\newtheorem{thm}{Theorem}[section]
\newtheorem{lemma}[thm]{Lemma}
\newtheorem{proposition}[thm]{Proposition}
\newtheorem{corollary}[thm]{Corollary}
\theoremstyle{definition}
\newtheorem{definition}[thm]{Definition}
\theoremstyle{remark}
\numberwithin{equation}{section}
\DeclareMathOperator{\Hom}{Hom}
\DeclareMathOperator{\Ind}{Ind}
\DeclareMathOperator{\Gal}{Gal}
\newcommand{\rightarrowdbl}{\rightarrow\mathrel{\mkern-14mu}\rightarrow}
\begin{document}

\title[]{LOCAL LANGLANDS CORRESPONDENCE FOR THE TWISTED EXTERIOR AND SYMMETRIC SQUARE $\epsilon$-FACTORS OF $\textrm{GL}_n$}


\author{Dongming She}
\date{Sept. 2019}
\address{Department of Mathematics, Purdue University, 
West Lafayette, IN 47907}
\email{shed@purdue.edu}




\begin{abstract}
Let $F$ be a non-Archimedean local field. Let $\mathcal{A}_n(F)$ be the set of equivalence classes of irreducible admissible representations of $\textrm{GL}_n(F)$, and $\mathcal{G}_n(F)$ be the set of equivalence classes of n-dimensional Frobenius semisimple Weil-Deligne representations of $W'_F$. The local Langlands correspondence(LLC) establishes the reciprocity maps $\textrm{Rec}_{n,F}: \mathcal{A}_n(F)\longrightarrow \mathcal{G}_n(F)$ , satisfying some nice properties. An important invariant under this correspondence is the L- and $\epsilon$-factors. This is also expected to be true under parallel compositions with a complex analytic representations of $\textrm{GL}_n(\mathbb{C})$. J.W. Cogdell, F. Shahidi, and T.-L. Tsai proved the equality of the symmetric and exterior square L- and $\epsilon$-factors [7] in 2017. But the twisted symmetric and exterior square L- and $\epsilon$-factors are new and very different from the untwisted case. In this paper we will define the twisted symmetric square L- and $\gamma$-factors using $\textrm{GSpin}_{2n+1}$, and establish the equality of the corresponding L- and $\epsilon$-factors. We will first reduce the problem to the analytic stability of their $\gamma$-factors for supercuspidal representations, then prove the supercuspidal stability by establishing general asymptotic expansions of partial Bessel function following the ideas in [7].
\end{abstract}

\maketitle




\section{INTRODUCTION}
The local Langlands Correspondence(LLC) for $\textrm{GL}_n$ has been proved by G. Laumon, M. Rapoport, and U. Stuhler for function fields (1993, [14]), by  G. Henniart (2000, [12]) and also by M. Harris and R. Taylor (2001, [10]), and later by P. Scholze (2010, [15]) using a different approach for p-adic fields.
Let $\rho$ be an n-dimensional Frobenius semisimple representation of the local Weil-Deligne group $W'_F$, and $\pi=\pi(\rho)$ be its corresponding irreducible admissible representation of $\textrm{GL}_n(F)$,  then one expects the equality of their L- and $\epsilon$-factors:
$$\epsilon(s,\rho,\psi)=\epsilon(s,\pi(\rho),\psi),$$
$$L(s,\rho)=L(s,\pi(\rho)),$$ where the local arithmetic $\epsilon$-factor $\epsilon(s,\rho, \psi)$ is defined by P. Deligne in [9], in which he showed that the global $\epsilon$-factors admit a factorization into a product of local ones. Here $L(s,\rho)$ is the local Artin L-factor and $\psi$ is a non-trivial additive character of $F$. The local analytic $\epsilon(s,\pi(\rho),\psi)$ and $L(s,\pi(\rho))$ are defined by Langlands-Shahidi method first for generic representations, then for tempered representations and finally using Langlands classification for all irreducible admissible representations of $\textrm{GL}_n(F)$. If $r$ is a continuous representation of $\textrm{GL}_n(\mathbb{C})$, then one can define the local Artin L- and $\epsilon$-factors $L(s,r\circ\rho,\psi)$ and $\epsilon(s,r\circ \rho, \psi)$. Therefore a natural question is to see if the following equalities hold:
$$L(s,r\circ\rho)=L(s,\pi,r),$$ 
$$\epsilon(s,r\circ\rho,\psi)=\epsilon(s,\pi,r,\psi),$$ as long as the factors on the analytic side are defined. We have a finite list of such factors defined by Langlands-Shahidi method, first for tempered representations, then use Langlands classification and multiplicativity to generalize the definitions to all irreducible admissible representations ([16], [17]). One has the following relationship for analytic $\epsilon$-, $\gamma$-, and L-factors:
$$\epsilon(s,\pi,r,\psi)=\frac{\gamma(s,\pi,r,\psi)L(s,\pi,r)}{L(1-s,\tilde{\pi},r)}.$$
On the arithmetic side, one can naturally define 
$$\gamma(s,r\circ\rho,\psi)=\frac{\epsilon(s,r\circ\rho,\psi)L(1-s, r\circ \rho^{\vee})}{L(s, r\circ\rho)}.$$

So the equalities of $\epsilon$- and L-factors are equivalent to the equalities of $\gamma$- and L-factors.
One method to prove equalities like this was first introduced by J.W. Cogdell, F. Shahidi, and T.-L. Tsai [7] in 2017, for the case where $r=\wedge^2$ and $\textrm{Sym}^2$. The proof uses a globalization method and certain reductions, and relies on two main results called the arithmetic stability and analytic stability of $\gamma$-factors respectively. The former was introduced and proved by P. Deligne in [9], the later for the case $r=\wedge^2$ (and by symmetry also $r=\textrm{Sym}^2$) was proved in [7]. The authors used the group $H=\textrm{GSp}_{2n}$ and its maximal self-associate Levi subgroup $M_H\simeq \textrm{GL}_n\times \textrm{GL}_1$ to construct the analytic factors for $r=\wedge^2$, using the fact that the adjoint representation $r$ of ${^LM}_H$ on $^L\mathfrak{n}_H=\textrm{Lie}(^LN_H)$ decomposes as $r=r_1\oplus r_2$, where $r_1$ is isomorphic to the standard representation of $\textrm{GL}_n(F)$ and $r_2=\wedge^2$. As a consequence the problem was reduced to establishing the stability of Shahidi local coefficients, which can be written as the Mellin transform of certain partial Bessel functions [19] under some conditions. The partial Bessel functions defined on the relevant part of the big Bruhat cells have nice asymptotic behaviors. Their asymptotic expansions can be written as a sum of two parts. The first part depends only on the central character of $\pi(\rho)$, and the second part is a uniformly smooth function on certain torus, which becomes zero after a highly ramified twist. 

In this paper we will define the twisted symmetric and exterior square $\gamma$- and L-factors of $\text{GL}_n(F)$, and prove the following result: 
\begin{thm}Let $F$ be a non-archimedean local field, $\rho$ be an n-dimensional $\Phi$-semisimple Weil-Delinge representation of $W_F'$, $\pi=\pi(\rho)$ be the corresponding irreducible admissible representation of $G=\text{GL}_n(F)$ attached to $\rho$ under the local Langlands correspondence.
Let $\text{Sym}^2$ and $\wedge^2$ denote the symmetric and exterior square representations of $^LG=\textrm{GL}_n(\mathbb{C})$, fix a character $\eta: F^{\times}\rightarrow \mathbb{C}^{\times}.$ Let $\epsilon(s, \pi, \textrm{Sym}^2\otimes\eta,\psi)$ and $\epsilon(s, \pi, \wedge^2\otimes\eta,\psi)$ be the twisted symmetric and exterior square local analytic $\epsilon$-factors, and $\epsilon(s, \textrm{Sym}^2\rho\otimes\eta,\psi)$, $\epsilon(s, \wedge^2\otimes\eta,\psi)$ their corresponding local arithmetic $\epsilon$-factors. 
Then
$$\epsilon(s, \textrm{Sym}^2\rho\otimes\eta,\psi)=\epsilon(s, \pi, \textrm{Sym}^2\otimes\eta,\psi);$$
$$\epsilon(s, \wedge^2\rho\otimes\eta,\psi)=\epsilon(s, \pi, \wedge^2\otimes\eta,\psi);$$

and
$$L(s, \textrm{Sym}^2\rho\otimes\eta)=L(s, \pi, \textrm{Sym}^2\otimes\eta);$$
$$L(s, \wedge^2\rho\otimes\eta)=L(s, \pi, \wedge^2\otimes\eta)).$$
\end{thm}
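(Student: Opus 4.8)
The plan is to follow the strategy of Cogdell–Shahidi–Tsai [7], adapting it to the twisted setting. The proof proceeds by a sequence of reductions, culminating in a supercuspidal stability result that must be established by new partial Bessel function estimates.

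\textbf{Plan of proof.} The strategy follows the template of Cogdell--Shahidi--Tsai \cite{7}, reorganized around the twisting character $\eta$. As noted in the introduction, the equality of $\epsilon$- and $L$-factors is equivalent to the equality of $\gamma$- and $L$-factors, so I would first reduce to proving, for all $\pi$,
$$\gamma(s,\pi,\textrm{Sym}^2\otimes\eta,\psi)=\gamma(s,\textrm{Sym}^2\rho\otimes\eta,\psi),\qquad \gamma(s,\pi,\wedge^2\otimes\eta,\psi)=\gamma(s,\wedge^2\rho\otimes\eta,\psi),$$
together with the matching of $L$-factors. Using multiplicativity of $\gamma$-factors on the analytic side (via the Langlands--Shahidi definitions through parabolic induction) and the corresponding additivity of Artin factors on the arithmetic side, one reduces the statement for general irreducible admissible $\pi$ to the case of tempered $\pi$, and then by the Langlands classification and an argument with Langlands quotients to the case of discrete series, and finally, twisting by unramified characters and using the classification of discrete series as Zelevinsky segments, to the case where $\pi$ is \emph{supercuspidal}. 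In the supercuspidal case one realizes $\pi$ (after a suitable globalization, using a cuspidal automorphic representation whose local component at one place is $\pi$ and which is controlled at the other ramified places) inside a global situation where the crude functional equation of the relevant Langlands--Shahidi $L$-function lets one compare the product of local $\gamma$-factors on the two sides; the comparison at the auxiliary places is exactly where stability enters.

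\textbf{Construction of the analytic factors.} For $r=\wedge^2\otimes\eta$ I would use $H=\textrm{GSp}_{2n}$ with its self-associate Levi $M_H\simeq \textrm{GL}_n\times\textrm{GL}_1$, exactly as in \cite{7}, but now feeding in the character $\eta$ through the $\textrm{GL}_1$-factor, so that the adjoint action of ${}^LM_H$ on ${}^L\mathfrak{n}_H$ decomposes as $r_1\oplus r_2$ with $r_2\cong\wedge^2\otimes\eta$ and $r_1$ a twist of the standard representation whose $\gamma$-factor is already known to be preserved by \cite{12},\cite{10}. For $r=\textrm{Sym}^2\otimes\eta$ the untwisted group $\textrm{SO}_{2n+1}$ does not see the twist, so here I would instead work with $H=\textrm{GSpin}_{2n+1}$ and its Siegel Levi $M_H\simeq \textrm{GL}_n\times\textrm{GL}_1$; the point is that the similitude/$\textrm{GSpin}$ structure provides exactly the extra $\textrm{GL}_1$ needed to twist the Gelfand--Kazhdan representation on ${}^L\mathfrak{n}_H$ by $\eta$, yielding $r_2\cong\textrm{Sym}^2\otimes\eta$. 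This is the genuinely new input relative to \cite{7} and should be isolated as a preliminary section: one must check the decomposition of the adjoint representation, verify that the Langlands--Shahidi machinery applies to $\textrm{GSpin}_{2n+1}$ (genericity of the Levi, the relevant $L$-function being entire or at worst with controlled poles after a highly ramified twist), and define $\gamma(s,\pi,\textrm{Sym}^2\otimes\eta,\psi)$ as the corresponding Shahidi $\gamma$-factor, together with its $L$-factor via the tempered definition plus Langlands classification.

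\textbf{Arithmetic stability and the reduction to supercuspidal stability.} On the arithmetic side the needed statement is Deligne's stability of $\epsilon$-factors \cite{9}: for a highly ramified character $\chi$ of $F^\times$, $\epsilon(s,\textrm{Sym}^2\rho\otimes\eta\otimes\chi,\psi)$ and $\epsilon(s,\wedge^2\rho\otimes\eta\otimes\chi,\psi)$ depend only on $\det\rho$, $\eta$, $\chi$ and $\psi$, and the corresponding $L$-factors vanish; this I would simply invoke. On the analytic side I would prove the parallel statement: for $\pi$ supercuspidal and $\chi$ highly ramified, $\gamma(s,\pi\otimes\chi,\textrm{Sym}^2\otimes\eta,\psi)$ and $\gamma(s,\pi\otimes\chi,\wedge^2\otimes\eta,\psi)$ are independent of $\pi$ (depending only on $\omega_\pi$, $\eta$, $\chi$, $\psi$) and the $L$-factors are trivial. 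Granting both stabilities, one applies them at the auxiliary ramified places in the globalization, twisting those local components by a common highly ramified $\chi$; the global functional equation then forces the two product formulas to agree place by place, and since the factors at the good places and at the archimedean places already match (by the known LLC compatibilities and the fact that the arithmetic and analytic constructions agree for principal series), one deduces the equality at the distinguished place, which was an arbitrary supercuspidal.

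\textbf{Analytic supercuspidal stability via partial Bessel functions.} This is the heart of the paper and the main obstacle. Following \cite{7},\cite{19}, for $\pi$ supercuspidal and generic the local coefficient $C_\psi(s,\pi\otimes\chi,w_0)$ attached to $M_H\subset H$ can be expressed as a Mellin transform over the torus of a partial Bessel function $j_{\pi,\chi}$ supported on the relevant piece of the big Bruhat cell of $H$. The plan is to establish an asymptotic expansion
$$j_{\pi\otimes\chi}(t) = j^{0}_{\omega_\pi,\chi}(t) + E_{\pi,\chi}(t),$$
where the leading term $j^0$ depends only on the central character and the twisting data, while the error term $E_{\pi,\chi}$ is a uniformly smooth function on the torus that is annihilated by integration against a sufficiently highly ramified $\chi$; the twisted $\gamma$-factor is then read off from $j^0$ alone and is visibly independent of $\pi$. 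Carrying this out requires redoing the Bruhat-cell analysis and the recursive structure of partial Bessel functions \emph{for $\textrm{GSp}_{2n}$ and $\textrm{GSpin}_{2n+1}$ with the $\eta$-twist}, which changes the relevant Weyl group element, the unipotent integration, and the cocycle controlling the $\psi$-dependence; in particular the similitude factor in $\textrm{GSpin}_{2n+1}$ alters the shape of the orbit decomposition and one must track how $\eta$ enters each Bessel function attached to a smaller Levi. I expect the bookkeeping for $\textrm{GSpin}_{2n+1}$ --- verifying that the asymptotic expansion still splits into a central-character part plus a uniformly smooth remainder once the extra $\textrm{GL}_1$ and the character $\eta$ are present --- to be the principal technical difficulty; the exterior square case with $\textrm{GSp}_{2n}$ should be a closer variant of \cite{7}. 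Once the expansion is in place, the vanishing of the stable $L$-factor follows as in \cite{7} from the fact that $j^0$ contributes no pole, completing the supercuspidal stability and hence, through the reductions above, the theorem.
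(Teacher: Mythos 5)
Your overall strategy — globalize, reduce to supercuspidal stability, prove stability via asymptotic expansions of partial Bessel functions on the relevant Bruhat cell — is the same as the paper's. But you propose to carry out the hard partial-Bessel analysis \emph{twice}, once for $\wedge^2\otimes\eta$ via $\textrm{GSp}_{2n}$ and once for $\textrm{Sym}^2\otimes\eta$ via $\textrm{GSpin}_{2n+1}$, while the paper avoids the exterior-square case entirely. The key reduction you did not use is the Rankin--Selberg factorization
$$\gamma(s,(\pi\times\pi)\times\eta,\psi)=\gamma(s,\pi,\wedge^2\otimes\eta,\psi)\,\gamma(s,\pi,\textrm{Sym}^2\otimes\eta,\psi),$$
together with its arithmetic analogue and the fact that LLC preserves $\gamma$- and $L$-factors of pairs and is compatible with character twists; this immediately lets one transfer the $\textrm{Sym}^2\otimes\eta$ equality to the $\wedge^2\otimes\eta$ equality, so the paper only establishes supercuspidal stability for the symmetric square. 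This is not merely cosmetic: the $\textrm{GSpin}_{2n+1}$ construction has the structural advantage that the adjoint action $r$ of $^L M_H$ on $^L\mathfrak{n}_H$ is irreducible, so the local coefficient $C_\psi(s,\sigma_\eta)$ \emph{is} the $\gamma$-factor, whereas in the $\textrm{GSp}_{2n}$ setting (as in [7]) $r = r_1 \oplus r_2$ is reducible and the local coefficient is a product of two $\gamma$-factors, forcing an extra argument to isolate the desired one both in the Mellin-transform formula and in the stability proof. The paper also explains why $\textrm{GSpin}_{2n}$ (the type-$D$ group that would most naturally twist $\wedge^2$) is problematic: for $n$ odd its relevant maximal parabolic is not self-associate, so Shahidi's Theorem 6.2 of [19] is unavailable; your fallback to $\textrm{GSp}_{2n}$ sidesteps that, but at the cost of the reducibility issue above and, ultimately, of doubling a body of work the paper shows can be dispensed with in one stroke.
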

We will show the equalies of their $\gamma$- and L-factors. 

First, the $\gamma$-factors $\gamma(s, \pi,\textrm{Sym}^2\otimes\eta,\psi)$ and $\gamma(s, \pi,\wedge^2\otimes\eta,\psi)$, once constructed, will have to satisfy the symmetry
$$\gamma(s, (\pi\times\pi)\times \eta,\psi)=\gamma(s,\pi,\wedge^2\otimes \eta,\psi)\gamma(s,\pi,Sym^2\otimes\eta,\psi),$$
$$\gamma(s,(\rho\otimes\rho)\otimes\eta,\psi)=\gamma(s,\wedge^2\rho\otimes\eta,\psi)\gamma(s,Sym^2\rho\otimes\eta,\psi).$$ As the LLC preserves $L$- and $\epsilon$-factors of pairs, and is compatible with twisting by characters, it suffices to prove Theorem 1.1 only for the twisted symmetric square $\gamma$-factors. We will use Langlands-Shahidi method for odd $\textrm{GSpin}$ groups to produce the twisted symmetric square $\gamma$-factors. The reason is that when $n$ is odd, the maximal parabolic subgroups in $\textrm{GSpin}_{2n}$ that produce the twisted exterior square $\gamma$-factors, are not self-associate, although their unipotent radicals have relatively simpler structures. Hence Theorem 6.2 of [19], which we will use to write the local coefficient as the Mellin transform of partial Bessel functions, can not be applied in this situation. 
\section{TWISTED SYMMETRIC SQUARE L- AND $\gamma$-FACTORS}

We will construct the twisted symmetric square $\gamma$- and L-factors of $\textrm{GL}_n$ using the group $H=\textrm{GSpin}_{2n+1}$. It is a reductive group of type $\textrm{B}_n$ with derived group $\textrm{Spin}_{2n+1}$, which is the simply connected double cover of $\textrm{SO}_{2n+1}$. By Proposition 2.1 of [2], the root datum of $H$ can be given as:
$$X=\mathbb{Z}e_0\oplus\mathbb{Z}e_1\oplus\cdots\oplus \mathbb{Z}e_n,$$
$$X^{\vee}=\mathbb{Z}e^*_0\oplus \mathbb{Z}e_1^*\oplus\cdots\oplus\mathbb{Z}e_n^*,$$
$$\Delta=\{\alpha_1=e_1-e_2,\alpha_2=e_2-e_3,\cdots,\alpha_{n-1}=e_{n-1}-e_n,\alpha_n=e_n\}$$
$$\Delta^{\vee}=\{\alpha_1^{\vee}=e_1^*-e_2^*,\alpha_2^{\vee}=e^*_2-e_3^*,\cdots,\alpha^{\vee}_{n-1}=e^*_{n-1}-e_n^*,\alpha_n^{\vee}=2e_n^*-e_0^*\}.$$

Take the self-associate parabolic subgroup $P_H$ of $H$ with Levi decomposition $P_H=M_HN_H$,  where $M_H=M_{\theta}$, $\theta=\Delta-\{\alpha_n\}$. Then $M_H\simeq \textrm{GL}_n\times \textrm{GL}_1$ (Theorem 2.7, [1]). Let $\psi$ be a non-trivial additive character of $F$, and
$(\pi, V)$ be an irreducible $\psi$-generic representation of $\textrm{GL}_n(F)$. Let $\eta:F^{\times}\rightarrow \mathbb{C}^{\times}$ be a character of $F^{\times}$. We lift $\pi$ to a $\psi$-generic representation $\sigma$ of $M_H(F)$, being trivial on the $\textrm{GL}_1$-component. Define a generic representation
 $\sigma_{\eta}:M_H(F)\simeq \textrm{GL}_n(F)\times \textrm{GL}_1(F)\longrightarrow \textrm{GL}(V)$ by
$\sigma_{\eta}(m(g,a))v= \eta^{-1}(a)\pi(g)v.$ 

Denote the L-group of $H$ by $^LH$, similarly we can define $^LM_H$ and $^LN_H$. We have $^LH \simeq GSp_{2n}(\mathbb{C})=\{h\in GL_{2n}(\mathbb{C}): {^th} J h=\phi (h) J \ \ \textrm{for}\ \ \textrm{some} \ \ \phi(h)\in F^\times \}$, where 
$$J=\begin{bmatrix}
\ \ & J' \\
-{^tJ}' & \ \ \\
\end{bmatrix}, J'=\begin{bmatrix}
\ \ & \ \ & \ \ & 1 \\
\ \ & \ \ & -1 \ \ \\
\ \ & \iddots & \ \ & \ \ \\
(-1)^{n-1} & \ \ & \ \ & \ \ \\
\end{bmatrix},$$
and $\phi: H\rightarrow \mathbb{C}^\times$ is the similitude character of $H$. Therefore we have 
$$\leftidx{^L}M_H=\{m=m(g,a_0)=\begin{bmatrix}
g & \ \ \\
\ \ & a_0 J'{^tg^{-1}J'^{-1}}\\
\end{bmatrix}: g\in \textrm{GL}_n(\mathbb{C}), a_0\in \mathbb{C}^{\times})\}$$$$\simeq \textrm{GL}_n(\mathbb{C})\times \textrm{GL}_1(\mathbb{C}).$$

Let $^L\mathfrak{n}_H=\textrm{Lie}(\leftidx{^L}N_H)$. The adjoint action $r: \leftidx{^L}M_H\longrightarrow \textrm{GL}(^L\mathfrak{n}_H)$ is irreducible (Appendix A, ($\textrm{B}_{n,ii}$), [17]).
Then by Langlands-Shahidi method (Theorem 3.1 in [16] or Theorem 8.3.2 in [17]), the local $\gamma$-factor $\gamma(s, \sigma_{\eta}, r,\psi)$ is well-defined. $\sigma_\eta$ is unramified if both $\pi$ and $\eta$ are. Fix a uniformizer $\varpi$ of $F$, then the semisimple conjugacy class $c(\pi)$ attached to $\pi$ is given by $c(\pi)=\textrm{diag}\{\chi_1(\varpi),\cdots,\chi_n(\varpi)\}$, where $\chi_1,\cdots,\chi_n$ are $n$ unramified characters of $F^\times$. Therefore the semisimple conjugacy class attached to $\sigma$ is given by $$c(\sigma)=\textrm{diag}\{\chi_1(\varpi),\cdots,\chi_n(\varpi),\chi_n(\varpi)^{-1},\cdots,\chi_1(\varpi)^{-1}\}.$$ On the other hand, $c(\eta)=\textrm{diag}\{1,\cdots,1,\eta(\varpi)^{-1},\cdots,\eta(\varpi)^{-1}\}$, so
$$c(\sigma_\eta)=c(\sigma)c(\eta)=\textrm{diag}\{\chi_1(\varpi),\cdots,\chi_n(\varpi),\eta(\varpi)^{-1}\chi_n(\varpi)^{-1},\cdots,\eta(\varpi)^{-1}\chi_1(\varpi)^{-1}\}.$$
It follows that $$L(s, \sigma_{\eta}, r)=\det(1-r(c(\sigma_\eta)q_F^{-s}))^{-1}=\prod_{1\leq i\leq j\leq n}(1-(\chi_i\chi_j\eta)(\varpi)q_F^{-s})^{-1}$$
which is what we usually referred as the unramified twisted symmetric square local L-factor for $\textrm{GL}_n$ (section 1, [20]). 

We can use Langlands-Shahidi method to first define the twisted symmetric square L-factor for $\pi$ being tempered, and use Langlands classification and multiplicativity to define for any irreducible admissible representation $\pi$ of $\textrm{GL}_n(F)$ that
$L(s, \pi, \textrm{Sym}^2\otimes\eta)=L(s, \sigma_{\eta},r)$ and $\gamma(s, \pi, \textrm{Sym}^2\otimes \eta,\psi)=\gamma(s, \sigma_{\eta}, r,\psi).$ This is how the general definitions of all Langlands-Shahidi $\gamma$- and $L$-factors are given ([16], [17]). 
\section{STABLE EQUALITY}
Suppose $\rho$ is mapped to $\pi=\pi(\rho)$ under the local Langlands correspondence. The character $\eta: F^\times\longrightarrow \mathbb{C}^\times$can be viewed as a character of the local Weil group $W_F$ by $ W_F\twoheadrightarrow W_F^{ab}\simeq F^{\times}\rightarrow\mathbb{C}^{\times}$ through the local Artin map $\textrm{Art}_F^{-1}: W_F^{ab}\simeq F^{\times}$. We still denote it by $\eta$. On the other hand, $\rho$ and $\eta$ define a homomorphism
$$\rho_{\eta}:W_F\longrightarrow {^LM}_H\simeq \textrm{GL}_n(\mathbb{C})\times \textrm{GL}_1(\mathbb{C})$$ by $\rho_{\eta}(w)=(\rho(w), \eta^{-1}(w)).$ It is easy to see that
$r\circ \rho_{\eta}\simeq \textrm{Sym}^2\rho\otimes \eta.$ 

Now Let $\chi:F^\times\rightarrow \mathbb{C}^\times$ be a continuous character of $F^\times$, viewed as a character of $\textrm{GL}_n(F)$ through the determinant. Similar to $\eta$ we can also view $\chi$ as a character of $W_F$. $\rho$ and $\chi$ determine a homomorphism $$\rho\otimes\chi:W_F\longrightarrow  \textrm{GL}_n(\mathbb{C})$$ by $w\mapsto \chi(w)\rho(w)$. Consequently we also have
$$(\rho\otimes \chi)_{\eta}: W_F\longrightarrow {^LM}_H\simeq \textrm{GL}_n(\mathbb{C})\times \textrm{GL}_1(\mathbb{C})$$ defined by
$(\rho\otimes \chi)_{\eta}(w)=((\rho\otimes \chi)(w),\eta^{-1}(w))=(\chi(w)\rho(w), \eta^{-1}(w))$. We can see that
$r\circ (\rho\otimes \chi)_{\eta}\simeq \textrm{Sym}^2(\rho\otimes \chi)\otimes \eta$. Therefore on the arithmetic side we have $L(s, \textrm{Sym}^2(\rho\otimes \chi)\otimes\eta)=L(s, r\circ (\rho\otimes \chi)_{\eta})$ and
$\gamma(s,\textrm{Sym}^2(\rho\otimes\chi)\otimes\eta,\psi)=\gamma(s,r\circ (\rho\otimes \chi)_{\eta}, \psi)$.
We aim to prove the following proposition in this section.
\begin{proposition}(\textbf{Stable Equality})
Let $F$ be a p-adic field of characteristic zero, $\eta$ a fixed character of $F^{\times}$, and $\rho$ be an $n$-dimensional continuous irreducible representation of $W_F$. Then for every sufficiently highly ramified character $\chi$ of $F^{\times}$, we have
$$\gamma(s,\textrm{Sym}^2(\rho\otimes \chi)\otimes \eta,\psi)=\gamma(s,\pi\otimes\chi, \textrm{Sym}^2\otimes \eta, \psi),$$ where $\pi=\pi(\rho)\in Irr(\textrm{GL}_n(F))$ is the irreducible admissible representation attached to $\rho$ under the local Langlands correspondence. 
\end{proposition}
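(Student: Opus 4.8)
The plan is to reduce both sides of the asserted identity, for $\chi$ highly ramified, to a common value depending only on the ``stable data'' $(n,\det\rho,\chi,\eta,\psi)$, and then to compute that value by specializing $\rho$. On the arithmetic side I would apply Deligne's stability of local $\gamma$-factors (in Henniart's form: two Weil--Deligne representations of the same dimension and the same determinant have equal $\gamma$-factors after a sufficiently ramified character twist), using the identification
\[
\textrm{Sym}^2(\rho\otimes\chi)\otimes\eta\;\simeq\;\bigl(\textrm{Sym}^2\rho\otimes\eta\bigr)\otimes\chi^{2},
\]
in which $\textrm{Sym}^2\rho\otimes\eta$ is a fixed representation of $W_F$ of dimension $\binom{n+1}{2}$ and determinant $(\det\rho)^{n+1}\eta^{\binom{n+1}{2}}$; since $F$ has characteristic zero, the character $\chi^{2}$ is highly ramified whenever $\chi$ is (with a bounded loss of conductor, only in residue characteristic $2$), so the hypothesis on $\chi$ causes no trouble. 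Thus, for $\chi$ sufficiently ramified, $\gamma(s,\textrm{Sym}^2(\rho\otimes\chi)\otimes\eta,\psi)$ depends only on $n$, $\det\rho$, $\chi$, $\eta$, $\psi$; in particular it is unchanged when $\rho$ is replaced by any $n$-dimensional representation $\rho_0$ of $W_F$ with $\det\rho_0=\det\rho$.

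On the analytic side I would invoke the analytic stability of the twisted symmetric square $\gamma$-factor: for $\chi$ sufficiently ramified, $\gamma(s,\pi\otimes\chi,\textrm{Sym}^2\otimes\eta,\psi)$ depends only on the central character $\omega_\pi$ (equivalently, by compatibility of the Langlands correspondence for $\textrm{GL}_1$, on $\det\rho$), together with $n$, $\chi$, $\eta$, $\psi$. This is the twisted analogue of the main local theorem of [7]; it is the real content of the paper, to be reduced to the case of supercuspidal $\pi$ and then proved via the asymptotic expansions of partial Bessel functions developed in the sections below. Granting it, for $\chi$ sufficiently ramified the right-hand side of the proposition is likewise unchanged when $\rho$ is replaced by such a $\rho_0$.

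It therefore suffices to verify the equality for one convenient $\rho_0$, and I would take $\rho_0=\mu_1\oplus\cdots\oplus\mu_n$, a direct sum of characters of $W_F$ with $\mu_1\cdots\mu_n=\det\rho$ (say $\mu_1=\det\rho$ and $\mu_2=\cdots=\mu_n=1$), so that $\pi(\rho_0)$ is the representation of $\textrm{GL}_n(F)$ attached to $\mu_1\oplus\cdots\oplus\mu_n$ under the Langlands correspondence. On the arithmetic side, additivity of $\gamma$-factors gives
\[
\gamma\bigl(s,\textrm{Sym}^2(\rho_0\otimes\chi)\otimes\eta,\psi\bigr)=\prod_{1\le i\le j\le n}\gamma\bigl(s,\mu_i\mu_j\chi^{2}\eta,\psi\bigr).
\]
On the analytic side, the multiplicativity of Langlands--Shahidi $\gamma$-factors---applied directly to the twisted $\textrm{Sym}^2$ factor, or, equivalently, through the decomposition $\gamma(s,(\pi'\times\pi')\otimes\eta,\psi)=\gamma(s,\pi',\wedge^2\otimes\eta,\psi)\,\gamma(s,\pi',\textrm{Sym}^2\otimes\eta,\psi)$ together with the Rankin--Selberg and twisted exterior square factorizations for a fully induced $\pi'$---reduces $\gamma(s,\pi(\rho_0)\otimes\chi,\textrm{Sym}^2\otimes\eta,\psi)$ to the same product, the only input being that the Langlands correspondence for $\textrm{GL}_1$ preserves Tate's local $\gamma$-factors. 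The two stable values coincide, so the equality holds for $\rho_0$ and hence for $\rho$.

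The essential difficulty is the analytic stability invoked above. In contrast with the untwisted case, the twisted symmetric square $\gamma$-factor does not reduce to the untwisted one, so eliminating its dependence on $\pi$ (beyond the central character) under a highly ramified twist genuinely requires the new partial Bessel function asymptotics to be established in the remainder of the paper. The other ingredients---Deligne's arithmetic stability, the elementary passage from $\chi$ to $\chi^{2}$, and the multiplicativity bookkeeping for the reducible test parameter $\rho_0$---are routine.
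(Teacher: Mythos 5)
Your overall strategy is similar in spirit to the paper's (use arithmetic and analytic stability to reduce both sides to a common stable value, then pin down that value at a base point), but your choice of base point creates a genuine logical gap. You propose to test the identity at $\rho_0=\mu_1\oplus\cdots\oplus\mu_n$, a direct sum of characters. But then $\pi(\rho_0)$ is a principal series, \emph{not} supercuspidal, whereas the analytic stability that the rest of the paper actually proves (Proposition 3.4) is \emph{supercuspidal} stability: it compares $\gamma(s,\pi_1\otimes\chi,\textrm{Sym}^2\otimes\eta,\psi)$ and $\gamma(s,\pi_2\otimes\chi,\textrm{Sym}^2\otimes\eta,\psi)$ only for $\pi_1,\pi_2$ both supercuspidal with the same central character. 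Since $\pi=\pi(\rho)$ is supercuspidal (as $\rho$ is irreducible) and $\pi(\rho_0)$ is not, Proposition 3.4 does not allow you to pass from the one to the other. The form of analytic stability you invoke, that $\gamma(s,\pi\otimes\chi,\textrm{Sym}^2\otimes\eta,\psi)$ depends only on $(n,\omega_\pi,\chi,\eta,\psi)$ for \emph{arbitrary} admissible $\pi$, is exactly the paper's Corollary 3.6, which is deduced from Proposition 3.1 (via Corollary 3.5); using it here would be circular.

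The paper resolves this by making the base point \emph{irreducible}: Proposition 3.2 constructs, by globalization (Lemma 3.1 of [7]) and the global functional equation together with an induction on $n$ and multiplicativity at the auxiliary bad places, an irreducible $n$-dimensional $\rho_0$ of $W_F$ with $\det\rho_0=\omega_0$ for which the arithmetic and analytic twisted symmetric square $\gamma$-factors agree for all $\chi$. Then $\pi_0=\pi(\rho_0)$ is supercuspidal, and both Proposition 3.3 (arithmetic stability for the pair $(\rho,\rho_0)$) and Proposition 3.4 (supercuspidal stability for the pair $(\pi,\pi_0)$) apply. Your attempt to dispense with the globalization step by choosing an abelian test representation fails because there is no way, at this stage of the argument, to compare the analytic $\gamma$-factor of a supercuspidal with that of a principal series; you would in effect need either the globalized base-point equality or a general analytic stability that has not yet been established. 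Your bookkeeping on the arithmetic side (the identification $\textrm{Sym}^2(\rho\otimes\chi)\otimes\eta\simeq(\textrm{Sym}^2\rho\otimes\eta)\otimes\chi^2$, the $\chi^2$-ramification remark, and the additivity giving a product of abelian factors over $1\le i\le j\le n$) is fine, and the multiplicativity argument matching these abelian factors with the analytic side for an induced $\pi'$ is correct as far as it goes; but it only establishes the equality for the induced test representation, not for the supercuspidal $\pi$ you actually started with.
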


We will prove Proposition 3.1 by induction on $n$. It is important to point out that the induction hypothesis will be used in the proof of Proposition 3.2 using a global-to-local argument. We will first establish the proposition for a fixed irreducible representation $\rho_0$ of $W_F$(Proposition 3.2), then use both the arithmetic and analytic stability of $\gamma$-factors (Proposition 3.3 \& 3.4) on the two sides to deform the equality for the fixed representation to obtain the result of Proposition 3.1 for all $n$-dimensional representations $\rho$. We begin with the first step:

\begin{proposition}(\textbf{Stable Equality at a base point})
Let $F$ be a p-adic field, fix a character $\eta$ of $F^{\times}$. Given a character $\omega_0$ of $F^{\times}$, there exists an irreducible n-dimensional representation $\rho_0$ of $W_F$ with $\det\rho_0$ corresponding to $\omega_0$ by local class field theory, such that for all characters $\chi$ of $F^{\times}$, we have
$$\gamma(s,\textrm{Sym}^2(\rho_0\otimes \chi)\otimes \eta,\psi)=\gamma(s,\pi(\rho_0)\otimes\chi, \textrm{Sym}^2\otimes \eta, \psi),$$
\end{proposition}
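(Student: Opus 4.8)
The plan is to prove Proposition 3.2 by a global-to-local argument, exactly in the spirit of [7], using the already-known equality of twisted symmetric square $\gamma$-factors in the global setting together with a careful choice of a global field and a global automorphic/Galois representation whose localization at the chosen place realizes the desired local picture. First I would fix the $p$-adic field $F$ and the character $\eta$, and construct a number field $k$ together with a place $v_0$ of $k$ with $k_{v_0}\simeq F$. The key input is the existence of a globalization: I would use (as in Henniart's work and as used in [7]) the fact that one can produce a cuspidal automorphic representation $\Pi$ of $\mathrm{GL}_n(\mathbb{A}_k)$, equivalently an irreducible $n$-dimensional representation $\varrho$ of the global Weil(-Deligne) group (or a compatible system), such that $\varrho_{v_0}$ is an irreducible representation of $W_F$ whose restriction is what we want at the base point, that $\det\varrho_{v_0}$ corresponds to the prescribed $\omega_0$, and that at all other ramified places the local components are in the "good" range where the local Langlands correspondence is already known to preserve the relevant twisted symmetric square factors (e.g. places where the representation is unramified, or a twist of Steinberg, or more generally where the equality is already established, possibly by the induction hypothesis on $n$ coming from Proposition 3.1 for smaller $n$ via multiplicativity).

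The second step is to invoke the global functional equation on both sides. On the arithmetic side, Deligne's theory gives a global functional equation for the completed $L$-function of $\mathrm{Sym}^2\varrho\otimes\eta_{\mathrm{glob}}$, hence a factorization of the global $\epsilon$-factor into a product of local arithmetic $\epsilon$-factors $\prod_v \epsilon(s,\mathrm{Sym}^2\varrho_v\otimes\eta_v,\psi_v)$. On the analytic side, the Langlands--Shahidi method applied to $H=\mathrm{GSpin}_{2n+1}$ over $k$ (as set up in Section 2) gives a crude functional equation for the completed twisted symmetric square $L$-function of $\Pi$, with the global $\gamma$-factor a product of local $\gamma(s,\sigma_{\eta,v},r,\psi_v)$. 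Comparing the two global identities, and using that at every place $v\neq v_0$ the local equality $\gamma(s,\mathrm{Sym}^2\varrho_v\otimes\eta_v,\psi_v)=\gamma(s,\pi_v,\mathrm{Sym}^2\otimes\eta_v,\psi_v)$ is already known (unramified places are trivial; the finitely many bad places $\neq v_0$ are arranged to lie in the known range or handled by the inductive hypothesis and multiplicativity of $\gamma$-factors for $\mathrm{GL}_m$ with $m<n$), one deduces the equality at the remaining place $v_0$:
$$\gamma(s,\mathrm{Sym}^2(\rho_0\otimes\chi)\otimes\eta,\psi)=\gamma(s,\pi(\rho_0)\otimes\chi,\mathrm{Sym}^2\otimes\eta,\psi).$$
To get this for \emph{all} twists $\chi$ at $v_0$ simultaneously from a single global $\rho_0$, I would note that twisting by a character $\chi$ of $F^\times$ can itself be globalized — choose an idele class character whose component at $v_0$ is $\chi$ and which is unramified (or otherwise harmless) elsewhere — and absorb it into $\varrho$, so the same comparison applies verbatim to $\rho_0\otimes\chi$; alternatively one may observe directly that both sides depend on $\chi$ in a controlled (rational) way and that the globalization can be chosen to accommodate a prescribed finite set of twists.

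The main obstacle, and the delicate point that must be handled with care, is the control of the bad places $v\neq v_0$: one must genuinely know, at each such place, that LLC preserves the twisted symmetric square $\gamma$-factor, and this is precisely where the induction on $n$ (via Proposition 3.1 for all smaller dimensions, pushed through multiplicativity of $\gamma$-factors and compatibility with parabolic induction / Langlands classification) enters, together with the base cases $n=1$ (class field theory) and the already-established untwisted results of [7] combined with twisting by characters. A second technical point is ensuring the globalization exists with $\varrho_{v_0}$ irreducible with prescribed determinant $\omega_0$ and with good behavior elsewhere; this is a standard but nontrivial application of the existence of automorphic forms with prescribed local components (Poincaré series / trace formula arguments, or the constructions in Henniart [12]), and one must also match additive characters $\psi_v$ globally, which only affects the factors by harmless $\psi$-dependence that cancels in the comparison. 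Finally, a modest amount of care is needed because we are comparing $\gamma$-factors rather than $\epsilon$-factors directly: one uses that the arithmetic and analytic $L$-factors at the good places agree (again by LLC for pairs and the known cases), so that the $\epsilon$-factor comparison upgrades to a $\gamma$-factor comparison at $v_0$ as stated.
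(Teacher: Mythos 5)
Your proposal is essentially the same globalization argument the paper uses: pick a global irreducible representation $\Sigma$ of $W_{\mathbb F}$ with $\Sigma_{v_0}$ irreducible with prescribed determinant and $\Sigma_v$ reducible at every other finite place, globalize $\eta$ and $\chi$ as Hecke characters, compare the arithmetic (Deligne) and analytic (Langlands--Shahidi on $\mathrm{GSpin}_{2n+1}$) global functional equations, cancel the unramified and Archimedean factors, and handle the remaining bad places $v\neq v_0$ by decomposing $\Sigma_v$ into irreducibles of smaller dimension so that additivity/multiplicativity of $\gamma$-factors and the induction hypothesis on $n$ (Proposition 3.1) close the loop. The only place your write-up is looser than the paper is that the paper insists on $\Sigma_v$ being \emph{reducible} at all finite $v\neq v_0$ (not merely unramified or Steinberg) precisely so the induction on dimension applies, and it spells out the multiplicativity of the analytic $\gamma$-factor by explicitly decomposing the adjoint action of $^LM_{\theta_1}$ on $^L\mathfrak n_H$; but these are points of detail, not of strategy.
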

\begin{proof}This is essentially the same as the proof of Proposition 3.2 in [7]. Using the globalization method provided by Lemma 3.1 in [7], we see that there exists a number field $\mathbb{F}$ and an irreducible continuous n-dimensional representation $\Sigma$ of the global Weil group $W_{\mathbb{F}}$, such that if $\Sigma_v=\Sigma\vert_{W_{\mathbb{F}_v}}$, then there is a place $v_0$ of $\mathbb{F}$ such that $\mathbb{F}_{v_0}=F$, $\det \Sigma_{v_0}$ corresponds to $\omega_0$ by local class field theory. Moreover, $\Sigma_{v_0}$ is irreducible, $\Sigma_v$ is reducible for all $v<\infty$ with $v\neq v_0$, and $\Pi=\pi(\Sigma):=\otimes_v \pi(\Sigma_v)$ is a cuspidal automorphic representation of $\textrm{GL}_n(\mathbb{A}_{\mathbb{F}})$. Therefore all the local components $\Pi_v$ are generic. Let $\Psi=\otimes_v \Psi_v$ be a nontrivial additive character of $\mathbb{F}\backslash \mathbb{A}_{\mathbb{F}}$ so that $\Psi_{v_0}=\psi$, the nontrivial additive character which defines the generic character of $U_n(F)$.
We also take $\tau:\mathbb{F}^{\times}\backslash \mathbb{A}^{\times}_{\mathbb{F}}\rightarrow \mathbb{C}^{\times}$ to be a Hecke character with $\tau_{v_0}=\eta$. Outside a finite set of places $S$ containing $v_0$ and the infinite places, $\Pi_v$, $\tau_v$ and $\Psi_v$ are all unramified. 

Take $\xi:\mathbb{F}^{\times}\backslash \mathbb{A}^{\times}_{\mathbb{F}}\rightarrow \mathbb{C}^{\times}$ a Hecke character such that $\xi_{v_0}=\chi$, it is easy to see that globally we have $\pi(\Sigma\otimes\xi)_{\tau}=(\Pi\otimes\xi)_{\tau}$. Similar to the local case the global $L$-functions are given by
$L(s,\textrm{Sym}^2(\Sigma\otimes\xi)\otimes \tau)=L(s, r\circ (\Sigma\otimes \xi)_{\tau})$ and
$L(s,\Pi\otimes \xi, \textrm{Sym}^2\otimes \tau)=L(s,(\Pi\otimes\xi)_{\tau}, r).$

Now we apply the global functional equations for the Artin L-functions in general as given in [9], and the twisted symmetric square L-function for the automorphic side through Langlands-Shahidi method as in [17], and do some simple calculation on the unramified places, we will be able to match the the product of L-factors at those places. We obtain the equality of the product of local $\gamma$-factors at those "bad" places. Since by [18] we know that the arithmetic and the analytic factors defined by the Langlands-Shahidi method always agree at all Archimedean places [18], we are left with the product of $\gamma$-factors of a finite set of places at which the local components $\Sigma_v$ are all reducible, and a fixed place $v_0$. Let $\Sigma_v=\Sigma_{v,1}\oplus\cdots\oplus \Sigma_{v,r_v}$ be the decomposition of $\Sigma_v$ into irreducibles. We will prove the equality
$\gamma(s, \textrm{Sym}^2((\Sigma_{v,1}\oplus\cdots\oplus \Sigma_{v,r_v})\otimes \xi_v)\otimes \tau_v, \Psi_v)=\gamma(s, \textrm{Ind}(\Pi_{v,1}\otimes\cdots\otimes \Pi_{v,r_v})\otimes\xi_v, \textrm{Sym}^2\otimes\tau_v, \Psi_v)$, by induction on $r_v$. 

Since $\Sigma_v$ is reducible, $r_v\ge 2$. When $r_v=2$ we have
$$\gamma(s, \textrm{Sym}^2((\Sigma_{v,1}\oplus\Sigma_{v,2})\otimes \xi_v)\otimes \tau_v, \Psi_v)$$$$=\gamma(s, \textrm{Sym}^2(\Sigma_{v,1}\otimes\xi_v)\otimes\tau_v, \Psi_v)\gamma(s, \textrm{Sym}^2(\Sigma_{v,2}\otimes \xi_v)\otimes\tau_v,\Psi_v)$$$$\cdot\gamma(s,((\Sigma_{v,1}\otimes \xi_v)\otimes(\Sigma_{v,2}\otimes \xi_v))\otimes\tau_v,\Psi_v)$$
$$=\gamma(s, \Pi_{v,1}\otimes \xi_v, \textrm{Sym}^2\otimes \tau_v,\Psi_v)\gamma(s, \Pi_{v,2}\otimes \xi_v, \textrm{Sym}^2\otimes \tau_v,\Psi_v)$$$$\cdot\gamma(s,((\Pi_{v,1}\otimes \xi_v)\times(\Pi_{v,2}\otimes \xi_v))\otimes\tau_v,\Psi_v)$$
$$=\gamma(s, \textrm{Ind}(\Pi_{v,1}\otimes\Pi_{v,2})\otimes\xi_v, \textrm{Sym}^2\otimes \tau_v,\Psi_v).$$ Here the first equality is the additivity of the arithmetic $\gamma$-factors, the second equality follows from our induction hypothesis of Proposition 3.1 on the dimension $n$ of $\rho$, and the fact the LLC preserves the local $\gamma$-factors in pairs. The last equality is a consequence of the multiplicativity of the analytic $\gamma$-factors. Indeed, recall that the adjoint action $r:{^LM_H}\simeq \textrm{GL}_n(\mathbb{C})\times \textrm{GL}_1(\mathbb{C})\longrightarrow \textrm{GL}({^L\mathfrak{n}}_H)$ is irreducible. ${^LM_H}=\{m=m(g,a_0)=\begin{bmatrix}
g & \ \ \\
\ \ & a_0J'{^tg^{-1}J'^{-1}}\\
\end{bmatrix} g\in \textrm{GL}_n(\mathbb{C}), a_0\in \textrm{GL}_1(\mathbb{C})\}$ and ${^L\mathfrak{n}_H}=\{\begin{bmatrix}
0 & X \\
0 & 0 \\
\end{bmatrix}: J'{^tX}J'=X\}.$
Let $Y=XJ'^{-1}$ then $J'{^tX}J'=X\Leftrightarrow {^tY}=Y.$ Denote $\mathfrak{n}(Y)=\begin{bmatrix}
0 & X \\
0 & 0\\
\end{bmatrix}=\begin{bmatrix}
0 & YJ'^{-1}\\
0 & 0 \\
\end{bmatrix}$. Then an easy calculation shows that 
$r(m(g,a_0))\mathfrak{n}(Y)=\mathfrak{n}(a_0gY{^tg}J').$ Let $\theta_1\subset \theta\subset \Delta$ be the subset of simple roots which gives the Levi subgroup $M_{\theta_1}\simeq \textrm{GL}_{n_1}\times \textrm{GL}_{n_2}\times \textrm{GL}_1$ with $n=n_1+n_2$, therefore ${^L}M_{\theta_1}\simeq \textrm{GL}_{n_1}(\mathbb{C})\times \textrm{GL}_{n_2}(\mathbb{C})\times \textrm{GL}_1(\mathbb{C})$.  Write 
$Y=\begin{bmatrix}
Y_1 & Y_2  \\
Y_3 &Y_4\\
\end{bmatrix}$, then $^tY=Y$ is equivalent to say that $^tY_1=Y_1$, $Y_3={^tY}_2$ and ${^tY}_4=Y_4$. According to the inductive construction of local $\gamma$-factors through Langlands-Shahidi method, we need to decompose the restriction of the adjoint action $r$ on ${^LM_{\theta_1}}$ on $^L\mathfrak{n}_H$ into a direct sum of irreducible subrepresentations (Theorem 8.3.2 of [17]). In our case each of them contributes to a local $\gamma$-factor. The restriction gives that
$$r(m(\begin{bmatrix}
g_1 & \ \ \\
\ \ & g_2 \\
\end{bmatrix}), a_0)(\mathfrak{n}(Y))=\mathfrak{n}(a_0\begin{bmatrix}
g_1 & \ \ \\
\ \ & g_2 \\
\end{bmatrix}\begin{bmatrix}
Y_1 &  Y_2 \\
{^tY}_2 & Y_4 \\
\end{bmatrix}\begin{bmatrix}
{^tg}_1 & \ \ \\
\ \ & {^tg}_2 \\
\end{bmatrix}J')$$$$=\mathfrak{n}(\begin{bmatrix}
a_0g_1 Y_2{^tg}_2 J_{n_2}' & a_0g_1Y_1{^tg}_1 J_{n_1}' \\
a_0 g_2Y_4 {^tg}_2 J_{n_2}' & a_0 g_2 {^tY}_2 {^tg_1} J_{n_1}' \\
\end{bmatrix},$$ where $J'=\begin{bmatrix}
\ \ & J_{n_1}' \\
J_{n_2}' & \ \ \\
\end{bmatrix}$ with $J_{n_i}'$ the same type of matrix as $J'$ of size $n_i$.

Now let's get back to our setting. For $v\in S$, non-archimedean and $v\neq v_0$, $\Pi_{v,1}$ and $\Pi_{v,2}$ are irreducible admissible representations of $\textrm{GL}_{n_1}(\mathbb{F}_v)$ and $\textrm{GL}_{n_2}(\mathbb{F}_v)$ respectively. $\tau_v$ is a fixed character of $\mathbb{F}_v^{\times}$, and $\xi_v$ is a character of $\mathbb{F}_v^{\times}$.
Notice that here $Y_2$ is a free matrix of size $n_1\times n_2$, so the two diagonal blocks above give an irreducible subrepresentation. It is isomorphic to the tensor product $\Pi_{v,1}$ and $\Pi_{v,2}$, twisted by a character $\tau_v$ which is given by the $a_0$-component in the above expression. Therefore it contributes to the twisted Rankin-Selberg local $\gamma$-factor $\gamma(s, (\Pi_{v,1}\times \Pi_{v,2})\otimes \tau_v, \Psi_v)$. If we take $\Pi_{v,i}\otimes \xi_v$ instead of $\Pi_{v,i}$, we obtain the twisted Rankin-Selberg $\gamma$-factor $\gamma(s, ((\Pi_{v,1}\otimes \xi_v)\times (\Pi_{v,2}\otimes \xi_v))\otimes \tau_v, \Psi_v)$. Moreover, notice that ${^tY}_1=Y_1$ and ${^tY_4}=Y_4$, and the form of each of the rest blocks shows that each of them is isomorphic to the adjoint action of $^LM_i$ on $^L\mathfrak{n_i}$, where $M_i$ is the same type of Siegel Levi inside $\textrm{GSpin}_{2n_i+1}$. Therefore they are both irreducible, and they contribute to the twisted symmetric square local $\gamma$-factors $\gamma(s, \Pi_{v,i}, \textrm{Sym}^2\otimes \tau_v, \Psi_v)$, $i=1,2$. Again take $\Pi_{v,i}\otimes \xi_v$ instead of $\Pi_{v,i}$, we obtain the two $\gamma$-factors $\gamma(s, \Pi_{v,1}\otimes \xi_v, \textrm{Sym}^2\otimes \tau_v, \Psi_v)$ and $\gamma(s, \Pi_{v,2}\otimes \xi_v, \textrm{Sym}^2\otimes \tau_v, \Psi_v)$. Therefore by the multiplicativity of the local analytic $\gamma$-factors, we obtain that
$$\gamma(s, \textrm{Ind}(\Pi_{v,1}\otimes\Pi_{v,2})\otimes\xi_v, \textrm{Sym}^2\otimes \tau_v,\Psi_v)$$
$$=\gamma(s, \Pi_{v,1}\otimes \xi_v, \textrm{Sym}^2\otimes \tau_v,\Psi_v)\gamma(s, \Pi_{v,2}\otimes \xi_v, \textrm{Sym}^2\otimes \tau_v,\Psi_v)$$$$\cdot\gamma(s,((\Pi_{v,1}\otimes \xi_v)\times(\Pi_{v,2}\otimes \xi_v))\otimes\tau_v,\Psi_v).$$ This establishes the last equality. The general case follows from the case $r_v=2$ by induction on $r_v$. Hence from the global functional equations we are left with
$\gamma(s, \textrm{Sym}^2(\rho_0\otimes \chi)\otimes \eta, \psi)=\gamma(s, \pi(\rho_0)\otimes\chi, \textrm{Sym}^2\otimes \eta, \psi)$.
\end{proof}

To prove Proposition 3.1, besides Proposition 3.2, we also need both the arithmetic and analytic stability for $\gamma$-factors. We will explain as follows.

On the arithmetic side, P. Deligne showed the existence and uniqueness of the local $\epsilon$-factors on page 535-547 in [9]. For $V$ a finite dimensional complex representation of the local Weil group, $\chi$ is sufficiently ramified character of $F^{\times}$, the arithmetic $\epsilon$-factor attached to $V\otimes \chi$ depends only on $\det(V)$ and $\dim(V)$. Apply this to the case when $V\simeq \textrm{Sym}^2\rho\otimes \eta$ where $\rho$ is an irreducible n-dimensional representation of $W_F$, and $\eta$ is a character of $F^{\times}$ viewed as a character of $W_F$ as before. Also notice that $L(s, V\otimes \chi)=1$ for $\chi$ sufficiently ramified, we obtain:
\begin{proposition}(\textbf{Arithmetic Stability for the twisted symmetric square $\gamma$-factors})
Let $\rho_1$ and $\rho_2$ be two continuous n-dimensional representations of $W_F$ with $\det(\rho_1)=\det(\rho_2)$, $\eta$ be a fixed character of $F^{\times}$. Then for all sufficiently ramified characters $\chi$ of $F^{\times}$ we have
$$\gamma(s, \textrm{Sym}^2(\rho_1\otimes \chi)\otimes \eta, \psi)=\gamma(s, \textrm{Sym}^2(\rho_2\otimes \chi)\otimes\eta, \psi).$$
\end{proposition}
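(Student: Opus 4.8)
The plan is to deduce Proposition~3.3 directly from Deligne's construction of the local $\epsilon$-factors in~\cite{9}. The key input is Deligne's stability theorem: for a finite-dimensional complex representation $V$ of $W_F$ and a sufficiently ramified character $\chi$ of $F^\times$ (how ramified depends only on $\dim V$ and the conductor of $V$), the $\epsilon$-factor $\epsilon(s, V\otimes\chi, \psi)$ depends only on $\det(V)$ and $\dim(V)$ (and of course on $\chi$ and $\psi$); moreover for such highly ramified $\chi$ the Artin $L$-factor $L(s, V\otimes\chi)=1$, since $V\otimes\chi$ has no unramified subquotient, and likewise $L(1-s, (V\otimes\chi)^\vee)=1$.

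The first step is to set $V_i = \textrm{Sym}^2\rho_i\otimes\eta$ for $i=1,2$. These are finite-dimensional continuous complex representations of $W_F$ of the same dimension $\binom{n+1}{2}$. Next I would compute their determinants and check they agree: one has $\det(\textrm{Sym}^2\rho) = (\det\rho)^{n+1}$ for an $n$-dimensional $\rho$ (this is the standard formula, coming from the weights of $\textrm{Sym}^2$ of the standard representation of $\textrm{GL}_n$), and tensoring by the character $\eta$ multiplies the determinant by $\eta^{\binom{n+1}{2}}$. Hence $\det(V_1) = (\det\rho_1)^{n+1}\,\eta^{\binom{n+1}{2}} = (\det\rho_2)^{n+1}\,\eta^{\binom{n+1}{2}} = \det(V_2)$, using the hypothesis $\det\rho_1=\det\rho_2$. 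The dimensions obviously agree.

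The third step is to invoke Deligne's theorem: for all $\chi$ sufficiently ramified, $\epsilon(s, V_1\otimes\chi, \psi) = \epsilon(s, V_2\otimes\chi, \psi)$, since the two sides depend only on the common values of $\det(V_i)$, $\dim(V_i)$, $\chi$, $\psi$. Then I would translate this into the $\gamma$-factor statement: by the arithmetic definition $\gamma(s, W, \psi) = \epsilon(s, W, \psi)\,L(1-s, W^\vee)/L(s, W)$ recalled in the introduction, and since for highly ramified $\chi$ all the $L$-factors in sight are $1$ (both $L(s, V_i\otimes\chi)$ and $L(1-s, (V_i\otimes\chi)^\vee)$), we get $\gamma(s, V_i\otimes\chi, \psi) = \epsilon(s, V_i\otimes\chi, \psi)$. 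Combining with the $\epsilon$-equality, and noting that $V_i\otimes\chi = \textrm{Sym}^2(\rho_i)\otimes\eta\otimes\chi$ while $\textrm{Sym}^2(\rho_i\otimes\chi) = \textrm{Sym}^2(\rho_i)\otimes\chi^2$, I must be slightly careful: the statement as written twists $\textrm{Sym}^2(\rho_i\otimes\chi)$, i.e.\ applies $\chi^2$ inside, not $\chi$. This is harmless — replacing $\chi$ by a highly ramified character still leaves $\chi^2$ highly ramified (in residue characteristic $\neq 2$, or more carefully, $\chi^2$ has large conductor whenever $\chi$ does, away from the prime $2$; and one may absorb this by simply requiring $\chi$ ramified enough that $\chi^2$ meets Deligne's bound), so the same argument applies with $V_i\otimes\chi^2$ in place of $V_i\otimes\chi$ and gives $\gamma(s, \textrm{Sym}^2(\rho_1\otimes\chi)\otimes\eta,\psi) = \gamma(s, \textrm{Sym}^2(\rho_2\otimes\chi)\otimes\eta,\psi)$.

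The main obstacle here is essentially bookkeeping rather than a genuine difficulty: one must make sure that ``sufficiently ramified $\chi$'' is a condition that can be met uniformly, i.e.\ that the ramification threshold coming from Deligne's theorem (which depends on $\dim V_i$ and $\mathrm{cond}(V_i)$ — both finite and fixed once $\rho_1,\rho_2,\eta$ are fixed) really only depends on the fixed data, so that a single cofinite family of $\chi$ works for both sides simultaneously; and one must track the inner versus outer twist ($\chi^2$ versus $\chi$) correctly. Neither of these is serious. The only point requiring a word of care is the determinant computation $\det(\textrm{Sym}^2\rho)=(\det\rho)^{n+1}$, which should be stated explicitly since it is what converts the hypothesis $\det\rho_1=\det\rho_2$ into $\det V_1=\det V_2$.
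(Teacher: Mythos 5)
Your proof is correct and takes the same route as the paper: both invoke Deligne's stability theorem for Artin $\epsilon$-factors and the vanishing of $L$-factors under highly ramified twists, applied to $V_i=\textrm{Sym}^2\rho_i\otimes\eta$. The paper states this only as a one-line observation; you have made explicit two small points it glosses over — the determinant identity $\det(\textrm{Sym}^2\rho)=(\det\rho)^{n+1}$ converting the hypothesis $\det\rho_1=\det\rho_2$ into $\det V_1=\det V_2$, and the fact that the twist appearing in the statement is really by $\chi^2$ (since $\textrm{Sym}^2(\rho\otimes\chi)=\textrm{Sym}^2\rho\otimes\chi^2$), which one handles by requiring $\chi$ ramified enough that $\chi^2$ clears Deligne's threshold — both of which are worth recording.
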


On the analytic side, $\pi=\pi(\rho)$ is supercusipidal when $\rho$ is irreducible, therefore analogously we should have:
\begin{proposition}(\textbf{Supercuspidal Stability for the twisted symmetric square $\gamma$-factors})
Let $\pi_1$ and $\pi_2$ be two supercusipidal representations of $\textrm{GL}_n(F)$ with $\omega_{\pi_1}=\omega_{\pi_2}$, and $\eta$ is a fixed character of $F^{\times}$. Then for all sufficiently ramified characters $\chi$ of $F^{\times}$, whose degree of ramification depends only on $\pi_1$ and $\pi_2$, identified as characters of $\textrm{GL}_n(F)$ through the determinant, we have
$$\gamma(s, \pi_1\otimes \chi, \textrm{Sym}^2\otimes\eta, \psi)=\gamma(s,\pi_2\otimes\chi, \textrm{Sym}^2\otimes\eta, \psi).$$
\end{proposition}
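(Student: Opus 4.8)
The plan is to adapt the strategy of Cogdell--Shahidi--Tsai [7, \S4--\S7] to the twisted $\textrm{GSpin}_{2n+1}$ setting. The first step is to realize the analytic twisted symmetric square $\gamma$-factor as a Shahidi local coefficient. By the construction of Section 2, $\gamma(s,\pi_i\otimes\chi,\textrm{Sym}^2\otimes\eta,\psi)=\gamma(s,(\sigma_i)_{\eta}\otimes\chi,r,\psi)$, where $(\sigma_i)_{\eta}$ is the lift of $\pi_i$ twisted by $\eta$ to $M_H(F)\simeq\textrm{GL}_n(F)\times\textrm{GL}_1(F)$ and $r$ is the irreducible adjoint action on $^L\mathfrak{n}_H$. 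Since $P_H=M_HN_H$ is self-associate, this $\gamma$-factor is, up to the standard relation $\gamma=\epsilon L/L$ and the fixed normalizing Weyl element $w_0$, the local coefficient $C_{\psi}(s,(\sigma_i)_{\eta}\otimes\chi,w_0)$. For $\chi$ sufficiently ramified the relevant $L$-factors are $1$ (already exploited on the arithmetic side in Proposition 3.3, and on the analytic side a consequence of the conductor formula for $L(s,\pi_i\otimes\chi,\textrm{Sym}^2\otimes\eta)=L(s,(\sigma_i)_{\eta}\otimes\chi,r)$), so it suffices to prove $C_{\psi}(s,(\sigma_1)_{\eta}\otimes\chi,w_0)=C_{\psi}(s,(\sigma_2)_{\eta}\otimes\chi,w_0)$ for all such $\chi$.

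Next I would invoke [19, Theorem 6.2], valid precisely because $P_H$ is self-associate, to write each $C_{\psi}(s,(\sigma_i)_{\eta}\otimes\chi,w_0)$ as a Mellin transform over a maximal torus of a partial Bessel function $j_{(\sigma_i)_{\eta}\otimes\chi,w_0}$ attached to the Whittaker model of $(\sigma_i)_{\eta}\otimes\chi$ and supported, near the identity, on the portion of the big Bruhat cell of $H$ relevant to $P_H$; here the fact that $\pi_i$ is supercuspidal makes the relevant Whittaker functions compactly supported modulo the center, which is what allows the expansion to be controlled. The heart of the argument is then a uniform asymptotic expansion of these partial Bessel functions, obtained as in [7] by an induction over Bruhat cells ordered by Bessel distance (in the spirit of the partial Bessel function analysis of Jacquet--Ye, Baruch, and [7]): one introduces partial Bessel functions $j_{\sigma,w}$ for each Weyl element $w$ occurring in the support, relates consecutive ones by recursion, and shows that on a sufficiently small torus neighborhood $j_{(\sigma_i)_{\eta}\otimes\chi,w_0}$ splits as a \emph{leading term} depending only on the central character $\omega_{\pi_i}$ of $\pi_i$, the fixed character $\eta$, and the similitude data of $H$, plus a remainder given by integration against a function that is \emph{uniformly smooth} in the torus variable, with a smoothness radius depending only on the conductor of $\pi_i$ and not on $\chi$. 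Carrying this out requires redoing the cell combinatorics for $H=\textrm{GSpin}_{2n+1}$ using the root datum and the description of $^L\mathfrak{n}_H$ recalled in Section 2, and tracking the extra $\textrm{GL}_1$-factor and the similitude character through the Bessel function bookkeeping.

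Finally one twists by a highly ramified $\chi$: the Mellin transform of a uniformly smooth function against the highly ramified character determined by $\chi\circ\det$ vanishes once the ramification of $\chi$ exceeds the smoothness radius, so the remainder terms drop out and the required degree of ramification depends only on $\pi_1$ and $\pi_2$, as claimed. Since $\omega_{\pi_1}=\omega_{\pi_2}$, the surviving leading terms coincide, whence $C_{\psi}(s,(\sigma_1)_{\eta}\otimes\chi,w_0)=C_{\psi}(s,(\sigma_2)_{\eta}\otimes\chi,w_0)$ and therefore $\gamma(s,\pi_1\otimes\chi,\textrm{Sym}^2\otimes\eta,\psi)=\gamma(s,\pi_2\otimes\chi,\textrm{Sym}^2\otimes\eta,\psi)$.

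The main obstacle is the uniform asymptotic expansion of the partial Bessel functions. One must (i) verify carefully that the hypotheses of [19, Theorem 6.2] hold here --- in particular the self-associateness of $P_H$, which Section 2 asserts but which is essential; (ii) identify exactly which lower Bruhat cells feed the recursion and prove that it terminates with a single central-character term, this being where the $\textrm{GSpin}$/similitude structure genuinely differs from the $\textrm{SO}$/$\textrm{Sp}$ situations of [7] and of the earlier literature; and (iii) establish the uniform-smoothness estimate with a radius independent of the twist $\chi$, which is ultimately what forces the degree of ramification to depend only on $\pi_1$ and $\pi_2$. Secondary but necessary care is needed in the passage between the $\gamma$-factor and the local coefficient and in confirming that no $L$-factor survives the highly ramified twist on either side.
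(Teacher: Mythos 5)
Your proposal outlines precisely the strategy the paper carries out: reduce to the Shahidi local coefficient (the paper does this directly from the irreducibility of $r$ rather than via $\gamma=\epsilon L/L$ and a conductor bound, but both are valid), express it as a Mellin transform of partial Bessel functions via [19, Theorem 6.2] after the structural work on $\textrm{GSpin}_{2n+1}$ and its Bruhat decomposition, establish an asymptotic expansion by induction on Bessel distance into a central-character term plus uniformly smooth remainders, and kill the remainders by a highly ramified twist whose required ramification depends only on $\pi_1,\pi_2$. The obstacles you flag --- verifying the self-associate hypothesis, carrying out the Bruhat-cell recursion with the extra $\mathrm{GL}_1$/similitude bookkeeping, and getting a twist-independent smoothness radius --- are exactly what Sections 5 and 6 of the paper are devoted to, so this is the paper's proof in outline form.
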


This is the main result of this paper and will be established in the remainder of the text.

With Proposition 3.2, 3.3, and 3.4, we are ready to prove Proposition 3.1.
\begin{proof}(Proof of Proposition 3.1)
We will do induction on the dimension $n$ with the help of a globalization method provided as on page 2061-2065 in [7]. 

When $n=1$ we obtain that both sides equal to 1, and there is nothing to prove. For $n=2$, one could either follow [8] directly, or instead we show 
$\gamma(s,\wedge^2(\rho\otimes \chi)\otimes \eta, \psi)=\gamma(s, \pi\otimes \chi, \wedge^2\otimes \eta, \psi).$ These $\gamma$-factors are in general defined again through Langlands-Shahidi method by the adjoint action of $^LM$ on ${^L\mathfrak{n}}$ where $M$ is the maximal Levi isomorphic to $\textrm{GL}_n\times \textrm{GSpin}_0\simeq \textrm{GL}_n\times \textrm{GL}_1$ inside $\textrm{GSpin}_{2n}$(Theorem 2.7 [1]). Notice that in this case $\wedge^2\rho\otimes \eta=\det(\rho)\otimes \eta$. On the other hand, it is not hard to see that $\gamma(s,\pi, \wedge^2\otimes \eta, \psi)=\gamma(s, \omega_{\pi}\times \eta,\psi)$, where $\omega_{\pi}$ is the central character of $\pi$, and the right hand side is the $\gamma$-factor attached to the Rankin-Selberg L-function $L(s,\omega_{\pi}\times \eta).$ Since we know that $\det \rho\leftrightarrow \omega_{\pi}$ under the local Langlands correspondence, and tensor product of representations on the arithmetic side corresponds to Rankin-Selberg convolutions on the analytic side, so $\det\rho\otimes \eta\leftrightarrow \omega_{\pi}\times \eta$. Moreover, since LLC is compatible with twisting by characters, we see that the stable equality is true for the twisted exterior square $\gamma$-factors when $n=2$, and for this case we don't even need to assume $\chi$ is highly ramified. Now apply the equalities
$$\gamma(s, (\pi\times\pi)\times \eta,\psi)=\gamma(s,\pi,\wedge^2\otimes \eta,\psi)\gamma(s,\pi,Sym^2\otimes\eta,\psi)$$
$$\gamma(s,(\rho\otimes\rho)\otimes\eta,\psi)=\gamma(s,\wedge^2\rho\otimes\eta,\psi)\gamma(s,Sym^2\rho\otimes\eta,\psi),$$ and by the fact that LLC preserves the $L$- and $\epsilon$-factors of pairs, we see that the proposition is true for the case when $n=2$ and any character $\chi$.

Now $\rho$ is an irreducible n-dimensional representation of $W_F$, let $\pi=\pi(\rho)$ be its corresponding supercuspidal representation of $\textrm{GL}_n(F)$. Take $\omega_0=\omega_{\pi}$ in Proposition 3.2, then there exists an irreducible n-dimensional representation $\rho_0$ of $W_F$ and its corresponding supercuspidal representation $\pi_0=\pi(\rho_0)$ of $\textrm{GL}_n(F)$ such that $\omega_{\pi}=\omega_{\pi_0}$, $\det (\rho)=\det(\rho_0)$ and $\gamma(s, \textrm{Sym}^2(\rho_0\otimes\chi)\otimes\eta,\psi)=\gamma(s, \pi_0\otimes\chi, \textrm{Sym}^2\otimes\eta,\psi)$. Take $\chi$ sufficiently ramified such that Proposition 3.3 holds for the pair $(\rho,\rho_0) $, and Proposition 3.4 holds for the pair $(\pi,\pi_0)$. Then for such $\chi$ we have
$$\gamma(s, \textrm{Sym}^2(\rho\otimes\chi)\otimes\eta,\psi)=\gamma(s, \textrm{Sym}^2(\rho_0\otimes\chi)\otimes\eta,\psi)$$$$=\gamma(s, \pi_0\otimes\chi,\textrm{Sym}^2\otimes\eta,\psi)=\gamma(s,\pi\otimes\chi, \textrm{Sym}^2\otimes\eta,\psi)$$
The degree of ramification now depends on $(\rho,\pi)$ and $(\rho_0,\pi_0)$, so one needs to fix such a base point $(\rho_0,\pi_0)$ for every character $\omega_0$. As in [7], this can be reduced to just fix the character $\omega_0$ since twisting by unramified characters can be absorbed into the complex parameter $s$ of the $\gamma$-factors. This completes the proof of Proposition 3.1.
\end{proof}

Next we extend our result to Weil-Deligne representations.

\begin{corollary}
Let $\rho$ be a continuous n-dimensional $\Phi$-semisimple complex representation of the Weil-Deligne group $W_F'$, and $\eta$ a fixed character of $F^{\times}$. Then for sufficiently ramified characters $\chi$ of $F^{\times}$ we have
$$\gamma(s, \textrm{Sym}^2(\rho\otimes\chi)\otimes\eta,\psi)=\gamma(s, \pi(\rho)\otimes\chi, \textrm{Sym}^2\otimes\eta, \psi).$$
\end{corollary}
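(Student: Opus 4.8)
\emph{Sketch of the approach.} The plan is to unravel both $\gamma$-factors by multiplicativity down to the irreducible constituents and then quote Proposition 3.1, the ramification hypothesis entering only where it is needed on each side. Write the $\Phi$-semisimple Weil--Deligne representation as $\rho\simeq\bigoplus_{i=1}^{r}\rho_i\boxtimes\mathrm{Sp}(n_i)$, where the $\rho_i$ are irreducible continuous representations of $W_F$ and $\mathrm{Sp}(n_i)$ is the $n_i$-dimensional special (Steinberg) Weil--Deligne representation; then $\pi(\rho)$ is the Langlands quotient of the representation parabolically induced from $\Delta_1\otimes\cdots\otimes\Delta_r$, where $\Delta_i:=\pi(\rho_i\boxtimes\mathrm{Sp}(n_i))$ is the generalized Steinberg (essentially square-integrable, hence generic) representation whose supercuspidal support is $\{\pi(\rho_i)\nu^{b}:b=\tfrac{n_i-1}{2},\tfrac{n_i-3}{2},\dots,-\tfrac{n_i-1}{2}\}$, with $\nu$ the unramified norm character. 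Collecting all these twists into a single list $\{\sigma_j\}=\{\rho_i\nu^{b}\}$ of irreducible representations of $W_F$, the $\Phi$-semisimplification of $\rho$ is $\bigoplus_j\sigma_j$ and $\pi(\rho)$ has supercuspidal support $\{\pi(\sigma_j)\}$, these two collections corresponding under the local Langlands correspondence.

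On the analytic side I would use multiplicativity twice. First, the Langlands--Shahidi twisted symmetric-square $\gamma$-factor of the non-tempered $\pi(\rho)$ is, by its very definition via the Langlands classification, the product over the inducing data $\Delta_i$ of the symmetric-square factors of the $\Delta_i\otimes\chi$ together with the Rankin--Selberg factors of the pairs, in accordance with the branching of $r$ to the block Levi computed in Section 3. Second, applying the multiplicativity of local coefficients to each generic constituent $\Delta_i$ of $\pi(\rho_i)\nu^{b_0}\times\cdots\times\pi(\rho_i)\nu^{b_{n_i-1}}$ rewrites each $\gamma(s,\Delta_i\otimes\chi,\mathrm{Sym}^2\otimes\eta,\psi)$ in terms of its supercuspidal support, so that altogether
$$\gamma(s,\pi(\rho)\otimes\chi,\mathrm{Sym}^2\otimes\eta,\psi)=\prod_j\gamma(s,\pi(\sigma_j)\otimes\chi,\mathrm{Sym}^2\otimes\eta,\psi)\cdot\prod_{j<k}\gamma\bigl(s,((\pi(\sigma_j)\otimes\chi)\times(\pi(\sigma_k)\otimes\chi))\otimes\eta,\psi\bigr).$$
On the arithmetic side, $\chi$ sufficiently ramified makes every $L$-factor in sight trivial, so all the $\gamma$-factors equal the corresponding $\epsilon$-factors; and by Deligne's stability theorem — the input behind Proposition 3.3, applied now to Weil--Deligne representations — the $\epsilon$-factor of $\mathrm{Sym}^2(\rho\otimes\chi)\otimes\eta$ depends only on its underlying Weil representation, since the monodromy correction to the $\epsilon$-factor involves Frobenius acting on $I_F$-invariants, which vanish after a highly ramified twist. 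That underlying Weil representation is $\bigoplus_j\mathrm{Sym}^2(\sigma_j\otimes\chi)\otimes\eta\;\oplus\;\bigoplus_{j<k}(\sigma_j\otimes\sigma_k)\otimes\chi^2\otimes\eta$ (plethysm plus the decomposition of the $\mathrm{Sp}(n_i)$ into unramified twists), so by additivity
$$\gamma(s,\mathrm{Sym}^2(\rho\otimes\chi)\otimes\eta,\psi)=\prod_j\gamma\bigl(s,\mathrm{Sym}^2(\sigma_j\otimes\chi)\otimes\eta,\psi\bigr)\cdot\prod_{j<k}\gamma\bigl(s,(\sigma_j\otimes\sigma_k)\otimes\chi^2\otimes\eta,\psi\bigr).$$
Matching the two products factor by factor finishes the proof: the symmetric-square terms agree by Proposition 3.1 applied to the irreducible $\sigma_j$ (finitely many, all of bounded ramification, so one ramification level for $\chi$ suffices, and $\pi(\sigma_j)$ is supercuspidal), and the Rankin--Selberg terms agree because the local Langlands correspondence preserves $\gamma$-factors of pairs and commutes with character twists; one then passes from Weil representations to $\Phi$-semisimple representations of $W_F'$ exactly as above.

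There is no new analytic input here: the entire content sits in Proposition 3.1 and the known local Langlands correspondence for $\mathrm{GL}_n$ pairs, which is why this is a corollary rather than a theorem proved alongside the supercuspidal case. Accordingly, the only points requiring care are bookkeeping: tracking the twists $\chi$, $\chi^2$ and $\eta$ through the two multiplicativity computations so that the arithmetic and analytic products are matched factor by factor, and, in residue characteristic $2$, choosing $\chi$ ramified enough that $\chi^2$ (which appears in the Rankin--Selberg cross terms) is still sufficiently ramified, at a bounded extra cost. The one genuinely used theorem beyond formal manipulations is Deligne's stability, guaranteeing that the arithmetic $\epsilon$-factor of a highly ramified twist forgets the monodromy operator; this is the step I would single out as the crux.
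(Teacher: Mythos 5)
Your proof is correct and follows essentially the same route as the paper's own (terse) argument for Corollary 3.5, which simply lists four facts: compatibility of the Weil--Deligne and Bernstein--Zelevinsky constructions, dependence of arithmetic $\gamma$-factors only on the semisimplification, triviality of $L$-factors under highly ramified twist plus LLC for pairs, and the additivity/multiplicativity of arithmetic/analytic $\gamma$-factors. You have unpacked each of these: the decomposition $\rho\simeq\bigoplus\rho_i\boxtimes\mathrm{Sp}(n_i)$ with supercuspidal support $\{\sigma_j\}$ is the compatibility statement, the plethysm plus Deligne additivity on the arithmetic side and the two-stage multiplicativity (Langlands classification, then local coefficients) on the analytic side are the product formulas, Proposition 3.1 handles the $\mathrm{Sym}^2$ blocks, and LLC for pairs handles the cross terms unconditionally. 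One small simplification you could make: the general fact that the arithmetic $\gamma$-factor of a Weil--Deligne representation depends only on its semisimplification as a representation of $W_F$ holds without any ramification hypothesis (the $L$-factor correction cancels in the $\gamma$-factor), so the residue-characteristic-$2$ bookkeeping about $\chi^2$ vanishing the $I_F$-invariants is unnecessary; in any case the Rankin--Selberg cross terms match by LLC for pairs without any ramification requirement at all, so $\chi^2$ never needs to be controlled.
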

\begin{proof}
The corollary follows from the following facts: (1) the compatibility of the construction of $\Phi$-semisimiple representations of $W_F'$ from irreducible representations of $W_F$ and the Bernstein-Zelevinsky construction [3] of irreducible representations of $\textrm{GL}_n(F)$ from supercuspidals; (2) the local $\gamma$-factors attached to $\rho$ only depends on its semisimplification(as representations of $W_F$)(page 201, [4]); (3) LLC is compatible with pairs of local L-factors and the twisted symmetric square L-factors on both the arithmetic and the analytic sides, and under highly ramified twists these become 1 [12]; (4), the additivity of the arithmetic local $\gamma$-factors [7] and the multiplicativity of the analytic local $\gamma$-factors, which was proved by an induction argument as in Proposition 3.2.  
\end{proof}
\begin{corollary}(\textbf{General analytic stability for the twisted symmetric square $\gamma$-factors}) Let $\pi_1$ and $\pi_2$ be two irreducible admissible representations of $GL_n(F)$ with $\omega_{\pi_1}=\omega_{\pi_2}$, $\eta$ is a fixed character of $F^{\times}$. Then for any sufficiently ramified character $\chi$ of $F^{\times}$ we have 
$$\gamma(s, \pi_1\otimes\chi, Sym^2\otimes\eta, \psi)=\gamma(s, \pi_2\otimes\chi, Sym^2\otimes\eta, \psi)$$
\end{corollary}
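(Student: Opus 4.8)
The plan is to move the identity to the Galois side, where Deligne's stability is available for all representations at once, and then transport it back using the arithmetic--analytic comparison of $\gamma$-factors just proved in Corollary 3.5. First, by the local Langlands correspondence for $\mathrm{GL}_n$ I would write $\pi_i=\pi(\rho_i)$, with $\rho_i$ an $n$-dimensional $\Phi$-semisimple Weil--Deligne representation of $W_F'$, $i=1,2$. Since LLC matches the central character of $\pi_i$ with the determinant of $\rho_i$ via local class field theory, and the monodromy operator leaves the determinant unchanged, the hypothesis $\omega_{\pi_1}=\omega_{\pi_2}$ is exactly $\det\rho_1=\det\rho_2$ as characters of $W_F$. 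Corollary 3.5 then gives, for every sufficiently ramified $\chi$,
$$\gamma(s,\pi_i\otimes\chi,\mathrm{Sym}^2\otimes\eta,\psi)=\gamma(s,\mathrm{Sym}^2(\rho_i\otimes\chi)\otimes\eta,\psi),\qquad i=1,2,$$
so it suffices to show that the arithmetic stability Proposition 3.3 remains valid for the pair $(\rho_1,\rho_2)$ without the irreducibility assumption, using only $\det\rho_1=\det\rho_2$.

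Next I would rerun the proof of Proposition 3.3, which never used irreducibility of $\rho$. As Weil--Deligne representations one has $\mathrm{Sym}^2(\rho_i\otimes\chi)\otimes\eta\cong\mathrm{Sym}^2(\rho_i)\otimes(\chi^2\eta)$, a twist of the fixed object $\mathrm{Sym}^2(\rho_i)\otimes\eta$ by $\chi^2$. For $\chi$ highly ramified the character $\chi^2\eta$ is highly ramified, so this representation has no inertia invariants; hence its Artin $L$-factor and that of its dual equal $1$, so $\gamma=\epsilon$, and the monodromy contributes nothing to the $\epsilon$-factor, so $\epsilon(s,\mathrm{Sym}^2(\rho_i\otimes\chi)\otimes\eta,\psi)$ equals the $\epsilon$-factor of the underlying semisimple $W_F$-representation. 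By Deligne's stability (recalled before Proposition 3.3) this depends only on $\dim\mathrm{Sym}^2(\rho_i)=\tfrac{n(n+1)}{2}$, on the twisting character, and on $\det\big(\mathrm{Sym}^2(\rho_i)\otimes\eta\big)$. Since $\det(\mathrm{Sym}^2 V)=(\det V)^{n+1}$ when $\dim V=n$, one gets $\det\big(\mathrm{Sym}^2(\rho_i)\otimes\eta\big)=(\det\rho_i)^{n+1}\eta^{n(n+1)/2}$, which depends on $\rho_i$ only through $\det\rho_i$. With $\det\rho_1=\det\rho_2$ the two $\epsilon$-factors agree, hence so do the two arithmetic $\gamma$-factors; combined with the displayed comparison this finishes the argument.

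There is also a purely analytic route: use the Langlands classification and multiplicativity of the analytic $\gamma$-factors (as in the proof of Proposition 3.2) to expand $\gamma(s,\pi_i\otimes\chi,\mathrm{Sym}^2\otimes\eta,\psi)$ into a product of twisted symmetric square $\gamma$-factors of the supercuspidal constituents of $\pi_i$ and of twisted Rankin--Selberg $\gamma$-factors of pairs of them, then apply Proposition 3.4 to the former and the known stability of Rankin--Selberg $\gamma$-factors to the latter. I expect this to be messier, the real work being to control the supercuspidal supports and match central characters constituent by constituent, whereas on the Galois side the determinant bookkeeping above disposes of it at once. Accordingly, in the route I propose the only genuine point to verify is that Deligne's stability, stated for Weil-group representations, passes through $\mathrm{Sym}^2$ and the $\chi^2$-twist to the $\Phi$-semisimple Weil--Deligne setting --- that sufficiently ramified twists kill the monodromy and $L$-factor contributions and that $\chi^2\eta$ stays sufficiently ramified --- which is routine conductor bookkeeping and is the main (and only minor) obstacle.
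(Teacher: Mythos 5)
Your proposal is correct and follows essentially the same route as the paper: reduce to the arithmetic side via Corollary 3.5, then invoke Deligne's stability for the Weil--Deligne representations $\rho_i$ using only $\det\rho_1=\det\rho_2$ and equal dimension. Your explicit verification that $\det(\mathrm{Sym}^2 V)=(\det V)^{n+1}$ and the observation that a highly ramified twist kills the monodromy contribution are the same points the paper handles more tersely by appealing to $\gamma(s,\rho,\psi)=\gamma(s,\rho^{ss},\psi)$ and the determinant-plus-dimension dependence of the stable $\epsilon$-factor.
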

\begin{proof}
Let $\rho_1$ and $\rho_2$ be two continuous n-dimensional $\Phi$-semisimple representations of the Weil-Deligne group $W_F'$ and $\pi_i=\pi(\rho_i)$ (i=1,2)  be their corresponding irreducible admissible representations of $\textrm{GL}_n(F)$. By corollary 3.5 we have 
$\gamma(s, \textrm{Sym}^2(\rho_i\otimes\chi)\otimes\eta, \psi)=\gamma(s, \pi_i\otimes\chi,\textrm{Sym}^2\otimes\eta,\psi).$ Then we can see that the result would follow if we have the analogue of Proposition 3.3 for Weil-Deligne representations. On the other hand, we know that the arithmetic $\gamma$-factors depend only on the semisimplification, i.e., we have $\gamma(s, \rho,\psi)=\gamma(s,\rho^{ss},\psi)$. Since the semisimplification does not change the determinant $\det{\rho}$ and $\dim(\rho_1)=\dim(\rho_2)=n$, so again since the local arithmetic $\epsilon$-factors depend only on $\det(\rho) $ and $\dim(\rho)$ under suitably highly ramified twist by $\chi$, as we mentioned earlier. So we can take $\chi$ sufficiently ramified such that the arithmetic stability of $\gamma$-factors follows for Weil-Deligne representations. That is, $\gamma(s, \textrm{Sym}^2(\rho_1\otimes\chi)\otimes\eta,\psi)=\gamma(s, \textrm{Sym}^2(\rho_2\otimes\chi)\otimes\eta,\psi).$ Then the result follows immediately from Corollary 3.5.
\end{proof} 

\section{PROOF OF THE MAIN THEOREM}

In this section we will prove our main theorem(Theorem 1.1), by assuming the analytic stability of the twisted symmetric square $\gamma$-factors attached to supercuspidal representations(Proposition 3.4).  

Before we proceed, as in [7], we make a remark on the additive character $\psi$ of $F$. Take $a\in F^{\times}$ and fix a non-trivial additive character $\psi$ of $F$. Let $\psi^a$ denote the character given by $\psi^a(x)=\psi(ax).$ By the study of Henniart [11] and Deligne [9] respectively, it turns out that as a function of $a\in F^{\times}$, both the analytic $\gamma$-factors $\gamma(s, \pi, r, \psi^a)$ and the corresponding arithmetic $\gamma$-factors $\gamma(s, r\circ \rho, \psi^a)$ vary in the same way. Therefore it suffices to prove the result for a fixed $\psi$. 

We will first establish the equality for the $\gamma$-factors, and then use it to obtain the equality for L-factors. We begin with some lemmas:
\begin{lemma}(\textbf{Equality for monomial representations})
Let $E/F$ be a finite Galois extension of degree n contained in a fixed algebraic closure $\overline{F}$ of $F$, and $\eta$ be a fixed character of $F^{\times}$. Denote $G=\Gal(E/F)$. Let $F\subset L\subset E$ be an intermediate extension and $\chi$ be a finite-order character of $H=\Gal(E/L)$. Let $\rho=\textrm{Ind}_H^G(\chi)$, then
$$\gamma(s, \textrm{Sym}^2\rho\otimes\eta,\psi)=\gamma(s,\pi(\rho), \textrm{Sym}^2\otimes\eta,\psi)$$
\end{lemma}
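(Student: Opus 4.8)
The plan is to reduce the monomial case to the supercuspidal stability already in hand (Proposition 3.4) together with the inductive structure of both sides under parabolic/automorphic induction. First I would observe that $\rho = \mathrm{Ind}_H^G(\chi)$ is, in general, \emph{not} irreducible, so I cannot invoke Proposition 3.1 directly; instead I want to exploit the compatibility of the local Langlands correspondence with automorphic induction, which is part of the Harris--Taylor/Henniart construction. The key identity to set up is that $\pi(\mathrm{Ind}_H^G\chi)$ is the Langlands quotient (or the full induced representation, after suitable reindexing) of the representation of $\mathrm{GL}_n(F)$ parabolically induced from the representations of the smaller general linear groups attached by automorphic induction to $\chi$ and its $G$-conjugates. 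Decomposing $\mathrm{Ind}_H^G\chi = \bigoplus_i \rho_i$ into irreducibles over $W_F$, each $\rho_i$ corresponds under LLC to a (twist of a) supercuspidal representation $\pi_i$ of some $\mathrm{GL}_{n_i}(F)$, with $\sum_i n_i = n$.

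Next I would run the same decomposition argument used in the proof of Proposition 3.2: on the arithmetic side, the additivity of Deligne's $\epsilon$-factors gives
$$\gamma(s,\mathrm{Sym}^2\rho\otimes\eta,\psi)=\prod_i\gamma(s,\mathrm{Sym}^2\rho_i\otimes\eta,\psi)\cdot\prod_{i<j}\gamma(s,(\rho_i\otimes\rho_j)\otimes\eta,\psi),$$
using $\mathrm{Sym}^2(\bigoplus_i\rho_i)\cong\bigoplus_i\mathrm{Sym}^2\rho_i\oplus\bigoplus_{i<j}\rho_i\otimes\rho_j$. On the analytic side, the explicit description of the adjoint action $r$ of $^LM_{\theta_1}$ on $^L\mathfrak{n}_H$ worked out in Section 3 — where the off-diagonal free block contributes a Rankin--Selberg factor $\gamma(s,(\pi_i\times\pi_j)\otimes\eta,\psi)$ and the symmetric diagonal blocks contribute twisted symmetric square factors of the smaller $\mathrm{GSpin}$ groups — together with the multiplicativity of Langlands--Shahidi $\gamma$-factors, gives the matching decomposition
$$\gamma(s,\pi(\rho),\mathrm{Sym}^2\otimes\eta,\psi)=\prod_i\gamma(s,\pi_i,\mathrm{Sym}^2\otimes\eta,\psi)\cdot\prod_{i<j}\gamma(s,(\pi_i\times\pi_j)\otimes\eta,\psi).$$
Since LLC preserves Rankin--Selberg $\gamma$-factors of pairs, the cross terms match term by term. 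The diagonal terms $\gamma(s,\mathrm{Sym}^2\rho_i\otimes\eta,\psi)=\gamma(s,\pi_i,\mathrm{Sym}^2\otimes\eta,\psi)$ are then exactly the content of Proposition 3.1 (equivalently Corollary 3.5) applied in the dimension-$n_i<n$ case — or, if $\rho$ itself is already irreducible, this is the base-point plus stability argument — so one invokes the induction hypothesis on dimension. Assembling the products yields the lemma.

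The main obstacle, and the step I would spend the most care on, is the first one: proving that $\pi(\mathrm{Ind}_H^G\chi)$ really is the representation obtained by parabolic induction from the $\pi(\rho_i)$, i.e.\ that LLC intertwines $\mathrm{Ind}$ on the Galois side with normalized parabolic induction (and the formation of Langlands quotients) on the $\mathrm{GL}_n$ side, and moreover that the $\gamma$-factor $\gamma(s,\pi(\rho),\mathrm{Sym}^2\otimes\eta,\psi)$ is insensitive to passing between the full induced representation and its constituents. The second point is handled by the fact that Langlands--Shahidi $\gamma$-factors are defined via multiplicativity precisely so as to be constant along the composition series, and that highly ramified twists kill the $L$-factor ambiguity; the first point is a known compatibility of the correspondence but must be cited carefully (Harris--Taylor, Henniart), since automorphic induction for non-cyclic $E/F$ requires the full strength of LLC for $\mathrm{GL}_n$ rather than the earlier cyclic base-change results. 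One subtlety to flag: the $\rho_i$ need not be twists of one another and the $n_i$ need not be equal, so the bookkeeping of which $\mathrm{GSpin}_{2n_i+1}$ and which Rankin--Selberg pairing appears must follow exactly the block-matrix computation recorded in Section 3, which fortunately was carried out there in the generality $n=n_1+n_2$ (and iterates).
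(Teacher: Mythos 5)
Your proposal takes a genuinely different route from the paper, and it contains a gap. The paper's proof of Lemma 4.1 is a \emph{globalization} argument modeled on Lemma 3.2 of [7]: one globalizes $E/F$ and $\chi$ to a number field extension and a Hecke character, arranges the global character to be highly ramified at all finite bad places other than $v_0$, and then extracts the unconditional local equality at $v_0$ from the global functional equation after matching factors elsewhere using Propositions 3.1 and 3.2 (the stable equalities), unramified computations, and the archimedean identity from [18]. Your proposal instead decomposes $\rho=\mathrm{Ind}_H^G\chi$ into irreducibles $\bigoplus_i\rho_i$ over $W_F$ and argues purely locally by additivity/multiplicativity.

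The gap is in the case where $\mathrm{Ind}_H^G(\chi)$ is \emph{irreducible} of dimension $n$. There your decomposition is trivial, and you say this falls back to the "base-point plus stability argument." But Proposition 3.1 and Corollary 3.5 only produce the \emph{stable} equality, i.e. the equality of $\gamma$-factors after twisting by a sufficiently ramified character $\chi'$ of $F^\times$; they do not, on their own, give the unconditional equality $\gamma(s,\mathrm{Sym}^2\rho\otimes\eta,\psi)=\gamma(s,\pi(\rho),\mathrm{Sym}^2\otimes\eta,\psi)$ that Lemma 4.1 asserts. In fact the entire point of Section 4 is to upgrade stability to the unconditional statement, and the globalization of the monomial case is precisely the device that does this upgrade in the irreducible situation: globally one may put all the "bad" ramification at places other than $v_0$ where the stable equality applies, leaving the desired equality at $v_0$. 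A purely local decomposition has no analogue of this move. Even in the reducible case, your diagonal terms need the unconditional Theorem 1.1 in dimensions $n_i<n$, not merely Proposition 3.1, so you would need to run an explicit induction on dimension at the level of Theorem 1.1 itself (which can in principle be set up, but is not what the paper does and is not what you can get from Proposition 3.1 alone). The cross-term and block-matrix bookkeeping in your middle paragraph is fine and matches what the paper already records in Section 3; the issue is entirely the irreducible case and the precise input you cite for the diagonal terms.
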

\begin{proof} This is the same globalization method as used in Lemma 3.2 in [7], one may simply replace the $\wedge^2$ there by $\textrm{Sym}^2\otimes\eta$,  change the equalities in the proof accordingly and use Proposition 3.1 and 3.2.
\end{proof}
\begin{lemma}(\textbf{Equality for Galois representations})
Let $\rho$ be an irreducible continuous n-dimensional representation of $W_F$ with $\det(\rho)$ being a character of finite order, and $\eta$ be a fixed character of $F^{\times}$. Then
$$\gamma(s, Sym^2\rho\otimes\eta,\psi)=\gamma(s, \pi(\rho), Sym^2\otimes\eta, \psi).$$
\end{lemma}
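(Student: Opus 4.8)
The plan is to deduce Lemma 4.3 from Lemma 4.2 by a Brauer-induction argument, exactly parallel to the treatment of the exterior square case in [7]. First I would invoke Brauer's theorem: the virtual representation $\rho$, restricted to the finite Galois quotient through which it factors after twisting (since $\det\rho$ has finite order, $\rho$ itself is a representation of $\Gal(E/F)$ for a suitable finite Galois $E/F$), can be written as $\rho = \sum_i n_i \,\textrm{Ind}_{H_i}^{G}(\chi_i)$ with $n_i \in \mathbb{Z}$ and $\chi_i$ one-dimensional characters of subgroups $H_i = \Gal(E/L_i)$. The $\gamma$-factor of a twisted symmetric square does not behave additively in $\rho$, so the first real step is to expand $\textrm{Sym}^2$ of a virtual sum: using $\textrm{Sym}^2(A\oplus B) = \textrm{Sym}^2 A \oplus \textrm{Sym}^2 B \oplus (A\otimes B)$ one writes $\textrm{Sym}^2\rho\otimes\eta$ as a $\mathbb{Z}$-linear combination of terms of the form $\textrm{Sym}^2(\textrm{Ind}\,\chi_i)\otimes\eta$ and $(\textrm{Ind}\,\chi_i \otimes \textrm{Ind}\,\chi_j)\otimes\eta$ for $i \neq j$.

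The second step is to match these term-by-term across the correspondence. For the $\textrm{Sym}^2$ of a single monomial piece, Lemma 4.2 gives the equality $\gamma(s,\textrm{Sym}^2(\textrm{Ind}_{H_i}^G\chi_i)\otimes\eta,\psi) = \gamma(s,\pi(\textrm{Ind}_{H_i}^G\chi_i),\textrm{Sym}^2\otimes\eta,\psi)$ directly. For the cross terms $(\textrm{Ind}\,\chi_i)\otimes(\textrm{Ind}\,\chi_j)\otimes\eta$, I would use that the LLC preserves Rankin–Selberg $\gamma$-factors of pairs and is compatible with twisting by the character $\eta$, together with the identity $\gamma(s,(\pi_1\times\pi_2)\otimes\eta,\psi)$ on the analytic side matching $\gamma(s,(\rho_1\otimes\rho_2)\otimes\eta,\psi)$ on the arithmetic side — this is standard and independent of the present construction. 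Assembling the $\mathbb{Z}$-linear combination, and using the additivity of arithmetic $\gamma$-factors and the multiplicativity of the Langlands–Shahidi $\gamma$-factors (the same inductive decomposition of $r|_{{}^LM_{\theta_1}}$ computed in the proof of Proposition 3.2, which splits the twisted symmetric square of a direct sum into two twisted symmetric squares plus a twisted Rankin–Selberg factor), one recovers $\gamma(s,\textrm{Sym}^2\rho\otimes\eta,\psi) = \gamma(s,\pi(\rho),\textrm{Sym}^2\otimes\eta,\psi)$.

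The main obstacle is bookkeeping the virtual/integer-coefficient combination correctly: Brauer's theorem produces a signed sum, so one must be certain that every manipulation — the $\textrm{Sym}^2(A\oplus B)$ expansion, the passage through the correspondence, and the reassembly — is valid at the level of $\gamma$-factors raised to integer (possibly negative) powers, i.e. that both sides are genuinely multiplicative/additive in the relevant Grothendieck-group sense and that no convergence or well-definedness issue intervenes. On the arithmetic side this is Deligne's inductivity of local constants in degree zero combined with additivity; on the analytic side one needs the Langlands–Shahidi $\gamma$-factors to be defined for the (genuine, non-virtual) constituents and the multiplicativity to chain consistently, which is exactly what the induction in Proposition 3.2 supplies. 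Once the degree-zero inductivity is in place the argument is formal, so I expect the write-up to be short: cite Lemma 4.2, Proposition 3.1, the pairs-compatibility of LLC, and the additivity/multiplicativity of $\gamma$-factors, and note that the proof is the verbatim analogue of [7, Lemma 3.3] with $\wedge^2$ replaced by $\textrm{Sym}^2\otimes\eta$.
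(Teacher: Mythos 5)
Your proposal is correct and follows essentially the same route as the paper, which likewise reduces to the monomial case (Lemma 4.1) via Brauer induction and the $\textrm{Sym}^2(A\oplus B)=\textrm{Sym}^2 A\oplus(A\otimes B)\oplus\textrm{Sym}^2 B$ expansion, invoking LLC pairs-compatibility for the Rankin--Selberg cross terms and additivity/multiplicativity of the $\gamma$-factors, as in [7, Lemma 3.3]. (Note your internal numbering is shifted by one relative to the paper -- the monomial lemma is 4.1 and the Galois lemma is 4.2 -- and in the virtual-sum bookkeeping the negative-multiplicity pieces also introduce $\wedge^2$ factors, which are handled formally by the symmetry between $\textrm{Sym}^2$ and $\wedge^2$ together with the pairs identity, exactly as you anticipate.)
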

\begin{proof} This is also a straightforward analogue of Lemma 3.3 in [7]. A very similar argument shows that the arithmetic and analytic twisted symmetric square local $\gamma$-factors satisfy the same formalism, then we use additivity and multiplicativity of the arithmetic and analytic twisted symmetric square $\gamma$-factors respectively, together with Lemma 4.1 then we are done.
\end{proof}

Now we have all the ingredients for the proof of Theorem 1.1.
\begin{proof}(\textbf{Proof of Theorem 1.1}) First we prove the equality of $\gamma$-factors. By Lemma 4.2, we have the equality of the local twisted symmetric square $\gamma$-factors for irreducible continuous representations of $W_F$ with finite order determinant. After tensoring with an unramified character, we can extend the result to any irreducible continuous n-dimensinal representation of $W_F$. Both LLC  and the formalism of the twisted symmetric square $\gamma$-factors are compatible with twisting by characters. Since LLC also preserves the local $\gamma$-factors for direct sums of representations on the arithmetic side with isobaric sums of the corresponding representations on the analytic side, we can further extend the result in Lemma 4.2 to arbitrary continuous $n$-dimensional representations of $W_F$.

Next, as in the proof of Corollary 3.5, we can extend the result to all continuous $\Phi$-semisimple n-dimensional representations of the Weil-Deligne group $W_F'$.
This completes the proof of the equality of the twisted symmetric square $\gamma$-factors in Theorem 1.1. 

We are left with the equality of L-factors.  We use a similar argument as Henniart's proof in [12] to show that the equality of $\gamma$-factors imply the equality of their corresponding L-factors. One can also see this by using the Langlands-Shahidi method ([16],[17]).

Recall that $\pi$ is an irreducible representation of $\textrm{GL}_n(F)$. 
Suppose $\pi\leftrightarrow \rho$ under LLC, where $\rho=(\rho',V, N)$. In general if $r$ is any analytic representation of $\textrm{GL}_n(\mathbb{C})$ we have that $r\circ\rho=(r\circ\rho', r(V), \frac{d}{dx}\vert_{x=0}(r\circ\rho)(x))$ is also a Weil-Deligne representation, where $r(V)$ is the space given by $r$ and $V$, i.e., $r:{^LG}=GL_n(\mathbb{C})\longrightarrow GL(r(V)).$ Notice that the monodromy operator $N$ satisfies
$\rho(x)v=\exp(xN)v$ for all $v\in V$ and $x\in \mathbb{G}_a$. Recall that  $W_F'\simeq W_F\rtimes \mathbb{G}_a$. So $N=\frac{d}{dx}\vert_{x=0}\rho(x)$, therefore in general the monodromy operator $T$ for $r\circ\rho$ is given by $T=\frac{d}{dx}\vert_{x=0}(r\circ\rho)(x)$.

Following Henniart's terminology in [12], we say a Weil-Delinge representation $\rho$ is tempered if all its indecomposable constituents are of the form $\rho_i'\otimes Sp(m_i)$ where $\rho_i'$ is an irreducible unitary representation of $W_F$ and $Sp(m_i)$  is a special representation of dimension $m_i$, corresponding to a Steinberg representation of $\textrm{GL}_{m_i}(F)$. Equivalently, if we define the Weil-Delinge group to be $W_F\rtimes \textrm{SL}_2(\mathbb{C})$, then the image of $W_F$ is bounded in $\textrm{GL}(V)$. Since we have the exact sequence 
$$0\rightarrow I_F\rightarrow  W_F\rightarrow \mathbb{Z}\rightarrow 0$$ where $I_F$ is the inertial subgroup, which is compact, it is the same as saying that the image of the geometric Frobenius is a unitary operator on $V$.
For this purpose here we use another definition of the Weil-Deligne group given by $W_F\rtimes \textrm{SL}_2(\mathbb{C})$. By Theorem 2.8 of [20], the triple $\rho=(\rho',V,N)$ is equivalent to a representation $\varphi: W_F\rtimes \textrm{SL}_2(\mathbb{C})\rightarrow \textrm{GL}_n(\mathbb{C})$ such that $\varphi$ is trivial on an open subgroup of $I_F$, $\varphi(\Phi)$ is semi-simple and $\varphi\vert_{\textrm{SL}_2(\mathbb{C})}$ is algebraic. By Lemma 2.9 of [23], there exists a unique $\mathfrak{sl}_2$-triple $(e,f,h)$ such that $e=N=\mathfrak{gl}_n^{\rho(I_F)}(\Phi)(q^{-1})$, $f=\mathfrak{gl}_n^{\rho(I_F)}(q)$, and $h=\mathfrak{gl}_n^{\rho(W_F)}=\mathfrak{gl}_n^{\rho(I_F)}(1)$, where $q=\vert\mathcal{O}_F/\mathfrak{m}_F\vert$ is the cardinality of the residue field and $V(q)$ denotes the q-eigenspace of the action of $\rho(\Phi)$ on V. Then the corresponding representation $\varphi: W_F\rtimes \textrm{SL}_2(\mathbb{C})\rightarrow \textrm{GL}_n(\mathbb{C})$ is given by
$\varphi(w)=\exp(\frac{-v(w)}{2}\log q\cdot h)\rho(w).$

First we assume that $\pi$ is tempered and $\eta$ is unitary. Then it follows that the representation $\sigma_\eta$ of $M_H(F)$ is tempered.
We show  $\textrm{Sym}^2\rho\otimes\eta$ is also tempered.  $\rho=(\rho',V,N)$ implies that
$\textrm{Sym}^2\rho\otimes\eta=(\textrm{Sym}^2\rho'\otimes\eta, \textrm{Sym}^2(V), 1\otimes N+N\otimes 1)$, here we identify $\textrm{Sym}^2\rho$ as a subspace of $\rho\otimes\rho$ generated by $e_i\otimes e_j+e_j\otimes e_i$ where $\{e_i\}_{i=1}^n$ is a basis of $V$. Now if $\rho$ is given by $\varphi$ as above, then $\textrm{Sym}^2\rho\otimes\eta$ is given by $\tilde{\varphi}: W_F\rtimes \textrm{SL}_2(\mathbb{C})\rightarrow \textrm{GL}_n(\mathbb{C})$ by
$\tilde{\varphi}(w)=\exp(\frac{-v(w)}{2}\log q\cdot H)\textrm{Sym}^2\rho\otimes\eta(w)=\exp(\frac{-v(w)}{2}\log q\cdot H)(\rho\otimes \rho)\vert_{\textrm{Sym}^2(V)}(w)\cdot \eta(w),$ where $H=1\otimes h+h\otimes 1.$ Notice that if $e=N, f, h$ form an $\mathfrak{sl}_2$-triple, then $E=1\otimes N+N\otimes 1, F=1\otimes f+ f\otimes 1$, and $H=1\otimes h+h\otimes 1$ also form an $\mathfrak{sl}_2$-triple. $\pi$ being tempered implies that $\rho$ is tempered, therefore $U=\varphi(\Phi)=\exp(\frac{1}{2}\log q\cdot h)\rho(\Phi)$ is unitary. Since $\eta$ is unitary, it suffices to show that $\exp(\frac{1}{2}\log q\cdot (1\otimes h+h\otimes 1))(\rho\otimes\rho)\vert_{\textrm{Sym}^2(V)}(\Phi)$ is unitary, thus it suffices to show that $\exp(\frac{1}{2}\log q\cdot (1\otimes h+h\otimes 1))(\rho\otimes\rho)(\Phi)$ is unitary. We have $\exp(\log \sqrt{q}(1\otimes h+h\otimes 1))(\rho\otimes\rho)(\Phi)=\exp(\log \sqrt{q}(1\otimes h))\cdot\exp(\log \sqrt{q}(h\otimes 1))((1 \otimes \rho)(\Phi)\cdot(\rho\otimes 1)(\Phi))=\exp(1\otimes \log\sqrt{q}\cdot h)(1\otimes \rho(\Phi))\cdot \exp(\log \sqrt{q}\cdot h\otimes 1)(\rho\otimes 1)(\Phi)=(1\otimes U)\cdot (U\otimes 1)$ is unitary since $U$ is unitary. Therefore $\textrm{Sym}^2\rho\otimes \eta$ is tempered.

In this case we have that
$L(s, \textrm{Sym}^2\rho\otimes \eta)$ has no poles for $Re(s)>0$, and for the same reason we have that
$L(1-s, \textrm{Sym}^2{\rho}^{\vee}\otimes\eta^{-1})$ has no poles for $Re(s)<1$. By Langlands-Shahidi method we have
$$\gamma(s, \pi, \textrm{Sym}^2\otimes \eta,\psi)=\epsilon(s, \textrm{Sym}^2\rho\otimes \eta, \psi)\frac{L(1-s, \textrm{Sym}^2\rho^{\vee}\otimes\eta^{-1})}{L(s,\textrm{Sym}^2\rho\otimes\eta)}$$
Moreover, $\gamma(s,\pi,\textrm{Sym}^2\otimes \eta,\psi)$ is a rational function of $q^{-s}$. To be precise, $\gamma(s,\pi,\textrm{Sym}^2\otimes\eta,\psi)=F(q^{-s})$ where $F(X)=c X^a\frac{P(X)}{Q(X)}$ with $P(X),Q(X)\in \mathbb{C}[X]$ such that $P(0)=Q(0)=1$, $c\in \mathbb{C}$ and $a\in \mathbb{Z}$. We also know that $\epsilon(s, \textrm{Sym}^2\rho\otimes\eta,\psi)$ is a monomial of $q^{-s}$. The local tempered L-factor is defined as $L(s,\pi, \textrm{Sym}^2\otimes \eta)=P(q^{-s})$. Since $L(s, \textrm{Sym}^2\rho\otimes\eta)$ and $L(1-s,\textrm{Sym}^2\rho^{\vee}\otimes \eta^{-1})$ have no poles in common, similar to Henniart's proof in [12], we can conclude that $L(s,\pi, \textrm{Sym}^2\otimes\eta)=L(s, \textrm{Sym}^2\rho\otimes\eta).$

Now if $\sigma_{\eta}$ is quasi-tempered, then $\pi$ is quasi-tempered and $\eta$ is arbitrary. Let $\tau_0:M(F)\simeq \textrm{GL}_n(F)\times \textrm{GL}_1(F)\rightarrow \mathbb{C}^{\times}$ be an unramified character of $M(F)$ given by $\tau_0=\vert \det(\cdot)\vert^{s_1} \vert\cdot\vert ^{s_2}$, where $s_1, s_2\in \mathbb{C}$. The fundamental weight attached to $\alpha$ is given by $\hat{\alpha}=\langle \rho, \alpha \rangle^{-1}\rho$ where $\rho$ is half of the sum of positive roots in $N_H$. In our case $\alpha=\alpha_n=e_n$ and $\rho=\frac{1}{2}(\sum_{1\leq i<j\leq n}(e_i+e_j)+\sum_{i=1}^n e_i)=\frac{n}{2}\sum_{i=1}^n e_i,$ therefore we have
$$\langle \rho, \alpha\rangle=\frac{2(\rho,\alpha)}{(\alpha,\alpha)}=\frac{2(\frac{n}{2}\sum_{i=1}^n e_i, e_n)}{(e_n,e_n)}=n$$ where $(\cdot,\cdot)$ is a Weyl group invariant non-degenerate bilinear form on $\mathfrak{a}^*=X^*(H)\otimes_{\mathbb{Z}}\mathbb{R}.$ So
$\hat{\alpha}=\langle \rho, \alpha\rangle ^{-1}\rho=n^{-1}(\frac{n}{2}\sum_{i=1}^ne_i)=\frac{1}{2}\sum_{i=1}^ne_i.$

For $s\in \mathbb{C}$, define
$\sigma_{\eta,s}=\sigma_\eta\otimes q^{\langle s\hat{\alpha}, H_M(\cdot)\rangle}\simeq(\sigma_s)_\eta$
where $\sigma_s$ is the lift of the representation $ \pi\otimes \vert\det(\cdot)\vert^{\frac{s}{2}}$ of $\textrm{GL}_n(F)$ to $M_H(F)$. So for $v\in V_{\pi}$,
$\sigma_s(m(g,a))v=\vert \det(g)\vert^{\frac{s}{2}}\pi(g)v$
and
$\sigma_{\eta,s}(m(g,a))v=\eta^{-1}(a)\vert \det(g)\vert^{\frac{s}{2}}\pi(g)v$. Let $\eta_s=\eta\cdot \vert\cdot \vert^s$, then
$\sigma_\eta\otimes \tau_0\simeq \sigma_{\eta_{-s_2},2s_1}.$ Now if $\eta=\eta_0\vert\cdot \vert^{z_0}$ where $\eta_0$ is unitary and $z_0\in \mathbb{C}^*$, take $s_2=z_0$, and take $s_1$ such that $\pi\otimes \vert\det(\cdot)\vert^{s_1}$ is tempered, then by the previous case we have
$$L(s, \sigma_\eta\otimes\tau_0, r)=L(s, (\sigma_{2s_1})_{\eta_0}, r)=L(s, \textrm{Sym}^2(\rho\otimes\vert\vert\cdot\vert\vert^{s_1})\otimes \eta_0)$$
$$=L(s+2s_1, \textrm{Sym}^2\rho\otimes\eta_0)=L(s+2s_1+s_2,\textrm{Sym}^2\rho\otimes \eta).$$ On the other hand, we apply section 2.7 of [12], which states how the local analytic $\gamma$-factor shifts under twists by unramified character of the maximal split quotient of $M_H$, to our case. The maximal split quotient $T_0$ of $M_H\simeq \textrm{GL}_n\times \textrm{GL}_1$ is isomorphic to $\textrm{GL}_1\times \textrm{GL}_1$, since the derived group $M_{H,{der}}$ of $M_H$ is isomorphic to $\textrm{SL}_n$. The adjoint action $r:{^L}M_H\longrightarrow \textrm{GL}({^L}\mathfrak{n}_H)$ is irreducible, so its restriction on the torus $\hat{T}_0$ is given by a character $\chi_r: \hat{T}_0\longrightarrow \mathbb{C}^{\times}.$ In our case, $r$ is given by the symmetric square action twisted by a character given by the $\textrm{GL}_1$ part of $^LM_H$. A direct calculation shows that $\chi_r: \hat{T}_0\longrightarrow GL({^L}{\mathfrak{n}_H})$ is given by $(xI_n, y)\mapsto x^2y$. Taking dual of this map we obtain a one-parameter subgroup $\hat{\chi}_r: F^{\times}\longrightarrow T_0\simeq \textrm{GL}_1\times \textrm{GL}_1$ given by $x\mapsto (x^2,x)$. Notice that $\tau_0\in X_{un}(M)$, and $M_{H,der}\subset \ker(H_{M_H})$, where $H_{M_H}: M_H(F)\longrightarrow \mathfrak{a}_{M_H}=\Hom(X(M_H)_F, \mathbb{Z})\otimes \mathbb{R}$ is the Harish-Chandra map. Therefore $\tau_0$ defines an unramified character on $T_0(F)$, say $\overline{\tau}_0: T_0(F)\longrightarrow \mathbb{C}^{\times}$ such that $\overline{\tau}_0\circ (\det\times id)=\tau_0$. Since $\tau_0=\vert\det(\cdot)\vert^{s_1}\vert\cdot\vert^{s_2}$, we see that $\overline{\tau}_0=\vert\cdot\vert^{s_1}\vert\cdot\vert^{s_2}$. Following [12], this defines an unramified character $\overline{\tau}_0\circ \hat{\chi}_r: F^{\times}\longrightarrow \mathbb{C}^{\times}$ given by $x\mapsto \vert x^2\vert^{s_1}\vert x\vert^{s_2}=\vert x\vert^{2s_1+s_2}$. Therefore by section 2.7 of [12] we obtain
$\gamma(s+2s_1+s_2,\pi, \textrm{Sym}^2\otimes\eta,\psi )=\gamma(s, \sigma_\eta\otimes \tau_0, r,\psi),$ therefore also
$L(s+2s_1+s_2,\pi, \textrm{Sym}^2\otimes\eta)=L(s, \sigma_{\eta}\otimes\tau_0,r)$, by the previous argument on the tempered case.
Compare it with the arithmetic side we obtain
$L(s+2s_1+s_2, \pi, \textrm{Sym}^2\otimes\eta)=L(s+2s_1+s_2, \textrm{Sym}^2\rho\otimes\eta).$ Then by the uniqueness of complex meromorphic functions we see that
$L(s,\pi,\textrm{Sym}^2\otimes\eta)=L(s, \textrm{Sym}^2\rho\otimes\eta).$ This shows the case when $\sigma_{\eta}$ is quasi-tempered.

In general, if $\rho$ is an n-dimensional $\Phi$-semisimple representation of $W_F'$, then $\rho=\oplus_{i=1}^r\rho_i$, where each $\rho_i$ is indecomposable and $\rho_i\simeq \rho_i'\otimes Sp(m_i)$, where each $\rho_i'$ is an irreducible $n_i'$-dimensional representation of $W_F$. Let $\pi'_i=\pi(\rho_i')\leftrightarrow \rho_i'$ under LLC, and let $\Delta_i$ be the segment $\{\pi_i',\pi_i'(1),\cdots,\pi_i'(m_i-1)\}$ where $\pi_i'(j)=\pi_i'\otimes\vert \det(\cdot)\vert^j$. Then the Bernstein-Zelevinsky's classification [3] tells us that $\rho_i\leftrightarrow Q(\Delta_i)$, where $Q(\Delta_i)$ is the unique irreducible subquotient of $\textrm{Ind}_{\textrm{GL}_{n_i}(F)^m}^{\textrm{GL}_{n_im_i}(F)}\pi_i'\otimes \pi_i'(1)\otimes\cdots\otimes\pi_i'(m_i-1)$ and $\pi(\rho)$ is the unique irreducible subquotient of $\textrm{Ind}_{\prod \textrm{GL}_{n_im_i}(F)}^{\textrm{GL}_n(F)}Q(\Delta_1)\otimes Q(\Delta_2)\otimes\cdots\otimes Q(\Delta_r)$. To simplify the notation we use $Q(\Delta_1)\times\cdots\times Q(\Delta_r)$ to denote this induced representation. For each $1\leq i\leq r$ there exists a unique $\beta_i\in \mathbb{R}$ such that $Q(\Delta_i)(-\beta_i)$ is square integrable, thus tempered. We can order the $\Delta_i$'s such that
$\alpha_1:=\beta_1=\beta_2=\cdots=\beta_{m_1}>\alpha_2:=\beta_{m_i+1}=\cdots=\beta_{m_2}>\cdots>\alpha_s:=\beta_{m_{s-1}+1}=\cdots=\beta_r.$
In this order $\Delta_i$ does not precede $\Delta_j$ for $i<j$ and all $\Delta_i$'s corresponding to the same $\alpha_j$ are not linked. For $1\leq j\leq s$, let
$\pi_j=Q(\Delta_{m_{j-1}+1})(-\alpha_j)\times\cdots\times Q(\Delta_{m_j})(-\alpha_j)$ where $m_0=0$ and $m_s=r$. Then all the $\pi_j$'s are irreducible tempered representations, and $\pi=\pi(\rho)$ is the unique irreducible subquotient of $\pi_1(\alpha_1)\times\cdots\times\pi_s(\alpha_s)$. This gives the Langlands classification [13]. We denote the corresponding parabolic subgroup by $P$ and let $\sigma=\pi_1\times\cdots\times\pi_s$, $\nu=\vert\det(\cdot)\vert^{\alpha_1}\otimes\vert\det(\cdot)\vert^{\alpha_2}\otimes\cdots\otimes\vert\det(\cdot)\vert^{\alpha_s}$, and $\pi=\pi(\rho)=J(P,\sigma,\nu)$.

On the other hand, by section 1.4* of [18] we know that $J(P,\sigma,\nu)=\tilde{I}(P,\tilde{\sigma},-\nu)$ where $\tilde{}$ denotes the contragredient, and $I(P,\sigma,\nu)$ denotes the unique irreducible subrepresentation of the parabolic induction $\textrm{Ind}_P^G (\sigma\otimes \nu)$ [5]. By Langlands-Shahidi method we know the multiplicativity of the local analytic $\gamma$-factors attached to generic representations which appear as subrepresentations of parabolic inductions from irreducible generic representations. We also have the multiplicativity of their corresponding local analytic $L$-factors. Using $J(P,\sigma,\nu)=\tilde{I}(P,\tilde{\sigma},-\nu)$ and the local functional equation
$\gamma(s, \pi, \textrm{Sym}^2\otimes\eta,\psi)\gamma(1-s, \tilde{\pi}, \textrm{Sym}^2\otimes\eta^{-1},\overline{\psi})=1$,
we obtain the multiplicativity of $\gamma(s,\pi, \textrm{Sym}^2\otimes\eta,\psi)$ and $L(s,\pi, \textrm{Sym}^2\otimes\eta)$ with respect to their quasi-tempered inducing data. Since we already showed the equality of $L$-factors for quasi-tempered case, we finally obtain that
$L(s, \pi(\rho), \textrm{Sym}^2\otimes\eta)=L(s, \textrm{Sym}^2\rho\otimes\eta).$ By the symmetry between $\wedge^2$ and $\textrm{Sym}^2$ we also obtain that
$L(s, \pi(\rho), \wedge^2\otimes\eta)=L(s, \wedge^2\rho\otimes\eta).$
\end{proof}
So far we have successfully reduced the problem to the supercuspidal stability(Proposition 3.4), which will be established in the rest part of this paper. We will start with some preparations in section 5, in which we will obtain a formula of the local coefficients in our case as the Mellin transform of some partial Bessel functions, and relate the partial Bessel functions with partial Bessel integrals. Then we will study the analysis of partial Bessel integrals in section 6 and obtain their asymptotic expansion formulas, generalizing the results in [7].
\section{PREPARATIONS FOR SUPERCUSPIDAL STABILITY}

We've already seen that the adjoint action $r: \leftidx{^L}M_H\longrightarrow \textrm{GL}(\leftidx{^L}{\mathfrak{n}_H})$
gives the twisted symmetric L- and $\gamma$-factors. Moreover, since $r$ is irreducible we have that the local coefficient $C_{\psi}(s,\pi)=\gamma(s, \pi, \textrm{Sym}^2\otimes \eta,\psi)$(Chapt. 5, [14]).
So it reduces the proof of Proposition 3.4 to the stability of local coefficients. The local coefficients can be written as the Mellin transform of certain partial Bessel functions under some conditions (Theorem 6.2, [19]). In order to study the Mellin transform in our case, we need to understand the following things at first: the structure of $H=\textrm{GSpin}_{2n+1}$, the structure and measure of the orbit space that the partial Bessel function is integrating on, and certain Bruhat decompositions.
\subsection{THE STRUCTURE OF $\textrm{GSpin}_{2n+1}$}
Let $H=\textrm{GSpin}_{2n+1}$. We want to understand its structure and its relationship with $H_D=\textrm{Spin}_{2n+1}$ and $\textrm{SO}_{2n+1}$.
We have an exact sequence
$$1\longrightarrow \mathbb{Z}/2\mathbb{Z}\longrightarrow Spin_{2n+1}\xrightarrow{\varphi} SO_{2n+1}\longrightarrow 1$$ where $\varphi$ is the covering map.
We fix the standard Borel subgroup $B=TU$ of $\textrm{SO}_{2n+1}$, and denote the corresponding Borel subgroup of $H$(resp. $H_D$) by $B_H=T_HU_H$(resp. $B_{H_D}=T_{H_D}U_{H_D}$). We see that $U\simeq U_{H_D}\simeq U_{H}$. 

As in the proof of Proposition 2.4 of [1], we start by fixing a basis $f_1,\cdots, f_n$ of the character lattice $X^*(T)$ of $\textrm{SO}_{2n+1}$. The root datum of $SO_{2n+1}$ can be given as follows:
$$X^*(T)=\mathbb{Z}f_1\oplus\mathbb{Z}f_2\oplus\cdots\oplus\mathbb{Z}f_n$$
$$\Delta=\{\gamma_1=f_1-f_2, \gamma_2=f_2-f_3,\cdots, \gamma_{n-1}=f_{n-1}-f_n, \gamma_n=f_n\}$$
$$X_*(T)=\mathbb{Z}f_1^*\oplus\mathbb{Z}f_2^*\oplus\cdots\oplus\mathbb{Z}f_n^*$$
$$\Delta^{\vee}=\{\gamma_1^{\vee}=f_1^*-f_2^*,\gamma_2^{\vee}=f_2^*-f_3^*,\cdots, \gamma_{n-1}^{\vee}=f_{n-1}^*-f_n^*,\gamma_n^{\vee}=2f_n^*\}.$$

Then the weight lattice $\textrm{P}_{\textrm{SO}_{2n+1}}=\{\lambda\in X^*(T): \langle\lambda,\gamma^{\vee}\rangle\in \mathbb{Z}, \forall \gamma\in \Phi\}$. If $\langle\Sigma c_if_i,\gamma_i^{\vee}\rangle\in \mathbb{Z}$, for $1\leq i \leq n-1$, this implies that $c_i-c_{i+1}\in\mathbb{Z}$, and if $i=n$, this implies that $2c_n\in \mathbb{Z}$.  Therefore $\textrm{P}_{\textrm{SO}_{2n+1}}=\{\Sigma c_if_i: c_i\in \frac{\mathbb{Z}}{2}, c_i-c_j\in \mathbb{Z}\},$ hence equal to the $\mathbb{Z}$-span of $f_1\cdots,f_n,\frac{f_1+f_2\cdots+f_n}{2}$. The group $\textrm{Spin}_{2n+1}$ is the simply connected double cover of $\textrm{SO}_{2n+1}$, hence its character lattice is equal to the root lattice of $\textrm{SO}_{2n+1}$, and its cocharacter lattice is the root lattice of type $C_n$, so we obtain the root datum of $H_D=\textrm{Spin}_{2n+1}$:
$$X^*(T_{H_D})=\mathbb{Z}f_1\oplus\mathbb{Z}f_2\oplus\cdots\oplus\mathbb{Z}f_n+\mathbb{Z}\frac{f_1\cdots+f_n}{2}$$
$$\Delta_{H_D}=\{\beta_1=f_1-f_2,\beta_2=f_2-f_3,\cdots, \beta_{n-1}=f_{n-1}-f_n,\beta_n=f_n\}$$
$$X_*(T_{H_D})=\mathbb{Z}\beta_1^{\vee}\oplus\mathbb{Z}\beta_2^{\vee}\oplus\cdots\oplus\mathbb{Z}\beta_n^{\vee}$$
$$\Delta^{\vee}_{H_D}=\{\beta_1^{\vee}=f_1^*-f_2^*,\beta_2^{\vee}=f_2^*-f_3^*,\cdots,\beta_{n-1}^{\vee}=f_{n-1}^*-f_n^*,\beta_n^{\vee}=2f_n^*\}.$$ We can realize $$H=\textrm{GSpin}_{2n+1}=(\textrm{GL}_1\times \textrm{Spin}_{2n+1})/ \{(1,1), (-1,\beta^{\vee}_n(-1))\}.$$
We add another character $f_0$ so that the character lattice of $\textrm{GL}_1\times \textrm{Spin}_{2n+1}$ is spanned by $f_0,f_1,f_2,\cdots,f_n,\frac{f_1+\cdots f_n}{2}$. Taking the ones that are trivial on $(-1,\beta^{\vee}(-1))$, we see that the character lattice of $\textrm{GSpin}_{2n+1}$ is spanned by $e_0=f_0+\frac{f_1+\cdots f_n}{2}, e_1=f_1,e_2=f_2,\cdots,e_n=f_n.$ Taking the dual basis, we have that the cocharacter lattice of $\textrm{GSpin}_{2n+1}$ is spanned by $e_0^*=f_0^*$, $e_1^*=f_1^*+\frac{f_0^*}{2},e_2^*=f_2^*+\frac{f_0^*}{2},\cdots, e_n^*=f_n^*+\frac{f_0^*}{2}$. Therefore the root datum of $H=\textrm{GSpin}_{2n+1}$ is given by:
$$X^*(T_H)=\mathbb{Z}e_0\oplus\mathbb{Z}e_1\oplus\cdots\oplus\mathbb{Z}e_n$$
$$\Delta_H=\{\alpha_1=e_1-e_2,\alpha_2=e_2-e_3,\cdots,\alpha_{n-1}=e_{n-1}-e_n,\alpha_n=e_n\}$$
$$X_*(T_H)=\mathbb{Z}e_0^*\oplus\mathbb{Z}e_1^*\cdots\oplus\mathbb{Z}e_n^*$$
$$\Delta^{\vee}_H=\{\alpha_1^{\vee}=e_1^*-e_2^*,\alpha_2^{\vee}=e_2^*-e_3^*,\cdots,\alpha_{n-1}^{\vee}=e_{n-1}^*-e_n^*,\alpha^*_n=2e_n^*-e_0^*\}.$$
It is easy to see that the three groups share the same root system, and we can identify $\alpha_i=\beta_i=\gamma_i$ for all $1\leq i\leq n$.

Take the Siegel Levi $M_H=M_{\theta}$ where $\theta=\Delta-\{\alpha_n\}$. We have $M_H\simeq \textrm{GL}_n\times \textrm{GL}_1$. Accordingly we will have that the Siegel Levi subgroup $M$ of $\textrm{SO}_{2n+1}$ is isomorphic to $\textrm{GL}_n$. Let $M_{H_D}$ be the corresponding Levi subgroup of $\textrm{Spin}_{2n+1}$. In the rest of this section we will realize $M_{H_D}$ inside $M_H$. It is crucial for the Bruhat decomposition in section 5.3. 

The covering map $\varphi$ induces a surjective map on the two corresponding Levi subgroups, then we have the following commutative diagram:

$$\begin{tikzcd}
\textrm{GL}_n\times \textrm{GL}_1\simeq & M_H \arrow[r, two heads,"pr"] & M &\simeq \textrm{GL}_n\\
& M_{H_D}\arrow[u, hook, "j"]\arrow[ru, two heads, "\varphi"]
\end{tikzcd}$$
where $j$ is the injection map and $pr$ is the projection of $M_H\simeq\textrm{GL}_n\times \textrm{GL}_1$ onto the $\textrm{GL}_n$-factor. Note that $j$ is induced from the surjective homomorphism of the character groups
$X^*(T_H)\twoheadrightarrow X^*(T_{H_D})$ by mapping $e_i$ to $f_i$ for $1\leq i\leq n-1$ and $e_0\mapsto f_0+\frac{f_1+\cdots+f_n}{2}$. Since $\textrm{Spin}_{2n+1}$ is simply connected, any element in its maxmal torus can be uniquely written as $t=\prod_{i=1}^n\beta^{\vee}(x_i)$. Any element in $T_H$ is of the form $\prod_{i=0}^n e_i^*(t_i)$. Hence if $t=\prod_{i=1}^n\beta_i^{\vee}(x_i)\in T_H$, since $\beta_i^{\vee}=\alpha_i^{\vee}$ for all $1\leq i\leq n$, we have $$t=\prod_{i=1}^n\beta_i^{\vee}(x_i)=\prod_{i=1}^n\alpha_i^{\vee}(x_i)=\prod_{i=1}^{n-1}(e_i^*-e_{i+1}^*)(x_i)\cdot (2e_n^*-e_0^*)(x_n)$$$$=e_1^*(x_1)e_2^*(\frac{x_2}{x_1})\cdots e_{n-1}^*(\frac{x_{n-1}}{x_{n-2}})e_n^*(\frac{x_n^2}{x_{n-1}})e_o^*(x_n^{-1}).$$

Therefore the injection $j: T_{H_D}\hookrightarrow T_H\simeq T_n\times T_1$ is given by
$\prod_{i=1}^n\beta_i^{\vee}(x_i)\mapsto \prod_{i=1}^n e^*_i(t_i)\mapsto e_1^*(x_1)e_2^*(\frac{x_2}{x_1})\cdots e_{n-1}^*(\frac{x_{n-1}}{x_{n-2}})e_n^*(\frac{x_n^2}{x_{n-1}})e_o^*(x_n^{-1})$ for all $x_i\in \mathbb{G}_m$. 
On the other hand, the covering map $\varphi$ induces a surjective map $\varphi:M_{H_D}\twoheadrightarrow M$. Since $\textrm{Spin}_{2n+1}$ and $\textrm{SO}_{2n+1}$ share the same roots, $\varphi$ is given by the surjective map $T_{H_D}\twoheadrightarrow T$, hence by the injection $X^*(T)\hookrightarrow X^*(T_{H_D})$, $f_i\mapsto f_i$, $1\leq i\leq n$, and in return by the surjective map $X_*(T_{H_D})\twoheadrightarrow X_*(T)$, $\beta_i^{\vee}\mapsto \gamma_i^{\vee}$, $1\leq i\leq n$. As a result, $T_{H_D}\twoheadrightarrow T$ can be explicitly written as  $$\prod_{i=1}^n\beta_i^{\vee}(x_i)\mapsto \prod_{i=1}^n\gamma_i^{\vee}(x_i)=\prod_{i=1}^{n-1}(f_i^*-f_{i+1}^*)(x_i)\cdot (2f_n^*)(x_n)$$
$$=f_1^*(x_1)f_2^*(\frac{x_2}{x_1})\cdots f_{n-1}^*(\frac{x_{n-1}}{x_{n-2}})f_n^*(\frac{x_n^2}{x_{n-1}}).$$ The kernel of this map is isomorphic to $\mathbb{Z}/2\mathbb{Z}$ with generator $\beta_n^{\vee}(-1)$. 

The above discussion shows that we have a commutative diagram on the corresponding tori:
$$\begin{tikzcd}
T_n\times T_1 \simeq&T_H \arrow[r, two heads,"pr"] & T_n \\
& T_{H_D} \arrow[u, hook, "j"]\arrow[ru, two heads, "\varphi"]
\end{tikzcd}$$
where $T_n$ and $T_1$ are the maximal tori of $\textrm{GL}_n$ and $\textrm{GL}_1$ respectively. Taking the isomorphisms on the root subgroups and Weyl groups of these groups, and using the Bruhat decomposition, we get the commutative diagram of Levi subgroups we discussed earlier. Moreover, from this we can also realize $M_{H_D}\subset M_H\simeq \textrm{GL}_n\times \textrm{GL}_1$ by 
$$M_{H_D}=\{m(g,a)\in M_H, \det(g)a^2=1\}^\circ,$$ where $\circ$ means taking the connected component.
\subsection{THE SPACE $Z_{M_H}^0U_{M_H}(F)\backslash N_H(F)$, ITS ORBIT REPRESENTATIVES AND MEASURE}

The partial Bessel functions that we are going to define will be integrating over this space. We proceed by first working on the space $U_{M_H}(F)\backslash N_H(F)$, then define $Z_{M_H}^0$  and consider its action after that. 

Let $H=\textrm{GSpin}_{2n+1}$, as an algebraic group defined over $F$. We fix the Borel subgroups $B_H=T_HU_H$, $B=TU$ of $H$ and $\textrm{SO}_{2n+1}$ respectively as in section 5.1. Notice that the Siegel parabolic $P_H=M_HN_H$ of $\textrm{GSpin}_{2n+1}$ share the same unipotent radical $N_H$ with the corresponding parabolic subgroup $P=MN$ of $\textrm{SO}_{2n+1}$. Let $U_{M_H}=U_H\cap M_H$, and $U_M=U\cap N$. We need to study the $U_{M_H}$-action on the $N_H$ by conjugation, both of which lie in the derived group of $H$. We have $U_{M_H}\simeq U_{M}$, and $N_H\simeq M$, and the action of $U_{M_H}$ on $N_H$ in $H=\textrm{GSpin}_{2n+1}$ is compatible with the $U_M$-action on $N$ in $\textrm{SO}_{2n+1}$. Therefore $U_{M_H}\backslash N_H\simeq U\backslash N$. Hence it suffices to study the $U_M$-action on $N$. 

We realize $\textrm{SO}_{2n+1}$ as $$SO_{2n+1}=\{h\in \textrm{GL}_{2n+1}:\leftidx{^t}h\tilde{J}h=\tilde{J} \},$$
where 
$\tilde{J}=
\begin{bmatrix}
\ \ & \ \ & J' \\
\ \ & 1 & \ \ \\
\leftidx{^t}J' & \ \ & \ \ \\
\end{bmatrix}$
and $J'=\begin{bmatrix}
\ \ & \ \ & \ \ & 1 \\
\ \ & \ \ & -1 \ \ \\
\ \ & \reflectbox{$\ddots$} & \ \ & \ \ \\
(-1)^{n-1} & \ \ & \ \ & \ \ \\
\end{bmatrix}.$ 
An easy calculation shows that the $M
=\{m=m(g)=\begin{bmatrix}
g & \ \ & \ \ \\
\ \ & 1 & \ \ \\
\ \ & \ \ & J'\leftidx{^t}g^{-1}J'^{-1} \\
\end{bmatrix}:g\in \textrm{GL}_n\}$. Consequently
$U_{M}=\{ \begin{bmatrix}
u & \ \ & \ \ \\
\ \ & 1 & \ \ \\ 
\ \ & \ \ & J'\leftidx{^t}u^{-1}J'^{-1}\\
\end{bmatrix}: u\in U_n\}$, where $U_n$ is the unipotent radical of the standard Borel subgroup of $\textrm{GL}_n$ consists of upper triangular unipotent matrices.
And the unipotent radical of $P=MN$ is 
$$N=\{n=n(X,\alpha)=\begin{bmatrix}
I & \alpha & X\\
\ \ & 1 & -\leftidx{^t}\alpha J' \\
\ \ & \ \ & I \\
\end{bmatrix}: X\leftidx{^t}J'+J' \leftidx{^t}X+\alpha\leftidx{^t}\alpha=0 \ \ (*)\}$$
 A simple calculation shows that the conjugate action of $U_{M}(F)$ on $N(F)$ is equivalent to 
$$X\mapsto uXJ'\leftidx{^t}uJ'^{-1}, \alpha\mapsto u\alpha.\ \ \cdots\cdots (a)$$
Let $Z=X\leftidx{^t}J' + \frac{\alpha\leftidx{^t}\alpha}{2}$, then $(*)\Leftrightarrow Z+\leftidx{^t}Z=0.$ Now $X=(Z-\frac{\alpha\leftidx{^t}\alpha}{2})\leftidx{^t}J'^{-1}=(Z-\frac{\alpha^t\alpha}{2})J'$. So $n=n(Z,\alpha)\in N_H(F)$ is therefore parameterized by $Z\in Sk_n(F),$ the set of skew-symmetric matrices with $F$-coefficients, and $\alpha\in F^n$. The action (a) translates into 
$$Z\mapsto uZ\leftidx{^t}u, \alpha\mapsto u\alpha. \ \ \cdots\cdots (a'),$$ since if we denote $X'=uXJ'\leftidx{^t}uJ'^{-1}, \alpha'=u\alpha$, then the corresponding $$Z'=X'^tJ'+\frac{\alpha'\leftidx{^t}{\alpha'}}{2}=(uXJ'\leftidx{^t}uJ'^{-1})^tJ'+\frac{u\alpha\leftidx{^t}{\alpha'}\leftidx{^t}u}{2}=u(X^tJ'+\frac{\alpha^t\alpha}{2})^tu=uZ^tu.$$
Now it is equivalent to find the orbit representatives for the action of $U_n(F)$ on $Sk_{n+1}(F)$ because 
$Sk_n(F)\times F^n\longrightarrow Sk_{n+1}(F)$ defined by
$(Z,\alpha)\mapsto \begin{bmatrix}Z & \alpha\\ 
-^t\alpha & 0 \\
\end{bmatrix}$
is a homeomorphism of p-adic manifolds. If we identify $U_n(F)$ with its image in $U_{n+1}(F)$ by the embedding $u \mapsto \begin{bmatrix}
u & \ \ \\
\ \ & 1 \\
\end{bmatrix}$, we also have  
$\begin{bmatrix}
u & \ \ \\
\ \ & 1 \\
\end{bmatrix}
\begin{bmatrix}
Z & \alpha \\
-\leftidx{^t}\alpha & 0
\end{bmatrix}
\begin{bmatrix}
\leftidx{^t}u & \ \ \\
\ \ & 1 \\
\end{bmatrix}
=\begin{bmatrix}
uZ\leftidx{^t}u & u\alpha\\
-\leftidx{^t}(u\alpha) & 0 \\
\end{bmatrix}.$
So it suffices to find orbit representatives of the action of $U_n(F)$ on $Sk_{n+1}(F)$ by $u.\tilde{Z}=\begin{bmatrix}
u & \ \ \\ 
\ \ & 1 \\
\end{bmatrix}\tilde{Z}
\begin{bmatrix}
\leftidx{^t}u & \ \ \\
\ \ & 1 \\
\end{bmatrix}$
 where $u\in U_n(F)$ and $\tilde{Z}\in Sk_{n+1}(F).$
For our concern it suffices to find such orbit representatives for an open dense subset of $N(F)$ under the p-adic topology. We will define this open dense subset inductively. Let's begin with a few lemmas:
\begin{lemma}
Let $\varphi: M\rightarrow N$ be a surjective submersion of manifolds. If we have an open dense subset $V\subset N $, then $U=\varphi^{-1}(V)$ is open dense in M. 
\end{lemma}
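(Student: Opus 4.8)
The plan is to prove the two assertions about $U=\varphi^{-1}(V)$ — openness and density — separately, the first being purely formal and the second resting on one standard fact about submersions. Openness of $U$ is immediate: $\varphi$ is continuous and $V\subseteq N$ is open, so $\varphi^{-1}(V)$ is open in $M$. Note that surjectivity of $\varphi$ plays no role in this part and is not needed.

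For density the key input I would invoke is that a submersion is an open map. In the $p$-adic analytic setting this follows from the submersion normal form, itself a consequence of the $p$-adic implicit function theorem: around any point of $M$ one can choose charts in which $\varphi$ becomes the standard coordinate projection $(x,y)\mapsto y$, which is patently open; hence $\varphi$ carries open sets to open sets.

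Granting this, the density of $U$ is a three-line chase. Let $W\subseteq M$ be an arbitrary nonempty open set; it suffices to exhibit a point of $W$ lying in $\varphi^{-1}(V)$. Since $\varphi$ is open, $\varphi(W)$ is a nonempty open subset of $N$, and since $V$ is dense in $N$ the intersection $\varphi(W)\cap V$ is nonempty. Choosing $y\in\varphi(W)\cap V$ and then $x\in W$ with $\varphi(x)=y$ gives $x\in W\cap\varphi^{-1}(V)=W\cap U$. Hence $U$ meets every nonempty open subset of $M$, i.e. $U$ is dense in $M$.

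I do not expect any genuine obstacle: the only non-formal ingredient is the openness of submersions, which is standard, and the surjectivity hypothesis is needed only for the later orbit-representative arguments to see all of $M$, not for the proof of this lemma itself.
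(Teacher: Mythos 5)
Your proof is correct, and it takes a slightly different route than the paper's. The paper localizes via the submersion normal form, reduces to the coordinate projection $pr\colon F^m\to F^n$, and computes the closure directly: $\overline{pr^{-1}(V)}=\overline{V\times F^{m-n}}=\overline{V}\times F^{m-n}=F^m$. You instead extract the single consequence of the normal form that you actually need — that a submersion is an open map — and then run a clean topological chase: for any nonempty open $W\subset M$, $\varphi(W)$ is nonempty open, meets the dense set $V$, and pulling back a witness gives a point of $W\cap\varphi^{-1}(V)$. The two arguments are essentially equivalent in depth, both resting on the local normal form; yours avoids the (slightly informal) ``it suffices to show this locally'' reduction and the closure computation, and makes explicit that surjectivity of $\varphi$ is not actually used, which is a small but genuine clarification over the paper's phrasing.
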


\begin{proof}
It suffices to show this locally. Thus without loss of generality, assume $M\simeq F^m$ and $N\simeq F^n$ with $m\ge n$, and $\varphi=pr: F^m\rightarrow F^n$ is the projection map. Then if $V$ is dense in $F^n$, we have
$\varphi^{-1}(V)=pr^{-1}(V)\simeq V\times F^{m-n}$.
So 
$\overline{\varphi^{-1}(V)}\simeq \overline{V\times F^{m-n}}\simeq \overline{V}\times F^{m-n}\simeq F^n\times F^{m-n}\simeq F^m\simeq M$. Since $\overline{\varphi^{-1}(V)}\subset M$, we have $\overline{\varphi^{-1}(V)}=M$.
\end{proof}

\begin{lemma}
Let $\varphi_i: Sk_{i+1}(F)\longrightarrow Sk_i(F)$ be defined by 
$Z=\begin{bmatrix}
Z' & \beta \\
-^t\beta & 0 \\
\end{bmatrix}\mapsto u_i Z' {^t}{u_i}$
where $u_i=\begin{bmatrix}
I_{i-1} & \gamma \\
0 & 1 \\
\end{bmatrix}$, $\beta=\begin{bmatrix}
\beta' \\
b_i \\
\end{bmatrix}$ with $b_i\neq 0$, $I_{i-1}$ denotes the $(i-1)\times (i-1)$ identity matrix and $\gamma=-b_i^{-1}\beta'$. Then
$\varphi_i$ is a surjective submersion of p-adic manifolds.
\end{lemma}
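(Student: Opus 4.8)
The plan is to establish the three required properties — $p$-adic analyticity, surjectivity, and surjectivity of the differential at every point — by passing to explicit block coordinates in which $\varphi_i$ becomes a family of linear isomorphisms. First I would pin down the domain: since $\gamma=-b_i^{-1}\beta'$ only makes sense when $b_i\neq 0$, the map $\varphi_i$ actually lives on the Zariski-open (hence $p$-adically open and dense) subset $\mathcal{O}_i\subset Sk_{i+1}(F)$ on which $b_i\neq 0$. On $\mathcal{O}_i$ I use the coordinates $Z\leftrightarrow (Z',\beta',b_i)$ with $Z'\in Sk_i(F)$, $\beta'\in F^{i-1}$, $b_i\in F^{\times}$, arising from the block form $Z=\begin{bmatrix}Z' & \beta\\ -{}^t\beta & 0\end{bmatrix}$, $\beta=\begin{bmatrix}\beta'\\ b_i\end{bmatrix}$. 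In these coordinates the entries of $u_i=u_i(\beta',b_i)$ are $1$, $0$, and the coordinates of $\gamma=-b_i^{-1}\beta'$, all regular rational functions on $\mathcal{O}_i$; the matrix $u_iZ'\,{}^tu_i$ is polynomial in the entries of $u_i$ and $Z'$, and it is skew-symmetric since ${}^t(u_iZ'\,{}^tu_i)=u_i({}^tZ')\,{}^tu_i=-u_iZ'\,{}^tu_i$. Hence $\varphi_i\colon\mathcal{O}_i\to Sk_i(F)$ is a composite of regular maps of affine $F$-varieties, in particular a $p$-adic analytic map of $p$-adic manifolds.

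The key observation is that for each fixed pair $(\beta',b_i)\in F^{i-1}\times F^{\times}$ the partial map $\Phi_{\beta',b_i}\colon Sk_i(F)\to Sk_i(F)$, $Z'\mapsto u_iZ'\,{}^tu_i$, is a linear automorphism: it preserves $Sk_i(F)$ and is inverted by $W\mapsto u_i^{-1}W\,{}^t(u_i^{-1})$, using ${}^tu_i\,{}^t(u_i^{-1})=I$. Surjectivity of $\varphi_i$ follows at once, since already $\Phi_{0,1}=\mathrm{id}$ exhibits every $W\in Sk_i(F)$ as the value of $\varphi_i$ at the point with $Z'=W$, $\beta'=0$, $b_i=1$; this same formula even furnishes a $p$-adic analytic section of $\varphi_i$. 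For the submersion property I would fix an arbitrary $Z_0\in\mathcal{O}_i$ with coordinates $(Z_0',\beta_0',b_{i,0})$ and restrict $\varphi_i$ to the affine slice $\{(Z',\beta_0',b_{i,0}):Z'\in Sk_i(F)\}$ through $Z_0$: on this slice $\varphi_i$ agrees with the linear automorphism $\Phi_{\beta_0',b_{i,0}}$, whose differential is an isomorphism of $F$-vector spaces. Since that differential factors through $d(\varphi_i)_{Z_0}$, the latter is surjective onto $T_{\varphi_i(Z_0)}Sk_i(F)$; as $Z_0$ was arbitrary, $\varphi_i$ is a submersion, and with surjectivity this is the assertion.

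I do not anticipate a genuine obstacle here: the statement is formal once $\mathcal{O}_i$ is identified. The only real work is bookkeeping — I would make sure the block-coordinate and indexing conventions match the identification $Sk_n(F)\times F^n\xrightarrow{\ \sim\ }Sk_{n+1}(F)$, $(Z,\alpha)\mapsto\begin{bmatrix}Z&\alpha\\-{}^t\alpha&0\end{bmatrix}$, recorded just above, and that the open dense sets $\mathcal{O}_i$ together with the maps $\varphi_i$ are assembled correctly (invoking the preceding lemma on preimages of open dense sets under surjective submersions at each stage) so that the inductively defined open dense subset of $N(F)$ and its orbit representatives are well defined. As a consistency check I would note the dimension count $\dim Sk_{i+1}(F)-\dim Sk_i(F)=\binom{i+1}{2}-\binom{i}{2}=i$, which matches the $i$ coordinates of $\beta$ parametrizing the fibers of $\varphi_i$.
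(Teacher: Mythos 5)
Your proof is correct and follows essentially the same strategy as the paper's: both fix $(\beta',b_i)$ and observe that the restriction of $\varphi_i$ to the resulting $Z'$-slice is a linear isomorphism of $Sk_i(F)$, which immediately gives the submersion property, while surjectivity is seen by specializing to $u_i=I$. Your argument that the slice map is an isomorphism (invertibility of the unipotent matrix $u_i$, so $Z'\mapsto u_iZ'\,{}^tu_i$ has the explicit inverse $W\mapsto u_i^{-1}W\,{}^t(u_i^{-1})$) is a slightly cleaner packaging than the paper's, which further splits $Z'$ into $(Z'',\alpha')$ with $Z''\in Sk_{i-1}(F)$, computes $u_iZ'\,{}^tu_i$ in those block coordinates, and reads off identity blocks in the Jacobian.
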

\begin{proof}
Write $Z'=\begin{bmatrix}
Z'' & \alpha' \\
-^t\alpha' & 0 \\
\end{bmatrix}$ with $Z''\in Sk_{i-1}(F)$.
Also notice that 
$u_iZ'^tu_i=\begin{bmatrix}
I_{i-1} & \gamma \\
0 & 1\\
\end{bmatrix}
\begin{bmatrix}
Z'' & \alpha' \\
-^t\alpha' & 0 \\
\end{bmatrix}
\begin{bmatrix}
I_{i-1} & 0 \\
^t\gamma & 1 \\
\end{bmatrix}=\begin{bmatrix}
Z''-\gamma^t\alpha'+\alpha'^t\gamma & \alpha'\\
-^t\alpha' & 0 \\
\end{bmatrix}$.
The map 
$$Sk_{i-1}(F)\times F^{i-1}\times F^{i-1}\times F^*\longrightarrow Sk_{i-1}(F)\times F^{i-1}$$
$$(Z'',\alpha',\beta',b_i)\mapsto (Z''-\gamma^t\alpha'+\alpha'^t\gamma, \alpha')$$
is a submersion because the Jacobian of this map contains an $i\times i$ identity matrix, due to that the coefficient of $Z''$ is 1 on both hand sides. The surjectivity is clear by the definition of $\varphi_i$. 
\end{proof}
\begin{lemma}
Denote $V_i=\{Z\in Sk_i(F): z_{i-1,i}\neq 0\}$ and let
$$V=\{Z\in Sk_{n+1}(F): \varphi_{n-i}\circ \varphi_{n-i+1}\circ\cdots \circ \varphi_n(Z)\in V_{n-i-1}, \forall 0\leq i \leq n-2 \}$$
where $\varphi_i: Sk_{i+1}(F)\longrightarrow Sk_i(F)$ as in Lemma 5.2, which is a surjective submersion. Then V is open dense in $Sk_{n+1}(F).$
\end{lemma}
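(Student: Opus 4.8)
The plan is to realize $V$ as a finite intersection of preimages of explicit open dense subsets under the maps $\varphi_i$ of Lemma~5.2, and then use the elementary fact that a finite intersection of open dense subsets of any topological space is again open dense. The only non-formal inputs will be the density of the sets $V_i$ and the density of preimages under the $\varphi_i$, and both are already essentially contained in Lemmas 5.1 and 5.2.

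First I would dispose of the two elementary inputs. Each $Sk_i(F)$, being a finite-dimensional $F$-vector space, is homeomorphic to $F^{\binom{i}{2}}$; in particular it is a locally compact Hausdorff space with no isolated points. The set $V_i=\{Z\in Sk_i(F):z_{i-1,i}\neq0\}$ is the preimage of the open set $F^{\times}$ under the continuous coordinate map $Z\mapsto z_{i-1,i}$, hence open; it is dense because any nonempty open subset of $Sk_i(F)$ contains a basic open box whose $z_{i-1,i}$-slot is a nonempty open subset of $F$ and so, $F$ having no isolated points, meets $F^{\times}$. Next, by Lemma~5.2 each $\varphi_i$ is a surjective submersion on the open set $\{b_i\neq0\}=V_{i+1}\subset Sk_{i+1}(F)$ where it is defined. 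I would then note that the conclusion of Lemma~5.1 — the preimage of an open dense set is open dense — uses only that a submersion is an open continuous map, so it applies verbatim to the restriction of any $\varphi_i$, or of a composite of such maps, to an open subset of its domain; surjectivity is not needed for this.

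The core of the proof is building the chain of domains. Set $D_n=V_{n+1}$, the domain of $\varphi_n$, and inductively $D_k=D_{k+1}\cap(\varphi_{k+1}\circ\cdots\circ\varphi_n)^{-1}(V_{k+1})$ for $k=n-1,\dots,1$; this is exactly the locus on which the composite $\Phi_k:=\varphi_k\circ\varphi_{k+1}\circ\cdots\circ\varphi_n\colon D_k\to Sk_k(F)$ is defined. By the observation above and induction on $n-k$, each $D_k$ is open dense in $Sk_{n+1}(F)$ and each $\Phi_k$ is a submersion on $D_k$; consequently each $\Phi_k^{-1}(V_k)$ is open dense in $Sk_{n+1}(F)$. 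The finitely many conditions that define $V$ are precisely the relations $\Phi_k(Z)\in V_k$ — the pivot entry of the $k$-th intermediate skew matrix is nonzero, which is exactly what allows $\varphi_{k-1}$ to be applied at the next step — together with $Z\in V_{n+1}$. Thus $V=V_{n+1}\cap\bigcap_k\Phi_k^{-1}(V_k)$ is a finite intersection of open dense subsets of $Sk_{n+1}(F)$, hence open dense, which is the assertion of the lemma.

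The argument is short, and the only point that needs care is the bookkeeping of domains: the $\varphi_i$ are merely partially defined — regular precisely on $V_{i+1}$ — so at each stage one must check that the relevant composite is defined on an open dense set and is still a submersion there before invoking Lemma~5.1. Here it matters that the density half of Lemma~5.1 needs only the open-map property and not surjectivity, so passing to the smaller domains $D_k$ is harmless. (Alternatively one could argue algebraically: each $\Phi_k$ is a dominant morphism on a Zariski-open subvariety and $V$ is Zariski-open, nonempty by the explicit sections used to establish surjectivity in Lemma~5.2, and a nonempty Zariski-open subset of $F^m$ is open dense in the $p$-adic topology.)
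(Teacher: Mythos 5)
Your proof is correct and follows the same route as the paper's: express $V$ as a finite intersection of open dense preimages under the $\varphi_i$ (using Lemmas 5.1 and 5.2), and conclude. You do add genuine rigor that the paper's terse three-sentence proof glosses over — namely, keeping track of the fact that each $\varphi_i$ is only defined on the locus $V_{i+1}$, so the composites $\Phi_k$ live on the inductively shrinking open dense domains $D_k$, and observing that the density conclusion of Lemma 5.1 needs only the open-map property rather than full surjectivity — but the underlying idea is identical.
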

\begin{proof} By the previous two lemmas, each $V_i$ is open dense in $Sk_i(F)$. Since the composition of surjective submersions is still a surjective submersion, the topology of $Sk_i(F)\hookrightarrow  Sk_{i+1}(F)$ is the induced topology. So the subset $V$, which is defined inductively, is a finite intersection of open dense subsets, therefore open dense.
\end{proof}
Based on the above discussion, we obtain
\begin{proposition} Let 
$N(F)'=\{n=\begin{bmatrix}
I & \alpha & (Z-\frac{\alpha^t\alpha}{2})J' \\
\ \ & 1 & -^t\alpha J' \\
\ \ & \ \ & I \\
\end{bmatrix}:\begin{bmatrix}
Z & \alpha \\
-^t\alpha & 0 \\
\end{bmatrix}\in V \}.$ Then $N(F)'\subset N(F)$ is open dense. Moreover, for $\forall n(Z,\alpha)\in N(F)'$, $\exists u\in U_n(F)$, such that 
$u\cdot n(Z,\alpha)=n(uZ^tu,u\alpha)$
where $$\begin{bmatrix}
uZ{^tu} & u\alpha \\
-^t(u\alpha) & 0 \\
\end{bmatrix}=\begin{bmatrix}
0 & a_1 & \ \ & \ \ & \ \ \\
-a_1 & 0 & \ \ & \ \ & \ \ \\
\ \ & \ \ & \ddots & \ \ & \ \ \\
\ \ & \ \ & \ \ & 0 & a_n \\
\ \ & \ \ & \ \ & -a_n & 0 \\
\end{bmatrix}$$
with $a_i\in F^*$. This gives a set of orbit representatives for the adjoint action of $U_{M}(F)\simeq U_n(F)$ on $N(F)'.$
\end{proposition}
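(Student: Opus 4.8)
The plan is to treat the two assertions separately. The openness and density of $N(F)'$ is a direct transfer from Lemma~5.3: the map $(Z,\alpha)\mapsto n(Z,\alpha)$ is a homeomorphism of $p$-adic manifolds $Sk_n(F)\times F^n\xrightarrow{\sim}N(F)$, and $(Z,\alpha)\mapsto\left[\begin{smallmatrix}Z&\alpha\\-{}^{t}\alpha&0\end{smallmatrix}\right]$ is a (linear) homeomorphism $Sk_n(F)\times F^n\xrightarrow{\sim}Sk_{n+1}(F)$; composing them identifies $N(F)'$ with the subset $V\subset Sk_{n+1}(F)$ of Lemma~5.3, which is open and dense there, so $N(F)'$ is open dense in $N(F)$. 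The existence of the claimed representative I would prove by induction on $n$, built on the explicit unipotent elements $u_i$ appearing in the maps $\varphi_i$ of Lemma~5.2.

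For the induction, the case $n=1$ is immediate: $Sk_1(F)=0$, $U_1(F)$ is trivial, and $\left[\begin{smallmatrix}0&\alpha\\-\alpha&0\end{smallmatrix}\right]$ is already in the required shape with $a_1=\alpha\in F^{\times}$. For the inductive step, fix $n(Z,\alpha)\in N(F)'$, set $\widetilde Z=\left[\begin{smallmatrix}Z&\alpha\\-{}^{t}\alpha&0\end{smallmatrix}\right]\in V$, and write $\alpha=\left[\begin{smallmatrix}\beta'\\ b_n\end{smallmatrix}\right]$. Membership in $V$ forces the pivot $b_n=z_{n,n+1}$ to be nonzero, so the unipotent $u_n\in U_n(F)$ attached to $\varphi_n$ in Lemma~5.2 is defined, and a short computation gives $u_n\alpha=b_n e_n$, whence
$$u_n\cdot\widetilde Z=\begin{bmatrix}\varphi_n(\widetilde Z)&b_n e_n\\ -b_n\,{}^{t}e_n&0\end{bmatrix}.$$
Thus after conjugating by $u_n$ the last row and column are already in the desired shape, carrying the single nonzero entry $b_n$ in position $(n,n+1)$, while the top-left block is $\varphi_n(\widetilde Z)\in Sk_n(F)$. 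The nested nonvanishing conditions defining $V$ say precisely that $\varphi_n(\widetilde Z)$ lies in the open dense subset relevant to $\textrm{GSpin}_{2n-1}$, so by the inductive hypothesis there is $u'\in U_{n-1}(F)$ — embedded in the first $n$ coordinates and fixing $e_n$ — with $u'\cdot\varphi_n(\widetilde Z)$ the normal form for $Sk_n(F)$, i.e.\ the skew matrix with super-diagonal entries $a_1,\dots,a_{n-1}\in F^{\times}$ and all other entries zero. Since $u'$ fixes $e_n$ it leaves the last row and column of $u_n\cdot\widetilde Z$ untouched, so $u:=u'u_n\in U_n(F)$ conjugates $n(Z,\alpha)$ to $n(uZ\,{}^{t}u,u\alpha)$, whose skew matrix $\left[\begin{smallmatrix}uZ\,{}^{t}u&u\alpha\\-{}^{t}(u\alpha)&0\end{smallmatrix}\right]$ has super-diagonal $(a_1,\dots,a_{n-1},b_n)$ and all other entries zero; taking $a_n=b_n\in F^{\times}$ gives the asserted representative.

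These matrices form a set of orbit representatives — every $U_n(F)$-orbit in $N(F)'$ meets the set by the construction above, and the $a_i$ are invariants of the orbit, since conjugation by $u\in U_n(F)$ changes each trailing square submatrix only by a unipotent congruence (hence preserves its Pfaffian). The step I expect to be the main obstacle is the inductive passage itself: one must check carefully that the subgroup of $U_n(F)$ preserving the partial normalization produced by $u_n$ is exactly the copy of $U_{n-1}(F)$ acting on the first $n$ coordinates, so that the inductive hypothesis for $\textrm{GSpin}_{2n-1}$ applies verbatim; and one must stay careful with the twists by the matrix $J'$ in passing between the realization of $N_H$ inside $\textrm{GSpin}_{2n+1}$ (via $\textrm{SO}_{2n+1}$) and the skew-symmetric model, so that the conjugation action really is the one in $(a')$ and $\varphi_n$ is applied to the right object.
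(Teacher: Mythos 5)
Your reduction to $Sk_{n+1}(F)$, the use of Lemma~5.3 for openness and density, and the inductive normalization via the $u_n$ of Lemma~5.2 followed by a copy of $U_{n-1}(F)$ embedded in $U_n(F)$ fixing $e_n$ are all correct and essentially match the paper's approach. The worry you flag at the end, about identifying the full stabilizer of the partial normalization, is not actually an obstacle for the existence step: there you only need to exhibit one $u$, not characterize a stabilizer.

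The genuine gap is in the final paragraph. The claim that the $a_i$ are orbit invariants ``since trailing Pfaffians are preserved'' is too weak. The embedded unipotent does transform each trailing $k\times k$ block by a unipotent congruence, so trailing Pfaffians are invariants; but for the tridiagonal normal form of size $n+1$ with superdiagonal $(a_1,\dots,a_n)$, the Pfaffian of the trailing $2k\times 2k$ block equals the product $a_n\,a_{n-2}\cdots a_{n-2k+2}$, and odd-size trailing blocks have no Pfaffian. These invariants determine $a_n, a_{n-2}, a_{n-4},\dots$ but say nothing about $a_{n-1}, a_{n-3},\dots$; already for $n=2$ the only nontrivial trailing Pfaffian is $a_2$, so your argument does not show that $a_1$ is an invariant. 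What actually pins down every $a_i$ is the constraint coming from the vector part $\alpha$: in normal form $\alpha = a_n e_n$, so the equation $u\alpha=\alpha'$ with $u$ upper-triangular unipotent forces $a_n=a_n'$ \emph{and} forces the last column of $u$ to be $e_n$, hence $u$ lies in the embedded $U_{n-1}(F)$; one then recurses on the leading $n\times n$ block. This constraint propagation is exactly the paper's induction, and the same recursion with the two normal forms taken equal also shows the action on $N(F)'$ is simple (trivial stabilizers), a fact the paper records because the quotient measure constructed just after Proposition~5.4 depends on it. The Pfaffian observation does not encode the rigidity supplied by $\alpha$, which is what makes \emph{every} $a_i$ an invariant rather than only the even-parity ones.
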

\begin{proof}
First, by the previous argument, $N(F)'$ is open dense in $N(F)$ under the p-adic topology. Now take $u_n$ as in Lemma 5.2 and write $\tilde{Z}=\begin{bmatrix}
Z & \alpha \\
-^t\alpha & 0 \\
\end{bmatrix}$. Then we have $u_nZ^tu_n=\varphi_n(\tilde{Z})\in V_n$ and
$u_n\alpha=[0,\cdots,0,a_n]^t$ with $a_n\neq 0$ by the construction of $N(F)'$.
Now $u_nZ^tu_n\in V_n\subset Sk_n(F)$, by induction on $n$ we end up with some $u\in U_n(F)$ as stated in the lemma.

Let $R$ denote this orbit representatives, as we saw above it is homeomorphic to $(F^*)^n$. So we have a continuous surjective map:
$U_n(F)\times R\longrightarrow V$ given by
$(u,
(a_1,\cdots,a_n))\mapsto \begin{bmatrix}
u & \ \ \\
\ \ & 1 \\
\end{bmatrix}
\begin{bmatrix}
0 & a_1 & \ \ & \ \ & \ \ \\
-a_1 & 0 & \ \ & \ \ & \ \ \\
\ \ & \ \ & \ddots & \ \ & \ \ \\
\ \ & \ \ & \ \ & 0 & a_n \\
\ \ & \ \ & \ \ & -a_n & 0 \\
\end{bmatrix}
\begin{bmatrix}
^tu &  \ \ \\
\ \ & 1 \\
\end{bmatrix}$.
The map is clearly continuous. It has an inverse. In fact, the inverse map is just given by the process of finding the orbit representatives as we showed above, which is apparently continuous since all maps arising are again just matrix multiplications. Hence to show it is a homeomorphism, we only need to show that any two matrices of this form lie in different orbits. This follows easily by induction on the size of the matrix. 
Indeed, suppose $u=\begin{bmatrix}
u'  & \gamma \\
\ \ & 1 \\
\end{bmatrix}$ and let $\tilde{Z}=\begin{bmatrix}
0 & a_1 & \ \ & \ \ & \ \ \\
-a_1 & 0 & \ \ & \ \ & \ \ \\
\ \ & \ \ & \ddots & \ \ & \ \ \\
\ \ & \ \ & \ \ & 0 & a_n \\
\ \ & \ \ & \ \ & -a_n & 0 \\
\end{bmatrix}=\begin{bmatrix}
\tilde{Z}_1 & \alpha \\
-^t\alpha & 0 \\
\end{bmatrix}$
with $\alpha=[0,\cdots,0,a_n]^t$ and $\tilde{Z}_1$ is the principal $(n-1)\times(n-1)$ block of $\tilde{Z}$.
Now suppose $\tilde{Z}'$ is another such matrix with entries $a_i'$ and $\begin{bmatrix}
u  & \ \ \\
\ \ & 1 \\
\end{bmatrix}\tilde{Z}\begin{bmatrix}
^tu & \ \ \\
\ \ & 1 \\
\end{bmatrix}=\tilde{Z}',$ and similarly we define $\tilde{Z}'_1$ and $\alpha'$.
This implies that $u\alpha=\alpha'$, hence $u$ has to be the form $u=\begin{bmatrix}
u' & 0 \\
0 & 1 \\
\end{bmatrix}$. This gives that $\begin{bmatrix}
u' & \ \ \\
\ \ & 1 \\
\end{bmatrix}\tilde{Z}_1\begin{bmatrix}
^tu' & \ \ \\
\ \ & 1 \\
\end{bmatrix}=\tilde{Z}'_1$
where $\tilde{Z}_1$ and $\tilde{Z}'_1$ are of the same form as $\tilde{Z}$ and $\tilde{Z}'$ respectively, but of strictly smaller size, so by induction hypothesis, we derive that $u'=I_{n-1}$, which also means that $u=I$. This forces $\tilde{Z}=\tilde{Z}'$, so $a_i=a'_i$ for $1\leq i \leq n$.

Moreover, the action is simple, i.e., if $u\cdot Z=Z$, then $u=I$. To see this, just take $\tilde{Z}'=\tilde{Z}$ in the above argument, and a similar process gives $u=I$.
\end{proof}

Now we have a homeomorphism $U_{M}(F)\times R\simeq N(F)'\subset N(F)$ with $N(F)'\subset N(F)$ open dense. Recall that we have isomorphisms of algebraic groups $U_{M_H}\simeq U_M$, $N_H\simeq N$, given by identifying the corresponding root subgroups. So we obtain homeomorphisms of p-adic manifolds: $U_{M_H}(F)\simeq U_M(F)$ and $N_H(F)\simeq N(F) $. Denote the homeomorphic image of $N(F)'$ in $N_H(F)$ by $N_H(F)'$, then it's clear that $N_H(F)'\subset N_H(F)$ is also open dense. Moreover, the $U_{M_H}(F)$-action on $N_H(F)$  is compatible with the $U_M(F)$-action on $N(F)$. From now on we identify the p-adic manifolds: $U_{M_H}(F)\simeq U_M(F)$, $N_H(F)\simeq N(F)$, $N_H(F)'\simeq N(F)'$, and $U_{M_H}(F)\backslash N_H(F)\simeq U_M(F)\backslash N(F)$. We also identify $R$ as the orbit space representatives of $U_{M_H}(F)\backslash N_H(F)$.

Now let's discuss the invariant measure on the orbit space. Any measurable function $f$ on $N_H(F)$ can be viewed as a function on $U_{M_H}(F)\times R$. Let $du$ and $dn$ the Haar measure on $U_{M_H}(F)$ and $N_H(F)$ respectively. Let $da$ be the measure on $R$ such that the integration formula
$\int_{U_{M_H(F)}}\int_Rf(u\cdot a) du da= \int_{N_H(F)}f(n)dn$ holds. We also need to construct an invariant measure on $R$. When the dimension $n=2$, $U_{M_H}(F)\simeq U_2(F)=\{\begin{bmatrix}
1 & x \\
\ \ & 1 \\
\end{bmatrix}:x\in F\}\simeq F$,
$R\simeq \{\begin{bmatrix}
0 & a_1 & 0 \\
-a_1 & 0 & a_2\\
0 & -a_2 & 0
\end{bmatrix}: a_1, a_2\in F^*\}\simeq (F^*)^2$,
and 
$N_H(F)\simeq \{n(Z,\alpha):Z\in Sk_2(F),\alpha\in F^2\}\simeq F^3$. The action of $U_2(F)$ on $R$ is give by 
$$\begin{bmatrix}
1 & x & \ \ \\
0 & 1 & \ \ \\
\ \ & \ \ & 1 \\
\end{bmatrix}\begin{bmatrix}
0 & a_1 & 0 \\
-a_1 & 0 & a_2 \\
0 & -a_2 & 0 \\
\end{bmatrix}
\begin{bmatrix}
1 &  0 & \ \ \\
x & 1 & \ \ \\
\ \ & \ \ & 1 \\
\end{bmatrix}=\begin{bmatrix}
0 & a_1 & a_2x \\
-a_1 & 0 & a_2 \\
-a_2x & a_2 & 0 \\
\end{bmatrix}$$
So $$F\times (F^*)^2\simeq U_{M_H}(F)\times R\longrightarrow N_H(F)\simeq F^3$$ is given by
$$(x, a_1, a_2)\mapsto (a_1, a_2x, a_2).$$
So we can write $f(u\cdot a)=f(a_1,a_2x,a_2)$. Let $da=da_1\vert a_2 \vert da_2$, then
$$\int_{U_{M_H}(F)}\int_R f(u\cdot a)du da=\int_{(a_1,a_2)\in (F^*)^2}\int_{x\in F}f(a_1, a_2x, a_2)dx da_1\vert a_2 \vert da_2.$$
Let $x'=a_2x, a_1'=a_1,a_2'=a_2$, then $dx'=\vert a_2 \vert dx$. Then the above integral
$$=\int_F\int_{(F^*)^2}f(a'_1,x',a_2')\frac{dx'}{\vert a_2'\vert}da_1' \vert a_2'\vert da_2'=\int_F\int_{(F^*)^2}f(a_1',x',a_2')dx'da_1'da_2'$$

$$=\int_{F^3}f(a_1',x',a_2')dx'da_1'da_2'=\int_{N_H(F)}f(n)dn.$$
It is straightforward to show by induction on the dimension $n$ that the invariant measure on the space of orbits $R$ is given by $da=\prod_{i=1}^n\vert a_i\vert^{i-1} da_i=\prod_{i=1}^n\vert a_i \vert^i d^{\times}a_i.$

Next, we define $Z^0_{M_H}$ and consider its action on $U_{M_H}(F)\backslash N_H(F)$. 
\begin{lemma}$H=\textrm{GSpin}_{2n+1}$. Let $Z_H$ and $Z_{M_H}$ denote the centers of $H$ and $M_H$ respectively, then
$Z_H=\{e^*_0(\lambda): \lambda\in GL_1\}$ and 
$Z_{M_H}=\{e^*_0(\lambda)e^*_1(\mu)\cdots e^*_n(\mu):\lambda,\mu\in GL_1\}$. There exists an injection:
$\alpha^{\vee}:F^{\times}\hookrightarrow Z_H\backslash Z_{M_H} $ such that $\alpha(\alpha^{\vee}(t))=t$ for $\forall t\in F^*$.
\end{lemma}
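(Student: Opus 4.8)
The plan is to read $Z_H$ and $Z_{M_H}$ directly off the root datum of $H=\textrm{GSpin}_{2n+1}$ displayed above, and then to exhibit $\alpha^{\vee}$ (where $\alpha=\alpha_n=e_n$ is the simple root deleted to form $M_H$) as a section of the homomorphism that $\alpha_n$ induces on the quotient torus $Z_H\backslash Z_{M_H}$.

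First I would compute the two centers. Since $T_H$ is a maximal torus of $H$, $Z_H=\bigcap_{i=1}^{n}\ker(\alpha_i)$. Writing a general element of $T_H$ as $t=\prod_{i=0}^{n}e_i^{*}(t_i)$ and using $\langle e_i,e_j^{*}\rangle=\delta_{ij}$, the conditions $\alpha_i(t)=t_i/t_{i+1}=1$ for $1\le i\le n-1$ force $t_1=\cdots=t_n$, and $\alpha_n(t)=t_n=1$ then forces $t_1=\cdots=t_n=1$ with $t_0$ free, so $Z_H=\{e_0^{*}(\lambda):\lambda\in\mathbb{G}_m\}$, a one-dimensional split torus. (As a check, $\textrm{GSpin}_{2n+1}=(\mathbb{G}_m\times\textrm{Spin}_{2n+1})/\langle(-1,\beta_n^{\vee}(-1))\rangle$ has center equal to the image of $\mathbb{G}_m\times Z(\textrm{Spin}_{2n+1})\simeq\mathbb{G}_m\times\mathbb{Z}/2\mathbb{Z}$, i.e.\ a copy of $\mathbb{G}_m$.) For $M_H=M_\theta$ with $\theta=\{\alpha_1,\dots,\alpha_{n-1}\}$, the center of the Levi is the common kernel on $T_H$ of its simple roots, $Z_{M_H}=\{t\in T_H:\alpha_i(t)=1,\ 1\le i\le n-1\}$; the same computation yields $t_1=\cdots=t_n=:\mu$ with $t_0=:\lambda$ free, hence $Z_{M_H}=\{e_0^{*}(\lambda)e_1^{*}(\mu)\cdots e_n^{*}(\mu):\lambda,\mu\in\mathbb{G}_m\}$, which is the two-dimensional torus one expects for $M_H\simeq\textrm{GL}_n\times\textrm{GL}_1$.

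Next I would assemble the map. Plainly $Z_H\subseteq Z_{M_H}$ (take $\mu=1$), and $\alpha_n=e_n$ is trivial on $Z_H$ because $\langle e_n,e_0^{*}\rangle=0$; hence $\alpha_n$ descends to $Z_H\backslash Z_{M_H}$, where it sends the class of $e_0^{*}(\lambda)e_1^{*}(\mu)\cdots e_n^{*}(\mu)$ to $\mu$, and therefore induces an isomorphism $Z_H\backslash Z_{M_H}\simeq\mathbb{G}_m$. I then define $\alpha^{\vee}\colon F^{\times}\to Z_H(F)\backslash Z_{M_H}(F)$ by letting $\alpha^{\vee}(t)$ be the class of $e_1^{*}(t)\cdots e_n^{*}(t)\in Z_{M_H}(F)$. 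This is a group homomorphism; it is injective, since $e_1^{*}(t)\cdots e_n^{*}(t)=e_0^{*}(\lambda)$ would give, on applying the character $e_1$ to both sides, $t=\lambda^{\langle e_1,e_0^{*}\rangle}=1$; and $\alpha(\alpha^{\vee}(t))=e_n(e_1^{*}(t)\cdots e_n^{*}(t))=t^{\langle e_n,\,e_1^{*}+\cdots+e_n^{*}\rangle}=t$, as required. (In fact $\alpha^{\vee}$ is inverse to the isomorphism above, so it is even a bijection, but only injectivity is claimed.)

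There is no serious obstacle here; the lemma is a bookkeeping exercise with the $B_n$ root datum. The one point worth stating with a little care is the identification of $Z(M_\theta)$ with the common kernel of the simple roots of $M_\theta$ on $T_H$ — this is standard (a central element of a connected reductive group lies in every maximal torus and acts trivially on every root subgroup of the Levi, and conversely), and it is exactly what makes the explicit torus found above literally equal to the full center $Z_{M_H}$ rather than just its identity component. One should also keep in mind that ``$Z_H\backslash Z_{M_H}$'' in the statement denotes the topological group $Z_H(F)\backslash Z_{M_H}(F)$, on which the formula for $\alpha^{\vee}$ makes sense verbatim because each $e_i^{*}$ is a cocharacter, i.e.\ a homomorphism $F^{\times}\to T_H(F)$.
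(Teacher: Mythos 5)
Your proof is correct and follows essentially the same route as the paper: you take the same formula $\alpha^{\vee}(t)=Z_H\cdot e_1^*(t)\cdots e_n^*(t)$, verify injectivity by showing $e_1^*(t)\cdots e_n^*(t)\in Z_H$ forces $t=1$ (the paper invokes linear independence of the cocharacter basis, you apply the character $e_1$ to both sides; both arguments are one line and equivalent), and compute $\alpha(\alpha^{\vee}(t))=e_n(\prod e_i^*(t))=t$ identically. The only cosmetic difference is that the paper cites Proposition 2.3 of [2] for the description of $Z_H$ and $Z_{M_H}$, whereas you derive both directly from the root datum by intersecting kernels of simple roots — a self-contained alternative to the citation, not a different method.
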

\begin{proof} The structure of $Z_H$ and $Z_{M_H}$ follows from Proposition 2.3 of [2]. For the second part of the lemma,
take $\alpha^{\vee}:t\mapsto Z_H(e_1^*(t)\cdots e^*_n(t))$.
Then $\alpha^{\vee}$ is an injection, since if $Z_H(e_1^*(t)\cdots e_n^*(t))=Z_H$, then $e^*_1(t)\cdots e_n^*(t)\in Z_H,$
therefore $e^*_1(t)\cdots e_n^*(t)=e^*_0(\lambda)$ for some $\lambda\in GL_1$, but the cocharacters are independent since they form a basis for the cocharacter lattice, it forces
$e_1^*(t)=e^*_2(t)=\cdots=e^*_n(t)=e^*_0(\lambda)=1,$
this implies $t=1$. Moreover, since $\alpha=\alpha_n=e_n$, we have
$\alpha(\alpha^{\vee}(t))=e_n(e^*_1(t)\cdots e_n^*(t))=e_n(e^*_n(t))=t$.
\end{proof}

Let $Z_{M_H}^0=\{ \alpha^{\vee}(t):t\in F^*\}$ be the image of the map $\alpha^{\vee}$ we just constructed. For $z=\alpha^{\vee}(t)=\prod_{i=1}^ne^*_i(t)$ and $n(Z,\alpha)\in N_H(F)$ as before it's easy to see that $$\alpha^{\vee}(t) n(Z,\alpha)\alpha^{\vee}(t)^{-1}= n(t^2Z,t\alpha).$$ Therefore the $Z_{M_H}^0$-action on $N_H(F)$ induces an action
$Z_{M_H}^0\times R\longrightarrow R$, given by
$(t,(a_1,\cdots, a_n))\mapsto (t^2a_1,\cdots t^2 a_{n-1},t a_n).$

We also need to define a measure on the space of orbits $R'$ of $Z_{M_H}^0U_{M_H}\backslash N_H$ such that it is compatible with the measure on $R$ we constructed. We can take $a_n=1$ to identify $R'$ with $\{(a_1',\cdots,a_{n-1}',1):a_i'\in F^*\}$. By the measure on $R$ we can see that the measure on $R'$ is of this form $da'=\prod_{i=1}^{n-1}\vert a'_i \vert^{k_i}da'_i$ with $k_i\in \mathbb{Z}.$ Recall that $\rho$ is the half of the sum of positive roots in $N_H$, as we computed before
$\rho=\frac{n}{2}\sum_{i=1}^n e_i.$ So for $z=\alpha^{\vee}(t)$, we have $q^{\langle 2\rho, H_{M_H}(z)\rangle}=\vert n\sum_{i=1}^n e_i(\prod_{i=1}^n e^*(t))\vert=\vert t\vert^{n^2}$.
Then we should have $$\int_Rf(a)da=\int_{Z_{M_H}^0}\int_{R'}f(z\cdot a')q^{\langle 2\rho, H_{M_H}(z) \rangle}da'dz$$
$$=\int_{F^*\times R'}f(t^2a_1',\cdots,t^2a_{n-1}',t)\vert t\vert^{n^2-1}\prod_{i=1}^{n-1}\vert a'_i\vert^{k_i}da'_idt.$$
Let $a_i=t^2a'_i$ for $1\leq i\leq n-1$, and $a_n=t$. Then $da'_i=\vert t\vert^{-2}da_i$ and $da_n=dt$. So the above integral
$$=\int_{F^*\times R'}f(a_1,\cdots,a_{n-1},a_n)\vert a_n \vert^{n^2-1}\prod_{i=1}^{n-1}\vert t^{-2}a_i\vert^{k_i}\vert a_n\vert^{-2(n-1)}da_ida_n.$$
On the other hand, we should also have $$\int_{(F^*)^n}f(a_1, \cdots, a_n)\prod_{i=1}^n\vert a_i\vert^{i-1}da_i=\int_{R}f(a)da.$$
By comparing this with the above discussion we can see that it forces each $k_i=i-1$.
This means that $$da'=\prod_{i=1}^{n-1}\vert a_i'\vert^{i-1}da'_i$$ gives the desired measure on the space of orbits $R'$ of $Z_{M_H}^0U_{M_H}(F)\backslash N_H(F)$.

\subsection{A BRUHAT DECOMPOSITION}
Theorem 6.2 of [19] allows us to write the local coefficients as the Mellin transform of some partial Bessel functions, whose definitions rely on a Bruhat decomposition. We will study the Bruhat decomposition in this section.

As before $H=\textrm{GSpin}_{2n+1}$. Let $w_H$ and $w_{\theta}$ be the long Weyl group element of $H$ and $M_{\theta}=M_H$, respectively. We denote the length of $w$ by $l(w)$. Then $l(w_H)=n^2$ and $l(w_\theta)=\frac{n(n-1)}{2}$, since in general $l(w)$ is the number of positive roots that are mapped to negatives ones by $w$. Their reduced decompositions can be given as follows:
$$w_H=w_{\alpha_{n-1}}(w_{\alpha_{n-2}}w_{\alpha_{n-1}})\cdots (w_{\alpha_{2}}\cdots w_{\alpha_{n-1}})(w_{\alpha_1}\cdots w_{\alpha_{n-1}})$$
$$\cdot w_{\alpha_n}(w_{\alpha_{n-1}}w_{\alpha_{n}})\cdots(w_{\alpha_2}\cdots w_{\alpha_n})(w_{\alpha_1}\cdots w_{\alpha_n})$$
and 
$$w_\theta=w_{\alpha_{n-1}}(w_{\alpha_{n-2}}w_{\alpha_{n-1}})\cdots (w_{\alpha_2}\cdots w_{\alpha_{n-1}})(w_{\alpha_1}\cdots w_{\alpha_{n-1}})$$

In general there is a canonical way to pick the Weyl group representative $\dot{w}$ of $w\in W$ by a given splitting $\{u_{\alpha}: \mathbb{G}_m\rightarrow U_{\alpha}\}_{\alpha\in \Phi^+}$: Fix a reduced decomposition $w=\prod_{\alpha}w_{\alpha}$ with each $w_\alpha$ a simple reflection, there is a unique $y_\alpha\in \mathbb{G}_m$ such that $w_{\alpha}(1)w_{-\alpha}(y_\alpha) w_\alpha(1)$ normalizes the maximal torus. For each $w_\alpha$ pick  $\dot{w}_\alpha=u_{\alpha}(1)u_{-\alpha}(y_\alpha)u_{\alpha}(1)$ and let $\dot{w}=\prod_{\alpha}\dot{w}_\alpha$. This makes each $\dot{w}_\alpha$ the image of $\begin{bmatrix}
\ \ & 1 \\
-1 & \ \ \\
\end{bmatrix}$ under the homomorphism $\textrm{SL}_2\rightarrow H$ attached to the $\mathfrak{sl}_2$-triple $\{X_\alpha, H_\alpha, H_{-\alpha}\}$.

One can compute that we should pick $\dot{w}_{\alpha_i}=u_{\alpha_i}(1)u_{-\alpha_i}(-1)u_{\alpha_i}(1)$ for $1\leq i\leq n-1$ and $\dot{w}_{\alpha_n}=u_{\alpha_n}(1)u_{-\alpha_n}(-2)u_{\alpha_n}(1).$ Now we pick $\dot{w}_H$ and $\dot{w}_{\theta}$ as in the above process and let $\dot{w}_0=\dot{w}_H \dot{w}_\theta^{-1}$. 
Moreover, given $\psi :F\rightarrow \mathbb{C}^*$ a non-trivial additive character, recall that we can define a generic character of $U_H(F)$, which is still denoted by $\psi$, by setting
$\psi(u)=\psi(\sum_{\alpha\in \Delta}u_{\alpha})$. We can identify $u=m(u',1)\in U_{M_H}(F)\simeq U_n(F)$ with $m(u')\in U_{M}$, where $u'\in U_n$. Then a straightforward calculation shows that the generic character $\psi$ is compatible with the choice of the Weyl group representative $\dot{w}_0$, i.e., we have $\psi(\dot{w}_0u \dot{w}_0^{-1})=\psi(u)$.

Let $\overline{N}_H=\dot{w}_H N_H \dot{w}_H^{-1}$. We need to find some open dense subset of $N_H(F)$ such that the Bruhat decomposition 
$\dot{w}_0^{-1}n=mn'\bar{n}$
holds for $n$ lying in this open dense subset, where $m\in M_H$, $n'\in N_H$ and $\bar{n}\in \overline{N}_H$.

Observe that in this decomposition $m$ is uniquely determined by $n$. Since $n, n'$ and $\overline{n}$ are all in the derived group $H_D=\textrm{Spin}_{2n+1}$, so is $m$. Instead of doing this directly in $\textrm{Spin}_{2n+1}$(or in $\textrm{GSpin}_{2n+1}$), we first do it in $\textrm{SO}_{2n+1}$. We identify the Weyl group elements in $H=\textrm{GSpin}_{2n+1}$ and $\textrm{SO}_{2n+1}$. A direct computation in $\textrm{SO}_{2n+1}$ shows that we should pick
$$\dot{w}_H=\begin{bmatrix}
\ \ & \ \ & (-\frac{1}{2})J' \\
\ \ & (-1)^n & \ \ \\
(-2)\cdot \leftidx{^t}J' & \ \ & \ \ \\
\end{bmatrix},\dot{w}_{\theta}=\begin{bmatrix}
J' & \ \ & \ \ \\
\ \ & 1 & \ \ \\
\ \ & \ \ & J' \\
\end{bmatrix}.$$ Hence $\dot{w}_0=\dot{w}_H\dot{w}_\theta^{-1}=\begin{bmatrix}
\ \ & \ \ & (-\frac{1}{2})I \\
\ \ & (-1)^n & \ \ \\
(-1)^n2I & \ \ & \ \ \\
\end{bmatrix}$. Therefore $$\dot{w}_0^{-1}=\begin{bmatrix}
\ \ & \ \ & (-1)^n\frac{1}{2}I \\
\ \ & (-1)^n & \ \ \\
-2I & \ \ & \ \ \\
\end{bmatrix}=\begin{bmatrix}
(-\frac{1}{2})I & \ \ & \ \ \\
\ \ & 1  \ \ \\
\ \ & \ \ & -2I \\
\end{bmatrix}\cdot \begin{bmatrix}
\ \ & \ \ & (-1)^{n-1}I \\
\ \ & (-1)^n & \ \ \\
I & \ \ & \ \ \\
\end{bmatrix}.$$ Let $\tilde{w}_0^{-1}=\begin{bmatrix}
\ \ & \ \ & (-1)^{n-1}I \\
\ \ & (-1)^n & \ \ \\
I & \ \ & \ \ \\
\end{bmatrix}$, then the above formula shows that $$\dot{w}_0^{-1}=m(-\frac{1}{2}I)\tilde{w}_0^{-1}$$ To simplify our computation, let's first compute the decomposition $\tilde{w}_0^{-1}n=m(g)n'\overline{n}$ in $\textrm{SO}_{2n+1}$.
We have
$$\tilde{w}_0^{-1}n=\begin{bmatrix}
\ \ & \ \ & (-1)^{n-1}I \\
\ \ & (-1)^n & \ \ \\
I & \ \ & \ \ \\
\end{bmatrix}
\begin{bmatrix}
I & \alpha & X \\
\ \ & 1 & -\leftidx{^t}{\alpha}J' \\
\ \ & \ \ & I \\
\end{bmatrix}=
\begin{bmatrix}
\ \ & \ \ & (-1)^{n-1}I \\
\ \ & (-1)^n & (-1)^{n-1}\leftidx{^t}{\alpha}J' \\
I & \alpha & X \\
\end{bmatrix}$$
and if we assume $m(g)=\begin{bmatrix}
g & \ \ & \ \ \\
\ \ & 1 & \ \ \\
\ \ & \ \ & J'\leftidx{^t}g^{-1}J'^{-1}\\
\end{bmatrix}$ with $g\in GL_n$,  
$n'=\begin{bmatrix}
I & \beta & Y' \\
\ \ & 1 & -\leftidx{^t}{\beta}J' \\
\ \ & \ \ & I \\
\end{bmatrix}$
and $\bar{n}=\begin{bmatrix}
I & \ \ & \ \ \\
(-1)^{n}2\leftidx{^t}{\gamma} & 1 & \ \ \\
4\leftidx{^t}{J'}Z\leftidx{^t}{J'} & (-1)^{n-1}2\leftidx{^t}J'\gamma & I\\
\end{bmatrix}$. Let $\gamma'=-2\gamma$ and $Z'=4Z$, then 
$$m(g)n'\bar{n}=\begin{bmatrix}
g & \ \ & \ \ \\
\ \ & 1 & \ \ \\
\ \ & \ \ & J'\leftidx{^t}g^{-1}J'^{-1}\\
\end{bmatrix}
\begin{bmatrix}
I-(-1)^n\beta\leftidx{^t}{\gamma'}+Y'\leftidx{^t}J' Z' \leftidx{^t}J' & \beta+(-1)^nY' \leftidx{^t}J'\gamma' & Y' \\
(-1)^{n-1}\leftidx{^t}{\gamma'}-\leftidx{^t}{\beta}Z'\leftidx{^t}J' & 1-(-1)^n\leftidx{^t}{\beta}\gamma' & -\leftidx{^t}{\beta}J' \\
\leftidx{^t}J' Z' \leftidx{^t}J' & (-1)^n\leftidx{^t}J'\gamma' & I \\
\end{bmatrix}$$
$$=\begin{bmatrix}
g(I-(-1)^n\beta\leftidx{^t}{\gamma'}+Y'\leftidx{^t}J' Z' \leftidx{^t}J') & g(\beta+(-1)^nY' \leftidx{^t}J'\gamma') & g Y' \\
(-1)^{n-1}\leftidx{^t}{\gamma'}-\leftidx{^t}{\beta}Z'\leftidx{^t}J' & 1-(-1)^n\leftidx{^t}{\beta}\gamma' & -\leftidx{^t}{\beta}J' \\
(-1)^{n-1}J'{^tg^{-1}} Z' {^tJ'} & -J'{^tg^{-1}}\gamma' & J'{^tg^{-1}}J'^{-1} \\
\end{bmatrix}.$$

Assume that $\det(X)\neq 0$, then the equality $\tilde{w}_0^{-1}n=m(g)n'\bar{n}$ in our case is equivalent to the following conditions:

(1) $I-(-1)^n\beta{^t\gamma'}+Y'{^tJ'}Z'{^tJ'}=0$;
(2) $\beta+(-1)^nY'{^tJ'}\gamma'=0;$
(3) $gY'=I;$
(4)$(-1)^{n-1}{^t\gamma'}-{^t\beta}Z'{^tJ'}=0;$
(5) $1-(-1)^n{^t\beta}\gamma'=(-1)^n;$
(6) $(-1)^{n-1}{^t\alpha}J'=-{^t\beta}J';$
(7) $(-1)^{n-1}J'{^tg^{-1}}Z'{^tJ'}=I;$
(8) $-J'{^tg^{-1}}\gamma'=\alpha;$
(9) $J'{^tg^{-1}}J'^{-1}=X.$

We also recall that by the definition of $N_H(F)$, we also have

(i) $X{^tJ'}+J'{^tX}+\alpha{^t\alpha}=0\Longleftrightarrow {^tJ'}X+{^tX}J'+{^tJ'}\alpha{^t\alpha}J'=0$

We need to simplify this first. Note that

$(9)\Longleftrightarrow g=J'{^tX^{-1}}J'^{-1};$
$(6)\Longleftrightarrow \beta=(-1)^n\alpha$; 
$(7)\Longleftrightarrow Z'={^tJ'}X^{-1}J'^{-1};$

$(8)\Longleftrightarrow \gamma'=-{^tJ'}X^{-1}\alpha;$
$(3)\Longleftrightarrow Y'=g^{-1}=J'{^tX}J'^{-1}.$

Next, we have $(5)\Longleftrightarrow (-1)^n-{^t\beta}\gamma'=1\Longleftrightarrow {^t\beta}\gamma'=(-1)^n-1\Longleftrightarrow {^t\alpha}{^tJ'}X^{-1}\alpha=(-1)^n-1$
We call this formula (ii).

Also we have
$(4)\Longleftrightarrow (-1)^{n-1}\gamma'-J'{^tZ'}\beta=0\Longleftrightarrow (-1)^{n-1}(-{^tJ'}X^{-1}\alpha)-J'({^tJ'^{-1}}{^tX^{-1}}J')(-1)^n\alpha=0
\Longleftrightarrow {^tJ'}X^{-1}\alpha-J'J'{^tX^{-1}}J'\alpha=0\Longleftrightarrow ({^tJ'}X-(-1)^{n-1}{^tX}J')X^{-1}\alpha=0$.
We call the last formula (4').

Also notice that 
$(2)\Longleftrightarrow (-1)^n\alpha+(-1)^n(J'{^tXJ'^{-1}}){^tJ'}(-{^tJ'}X^{-1}\alpha)=0\Longleftrightarrow \alpha+J'{^tX}(-1)^{n-1}(-{^tJ'}X^{-1}\alpha)=0\Longleftrightarrow \alpha-(-1)^{n-1}J'{^tX}{^tJ'}X^{-1}\alpha=0\Longleftrightarrow \alpha-J'^{-1}{^tX}{^tJ'}X^{-1}\alpha=0\Longleftrightarrow {^tX^{-1}}J'\alpha-{^tJ'}X^{-1}\alpha=0\Longleftrightarrow ({^tX^{-1}}J'-{^tJ'}X^{-1})\alpha=0\Longleftrightarrow ({^tJ'}X-(-1)^{n-1}{^tX}J')X^{-1}\alpha=0 \Longleftrightarrow (4')$. So $(2)\Longleftrightarrow (4')\Longleftrightarrow (4).$

Next we show that $(i)+(ii)\Longrightarrow (4').$
Notice that
$(i)\Longleftrightarrow{^tJ'}X+{^tX}J'+{^tJ'}\alpha{^t\alpha}J'=0\Longleftrightarrow {^tJ'}X+{^tX}J'+(-1)^{n-1}J'\alpha{^t\alpha}J'=0\Longleftrightarrow {^tJ'}X+{^tX}J'+J'\alpha{^t\alpha}{^tJ'}=0,$
multiply this by $X^{-1}\alpha$ we obtain
${^tJ'}\alpha+{^tX}J'X^{-1}\alpha+J'\alpha((-1)^n-1)=0$.
When $n$ is even, this is equal to 
${^tJ'}\alpha+{^tX}J'X^{-1}\alpha=0,$ on the other hand in this case we have $(4')\Longleftrightarrow ({^tJ'}X+{^tXJ'})X^{-1}\alpha=0\Longleftrightarrow {^tJ'}\alpha+{^tX}J'X^{-1}\alpha=0;$
When $n$ is odd, this is saying that
${^tJ'}\alpha+{^tX}J'X^{-1}\alpha-2J'\alpha=0$, but since ${^tJ'}=(-1)^{n-1}J'=J'$ in this case, we have that this is the same as saying
${^tJ'}\alpha-{^tX}J'X^{-1}\alpha=0,$ while $(4')\Longleftrightarrow ({^tJ'}X-{^tX}J')X^{-1}\alpha=0\Longleftrightarrow {^tJ'}\alpha-{^tX}J'X^{-1}\alpha=0.$ 
Hence in both cases we have that $(i)+(ii)\Longrightarrow (4')$, and this is the same as saying that $(5)+(i)\Longleftrightarrow (i)+(ii)\Longrightarrow (2)\&(4).$
So we obtain that $(1)+(2)+\cdots+(9)+(i)\Longleftrightarrow (i)+(ii)+(1).$ 

We are left with (1).
We have 
$(1)\Longleftrightarrow I-\alpha(-{^t\alpha}{^tX^{-1}}J')+({^tJ'}{^tX}J'^{-1})$
$\cdot{^tJ'}({^tJ'}X^{-1}J'^{-1}){^tJ'}=0 \Longleftrightarrow I+\alpha{^t\alpha}{^tX^{-1}}J'+J'{^tX}J'X^{-1}=0,$ we call the last formula $(iii)$.

We show that if we pick $n\in N_H(F)'$, the open dense subset of $N_H(F)$ constructed in the last section, then both $(ii)$ and $(iii)$ are implied by $(i)$.

If we let 
$Y=X{^tJ'}$ and $Z=X{^tJ'}+\frac{\alpha{^t\alpha}}{2}=Y+\frac{\alpha{^t\alpha}}{2}$ as in the previous section in which we find orbit representatives for $U_{M_H}(F)\backslash N_H(F)$, then there exists $u\in U_n(F)$ such that 
$uZ{^tu}=\begin{bmatrix}
0 & a_1 & \ \ & \ \ & \ \ \\
-a_1 & 0 & \ \ & \ \ & \ \ \\
\ \ & \ \ & \ddots & \ \ & \ \ \\
\ \ & \ \ & \ \ & 0 & a_{n-1} \\
\ \ & \ \ & \ \ & -a_{n-1} & 0 \\
\end{bmatrix}$, we denote this matrix by $Z(a_1,\cdots, a_{n-1})$,  and we also have
$u\alpha=[0,\cdots,0,a_n]^t$, hence
$uY{^tu}=\begin{bmatrix}
0 & a_1 & \ \ & \ \ & \ \ \\
-a_1 & 0 & \ \ & \ \ & \ \ \\
\ \ & \ \ & \ddots & \ \ & \ \ \\
\ \ & \ \ & \ \ & 0 & a_{n-1}  \\
\ \ & \ \ & \ \ & -a_{n-1} & -\frac{a_n^2}{2} \\\end{bmatrix}$, we denote this matrix by $Y(a_1,\cdots, a_n)$. Then we see that
$(i)\Longleftrightarrow Y+{^tY}+\alpha{^t\alpha}=0\Longleftrightarrow u(Y+{^tY}+\alpha{^t\alpha}){^tu}=0\Longleftrightarrow uY{^tu} +{^t(uY{^tu})}+(u\alpha){^t(u\alpha)}=0;$

$(ii)\Longleftrightarrow {^t\alpha}Y^{-1}\alpha=-1-(-1)^{n-1}\Longleftrightarrow {^t(u\alpha)}(uY{^tu})^{-1}(u\alpha)=-1-(-1)^{n-1};$
$(iii)\Longleftrightarrow I+(-1)^{n-1}\alpha{^t\alpha}{^tY^{-1}}+{^tY}Y^{-1}=0\Longleftrightarrow u(I+(-1)^{n-1}\alpha{^t\alpha}{^tY^{-1}}+{^tY}Y^{-1})u^{-1}=0\Longleftrightarrow I+(-1)^{n-1}(u\alpha){^t(u\alpha)}{^t(uY{^tu})^{-1}}+{^t(uY{^tu})}(uY{^tu})^{-1}=0$.

Therefore, without loss of generality, we can assume that $Y=Y(a_1,\cdots,a_n)$
and 
$\alpha=[0,\cdots,0,a_n]^t$ with all $a_i\neq 0$ in this proof.
We work on the cases when the size of the matrix $n$ is even or odd separately.

Case 1: When $n$ is even;

Now we have that ${^tJ'}=J'^{-1}=(-1)^{n-1}J'=-J'.$ So
$(ii)\Longleftrightarrow {^t\alpha}Y^{-1}\alpha=0,$ notice that $\alpha$ is a vector with only the last entry non-zero, so only the last entry in $Y^{-1}$ contributes. Let $Y_{i,j}^*$ denote the $(i,j)$-th entry of the adjoint matrix of $Y$. Then we see that ${^t\alpha}Y^{-1}\alpha=a_n^2(\det Y^{-1})Y_{n,n}^*$. But since $n$ is even and therfore the $(n,n)$-th minor of $Y$ is an $(n-1)\times (n-1)$ skew-symmetric matrix of odd size, thus $Y_{n,n}^*=0$, hence ${^t\alpha} Y^{-1}\alpha=0;$ 
And we also have that 
$(iii)\Longleftrightarrow I-\alpha{^t\alpha}{^tY^{-1}}+{^tY}Y^{-1}=0\Longleftrightarrow I-Y^{-1}\alpha{^t\alpha}+{^tY^{-1}}Y=0.$
But $(i)\Longleftrightarrow Y+{^tY}+\alpha{^t\alpha}=0 \Longleftrightarrow {^tY^{-1}}Y+I+{^tY^{-1}}\alpha{^t\alpha}=0$,
so if we replace ${^tY^{-1}}Y$ by $-I-{^tY^{-1}}\alpha{^t\alpha}$ in the last formula for (iii) right above, then we have
$(iii)\Longleftrightarrow (Y^{-1}+{^tY^{-1}})\alpha{^t\alpha}=0$. But now $\alpha{^t\alpha}$ is a matrix with only the last entry non-zero and equals $a_n^2$, so only the last column of $Y^{-1}+{^tY}^{-1}$ contribute. For the same reason we have that $Y_{n,n}^*={^tY_{n,n}^*}=0$. On the other hand, for the matrix $Y$, we see that  $Y_{i,j}=-Y_{j,i}$ for all $(i,j)\neq (n,n)$, so we see that ${^tY_{i,n}^*}=(-1)^{n-1}Y_{i,n}^*=-Y_{i,n}^*$ for all $1\leq i\leq (n-1)$. This implies that $(Y^{-1}+{^tY^{-1}})\alpha{^t\alpha}=0.$

Case 2: When $n$ is odd.

Now
$(ii)\Longleftrightarrow {^t\alpha}Y^{-1}\alpha=-2$.
We see that $Y_{n,n}^*=\det Y_{n-1}$ where $Y_{n-1}$ is the principal $(n-1)$-th minor of $Y$, therefore one can easily prove by induction that
$\det Y_{n-1}=\prod_{k\ \ odd,k\neq n}a_k^2$ but on the other hand $\det Y=-\frac{1}{2}\prod_{k\ \ odd}a_k^2$, which can also be proved by induction on the size.
Therefore we have ${^t\alpha}Y^{-1}\alpha=(\det Y)^{-1}Y_{n,n}^*a_n^2=\frac{\prod_{k\ \ odd,k\neq n}a_k^2}{-\frac{1}{2}\prod_{k\ \ odd}a_k^2}\cdot a_n^2=-2.$ We also have 
$(iii)\Longleftrightarrow I+\alpha{^t\alpha}{^tY^{-1}}+{^tY}Y^{-1}=0\Longleftrightarrow I+Y^{-1}\alpha{^t\alpha}+{^tY^{-1}}Y=0$.
Again by $(i)$ we have ${^tY^{-1}}Y=-I-{^tY^{-1}}\alpha{^t\alpha}$,
so $(iii)\Longleftrightarrow (Y-{^tY^{-1}})\alpha{^t\alpha}=0$. But in this case $Y^*_{n,n}={^tY^*_{n,n}}=\prod_{k\ \ odd,k\neq n}a_k^2$, and ${^tY_{i,n}^*}=(-1)^{n-1}Y_{i,n}^*=Y_{i,n}^*$, therefore it shows that
$(Y-{^tY^{-1}})\alpha{^t\alpha}=0.$

From the above argument we see that in both cases if we pick $n=n(X,\alpha)\in N_H(F)'$, with $\det X\neq 0$ then $(i)\Longleftrightarrow (i)+(ii)+(iii)\Longleftrightarrow (i)+(1)+\cdots +(9).$

We have showed that for $n=n(X,\alpha)\in N_H(F)$, assume $\det(X)\neq 0$, then $\tilde{w}_0^{-1}n(X,\alpha)=m(J'\leftidx{^t}Y^{-1})n'\overline{n}$. Since $\dot{w}_0^{-1}=m(-\frac{1}{2}I)\tilde{w}_0^{-1}$, we see that 
$\dot{w}_0^{-1}n=m(-\frac{1}{2}I)m(J'\leftidx{^t}Y^{-1})n'\bar{n}=m(-\frac{1}{2}J'\leftidx{^t}Y^{-1})n'\overline{n}$ holds for $n\in N_H(F)'$, which already implies that $\det{X}\neq 0$ since $X=Y\leftidx{^t}J'^{-1}=YJ'$, and $\det(Y)=\det(Y(a_1,\cdots,a_n))\neq 0$. This gives the decomposition in $SO_{2n+1}$. 

The decomposition $\dot{w}_0^{-1}n=mn'\bar{n}$ in $\textrm{SO}_{2n+1}$ and $\textrm{Spin}_{2n+1}$ differ only by the $m$ part. Recall that at the end of section 5.1 we have $M_{H_D}=\{m(g,a)\in M_H\simeq\textrm{GL}_n\times \textrm{GL}_1: \det(g)a^2=1\}^\circ$, and the covering map $\varphi:M_{H_D}\rightarrow M\simeq \textrm{GL}_n$ is given by $m(g,a)\mapsto m(g)\mapsto g$. So for $n\in N_H(F)'$, we see that $\dot{w}_0^{-1}n=m(g,a(g) )n'\overline{n}$, where $g=(-\frac{1}{2})J'\leftidx{^t}Y^{-1}$, and $a(g)$ is uniquely determined by the relation $\det(g)\cdot a(g)^2=1$, since from the realization of $M_{H_D}$ in $M_H$ the $F$-points of $M_{H_D}$ is given by a pair $(g,a)\in \textrm{GL}_n(F)\times \textrm{GL}_1(F)$ such that $\det(g)=a^{-2}$ is a square in $F^\times$ and this $a$ is the unique square root of $\det(g)^{-1}$ that lies in the identity component of the $F$-points of the variety $\{(g,a)\in \textrm{GL}_n\times \textrm{GL}_1: \det(g)a^2=1\}$.  

If $Y=Y(a_1\cdots, a_n)$, we can see that $\det(g)=\det((-\frac{1}{2})J' \leftidx{^t}Y(a_1,\cdots,a_n)^{-1})=\frac{(-\frac{1}{2})^n}{\prod_{k\ \ odd}a_k^2}$ if $n$ is even, and $(-\frac{1}{2})^{n}\cdot\frac{-2}{\prod_{k\ \ odd}a_k^2}=\frac{(-\frac{1}{2})^{n-1}}{\prod_{k\ \ odd}a_k^2}$ if $n$ is odd. Hence $a(g)=\frac{(\frac{1}{2})^{\frac{n}{2}}}{\prod_{k\ \ odd}a_k}$ if $n$ is even, and $\frac{(\frac{1}{2})^{\frac{n-1}{2}}}{\prod_{k\ \ odd}a_k}$ if $n$ is odd. So we obtain the desired Bruhat decomposition in $\textrm{Spin}_{2n+1}$ and therefore in $H=\textrm{GSpin}_{2n+1}$.
\subsection{LOCAL COEFFICIENTS AND PARTIAL BESSEL FUNCTIONS}  

Now we are ready to apply Theoerem 6.2 of [19] to express the local coefficients as the Mellin transform of partial Bessel functions in our setting.

Recall that we have an injection $\alpha^{\vee}:F^*\hookrightarrow Z_H\backslash Z_{M_H}$ and $\alpha(\alpha^{\vee}(t))=t$ for $t\in F^*$ (Lemma 5.5). By the last section we also obtained that the decomposition 
$\dot{w}_0^{-1}n=mn'\bar{n}$ holds for $n\in N_H(F)'\subset N_H(F)$. Moreover, by the work of R. Sundaravaradhan in [22], we have that except for a set of measure zero on $N_H(F)$, 
$U_{M_H,n}=U_{M_H,m}'$, where $U_{M_H,n}=\{u\in U_{M_H}: unu^{-1}=n\},$ and $U_{M_H,m}'=\{u\in U_{M_H}: mum^{-1}\in U_{M_H}\ \ \& \ \ \chi(mum^{-1})=\chi(u)\}$.
The above two properties imply that the assumptions for Theorem 6.2 in [19] are satisfied. 

Let $\pi$ be a $\psi$-generic representation of $\textrm{GL}_n(F)$ and $\eta$ a character of $F^\times$, and $\lambda$ be a Whittaker functional attached to $\pi$. Since $U_{M_H}\simeq U_n$, $\psi$ can be viewed as a character of $U_{M_H}$. The representation $\sigma_\eta$ of $M_H(F)$ is also generic. Since $\psi(u)\lambda(v)=\lambda(\pi(u)v)=\lambda(\sigma_\eta(m(u,1)v))$, $\lambda$ can also be viewed as a Whittaker functional of $\sigma_\eta$.

Let $ \mathfrak{a}^*_{H,\mathbb{C}}=\mathfrak{a}_H^*\otimes_{\mathbb{R}}\mathbb{C}$, where $\mathfrak{a}_H^*=X(M_H)_F\otimes_{\mathbb
Z}\mathbb{R}$, and $\mathfrak{a}_H=\Hom(X(M_H)_F,\mathbb{R})$ is the real Lie algebra. The Harish-Chandra map 
$H_{M_H}:M_H\longrightarrow \mathfrak{a}_H$
is defined by 
$q^{\langle \chi, H_{M_H}(m)\rangle}=\vert \chi(m)\vert_F$ for all $\chi \in X(M_H)_F.$ Given $\mu\in \mathfrak{a}_{H,\mathbb{C}}^*$, let
$I(\mu, \sigma_\eta)=\Ind_{M_HN_H}^H((\sigma_\eta\otimes q^{\langle \mu, H_{M_H}(\cdot)})\otimes 1_{N_H})$ be the induced representation, and denotes its space by $V(\mu,\sigma_\eta).$ 
As before let $\sigma_{\eta,s}$ denote the representation $\sigma_\eta \otimes q^{\langle s\hat{\alpha}, H_{M_H}(\cdot)\rangle}$, where $\hat{\alpha}=\langle \rho, \alpha\rangle ^{-1}\rho=\frac{(\alpha,\alpha)}{2(\rho,\alpha)}\rho= \frac{(e_n,e_n)}{2\cdot\frac{n}{2}(\sum_{i=1}^ne_i,e_n)}$
$\cdot(\frac{n}{2}\sum_{i=1}^ne_i)=\frac{1}{2}\sum_{i=1}^ne_i.$
For $s\in \mathbb{C}$, define $I(s,\sigma_\eta)=I(s\hat{\alpha},\sigma_\eta)$ and let $V(s,\sigma_\eta)$ be its space. The local standard intertwining operator 
$A(s,\sigma_\eta):I(s,\sigma_\eta)\longrightarrow I(-s,w_0(\sigma_\eta))$
is defined by
$A(s.\sigma_\eta)f(h)=\int_{N_H}f(\dot{w}_0^{-1}nh)dn$ for $\forall h\in H$ and $f\in V(s,\sigma_\eta)$. We identify $\lambda$ as a Whittaker functional for $\sigma_\eta$, and
denote $\lambda_{\psi}(s,\sigma_\eta)$ the Whittaker functional for $I(s,\sigma_\eta)$ given by $\lambda$, defined as
$\lambda_{\psi}(s,\sigma_\eta)(f)=\int_{N_H}\langle f(\dot{w}_0^{-1}n),\lambda\rangle\cdot \psi^{-1}(n)dn.$ Then since $\psi$ is compatible with $\dot{w}_0$, 
$\lambda_{\psi}(-s,w_0(\sigma_\eta))\circ A(s,\sigma_\eta)$ defines another Whittaker functional for $I(s,\sigma_\eta)$. 
So by uniqueness of the local Whittaker functionals we obtain that the local coefficient $C_{\psi}(s,\sigma_\eta)$ is defined by
$\lambda_{\psi}(s,\sigma_\eta)=C_{\psi}(s,\sigma_\eta)\cdot \lambda_{\psi}(-s, w_0(\sigma_\eta))\circ A(s,\sigma_\eta).$

As in [19] we will choose $\overline{N}_0\subset \overline{N}_H(F)$ to be open compact so that $\alpha^{\vee}(t)\overline{N}_0\alpha^{\vee}(t)^{-1}$ depends only on $\vert t\vert$ for all $t\in F^*$. Define $\varphi_{\kappa}(X)=1$ if $\vert X_{i,j}\vert\leq q^{(i+j-1)\kappa}$ , and $0$ otherwise.

From the calculation of the decomposition $\dot{w}_0^{-1}n=mn'\bar{n}$ in the last section we see that if $n=n(X,\alpha)$ with $\det(X)\neq 0$, then $\bar{n}=\begin{bmatrix}
I & \ \ & \ \ \\
-^t(J'X^{-1}\alpha) & 1 & \ \ \\
X^{-1} & X^{-1}\alpha & I \\
\end{bmatrix}$, we denote $\begin{bmatrix}
I & \ \ & \ \ \\
-^t(J'\tilde{X}\alpha) & 1 & \ \ \\
\tilde{X} & \tilde{X}\alpha & I \\
\end{bmatrix}$ by $\bar{n}(\tilde{X},\alpha)$. Let 
$$\overline{N}_{0,\kappa}=\{\bar{n}=\bar{n}(\tilde{X},\alpha):  \varphi_{\kappa}(-\frac{1}{8}\varpi^{2(d+f)}\cdot\leftidx{^t}{\tilde{X}}{J'}^{-1})=1\},$$ where $d$ is the conductor of $\chi$ and $f$ is the conductor of $w_{\pi}^{-1}(w_0w_{\pi})$. And let $\varphi_{\overline{N}_0,\kappa}$ be the characteristic function of $\overline{N}_{0,\kappa}$.

Let $n\in N_H(F)'$ with $w_0^{-1}n=mn'\bar{n}$, and let $z\in Z_M^0=\{\alpha^{\vee}(t): t\in F^*\}$. As in (6.21) of [19], the partial Bessel function on $M_H(F)\times Z^0_{M_H}$ is defined by 
$$j_{\sigma_{\eta,s},\kappa}(m,z)=\int_{U_{M_H}} W_{\sigma_{\eta,s},v}(mu^{-1})\varphi_{\overline{N}_{0,\kappa}}(zu^{-1}\bar{n}uz^{-1})\psi^{-1}(u)du$$
where $W_{\sigma_{\eta,s},v}\in W(\sigma_{\eta,s})$ is a Whittaker model attached to the $\sigma_{\eta,s}$, with $v$ a fixed vector in the represenattion space. For partial Bessel functions for quasi-split groups, we refer the reader to [6].

In our case $m=m(g,a(g))$ with $\det(g)a(g)^2=1$, and $u=m(u',1)$ for $u'\in U_n$. Hence $W_{\sigma_{\eta,s},v}(m(g,a(g)))=\lambda(\sigma_{\eta,s}(m(g,a(g)))v)=\eta(a(g))^{-1}\vert \det(g)\vert^{\frac{s}{2}}\lambda(\pi(g)v)=\eta(a(g))^{-1}\vert \det(g)\vert^{\frac{s}{2}}W_{\pi,v}(g).$ Moreover, let $z=\alpha^{\vee}(\varpi^{d+f}u_{\alpha_n}(\dot{w}_0\bar{n}\dot{w}_0^{-1}))$, and define for $g\in \textrm{GL}_n(F)$,  
$$j_{\pi,\eta,\dot{w}_\theta,\kappa}(g)=j_{\sigma_{\eta,s},\kappa}(m,\alpha^{\vee}(\varpi^{d+f}u_{\alpha_n}(\dot{w}_0\bar{n}\dot{w}_0^{-1}))),$$ where $m=m(g,a(g)).$ This defines the partial Bessel function on $\textrm{GL}_n(F)$ in our case.
Now apply Theorem 6.2 in [19], we obtain
\begin{proposition} Let $\pi$ be an irreducible admissible $\psi$-generic representation of $\textrm{GL}_n(F)$, lifted as a $\psi$-generic representation $\sigma$ of $M_H(F)\simeq \textrm{GL}_n(F)\times \textrm{GL}_1(F)$ by pull-back through the projection on the $\textrm{GL}_n$-factor. $\eta:F^\times\rightarrow \mathbb{C}^{\times}$ is a fixed continuous character. Define the representation $\sigma_\eta$ as before. Suppose that $\omega_{\sigma_\eta}(w_0\omega_{\sigma_\eta}^{-1})$ is ramified as a character of $F^{\times}$. Then for all sufficiently large $\kappa$ we have
$$C_{\psi}(s,\sigma_\eta)^{-1}=\gamma(2\langle \hat{\alpha}, \alpha^{\vee}\rangle)s, \omega_{\sigma_\eta}(w_0w_{\sigma_\eta}^{-1})\circ \alpha^{\vee},\psi)^{-1}$$
$$\cdot\int_{Z_{M_H}^0 U_{M_H}\backslash N_H} j_{\pi,\eta,\dot{w}_\theta, \kappa}(g)\omega_{\sigma_{\eta,s}}^{-1}(\alpha^{\vee}(u_n))(w_0\omega_{\sigma_{\eta,s}})(\alpha^{\vee}(u_n))q^{\langle s \hat{\alpha}+\rho,H_M(m)\rangle}d\dot{n}$$
where off a set of measure zero, the decomposition $\dot{w}_0^{-1}n=mn'\bar{n}$ holds as in the previous section. Here $u_n=u_{\alpha_n}(\dot{w}_0\bar{n}\dot{w}^{-1}_0)\in U_{\alpha}$, $\gamma(2\langle \hat{\alpha}, \alpha^{\vee}\rangle s, \omega_{\sigma_\eta}(w_0\omega_{\sigma_\eta}^{-1})\circ \alpha^{\vee},\psi)$ is an abelian $\gamma$-factor depending only on $\omega_{\pi}$ and $\eta$. 
\end{proposition}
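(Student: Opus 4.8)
The plan is to apply Theorem 6.2 of [19] to $H=\textrm{GSpin}_{2n+1}$ with its self-associate Siegel parabolic $P_H=M_HN_H$, and then unwind the resulting identity into the stated shape using the structure theory assembled in \S5.1--\S5.3. The first task is to verify the hypotheses of that theorem. (i) The Bruhat decomposition $\dot w_0^{-1}n=mn'\bar n$ with $m\in M_H$, $n'\in N_H$, $\bar n\in\overline N_H$ must hold for $n$ in an open dense subset of $N_H(F)$: this is exactly what \S5.3 provides on $N_H(F)'$, where moreover the $m$-component was computed explicitly as $m(g,a(g))$ with $g=-\tfrac12 J'\,{}^tY^{-1}$ and $a(g)$ the distinguished square root of $\det(g)^{-1}$ coming from the realization $M_{H_D}\subset M_H$. (ii) Off a set of measure zero on $N_H(F)$ one needs $U_{M_H,n}=U'_{M_H,m}$; this is quoted above from the work of Sundaravaradhan [22]. (iii) The character $\omega_{\sigma_\eta}(w_0\omega_{\sigma_\eta}^{-1})$ must be ramified, which is the standing assumption of the proposition. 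Since $r\colon{}^LM_H\to\textrm{GL}({}^L\mathfrak n_H)$ is irreducible we also have $C_\psi(s,\sigma_\eta)=\gamma(s,\sigma_\eta,r,\psi)$, so the formula produced is genuinely an expression for the twisted symmetric square $\gamma$-factor.

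With the hypotheses in hand, Theorem 6.2 of [19] presents $C_\psi(s,\sigma_\eta)^{-1}$ as an abelian $\gamma$-factor times the Mellin-type integral of the partial Bessel function over $Z^0_{M_H}U_{M_H}\backslash N_H$, and it remains to identify each ingredient. I would compute $2\langle\hat\alpha,\alpha^\vee\rangle$ directly from the root datum of \S5.1: with $\hat\alpha=\tfrac12\sum_{i=1}^n e_i$ and $\alpha^\vee(t)=Z_H\big(\prod_{i=1}^n e_i^*(t)\big)$ one finds $\langle\hat\alpha,\alpha^\vee\rangle=n/2$, so the deformation parameter of the abelian factor is $ns$. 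The character $\omega_{\sigma_\eta}(w_0\omega_{\sigma_\eta}^{-1})\circ\alpha^\vee$ is a character of $F^\times$ assembled from the central character $\omega_{\sigma_\eta}$, which by the definition $\sigma_\eta(m(g,a))v=\eta^{-1}(a)\pi(g)v$ is determined by $\omega_\pi$ and $\eta$ alone, together with the fixed combinatorial action of $w_0$ on $X(T_H)$; hence $\gamma(2\langle\hat\alpha,\alpha^\vee\rangle s,\omega_{\sigma_\eta}(w_0\omega_{\sigma_\eta}^{-1})\circ\alpha^\vee,\psi)$ is an abelian $\gamma$-factor depending only on $\omega_\pi$ and $\eta$. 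For the integrand I would feed in the invariant measure $da'=\prod_{i=1}^{n-1}|a_i'|^{i-1}\,da_i'$ on $R'$ from \S5.2, the explicit shapes of $\bar n=\bar n(\tilde X,\alpha)$ and of $z=\alpha^\vee(\varpi^{d+f}u_{\alpha_n}(\dot w_0\bar n\dot w_0^{-1}))$ from \S5.3, and the Whittaker computation $W_{\sigma_{\eta,s},v}(m(g,a(g)))=\eta(a(g))^{-1}|\det g|^{s/2}W_{\pi,v}(g)$; substituting these into (6.21) of [19] collapses the abstract Bessel function into the $j_{\pi,\eta,\dot{w}_\theta,\kappa}(g)$ defined just above, with the Jacobian $q^{\langle s\hat\alpha+\rho,H_M(m)\rangle}$ and the central twist $\omega_{\sigma_{\eta,s}}^{-1}(\alpha^\vee(u_n))(w_0\omega_{\sigma_{\eta,s}})(\alpha^\vee(u_n))$ appearing as in the statement.

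The step I expect to be the main obstacle is the bookkeeping around the center and the abelian factor. Because $M_H$ has a two-dimensional center while the Bruhat decomposition takes place inside the derived group $\textrm{Spin}_{2n+1}$ (so $a(g)$ is pinned down by $\det(g)a(g)^2=1$ rather than being free), one must track carefully how the normalization in Theorem 6.2 of [19] interacts with the identification $M_{H_D}\subset M_H$ of \S5.1 and with the choice of the open compact $\overline N_{0,\kappa}$, whose definition involves the conductor $d$ of $\chi$ and the conductor $f$ of $\omega_\pi^{-1}(w_0\omega_\pi)$ entering the element $z$. Once these normalizations are reconciled, the argument is purely a matter of substituting the explicit data of \S\S5.1--5.3, and the stated formula follows.
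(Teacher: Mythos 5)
Your approach matches the paper's exactly: verify the three hypotheses of Theorem 6.2 of [19] (the Bruhat decomposition on the open dense set $N_H(F)'$ from \S5.3, the twisted-centralizer identity $U_{M_H,n}=U'_{M_H,m}$ from Sundaravaradhan [22], and the ramification of $\omega_{\sigma_\eta}(w_0\omega_{\sigma_\eta}^{-1})$), note that irreducibility of $r$ gives $C_\psi(s,\sigma_\eta)=\gamma(s,\sigma_\eta,r,\psi)$, then apply the theorem and identify the ingredients using the explicit data of \S\S5.1--5.3. The computations you flag as potential obstacles (the value $\langle\hat\alpha,\alpha^\vee\rangle=n/2$, the derived-group constraint $\det(g)a(g)^2=1$, the Whittaker-function identity for $\sigma_{\eta,s}$) are indeed exactly what the paper carries out, partly before stating this proposition and partly in the simplifications leading to Proposition 5.7.
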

Let's simplify this formula. First recall that in our case $\alpha=e_n$, $\rho$ is the half of the sum of roots in $N_H$. The roots in $N_H$ are $e_i+e_j(1\leq i<j\leq n)$ and $e_i(1\leq i\leq n)$, so $\rho=\frac{1}{2}(\sum_{1\leq i<j\leq n}(e_i+e_j)+\sum_{i=1}^n e_i)=\frac{n}{2}\sum_{i=1}^n e_i$. 

We have $$\langle \rho, \alpha\rangle=\frac{2(\rho,\alpha)}{(\alpha,\alpha)}=\frac{2(\frac{n}{2}\sum_{i=1}^n e_i, e_n)}{(e_n,e_n)}=n.$$ So $\hat{\alpha}=\langle \rho, \alpha\rangle ^{-1}\rho=n^{-1}(\frac{n}{2}\sum_{i=1}^ne_i)=\frac{1}{2}\sum_{i=1}^ne_i$.
 Since $\alpha^{\vee}=\sum_{i=1}^ne_i^*,$ so we have for $\forall t\in F^*$,
$t^{\langle \hat{\alpha}, \alpha^{\vee}\rangle}=\hat{\alpha}(\alpha^{\vee}(t))=\frac{1}{2}\sum_{i=1}^ne_i(\prod_{i=1}^ne_i^*(t))=t^{n/2}.$
Therefore $\langle \hat{\alpha},\alpha^{\vee}\rangle=\frac{n}{2}.$
This implies that
$q^{\langle s\hat{\alpha}, H_{M_H}(m)}=q^{\langle s\hat{\alpha}, H_{M_H}(m(g,a(g)))}=\vert \det(g)\vert^{s/2}.$ 
Then
$\omega_{\sigma_{\eta,s}}(m(g,a(g)))$
$=\omega_{\sigma_{\eta}}(m(g,a(g)))\vert \det g\vert^{s/2}=\eta^{-1}(a(g))\vert \det(g)\vert^{\frac{s}{2}}\omega_\pi(g).$

Secondly, since we have $w_0=w_H\cdot w_{\theta}$, where $\theta=\Delta-\{\alpha_n\}=\Delta-\{\alpha\},$ and 
$w_H: e_i\mapsto -e_i$, $w_{\theta}: e_i\mapsto e_{n+1-i}$, we obtain
$w_0^{-1}\cdot\prod_{i=1}^ne_i^*(t)\cdot w_0=\prod_{i=1}^n(-e^*_{n+1-i}(t))=\prod_{i=1}^n(-e^*_i(t)).$
This implies that 
$$\omega_{\sigma_\eta}(w_0\omega_{\sigma_\eta}^{-1})(\alpha^{\vee}(t))=\omega_{\sigma_\eta}(\prod_{i=1}^ne_i^*(t))\cdot \omega_{\sigma_\eta}^{-1}(w_0^{-1}\cdot \prod_{i=1}^ne_i^*(t)\cdot w_0)=$$$$\omega_{\sigma_\eta}(\prod_{i=1}^ne^*_i(t))\cdot \omega_{\sigma_\eta}^{-1}(\prod_{i=1}^n(-e^*_i(t)))=\omega_{\sigma_\eta}(\prod_{i=1}^ne^*_i(t))\cdot \omega_{\sigma_\eta}(\prod_{i=1}^ne^*_i(t))=\omega_{\sigma_\eta}^2(\alpha^{\vee}(t))=\omega_{\pi}^2(t),$$ since $\eta$ is trivial on the $\textrm{GL}_n$-component of $M_H$.
So $\omega_{\pi}(w_0 \omega_{\pi}^{-1})\circ \alpha^{\vee}=\omega_{\pi}^2.$

Similarly 
$$\omega_{\sigma_{\eta,s}}^{-1}(w_0 \omega_{\sigma_{\eta,s}})(\alpha^{\vee}(t))=\omega_{\sigma_{\eta,s}}^{-1}(\prod_{i=1}^ne^*_i(t))\cdot \omega_{\sigma_{\eta_s}}(\prod_{i=1}^n(-e^*_i(t)))=\omega_{\sigma_{\eta_s}}^{-2}(\prod_{i=1}^ne^*_i(t))$$$$=\omega_{\pi}^{-2}(\alpha^{\vee}(t))\cdot \vert t^{n}\vert^{-(s/2)\cdot 2}\cdot =\omega_{\pi}^{-2}(t)\cdot \vert t\vert ^{-ns}.$$ So
$\omega_{\sigma_{\eta,s}}^{-1}(w_0\omega_{\sigma_{\eta,s}})\circ \alpha^{\vee}=\omega_{\pi}^{-2}(\cdot) \vert \cdot \vert ^{-ns}.$

Finally
$$q^{\langle s\hat{\alpha}+\rho,H_{M_H}(m)\rangle}=\vert(\frac{s}{2}\sum_{i=1}^ne_i+\frac{n}{2}\sum_{i=1}^ne_i)(m(g,a(g)))\vert=$$
$$\vert \frac{(s+n)}{2}\sum_{i=1}^ne_i(m(g,a(g))))\vert=\vert\det(g)\vert^{\frac{s+n}{2}}.$$

From the above discussion we obtain a simplified version of the local coefficient formula in our case, namely
\begin{proposition} Let $\pi$ be an irreducible admissible $\psi$-generic representation of $\textrm{GL}_n(F)$, lifted as a $\psi$-generic representation $\sigma$ of $M_H(F)\simeq \textrm{GL}_n(F)\times \textrm{GL}_1(F)$ by pull-back through the projection on the $\textrm{GL}_n$-factor. $\eta:F^\times\rightarrow \mathbb{C}^{\times}$ is a fixed continuous character. Define the representation $\sigma_\eta$ as before. Suppose that $\omega_{\sigma_\eta}(w_0\omega_{\sigma_\eta}^{-1})$ is ramified as a character of $F^{\times}$. Then for all sufficiently large $\kappa$ we have
$$C_{\psi}(s,\sigma_\eta)^{-1}=\gamma(ns, \omega_{\pi}^2,\psi)^{-1}$$$$\cdot\int_{Z^0_{M_H}U_{M_H}\backslash N_H}j_{\pi,\eta,\dot{w}_\theta,\kappa}(g)\omega_{\pi}^{-2}(u_n)\vert u_n\vert^{-ns}\vert \det(g)\vert^{\frac{s+n}{2}}d\dot{n}. $$
where off a set of measure zero, the decomposition $\dot{w}_0^{-1}n=mn'\bar{n}$ holds as in the previous section. Here $u_n=u_{\alpha_n}(\dot{w}_0\bar{n}\dot{w}^{-1}_0)\in U_{\alpha_n}=U_{\alpha}$. And $\gamma(ns, \omega_{\pi}^2,\psi)$ is an abelian $\gamma$-factor depending only on $\omega_{\pi}$. 
\end{proposition}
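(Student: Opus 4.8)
The plan is to obtain this as a direct specialization of the preceding proposition, which records the raw output of Theorem 6.2 of [19], by evaluating every structure constant occurring there in terms of the explicit root datum of $H=\textrm{GSpin}_{2n+1}$ and the embedding $\alpha^\vee\colon F^\times\hookrightarrow Z_H\backslash Z_{M_H}$ of Lemma 5.5. First I would record that $\rho=\tfrac n2\sum_{i=1}^n e_i$, that $\langle\rho,\alpha\rangle=n$ for $\alpha=\alpha_n=e_n$, hence $\hat\alpha=\tfrac12\sum_{i=1}^n e_i$; pairing against $\alpha^\vee=\sum_{i=1}^n e_i^*$ gives $\langle\hat\alpha,\alpha^\vee\rangle=\tfrac n2$, so the exponent $2\langle\hat\alpha,\alpha^\vee\rangle s$ appearing in the abelian $\gamma$-factor of the preceding proposition collapses to $ns$. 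Likewise $s\hat\alpha+\rho=\tfrac{s+n}2\sum_{i=1}^n e_i$, so that $q^{\langle s\hat\alpha+\rho,H_{M_H}(m)\rangle}=|\det(g)|^{(s+n)/2}$ for $m=m(g,a(g))$.

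Next I would compute the central-character twists. Writing $w_0=w_Hw_\theta$ with $w_H\colon e_i\mapsto -e_i$ and $w_\theta\colon e_i\mapsto e_{n+1-i}$, conjugation by $\dot{w}_0$ carries $\alpha^\vee(t)=\prod_{i=1}^n e_i^*(t)$ to its inverse, so $\omega_{\sigma_\eta}(w_0\omega_{\sigma_\eta}^{-1})(\alpha^\vee(t))=\omega_{\sigma_\eta}^2(\alpha^\vee(t))$; a direct check using Lemma 5.5, in which the $\eta$-twist drops out because $\eta$ is trivial on the $\textrm{GL}_n$-component of $M_H$, identifies this with $\omega_\pi^2(t)$. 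Hence the abelian $\gamma$-factor of the preceding proposition equals $\gamma(ns,\omega_\pi^2,\psi)$, which depends only on $\omega_\pi$, and the ramification hypothesis imposed here is exactly the one needed to invoke Theorem 6.2 of [19]. Running the same computation for $\sigma_{\eta,s}=\sigma_\eta\otimes q^{\langle s\hat\alpha,H_{M_H}(\cdot)\rangle}$ gives $\omega_{\sigma_{\eta,s}}^{-1}(w_0\omega_{\sigma_{\eta,s}})\circ\alpha^\vee=\omega_\pi^{-2}(\cdot)\,|\cdot|^{-ns}$, which accounts for the factor $\omega_\pi^{-2}(u_n)|u_n|^{-ns}$ in the integrand.

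Substituting these four evaluations into the formula of the preceding proposition, and retaining the invariant measure $d\dot{n}$ on $Z^0_{M_H}U_{M_H}\backslash N_H$ constructed in Section 5.2, yields the asserted identity verbatim. I do not anticipate a genuine obstacle here: everything reduces to the explicit root datum already in hand, and the only point requiring care is the bookkeeping of the $\dot{w}_0$-action on the cocharacter lattice and the ensuing identification of $\omega_{\sigma_\eta}\circ\alpha^\vee$ with $\omega_\pi$ --- in particular the cancellation of the $\eta$-twist and the fact that the signs in $\prod_i e_i^*(t)\mapsto\prod_i e_i^*(t)^{-1}$ disappear after squaring.
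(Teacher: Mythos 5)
Your proposal is correct and follows exactly the same route as the paper: it specializes the preceding Proposition 5.6 (Shahidi's Theorem 6.2 of [19]) by computing $\rho$, $\hat\alpha$, $\langle\hat\alpha,\alpha^\vee\rangle=\tfrac n2$, the action of $w_0$ on $\alpha^\vee$, and the resulting identifications $\omega_{\sigma_\eta}(w_0\omega_{\sigma_\eta}^{-1})\circ\alpha^\vee=\omega_\pi^2$ and $\omega_{\sigma_{\eta,s}}^{-1}(w_0\omega_{\sigma_{\eta,s}})\circ\alpha^\vee=\omega_\pi^{-2}|\cdot|^{-ns}$, together with $q^{\langle s\hat\alpha+\rho,H_{M_H}(m)\rangle}=|\det g|^{(s+n)/2}$. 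These are precisely the computations the paper carries out between Propositions 5.6 and 5.7.
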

In the proof of stability, we also need an integral formula for the local coefficient $C_{\psi}(s,(\sigma_\eta \otimes \chi))^{-1}$ for a sufficiently ramified character $\chi$ of $F^{\times}$, viewed as a character of $M_H(F)$ by $\chi(m(g,a))=\chi(\det(g))$. Therefore it is important to be able to choose $\kappa$ or equivalently, $\overline{N}_0\subset \overline{N}_H(F)$ to be independent of $\chi$.

To make this work, as in the proof of Theorem 6.2 in [19] and the corresponding discussion in [7], if we fix an irreducible generic representation $\pi'$ of $G$ such that $\omega_{\sigma_\eta'}$ is ramified, where $\sigma'$ is the lift of $\pi'$, $\sigma_\eta'$ is defined in the same way as $\sigma_\eta$. Then $\overline{N}_0$ is chosen to satisfy (1) $\exists f\in V(s,\sigma_\eta')$ such that $f$ is supported in $P_H\overline{N}_0$; (2) $\overline{N}_0$ is large enough such that $\alpha^{\vee}(t)\overline{N}_0\alpha^{\vee}(t)^{-1}$ depends only on $\vert t\vert $ for all $t\in F^{\times}.$ Note that here (2) does not depend on $\pi'$. For (1), as in the proof of Theorem 6.2 in [19], there exist $f\in V(s,\sigma_\eta') $ s.t. $f$ is compactly supported modulo $P_H$. Fix such an $f$ and choose $\overline{N}_0$ sufficiently large such that it contains the support of $f$, then $f$ is supported in $P_H\overline{N}_0$. 

Now let's get back to our case. We fix a character $\chi_0$ of $F^{\times}$ such that $\omega_{\sigma_\eta}\chi_0^n=\eta^{-1}\omega_\pi\chi_0^n=\omega_{\sigma_\eta\otimes \chi_0}$ is ramified. Then we take $\kappa_0$ such that both conditions (1) and (2) above are satisfied for $\overline{N}_{0,\kappa_0}$ and $f_{\chi_0}\in V(s,\sigma_\eta\otimes\chi_0 )$. Also note that if $\kappa\ge \kappa_0$, we have $\overline{N}_{0,\kappa_0}\subset \overline{N}_{0,\kappa}$. Therefore (1) and (2) hold for $\sigma_\eta\otimes \chi_0$ and all $\kappa\ge \kappa_0.$ Let $\chi$ be any other character of $F^{\times}$ such that $\omega_{\sigma_\eta}\chi^n$ is ramified. Then as discussed above we can choose $f_{\chi}\in V(s,\sigma_\eta\otimes\chi )$ which is supported in $P_H\overline{N}_{0,\chi}$ for some open compact $\overline{N}_{0,\chi}\subset\overline{N}_H$. Now if $\overline{N}_{0,\chi}\subset\overline{N}_{0,\chi_0}$, then Proposition 5.7 holds for $\sigma_\eta\otimes \chi$ and all $\kappa\ge \kappa_0$. While if not, note that $\alpha^{\vee}(t)=\prod_{i=1}^ne_i^*(t)\in M_H,$ then $R(\alpha^{\vee}(t)^{-1})f$ will be supported in $P_H(\alpha^{\vee}(t)^{-1}\overline{N}_{0,\chi}\alpha^{\vee}(t))$. To see this, note that for $\bar{n}(\tilde{X},\alpha)\in \overline{N}_H(F)$, we have $\alpha^{\vee}(t)^{-1}\bar{n}(\tilde{X},\alpha)\alpha^{\vee}(t)=\bar{n}(t^2\tilde{X}, t\alpha).$ Therefore if we take $\vert t\vert $ sufficiently small, we will have
$\alpha^{\vee}(t)^{-1}\overline{N}_{0,\chi}\alpha^{\vee}(t)\subset \overline{N}_{0,\kappa_0}$. So if we take such a $t$ and replace $f$ with $f'_{\chi}=R(\alpha^{\vee}(t)^{-1})f_{\chi}$, we see that $f'_{\chi}$ will be supported in $P_H\overline{N}_{0,\kappa_0}$ and Proposition 5.7 holds for $\sigma_\eta\otimes \chi$ and for all $\kappa\ge \kappa_0$. Now we obtain a stronger version of Proposition 5.7.

\begin{proposition} Let $\pi$ be an irreducible admissible $\psi$-generic representation of $\textrm{GL}_n(F)$, lifted as a $\psi$-generic representation $\sigma$ of $M_H(F)\simeq \textrm{GL}_n(F)\times \textrm{GL}_1(F)$ by pull-back through the projection on the $\textrm{GL}_n$-factor. $\eta:F^\times\rightarrow \mathbb{C}^{\times}$ is a fixed continuous character. Define the representation $\sigma_\eta$ as before. Suppose that $\omega_{\sigma_\eta}(w_0\omega_{\sigma_\eta}^{-1})$ is ramified as a character of $F^{\times}$. Then there exist a $\kappa_0$ such that for all $\kappa\ge \kappa_0$ and all $\chi$ such that $\omega_{\sigma_\eta}\chi^n$ is ramified, we have
$$C_{\psi}(s,\sigma_\eta\otimes \chi)^{-1}=\gamma(ns, (w_{\pi}\chi)^{2n},\psi)^{-1}\int_{Z^0_{M_H}U_{M_H}\backslash N_H}j_{\pi\otimes \chi,\eta,\dot{w}_\theta,\kappa}(g)(\omega_{\pi}\chi^n)^{-2}(u_n)$$$$\cdot \vert u_n\vert^{-ns}\vert \det(g)\vert^{\frac{s+n}{2}}d\dot{n}. $$
where off a set of measure zero, the decomposition $\dot{w}_0^{-1}n=mn'\bar{n}$ holds as in the previous section. Here $u_n=u_{\alpha_n}(\dot{w}_0\bar{n}\dot{w}^{-1}_0)\in U_{\alpha_n}=U_{\alpha}$. And $\gamma(ns,(\omega_{\pi}\chi)^{2n},\psi)$ is an abelian $\gamma$-factor depending only on $\omega_{\pi}$ and $\chi$. 
\end{proposition}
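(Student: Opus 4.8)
Proposition 5.8 differs from Proposition 5.7 only in that the cutoff $\kappa$ --- equivalently, the auxiliary open compact set $\overline{N}_0\subset\overline{N}_H(F)$ truncating the intertwining integral --- must now be chosen uniformly over all sufficiently ramified $\chi$. The plan is therefore to isolate the single condition on $\overline{N}_0$ that depends on $\chi$ and to remove that dependence by a central translation. Recall from the proof of Theorem 6.2 in [19] that $\overline{N}_0$ is required to satisfy: (1) there is a section $f\in V(s,\sigma_\eta\otimes\chi)$ whose support lies in $P_H\overline{N}_0$; and (2) $\alpha^{\vee}(t)\overline{N}_0\alpha^{\vee}(t)^{-1}$ depends only on $|t|$ for all $t\in F^{\times}$. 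Condition (2) is intrinsic to $\overline{N}_0$ and independent of $\chi$, so only (1) is genuinely at issue.

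First I would pin down $\kappa_0$ using one auxiliary twist: fix a character $\chi_0$ of $F^{\times}$ with $\omega_{\sigma_\eta}\chi_0^n$ ramified, and choose $\kappa_0$ large enough that $\overline{N}_{0,\kappa_0}$ satisfies (2) and contains, modulo $P_H$, the support of some section $f_{\chi_0}\in V(s,\sigma_\eta\otimes\chi_0)$ that is compactly supported modulo $P_H$; the existence of such a section is exactly the point established in the proof of Theorem 6.2 in [19]. Because $\overline{N}_{0,\kappa_0}\subset\overline{N}_{0,\kappa}$ whenever $\kappa\ge\kappa_0$, both conditions then persist for all $\kappa\ge\kappa_0$. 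Next, for an arbitrary $\chi$ with $\omega_{\sigma_\eta}\chi^n$ ramified I would pick $f_\chi\in V(s,\sigma_\eta\otimes\chi)$ compactly supported modulo $P_H$, supported say in $P_H\overline{N}_{0,\chi}$ for some open compact $\overline{N}_{0,\chi}\subset\overline{N}_H(F)$; if $\overline{N}_{0,\chi}\not\subset\overline{N}_{0,\kappa_0}$, I would use $\alpha^{\vee}(t)=\prod_{i=1}^n e_i^*(t)\in M_H$ and the identity $\alpha^{\vee}(t)^{-1}\bar{n}(\tilde{X},\alpha)\alpha^{\vee}(t)=\bar{n}(t^2\tilde{X},t\alpha)$ to see that for $|t|$ small $\alpha^{\vee}(t)^{-1}\overline{N}_{0,\chi}\alpha^{\vee}(t)\subset\overline{N}_{0,\kappa_0}$, so that $R(\alpha^{\vee}(t)^{-1})f_\chi$ is supported in $P_H\overline{N}_{0,\kappa_0}$. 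Feeding this translated section into the proof of Theorem 6.2 in [19] --- equivalently into the derivation of Proposition 5.7 --- would then give the integral formula for $C_\psi(s,\sigma_\eta\otimes\chi)^{-1}$ with $\kappa=\kappa_0$, hence with every $\kappa\ge\kappa_0$.

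Finally I would read off the explicit shape of the formula by applying Proposition 5.7 with $\pi$ replaced by $\pi\otimes\chi$: viewing $\chi$ on $M_H$ through $m(g,a)\mapsto\chi(\det g)$, one has $\sigma_\eta\otimes\chi=(\pi\otimes\chi)_\eta$, the relevant central character is $\omega_{\pi\otimes\chi}=\omega_\pi\chi^n$, and the computation $\omega_{\sigma_\eta\otimes\chi}(w_0\omega_{\sigma_\eta\otimes\chi}^{-1})\circ\alpha^{\vee}=(\omega_\pi\chi^n)^2$ is word for word the one carried out before Proposition 5.7; this produces the abelian factor $\gamma(ns,(\omega_\pi\chi^n)^2,\psi)$, the character $(\omega_\pi\chi^n)^{-2}(u_n)$, the remaining weights $|u_n|^{-ns}|\det g|^{(s+n)/2}$, and the partial Bessel function $j_{\pi\otimes\chi,\eta,\dot{w}_\theta,\kappa}$. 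The main obstacle is precisely the uniformity of $\kappa_0$: a priori the $\overline{N}_0$ forced by condition (1) for each $\chi$ might grow without bound, but the conjugation identity shows that the conjugates $\alpha^{\vee}(t)^{-1}\overline{N}_{0,\chi}\alpha^{\vee}(t)$ contract to the identity as $|t|\to 0$, while condition (2) guarantees the partial Bessel integral is insensitive to such a central translation, so the single set $\overline{N}_{0,\kappa_0}$ works for all $\chi$.
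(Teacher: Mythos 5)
Your proposal matches the paper's own argument step for step: identify conditions (1) and (2) of Theorem 6.2 in [19], observe that (2) is intrinsic to $\overline{N}_0$ and the truncation sets nest ($\overline{N}_{0,\kappa_0}\subset\overline{N}_{0,\kappa}$ for $\kappa\ge\kappa_0$), fix a base twist $\chi_0$ to pin down $\kappa_0$, then for arbitrary $\chi$ use $\alpha^{\vee}(t)^{-1}\bar{n}(\tilde X,\alpha)\alpha^{\vee}(t)=\bar{n}(t^2\tilde X,t\alpha)$ to translate the support of $f_\chi$ into $P_H\overline{N}_{0,\kappa_0}$, and finally read off the formula from Proposition 5.7 applied to $\pi\otimes\chi$. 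As a minor aside, your reading of the abelian factor as $\gamma(ns,(\omega_\pi\chi^n)^2,\psi)$ is the correct substitution (it agrees with the form appearing in Proposition 5.9); the $(\omega_\pi\chi)^{2n}$ printed in the statement of Proposition 5.8 is a typo.
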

Next, we use our orbit space representatives and measure to further simplify the integral in the local coefficient formula.
Recall that we have the decomposition 
$\dot{w}_0^{-1}n=mn'\bar{n}$
holds for $n$ lying in the open dense subset $N_H(F)'$ of $N_H(F)$. Now for $n=n(X,\alpha)$,
let $Y=X^tJ'=(Z-\frac{\alpha^t\alpha}{2})J'^tJ'=Z-\frac{\alpha^t\alpha}{2}$. Then by section 5.2 on orbit space and measure, if $n\in N_H(F)'$, then $Z$ can be taken as $Z(a_1,\cdots,a_{n-1})$ and $\alpha$ can be taken as $[0,\cdots,0,a_n]^t$, consequently $Y$ can be given as 
$Y(a_1,\cdots,a_n)$(see the notations on Page 22). Also recall that the calculation of the decomposition $w_0^{-1}n=mn'\bar{n}$ gives 
$m=m(g,a(g))$
where $g=(-\frac{1}{2})J'^tY^{-1}$ and $a(g)=\frac{(\frac{1}{2})^{\frac{n}{2}}}{\prod_{k\ \ odd}a_k}$ if $n$ is even and $a(g)=\frac{(\frac{1}{2})^{\frac{n-1}{2}}}{\prod_{k\ \ odd}a_k}$ if $n$ is odd.

We have seen that in the decomposition $\dot{w}_0^{-1}n=mn'\bar{n}$, if $n=n(X,\alpha)$, then the corresponding
$\bar{n}=\bar{n}(X^{-1},\alpha)=\begin{bmatrix}
I & \ \ & \ \ \\
-^t(J'X^{-1}\alpha) & 1 & \ \ \\
X^{-1} & X^{-1}\alpha & I \\
\end{bmatrix}.$
So $$\dot{w}_0\bar{n}\dot{w}_0^{-1}=\begin{bmatrix}
\ \ & \ \ & (-\frac{1}{2})I \\
\ \ & (-1)^n & \ \ \\
(-1)^{n}2I & \ \ & \ \ \\
\end{bmatrix}
\begin{bmatrix}
I & \ \ & \ \ \\
-^t(J'X^{-1}\alpha) & 1 & \ \ \\
X^{-1} & X^{-1}\alpha & I \\
\end{bmatrix}$$$$
\cdot \begin{bmatrix}
\ \ & \ \ & (-1)^{n}\frac{1}{2}I \\
\ \ & (-1)^n & \ \ \\
-2I & \ \ & \ \ \\
\end{bmatrix}
=\begin{bmatrix}
I & (-1)^{n-1}\frac{1}{2}X^{-1}\alpha & (-1)^{n-1}\frac{1}{4}X^{-1} \\
\ \ & 1 & -\frac{1}{2}^t(J'X^{-1}\alpha)' \\
\ \ & \ \ & I \\
\end{bmatrix}$$
So $u_n=u_{\alpha_n}(\dot{w}_0\bar{n}\dot{w}_0^{-1})$ is the last entry of $(-1)^{n-1}\frac{1}{2}X^{-1}\alpha$. Since only the last entry of $\alpha$ is non-zero, $u_n=(-1)^{n-1}\frac{1}{2}(\det X)^{-1}X_{n,n}^*a_n$, where $X^*_{n,n}$ is the $(n,n)$-th entry of the adjoint matrix of $X$. Since $X=Y{^tJ'^{-1}}=YJ'$, $Y$ is the matrix given as above, it is not hard to see that $X_{n,n}^*=(-1)^{n-1}\prod_{i=1}^{n-1}a_i.$ Therefore we have that $u_n=\frac{1}{2}(\det X)^{-1}\prod_{i=1}^n a_i.$ Also notice that $X=YJ'$ and $\det{J'}=1$, so $\det(X)=\det(Y)$. Hence $u_n=\frac{1}{2}(\det Y)^{-1}\prod_{i=1}^na_i.$ 

Next, we work on $zu^{-1}\bar{n}uz^{-1}.$ Let $z_0=\varpi^{d+f}u_n=\frac{1}{2}\varpi^{d+f}(\det Y)^{-1}\prod_{i=1}^na_i,$
let $t=(\det Y)^{-1}\prod_{i=1}^na_i\in F^{\times}$, then $z_0=\frac{1}{2}\varpi^{d+f}t$. Let $u=m(u_0,1)$ and $z=\alpha^{\vee}(z_0)=m(z_0I,1)=$ with $u_0\in U_n(F)\subset GL_n(F).$ Since $Y=X^tJ'$, so $X^{-1}=^tJ'Y^{-1}$, therefore $\bar{n}(X^{-1},\alpha)=\bar{n}(^tJ'Y^{-1},\alpha)=\begin{bmatrix}
I & \ \ & \ \ \\
-{^t\alpha}{^tY^{-1}} & 1 & \ \ \\
{^tJ'}Y^{-1} & {^tJ'}Y^{-1}\alpha & I \\
\end{bmatrix}$.
Then a direct calculation shows that $u^{-1}\bar{n}(^tJ'Y^{-1},\alpha)u=\bar{n}({^tJ}'{^tu}_0Y^{-1}u_0,u_0^{-1}\alpha)$.
This implies that
$zu^{-1}\bar{n}(^tJ'Y^{-1},\alpha)uz^{-1}=\bar{n}(z_0^{-2}\cdot\leftidx{^t}J'{^tu}_0Y^{-1}u_0,z_0u_0^{-1}\alpha)$.

We have $z_0=\frac{1}{2}\varpi^{d+f}t$, with $t=(\det Y)^{-1}\prod_{i=1}^na_i\in F^{\times}.$ Let $Y'=t^2Y$ and $\alpha'=t\alpha$. Recall that 
$\overline{N}_{0,\kappa}=\{\bar{n}=\bar{n}(\tilde{X},\alpha):  \varphi_{\kappa}(-\frac{1}{8}\varpi^{2(d+f)}\cdot\leftidx{^t}{\tilde{X}}{J'}^{-1})=1\}.$
Therefore
$\varphi_{\overline{N}_{o,\kappa}}(zu^{-1}\bar{n}uz^{-1})=\varphi_{\kappa}(-\frac{1}{8}\varpi^{2(d+f)}\cdot(\frac{1}{2}\varpi^{d+f}t)^{-2}\cdot\leftidx{^t}(^tJ'\leftidx{^t}u_0Y^{-1}u_0)J'^{-1})$
$=\varphi_\kappa(-\frac{1}{2}t^{-2}(^tu_0\leftidx{^t}Y^{-1}u_0J')J'^{-1})=\varphi_{\kappa}(-\frac{1}{2}t^{-2}\cdot\leftidx{^t}u_0\leftidx{^t}Y^{-1}u_0)=\varphi_\kappa(-\frac{1}{2}\leftidx{^t}u_0\leftidx{^t}Y'^{-1}u_0)$. We pick the long Weyl group representative of $G=\textrm{GL}_n$ by $\dot{w}_G=J'$, then
$$j_{\pi,\eta,\dot{w}_\theta,\kappa}(g)=j_{\pi,\eta,\dot{w}_\theta,\kappa}(-\frac{1}{2}\dot{w}_G\leftidx{^t}Y^{-1})$$$$=\int_{U_{M_H}}W_{\sigma_{\eta,s},v}(m(-\frac{1}{2}\dot{w}_G{^tY^{-1}},a(g))u)\varphi_{\overline{N}_{0,\kappa}}(zu^{-1}\bar{n}uz^{-1})\psi^{-1}(u)du$$$$=\int_{U_n} \eta(a(g))^{-1}\vert \det(g)\vert^{\frac{s}{2}} W_{{\pi},v}(-\frac{1}{2}\dot{w}_G{^tY^{-1}}u_0)\varphi_{\kappa}(-\frac{1}{2}\leftidx{^t}u_0{^tY'^{-1}}u_0)\psi^{-1}(u_0)du_0$$$$=\eta(a(g))^{-1}\vert \det(g)\vert^{\frac{s}{2}}\int_{U_n} W_{\pi,v}(gu)\varphi_{\kappa}(\leftidx{^t}u \dot{w}^{-1}_G g' u)\psi^{-1}(u)du.$$
where $g'=-\frac{1}{2}\dot{w}_G\leftidx{^t}Y'^{-1}$(so $g=t^2g'$), $U_n$ is the upper triangular unipotent matrices of size $n$ in $\textrm{GL}_n$. We also used the fact that $W_{\pi,v}(g)=\lambda(\pi(g)v)$, therefore $W_{\sigma_{\eta,s},v}(m(g,a(g)))=\lambda(\sigma_{\eta,s}(m(g,a(g))))=\eta(a(g))^{-1}\vert \det(g)\vert^{\frac{s}{2}}\lambda(\pi(g)v)=\eta(a(g))^{-1}\vert \det(g)\vert^{\frac{s}{2}}W_{\pi,v}(g)$. 

Moreover, substitute $u_n=\frac{1}{2}(\det Y)^{-1}\prod_{i=1}^na_i$ into the local coefficient formula, and use the orbit space measure we constructed earlier. After some simplifications, we obtain
\begin{proposition}
Let $\pi$ be an irreducible admissible $\psi$-generic representation of $\textrm{GL}_n$, lifted as a $\psi$-generic representation $\sigma$ of $M_H(F)\simeq \textrm{GL}_n(F)\times \textrm{GL}_1(F)$ by pull-back through the projection on the $\textrm{GL}_n$-factor. $\eta:F^\times\rightarrow \mathbb{C}^{\times}$ is a fixed continuous character. Define the representation $\sigma_\eta$ as before. Suppose that $\omega_{\sigma_\eta}(w_0\omega_{\sigma_\eta}^{-1})$ is ramified as a character of $F^{\times}$. Then for all sufficiently large $\kappa$, we have
$$C_{\psi}(s,\sigma_\eta)^{-1}=\gamma(ns, \omega_{\pi}^2,\psi)^{-1}\int_{F^{\times}\backslash R}j_{\pi,\eta,\dot{w}_\theta,\kappa}(-\frac{1}{2}\dot{w}_G\leftidx{^t}Y^{-1})$$$$\cdot\omega_{\pi}(4\det(Y)^2\prod_{i=1}^n a_i^{-2}) \vert\frac{1}{2}\vert^{\frac{n(n-s)}{2}}\vert \det(Y)\vert^{\frac{2ns-s-n}{2}} \prod_{i=1}^n\vert a_i \vert ^{i-1-ns}da_i$$
In addition, there exists a constant $\kappa_0$ such that for all $\kappa\ge \kappa_0$ and all $\chi$ such that $\eta^{-1}\omega_{\pi}\chi^n$ is ramified, we have
$$C_{\psi}(s,\sigma_\eta\otimes\chi)^{-1}=\gamma(ns, (\omega_{\pi}\chi^n)^2,\psi)^{-1}\int_{F^{\times}\backslash R}j_{\pi,\eta,\dot{w}_\theta,\kappa}(-\frac{1}{2}\dot{w}_G\leftidx{^t}Y^{-1})$$$$\cdot (\omega_{\pi}\chi^n)(4\det(Y)^2\prod_{i=1}^n a_i^{-2})\vert \frac{1}{2}\vert^{\frac{n(n-s)}{2}} \vert \det(Y)\vert^{\frac{2ns-s-n}{2}} \prod_{i=1}^n\vert a_i \vert ^{i-1-ns}da_i.$$
\end{proposition}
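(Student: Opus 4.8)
The plan is to derive this formula purely by substituting, into the two local coefficient formulas already obtained (Proposition 5.7 in the untwisted case, Proposition 5.8 in the $\chi$-uniform case), the explicit data assembled in the preceding subsections: the orbit parametrization and invariant measure of Section 5.2 (Proposition 5.4, Lemma 5.5) together with the Bruhat decomposition of Section 5.3. First I would take the right-hand side of Proposition 5.7, namely $\gamma(ns,\omega_\pi^2,\psi)^{-1}$ times $\int_{Z^0_{M_H}U_{M_H}\backslash N_H} j_{\pi,\eta,\dot w_\theta,\kappa}(g)\,\omega_\pi^{-2}(u_n)\,|u_n|^{-ns}\,|\det g|^{(s+n)/2}\,d\dot n$, and restrict the domain to the open dense subset $N_H(F)'$; on it, by Section 5.3, the Bruhat decomposition $\dot w_0^{-1}n=mn'\bar n$ holds with $m=m(g,a(g))$, $g=-\tfrac12\,\dot w_G\,{}^tY^{-1}$, and $u_n=u_{\alpha_n}(\dot w_0\bar n\dot w_0^{-1})=\tfrac12(\det Y)^{-1}\prod_{i=1}^n a_i$, while the complement has measure zero and may be ignored.

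Next I would change variables from $n\in N_H(F)'$ to the orbit parameters. By Proposition 5.4 the $U_{M_H}(F)$-orbits are parametrized by $R\simeq(F^\times)^n$ with coordinates $(a_1,\dots,a_n)$ corresponding to $Z=Z(a_1,\dots,a_{n-1})$, $\alpha=[0,\dots,0,a_n]^t$, hence $Y=Y(a_1,\dots,a_n)$, and by Section 5.2 the invariant measure on $R$ is $\prod_{i=1}^n|a_i|^{i-1}\,da_i$, with the residual $Z^0_{M_H}\simeq F^\times$-action $(t,(a_i))\mapsto(t^2a_1,\dots,t^2a_{n-1},ta_n)$ of modulus $|t|^{n^2}=q^{\langle 2\rho,H_{M_H}(\cdot)\rangle}$; thus $Z^0_{M_H}U_{M_H}(F)\backslash N_H(F)$ is identified, up to measure zero, with $F^\times\backslash R$. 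Plugging in the Bruhat data one gets $\det g=(-\tfrac12)^n\det(Y)^{-1}$ (using $\det\dot w_G=\det J'=1$), hence $|\det g|^{(s+n)/2}=|\tfrac12|^{n(s+n)/2}|\det Y|^{-(s+n)/2}$, $|u_n|^{-ns}=|\tfrac12|^{-ns}|\det Y|^{ns}\prod_i|a_i|^{-ns}$, and $\omega_\pi^{-2}(u_n)=\omega_\pi\!\left(4\det(Y)^2\prod_i a_i^{-2}\right)$; multiplying these against the orbit measure $\prod_i|a_i|^{i-1}\,da_i$ and collecting the powers of $|\tfrac12|$ and $|\det Y|$ produces exactly the displayed integrand, with $j_{\pi,\eta,\dot w_\theta,\kappa}$ evaluated at $-\tfrac12\,\dot w_G\,{}^tY^{-1}$ and prefactor $|\tfrac12|^{n(n-s)/2}|\det Y|^{(2ns-s-n)/2}$. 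This is the first assertion.

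For the second assertion I would run the identical substitution starting from Proposition 5.8 instead of Proposition 5.7: the twist $\pi\otimes\chi$ has central character $\omega_\pi\chi^n$ and partial Bessel function $j_{\pi\otimes\chi,\eta,\dot w_\theta,\kappa}$, the abelian factor becomes $\gamma(ns,(\omega_\pi\chi^n)^2,\psi)$, and the uniformity in $\chi$ is precisely what Proposition 5.8 furnishes, since the constant $\kappa_0$ there was obtained from a support condition on $\overline N_{0,\kappa_0}$ independent of $\chi$ (only requiring $\eta^{-1}\omega_\pi\chi^n=\omega_{\sigma_\eta\otimes\chi}$ to be ramified). This yields the second formula with $\omega_\pi$ replaced by $\omega_\pi\chi^n$ throughout.

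All the algebra above is routine; the one delicate point is the passage from the $d\dot n$-integral over $Z^0_{M_H}U_{M_H}(F)\backslash N_H(F)$ to the $\prod_i|a_i|^{i-1-ns}\,da_i$-integral over $F^\times\backslash R$. One must check that the integrand times the orbit measure $\prod_i|a_i|^{i-1}\,da_i$ transforms under the $Z^0_{M_H}$-action by exactly $q^{\langle 2\rho,H_{M_H}(z)\rangle}$, so that the unfolding identity $\int_R(\cdots)\,da=\int_{Z^0_{M_H}}\!\int_{F^\times\backslash R}(\cdots)\,q^{\langle 2\rho,H_{M_H}(z)\rangle}\,da'\,dz$ of Section 5.2 applies and the resulting $F^\times\backslash R$-integral coincides with the one over $Z^0_{M_H}U_{M_H}(F)\backslash N_H(F)$; it is precisely the factors $|u_n|^{-ns}$ and $|\det g|^{(s+n)/2}$ — which contribute the exponent $i-1-ns$ in the measure and $(2ns-s-n)/2$ on $|\det Y|$ — that make this compatibility hold. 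I expect this bookkeeping, rather than any single simplification, to be the main thing to get right.
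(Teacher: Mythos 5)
Your proposal is correct and follows essentially the same path as the paper: substitute the explicit Bruhat data $u_n=\tfrac12(\det Y)^{-1}\prod_i a_i$ and $\det g=(-\tfrac12)^n(\det Y)^{-1}$ into the integrals of Propositions 5.7 and 5.8, pass to the orbit parametrization $(a_1,\dots,a_n)$ with the invariant measure $\prod_i|a_i|^{i-1}\,da_i$ from Section 5.2, and collect the powers of $|\tfrac12|$, $|\det Y|$, and $|a_i|$. Your algebra checks out; the only imprecision is the closing remark — the $Z^0_{M_H}$-invariance of the integrand and the identification $d\dot n \leftrightarrow da'$ on $F^\times\backslash R$ were already arranged in Section 5.2, so the factors $|u_n|^{-ns}$ and $|\det g|^{(s+n)/2}$ are not what make the unfolding compatible but are simply carried along in the substitution.
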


\subsection{Partial Bessel integrals}
For the proof of the stability of local coefficients, it is important to relate partial Bessel functions with partial Bessel integrals, which have nice asymptotic expansions under some conditions. 

Let $\textbf{G}$ be a split reductive group over $F$, and $G=\textbf{G}(F)$. Fix a Borel subgroup $\textbf{B}=\textbf{AU}$ and let $B$, $A$, $U$ denote the groups of their $F$-points respectively. Suppose $\Theta:G\longrightarrow G$ is an involution defined over $F$, i.e., $\Theta^2=1$ and $\Theta\neq 1$. Let $\pi$ be a $\psi$-generic supercuspidal representation of $G$ with its central character $\omega_\pi$. Let $f\in \mathcal{M}(\pi)$ be a matrix coefficient of $\pi$. Then $f\in C^\infty_c(G;\omega_\pi)$, the space of smooth functions on $G$ with compact support modulo the center $Z_G$ such that $f(zg)=\omega_\pi(z)f(g)$ for $z\in Z_G$ and $g\in G$. We associate $f$ with the Whittaker function $W^f(g)=\int_Uf(u'g)\psi^{-1}(u')du'$. The integral convergences since the coset $UZg$ is closed in $G$ and $f\in C^\infty_c(G;\omega_\pi)$. We can normalize it by choosing $f\in \mathcal{M}_\pi$ such that $W^f(e)=1$, where $e\in G$ is the identity element.

We define the twisted centralizer of $g\in G$
by
$$U_g=\{u\in U: \Theta(u^{-1})gu=g \}.$$
Suppose $G=Z_GG'$, write $g=zg'$ with $z\in Z_G$, $g\in G'$. Then we define the partial Bessel integral

$$B^G_{\tilde{\varphi}}(g,f)=\int_{U_g\backslash U}W^f(gu)\tilde{\varphi}(\Theta(u^{-1})g'u)\psi^{-1}(u)du,$$ where $\tilde{\varphi}$ is some cut-off function. Note that the above definitions can also be applied to any Levi subgroup $\textbf{M}$ of $\textbf{G}$.

If we apply the above settings to the case $\textbf{G}=\textrm{GL}_n$, $\Theta(g)=\dot{w}_G \leftidx{^t}g^{-1}\dot{w}_G^{-1}$, and $\tilde{\varphi}=L_{\dot{w}_G}\varphi$, where $L_{s}\varphi(g)=\varphi(s^{-1}g)$ is the left translation of $\varphi$, we obtain
$$B^G_{\varphi}(g, f)=\int_{U_g\backslash U} W^{f}(gu)\varphi(\leftidx{^t}u\dot{w}^{-1}_G g' u)\psi^{-1}(u)du,$$ which is the definition of partial Bessel integrals in [7]. And in this case the twisted centralizer of $g$ is given by
$$U_g=\{u\in U: \leftidx{^t}u\dot{w}_G^{-1}gu=\dot{w}_G^{-1}g\}$$
We will only use this definition for partial Bessel integrals and twisted centralizers in the rest part of the paper.

On the other hand, it is not hard to see by induction on the size $n$ that if $g=-\frac{1}{2}\dot{w}_G \leftidx{^t}Y^{-1}$ for $Y=Y(a_1,\cdots,a_n)$ with $(a_1\cdots,a_n)\in (F^{\times})^n$ as in the last part of section 5.4, the twisted centralizer $U_g$ is trivial. Hence the partial Bessel integral
$$B^G_{\varphi}(g, f)=\int_U W^{f}(gu)\varphi(\leftidx{^t}u\dot{w}^{-1}_G g' u)\psi^{-1}(u)du,$$
where $g=zg'$, $z\in Z$. Now choose $f\in \mathcal{M}(\pi)$ such that $W_{\pi,v}=W^f$, and $W^f(e)=1$. Take $\varphi=\varphi_{\kappa}$. From the calculations right before Proposition 5.9, we have $$j_{\pi,\eta,\dot{w}_\theta,\kappa}(g)=\eta(a(g))^{-1}\vert \det(g)\vert^{\frac{s}{2}}\int_{U_n} W_{\pi,v}(gu)\varphi_{\kappa}(\leftidx{^t}u \dot{w}^{-1}_G g' u)\psi^{-1}(u)du.$$ 

Therefore we obtain 
\begin{proposition}
Let $f\in \mathcal{M}(\pi)$ such that $W^f(e)=1$, and let $\varphi=\varphi_{\kappa}$, then
$$j_{\pi,\eta,\dot{w}_\theta,\kappa}(g)=\eta(a(g))^{-1}\vert\det(g)\vert^{\frac{s}{2}}\cdot B^G_{\varphi}(g,f),$$
for $g=-\frac{1}{2}\dot{w}_G\leftidx{^t}Y^{-1}$, where $Y=Y(a_1,\cdots,a_n)$ with all $a_i\in F^{\times}.$
\end{proposition}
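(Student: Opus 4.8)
The plan is to read the identity directly off the unwinding of the partial Bessel function carried out before Proposition 5.9, once we know that the twisted centralizer $U_g$ attached to $g=-\tfrac12\dot w_G{}^tY^{-1}$ is trivial. Recall from the computation preceding Proposition 5.9 that, for $g=-\tfrac12\dot w_G{}^tY^{-1}$ with $Y=Y(a_1,\dots,a_n)$ and all $a_i\in F^\times$,
$$j_{\pi,\eta,\dot w_\theta,\kappa}(g)=\eta(a(g))^{-1}\vert\det(g)\vert^{\frac s2}\int_{U_n}W_{\pi,v}(gu)\,\varphi_\kappa\big({}^tu\,\dot w_G^{-1}g'u\big)\psi^{-1}(u)\,du,$$
where $g=t^2g'$ with $t=(\det Y)^{-1}\prod_{i=1}^na_i$, so that $g'=-\tfrac12\dot w_G{}^tY'^{-1}$ for $Y'=t^2Y$. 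Comparing this with the definition $B^G_\varphi(g,f)=\int_{U_g\backslash U}W^f(gu)\,\varphi\big({}^tu\,\dot w_G^{-1}g'u\big)\psi^{-1}(u)\,du$, specialized to $\varphi=\varphi_\kappa$ and to the central decomposition $g=(t^2I)\,g'$ (so that the $g'$ here is exactly the one above), it remains to verify two things: that $W_{\pi,v}$ is of the form $W^f$ for a suitably normalized matrix coefficient $f$, and that $U_g=\{1\}$, so that the integral over $U_g\backslash U_n$ is the integral over $U_n$ appearing above.

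For the first point, $\pi$ is supercuspidal, so its matrix coefficients lie in $C^\infty_c(G;\omega_\pi)$ and $f\mapsto W^f$ surjects onto the Whittaker model $\mathcal W(\pi,\psi)$; hence we may choose $f\in\mathcal M(\pi)$ with $W^f=W_{\pi,v}$, and after rescaling $v$ (equivalently $f$) we may assume $W^f(e)=W_{\pi,v}(e)=1$. The prefactor $\eta(a(g))^{-1}\vert\det(g)\vert^{s/2}$ is precisely the one produced by $W_{\sigma_{\eta,s},v}(m(g,a(g)))=\eta(a(g))^{-1}\vert\det(g)\vert^{s/2}W_{\pi,v}(g)$, already recorded above.

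For the second point, since $\dot w_G^{-1}g=-\tfrac12{}^tY^{-1}$, the defining condition ${}^tu\,\dot w_G^{-1}g\,u=\dot w_G^{-1}g$ of $U_g$ is equivalent to ${}^tu\,{}^tY^{-1}u={}^tY^{-1}$, i.e.\ to $uY\,{}^tu=Y$. Writing $Y=Z-\tfrac12\alpha\,{}^t\alpha$ with $Z=Z(a_1,\dots,a_{n-1})$ skew-symmetric and $\alpha={}^t(0,\dots,0,a_n)$ (in the notation of Section 5.3), and separating $uY\,{}^tu=uZ\,{}^tu-\tfrac12(u\alpha)\,{}^t(u\alpha)$ into skew-symmetric and symmetric parts, the equation $uY\,{}^tu=Y$ is equivalent to the pair $uZ\,{}^tu=Z$ and $(u\alpha)\,{}^t(u\alpha)=\alpha\,{}^t\alpha$; since $u\in U_n$ is upper-triangular unipotent and $a_n\neq0$, the last entry of $u\alpha$ is $a_n$, so the second equation forces $u\alpha=\alpha$. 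Hence $u\in U_g$ if and only if $\mathrm{diag}(u,1)$ fixes the $(n+1)\times(n+1)$ skew-symmetric matrix $\left[\begin{smallmatrix}Z&\alpha\\ -{}^t\alpha&0\end{smallmatrix}\right]$ — which is exactly the tridiagonal orbit representative with parameters $a_1,\dots,a_n$ occurring in Proposition 5.4 — under the $U_n(F)$-action considered there; by the simplicity of that action established at the end of the proof of Proposition 5.4, this forces $u=I$, so $U_g=\{1\}$.

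Combining the two points, $B^G_{\varphi_\kappa}(g,f)=\int_{U_n}W_{\pi,v}(gu)\,\varphi_\kappa\big({}^tu\,\dot w_G^{-1}g'u\big)\psi^{-1}(u)\,du$, and the displayed formula preceding Proposition 5.9 then reads $j_{\pi,\eta,\dot w_\theta,\kappa}(g)=\eta(a(g))^{-1}\vert\det(g)\vert^{s/2}\,B^G_{\varphi_\kappa}(g,f)$, which is the assertion. I expect the triviality of $U_g$ to be the only step requiring a genuine (if brief) argument: everything else amounts to matching normalizations that have already been set up, together with the standard fact that Whittaker functions of a supercuspidal representation come from matrix coefficients.
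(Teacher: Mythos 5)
Your proposal is correct and follows the same route as the paper: unwind the integral computed just before Proposition 5.9, choose $f\in\mathcal{M}(\pi)$ with $W^f=W_{\pi,v}$ normalized so that $W^f(e)=1$, and observe that $U_g$ is trivial so that the integral over $U_g\backslash U_n$ in the definition of $B^G_{\varphi}(g,f)$ is an integral over all of $U_n$. The one point where you go beyond the paper, which merely asserts the triviality of $U_g$ ``by induction on $n$,'' is the short argument reducing ${}^tu\,\dot w_G^{-1}g\,u=\dot w_G^{-1}g$ to $uY\,{}^tu=Y$, splitting $Y=Z-\tfrac12\alpha\,{}^t\alpha$ into skew-symmetric and symmetric parts to force $u\alpha=\alpha$ and $uZ\,{}^tu=Z$, and then invoking the simplicity (freeness) of the $U_n(F)$-action established at the end of the proof of Proposition 5.4; this is exactly where the paper's implicit induction lives, and your version makes it explicit.
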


Now we have successfully related our partial Bessel functions with partial Bessel integrals, whose asymptotic expansions will lead to the proof of stability.

\section{ANALYSIS OF PARTIAL BESSEL INTEGRALS}

Let $\textbf{G}$ be a split connected reductive group over $F$. Fix a Borel subgroup $\textbf{B}=\textbf{AU}$, and let $\textbf{U}^-$ be the unipotent group generated by all the negative roots. We use $G$, $B$, $A$, $U$, $U^-$ to denote their groups of $F$-points respectively. Denote the Weyl group of $\textrm{G}$ by $W$. We begin by stating some basic facts and properties.
\begin{itemize}
    \item {\textbf{\textrm{B(G)}.}} Define the subset of $W$ that supports Bessel functions by $B(G)=\{w\in W: \alpha\in \Delta \ \ s.t. \ \ w\alpha>0 \Rightarrow w\alpha\in \Delta \}$, or equivalently,
$B(G)=\{w\in W: w_G w=w_M \ \ \textrm{for} \ \ \textrm{some} \ \ \textrm{standard} \ \ \textrm{Levi} \ \ M\subset G\}$. We take the representatives $\dot{w}$ of $w\in B(G)$ so that $\dot{w}=\dot{w}_G\dot{w}_M^{-1}$.
Then there is a one-to-one correspondence between elements in $B(G)$ and Levi subgroups standard parabolic subgroups of $G$. To be precise, to a $w\in B(G)$ we associate $\theta_w^+=\{\alpha\in \Delta: w\alpha>0\}\subset \Delta$
which determines a standard parabolic subgroup $P_w=M_wN_w$, such that $M_w=Z_G(\cap_{\alpha\in \theta_w^+}\ker \alpha)$. We also have that $\theta^+_w=\theta_{w_M}^-=\Delta_M\subset \Delta$, where $w_M$ is the long Weyl group element of $M$.

\item
$\textbf{U}_w^+, \textbf{U}_w^-.$ For each $w\in W$ we define two unipotent subgroups $U^+_w$ and $U_w^-$ of $U$ to be
$U_w^+=\{u\in U: wuw^{-1}\in U\}$
and $U_w^-=\{u\in U:wuw^{-1}\in U^-\}.$ In other words, $U_w^+$(resp. $U_w^-$) is generated by those roots that are made positive(resp. negative) by $w$. One can see that 
$U^+_w=U\cap w^{-1}Uw$, $U^-_w=U\cap w^{-1} U^-w,$ and $U=U^+_wU^-_w$.
Moreover, if $w\in B(G)$, suppose $\dot{w}=\dot{w}_G\dot{w}_M^{-1}$, so $w$ associates the Levi $M=M_w$ of $G$. Let $U_M=U\cap M$, then $U_M$ is the standard maximal unipotent subgroup of $M$. If we denote $N_M$ to be the unipotent radical of the corresponding parabolic, i.e., $P_M=MN_M$. Then $U=U_MN_M$. Now for $w=w_M$, we can see that
$U_{w_M}^+=N_M, U_{w_M}^-=U_M$ and for $w=w_G$, we have
$U_{w_G}^+=\{e\}, U_{w_G}^-=U$. In general for $w=w_Gw_M$ we have
$U_w^+=U_M, U_{w}^-=N_M.$

\item\textbf{Bessel distance} For $w,w'\in B(G)$ with $w>w'$ we define the Bessel distance as follows:
$d_B(w,w')=max\{m: \exists w_i\in B(G)\ \ s.t \ \ w=w_m>w_{m-1}>\cdots>w_0=w' \}$. And if we denote $\Delta_{M_w}$ to be the set of simple roots associated with the standard Levi $M_w$, we have $\Delta_{M_w}\subset \Delta_{M_w'}$ and $d_B(w,w')=\vert \Delta_{M_{w'}}-\Delta_{M_w}\vert$.

\item \textbf{Bruhat order} For $w\in W$ we denote the Bruhat cell by $C(w)=UwAU$, we define the Bruhat order on $W$ by $w\leq w'\Longleftrightarrow C(w)\subset \overline{C(w')}.$

\item\textbf{The relevant torus $\textbf{A}_w$}.
For $w\in B(G)$, define $A_w=\{a\in A: a \in \cap_{\alpha\in \theta_w^+}\ker \alpha\}^\circ\subset A$, which is also the center $Z_{M_w}$ of $M_w$.

\item \textbf{The relevant Bruhat cell} $\textbf{C}_{r}(\dot{w})$. We call $C_r(\dot{w})=U\dot{w}A_wU_w^-$ the relevant part of the Bruhat cell $C(w)$. Note that $C_r(\dot{w})$ depends on the choice of the representative $\dot{w}$ of $w$.
\item \textbf{Transverse tori} Let $w,w'\in B(G)$ and let $M=M_w$ and $M'=M_{w'}$ be their associated Levi subgroups respectively. Suppose $w'\leq w$. Then $M\subset M'$ and $A_{w'}\supset A_w$. Let $A^{w'}_w=A_w\cap M^d_{w'}=Z_M\cap (M')^d$. Note that in particular $A^w_w=Z_M \cap M^d$ is finite since $M$ is reductive and in general we have that $M^d\cap R(M)=M^d\cap Z^0$ is finite, where $Z^0$ is the connected component of $Z$ and $R(M)$ is the radical of $M$. In the case of $G=GL_n$ the center is connected, and $A^w_w$ consists of certain roots of unity on the diagonal blocks of $M$. Similarly $A^{w'}_w\cap A_{w'}=A^{w'}_{w'}$ is finite and the subgroup $A^{w'}_w A_{w'}\subset A_w$ is open and of finite index. So this decomposition is essentially a "transfer principal" for relevant tori, from the larger one $A_w$ to the smaller one $A_{w'}$ which differs by the transverse torus $A^{w'}_w$, on which the germ functions live on, as we will see later.
\end{itemize}

Here are some useful properties of $B(G)$:

\textbf{1,} For $w,w'\in B(G)$. Then $w'\leq w\Longleftrightarrow M_w\subset M_{w'}\Longleftrightarrow A_w\supset A_{w'}.$ (Lemma 5.1 in [7])

\textbf{2,} For each $w\in B(G)$, say $\dot{w}=\dot{w}_G\dot{w}_M^{-1}$. Then for all $u\in U_w^+=U_M$, we have $\psi(\dot{w}u\dot{w}^{-1})=\psi(u)$, where $\psi$ is the generic character. (Proposition 5.1 in [7])

\textbf{3,} Let 
$\Omega_w=\bigsqcup_{w\leq w'} C(w'),$ we see that $\Omega_w$ is invariant under the two-sided action of $U\times U$ and as in Lemma 5.2 in [7], $\Omega_w$ is an open subset of $G$ and $C(w)$ is closed in $\Omega_w$. 

As stated in [7] we also have:
\begin{lemma}
Suppose $w\in B(G)$ is associated with a standard Levi $M$ of $G$, then we have 
$\Omega_{w}\simeq U^-_{w^{-1}}\times \dot{w}M\times U_w^-.$ This decomposition is unique.
\end{lemma}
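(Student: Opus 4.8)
The plan is to realize the claimed decomposition as the bijectivity of the multiplication map
$\mu\colon U^-_{w^{-1}}\times \dot w M\times U^-_w\longrightarrow G$, $(u_1,\dot w m,u_2)\longmapsto u_1\dot w m u_2$: the asserted uniqueness is exactly injectivity of $\mu$, and the isomorphism of varieties will follow at the end, since $\mu$ is a bijective morphism assembled from products and $\dot w$-conjugations that are themselves isomorphisms. Throughout I write $w=w_Gw_M$ for the Weyl element (so that $\dot w=\dot w_G\dot w_M^{-1}$ is the chosen representative) and $W_M$ for the Weyl group of $M$. Two preliminary facts organize everything. First, $\{w'\in W:w'\ge w\}=wW_M$ and $\ell(wv)=\ell(w)+\ell(v)$ for $v\in W_M$ (since $w_Mv\le w_M$ inside $W_M$), so $\dot w\dot v$ is a minimal representative of $wv\ge w$ and $\Omega_w=\bigsqcup_{v\in W_M}C(wv)$. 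Second, using the identities $U^+_w=U_M$ and $U^-_w=N_M$ recalled in the excerpt, together with a count of the roots in $\Phi^+\cap w(\Phi^-)$, one gets $\dot w^{-1}U^-_{w^{-1}}\dot w=N_M^-$ and $\dot w^{-1}U^+_{w^{-1}}\dot w=U_M$, where $N_M^-$ is the unipotent radical of the parabolic $\overline P_M$ opposite to $P_M=MN_M$.

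Granting these, the image of $\mu$ equals $U^-_{w^{-1}}\dot w M N_M=\dot w\,(\dot w^{-1}U^-_{w^{-1}}\dot w)\,M N_M=\dot w\,(N_M^-M N_M)=\dot w\cdot\overline P_M P_M$, i.e. $\dot w$ times the open cell of $G$ attached to the standard parabolic $P_M$. I then prove $\dot w\,\overline P_M P_M=\Omega_w$ by two inclusions, each a short Bruhat computation inside $M$. For $\subseteq$: an element $\dot w n^- m n$ with $n^-\in N_M^-$, $m\in M$, $n\in N_M$ is rewritten by Bruhat-decomposing $m=u_1\dot v a u_2$ in $M$ (with $u_1,u_2\in U_M$, $v\in W_M$, $a\in A$) and pushing the conjugate $\dot w(n^-u_1)\dot w^{-1}$ into $U$ — legitimate precisely because $\dot w N_M^-\dot w^{-1}=U^-_{w^{-1}}\subseteq U$ and $\dot w U_M\dot w^{-1}=\dot w U^+_w\dot w^{-1}\subseteq U$; absorbing the remaining unipotent pieces lands the element in $U\dot w\dot v A U=C(wv)\subseteq\Omega_w$. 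For $\supseteq$: given $u_1\dot w\dot v a u_2\in C(wv)$ with $v\in W_M$, split $u_1=u_1^+u_1^-$ with respect to $U=U^+_{w^{-1}}U^-_{w^{-1}}$; conjugating by $\dot w^{-1}$ sends $u_1^+$ into $U_M\subseteq M$ and $u_1^-$ into $N_M^-$, so $u_1\dot w\dot v a u_2=\dot w\cdot\big((\dot w^{-1}u_1\dot w)\dot v a u_2\big)$ with the parenthesized term visibly in $\overline P_M\cdot M\cdot U=\overline P_M P_M$.

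It remains to check injectivity. If $u_1\dot w m u_2=u_1'\dot w m'u_2'$ with the factors from the prescribed pieces, then $\big(\dot w^{-1}(u_1')^{-1}u_1\dot w\big)\,m=m'\,(u_2'u_2^{-1})$; the left side lies in $N_M^-M$ and the right side in $MN_M$, and since $N_M^-\cap P_M=\{e\}$ one concludes $\dot w^{-1}(u_1')^{-1}u_1\dot w=e$, hence $u_1=u_1'$; then $m'^{-1}m=u_2'u_2^{-1}\in M\cap N_M=\{e\}$ finishes it. Finally, $\mu$ is a bijective morphism whose inverse is built from the standard open-cell isomorphism $\overline P_M P_M\xrightarrow{\ \sim\ }N_M^-\times M\times N_M$ and conjugation by $\dot w$, so it is an isomorphism of varieties (equivalently, of $p$-adic manifolds on $F$-points). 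The one step demanding real care is the second preliminary fact — pinning down $U^-_{w^{-1}}$ and its $\dot w$-conjugate through the combinatorics of $\Phi^+\cap w(\Phi^-)$ — and, throughout, keeping the three unipotent groups $U^+_w$, $U^-_w$, $U^-_{w^{-1}}$ straight when transporting elements past $\dot w$; everything else is routine once that bookkeeping is in place.
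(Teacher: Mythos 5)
Your proof is correct. The paper itself does not prove this lemma---it simply cites Cogdell--Shahidi--Tsai [7], where the analogous statement (their Lemma 5.2) is established by exactly the route you take: identify $\Omega_w$ with $\dot w\,\overline P_M P_M$, the $\dot w$-translate of the big cell attached to $P_M$, via the conjugation identities $\dot w^{-1}U^-_{w^{-1}}\dot w=N_M^-$, $\dot w^{-1}U^+_{w^{-1}}\dot w=U_M$, $U^-_w=N_M$, and then read off existence and uniqueness of the factorization from the standard decomposition $\overline P_M P_M\simeq N_M^-\times M\times N_M$. Your bookkeeping of the three unipotent groups under $\dot w$-conjugation, the Bruhat computations for the two inclusions, and the injectivity argument via $N_M^-\cap P_M=\{e\}$ are all correct; the one preliminary fact you state without proof, $\{w'\ge w\}=wW_M$ for $w=w_Gw_M$, is standard (it follows from the order-reversal $x\le y\Leftrightarrow w_Gx\ge w_Gy$ together with $\{x\le w_M\}=W_M$) and is also taken for granted in [7].
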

Suppose $\pi$ is a generic representation of $M(F)$. Let $C^{\infty}_c(\Omega_w;w_{\pi})$ denote the space of smooth functions of compact support modulo the center $Z$, so $\forall g\in \Omega_w$ and $z\in Z$, $f(zg)=w_{\pi}(z)f(g)$. Since $\Omega_w$ is open in $G$, we have $C_c^{\infty}(\Omega_w;w_{\pi})\subset C^{\infty}_c(G;w_{\pi}).$ 

\begin{lemma} There is a surjective map: $C^{\infty}_c(M;w_{\pi})\rightarrowdbl  C^{\infty}_c(\Omega_{w'};w_{\pi})$ given by
$h=h_f\mapsto f$
where $h(m)=h_f(m)=\int_{U^-_{w'}}\int_{U^-_{w'^{-1}}}f(x^-\dot{w}mu^-)\psi^{-1}(x^-u^-)dx^-du^-.$
\end{lemma}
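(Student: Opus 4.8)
The plan is to read the assertion as the statement that the integration map $f\mapsto h_f$ is a well-defined \emph{surjection} $C^\infty_c(\Omega_{w'};w_\pi)\twoheadrightarrow C^\infty_c(M;w_\pi)$; the displayed ``$h=h_f\mapsto f$'' is then simply a (non-canonical) set-theoretic section, which the proof produces explicitly. Everything rests on the decomposition $\Omega_{w'}\simeq U^-_{w'^{-1}}\times\dot w'M\times U^-_{w'}$ of Lemma 6.1, which is an isomorphism of $p$-adic analytic manifolds; since the center $Z=Z_G$ lies in $M$, a function $f\in C^\infty_c(\Omega_{w'};w_\pi)$ is, under this isomorphism, nothing but a locally constant, compactly-supported-modulo-$Z$ function of $(x^-,m,u^-)$ satisfying $f(x^-\dot w'(zm)u^-)=w_\pi(z)f(x^-\dot w'mu^-)$ in the middle variable.

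First I would verify that $f\mapsto h_f$ is well defined. For fixed $m$, the set of $(x^-,u^-)$ with $f(x^-\dot w'mu^-)\neq 0$ is compact: it is the intersection of $\mathrm{supp}\,f$ (compact modulo $Z$) with the slice $U^-_{w'^{-1}}\dot w'mU^-_{w'}$, and by the uniqueness in Lemma 6.1 the $Z$-orbit of an element of that slice moves only in the $M$-direction, so the slice meets $Z$-orbits transversally; hence the integral converges. Equivariance $h_f(zm)=w_\pi(z)h_f(m)$ holds because $z\in Z_G$ commutes past $x^-$ and $\dot w'$, and local constancy and compact support modulo $Z$ of $h_f$ are inherited from $f$ (the support of $h_f$ lies in the $M$-projection of $\mathrm{supp}\,f$). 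Thus $h_f\in C^\infty_c(M;w_\pi)$.

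For surjectivity, given $h\in C^\infty_c(M;w_\pi)$ I would build a preimage as a ``tensor'' under Lemma 6.1: choose $\phi_1\in C^\infty_c(U^-_{w'^{-1}})$ and $\phi_2\in C^\infty_c(U^-_{w'})$ with $\int\phi_1\psi^{-1}=\int\phi_2\psi^{-1}=1$ — such functions exist since $\psi$ is continuous with $\psi(1)=1$, hence trivial on an open compact subgroup $K$ of the relevant unipotent group, and $\mathrm{vol}(K)^{-1}\mathbf 1_K$ works — and set $f(x^-\dot w'mu^-)=\phi_1(x^-)\,h(m)\,\phi_2(u^-)$ on $\Omega_{w'}$, $f=0$ elsewhere. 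By Lemma 6.1 this $f$ is locally constant on the open set $\Omega_{w'}$, hence smooth on $G$; it is supported on $(\mathrm{supp}\,\phi_1)\dot w'(\mathrm{supp}\,h)(\mathrm{supp}\,\phi_2)$, compact modulo $Z$; and $f(zg)=w_\pi(z)f(g)$ because $h$ is $w_\pi$-equivariant and $z$ is central. So $f\in C^\infty_c(\Omega_{w'};w_\pi)$. Finally, since $\psi$ is a character of $U$ and $x^-,u^-\in U$, we have $\psi^{-1}(x^-u^-)=\psi^{-1}(x^-)\psi^{-1}(u^-)$, so
\[
h_f(m)=\Big(\int_{U^-_{w'^{-1}}}\phi_1\psi^{-1}\Big)\Big(\int_{U^-_{w'}}\phi_2\psi^{-1}\Big)h(m)=h(m),
\]
which gives surjectivity. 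This reproduces the argument of the corresponding lemma in [7]; the only delicate points — and thus the ``main obstacle,'' such as it is — are bookkeeping with Lemma 6.1 (that it is an \emph{analytic} isomorphism, so that tensoring $h$ with unipotent bump functions truly yields a smooth, compactly-supported-modulo-center function on $\Omega_{w'}\subset G$) and the harmless observation that the restriction of $\psi$ to each unipotent factor still admits a $\psi^{-1}$-average-one bump function; one should also record $Z_G\subseteq Z_M$ so that the central-character conditions are compatible on both sides.
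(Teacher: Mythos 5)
Your reading of the (admittedly awkwardly worded) lemma is the right one: the real content is that the integration map $f\mapsto h_f$ from $C^\infty_c(\Omega_{w'};\omega_\pi)$ onto $C^\infty_c(M;\omega_\pi)$ is surjective, and the map $h\mapsto f$ in the statement is a choice of section. Your proof — verify well-definedness via the product decomposition $\Omega_{w'}\simeq U^-_{w'^{-1}}\times \dot{w}'M\times U^-_{w'}$ of Lemma 6.1 with $Z=Z_G$ acting only in the $M$-slot, then build the section $f(x^-\dot{w}'mu^-)=\phi_1(x^-)h(m)\phi_2(u^-)$ with $\psi^{-1}$-average-one bump functions $\phi_i$, and use that $\psi$ is a character on $U\supset U^-_{w'^{-1}},U^-_{w'}$ to split $\psi^{-1}(x^-u^-)=\psi^{-1}(x^-)\psi^{-1}(u^-)$ — is exactly the argument of Lemma~5.9 of [7], which the paper cites in lieu of a proof. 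So you have supplied the details the paper omits, correctly and by the same route.
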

\begin{proof}
See Lemma 5.9 [7].
\end{proof}
\subsection{Partial and full Bessel integrals} 

Let $w\in B(G)$ and $g=u_1\dot{w}au_2\in C_r(\dot{w})$, the relevant cell associated to $w$, which depends on the choice of the representative $\dot{w}$ of $w$.  Let $M=M_w$ be the Levi subgroup of $G$ such that $w=w_Gw_M.$ We have 
\begin{lemma} For $g=u_1\dot{w}au_2\in C_r(\dot{w})$ with $w=w_Gw_M\in B(G)$, then $$U_g\subset u_2^{-1}U^+_w u_2=u_2^{-1} U_M u_2$$
\end{lemma}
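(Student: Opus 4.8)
The plan is to prove the inclusion $U_g\subset u_2^{-1}U_w^+u_2 = u_2^{-1}U_Mu_2$ by a direct computation using the definition of the twisted centralizer and the Bruhat decomposition $g=u_1\dot wau_2$. Recall that by definition $U_g=\{u\in U:{}^tu\,\dot w_G^{-1}gu=\dot w_G^{-1}g\}$. First I would substitute $g=u_1\dot wau_2$ and use $\dot w=\dot w_G\dot w_M^{-1}$, so that $\dot w_G^{-1}g = \dot w_G^{-1}u_1\dot w_G\dot w_M^{-1}au_2$. The element $\dot w_G^{-1}u_1\dot w_G$ lies in $U^-$ (conjugating the maximal unipotent by the long Weyl element), so $\dot w_G^{-1}g$ lies in $U^-\dot w_M^{-1}AU$; this is the key structural observation. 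The condition $u\in U_g$ then becomes a condition on how the conjugate ${}^tu\cdots u$ interacts with this factorization.

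**The reduction to a transversality statement.** The heart of the matter is to show that an element $u\in U$ satisfying $\Theta(u^{-1})g'u=g'$ (where $\Theta(x)=\dot w_G{}^tx^{-1}\dot w_G^{-1}$ and $g'$ is the relevant-torus part) must, after the conjugation by $u_2$, land in $U_M=U_w^+$ rather than having any component in $U_w^-=N_M$. I would conjugate the defining equation by $u_2$: writing $u' = u_2uu_2^{-1}$ and pushing the equation through, the $U\dot wAU_w^-$-factorization of $g$ means that the stabilizer condition forces $u'$ to preserve the $U_w^-A_w\dot w U$-cell decomposition. Since $U=U_w^+U_w^-$, write $u'=u^+u^-$ with $u^+\in U_w^+=U_M$, $u^-\in U_w^-=N_M$. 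The equation $\Theta(u'^{-1})g'u'=g'$, combined with the fact that $g'$ normalizes neither factor in a way that could absorb a nontrivial $u^-$ (because $A_w=Z_{M_w}$ acts trivially on $U_M$ but the $\dot w$-conjugation sends $N_M$-directions out of $U$), forces $u^-=e$, hence $u'\in U_M$, i.e. $u\in u_2^{-1}U_Mu_2$.

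**The main obstacle.** The delicate point — and where I expect to spend the most care — is controlling how the $U^-$-part $\dot w_G^{-1}u_1\dot w_G$ and the torus $a$ interact with the stabilizer equation. One cannot simply discard $u_1$ and $a$: the twisted centralizer is genuinely sensitive to the full element $g$, not just to $\dot w$. The correct approach is to use that $g\in C_r(\dot w)=U\dot wA_wU_w^-$, so in particular $u_2\in U_w^-=N_M$ (or can be chosen in $U_w^-$), and then the uniqueness of the decomposition in Lemma 6.1 (the $\Omega_w\simeq U_{w^{-1}}^-\times\dot wM\times U_w^-$ decomposition) pins down exactly which subgroup of $U$ can stabilize $g$ up to the $\Theta$-twist. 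I would invoke this uniqueness: if $u\in U_g$, then $\Theta(u^{-1})gu$ and $g$ have the same image under the $\Omega_w$-factorization, and matching the $U_w^-$-components (which transform by $u_2\mapsto u_2u$ essentially) while matching the middle $\dot wM$-component forces the constraint. Translating this back through the $u_2$-conjugation gives $U_g\subset u_2^{-1}U_Mu_2$. The computation is elementary once the right decomposition is invoked, but identifying that Lemma 6.1 (rather than a brute-force matrix calculation) is the clean tool is the crux; I would also double-check the $w=w_M$ and $w=w_G$ boundary cases, where the statement reads $U_g\subset u_2^{-1}Uu_2=U$ (trivial) and $U_g\subset u_2^{-1}\{e\}u_2$ respectively, as a sanity check.
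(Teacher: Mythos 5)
Your final paragraph contains a correct argument, but it is a genuinely different route from the paper's. You propose writing $g=u_1^-\dot w m'u_2$ with $u_1^-\in U^-_{w^{-1}}$, $m'\in M$, $u_2\in U^-_w$, noting that $\Theta(u^{-1})gu=(\Theta(u^{-1})u_1^-)\dot w m'(u_2u)$ also admits an $\Omega_w$-factorization, and invoking the uniqueness of Lemma 6.1: matching the $U_w^-$-components gives $u_2u=v_2^+u_2$ for some $v_2^+\in U^+_w=U_M$, hence $u\in u_2^{-1}U_Mu_2$ immediately. That works. The paper instead argues directly: setting $\overline{u_1}=\dot w_G^{-1}u_1\dot w_G\in U^-$, the defining equation rearranges to $\overline{u_1}^{-1}\,{}^tu\,\overline{u_1}=\dot w_M^{-1}\bigl(au_2u^{-1}u_2^{-1}a^{-1}\bigr)\dot w_M$; the left side lies in $U^-$, the element $au_2u^{-1}u_2^{-1}a^{-1}$ lies in $U$, so it must lie in $U^-_{w_M}=U_M$, and since $a\in A_w=Z_{M_w}$ centralizes $U_M$ one concludes $u\in u_2^{-1}U_Mu_2$. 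Your approach is tidier for this inclusion, but the paper's direct manipulation — which in particular isolates the identity $U^-_{w_M}=U_M$ and the role of $a\in Z_{M_w}$ — is deliberately set up as the warm-up for Lemma 6.6, where one needs the \emph{exact} description of $U_g$ in terms of $U_{M,m}$ and the conjugating elements $\overline{u_1^-}$, $u_2^-$, not just an inclusion; the uniqueness-of-factorization argument gives the inclusion but would need to be reorganized substantially to produce that refinement. One caution: your middle paragraph is not a proof. Conjugating the defining equation by $u_2$ does not commute with $\Theta$, so the equation $\Theta(u'^{-1})g'u'=g'$ with $u'=u_2uu_2^{-1}$ is not what you actually get, and ``the stabilizer condition forces $u'$ to preserve the cell decomposition'' is not a precise statement; as written, that paragraph would not survive scrutiny. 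It is only the uniqueness argument of your last paragraph that carries the day. (Also a small slip: the boundary cases are $w=e$, i.e.\ $M=G$, and $w=w_G$, i.e.\ $M=A$, not ``$w=w_M$''.)
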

\begin{proof}
$u\in U_g\Longleftrightarrow ^tu\dot{w}_G^{-1}u_1\dot{w}_G\dot{w}_M^{-1}au_2u=\dot{w}_G^{-1}u_1\dot{w}_G\dot{w}_M^{-1}au_2$. Let $\overline{u_1}=\dot{w}_G^{-1}u_1\dot{w}_G\in U^-$, then this is equivalent to
$ (\overline{u_1})^{-1}{^tu} \overline{u_1}\dot{w}_M^{-1}au_2uu_2^{-1}=\dot{w}_M^{-1}a$, which is the same as $ (\overline{u_1})^{-1}{^tu}\overline{u_1}=\dot{w}_Mau_2u^{-1}u_2^{-1}a^{-1}\dot{w}_M$.

Notice that $(\overline{u_1})^{-1}{^tu}\overline{u_1}\in U^-$, and $au_2u^{-1}u_2^{-1}a^{-1}\in U$. This implies that 
$au_2u^{-1}u_2^{-1}a^{-1}\in U_{w_M}^-=U_M$.
Therefore $ u_2u^{-1}u_2^{-1}\in a^{-1}U_Ma=U_M$ since $a\in A_w$.
So $u^{-1}\in u^{-1}_2U_Mu_2$, thus $u\in u^{-1} U_M u_2=u_2^{-1}U_{w}^+u_2$.
\end{proof}
Next, we will show an equality that relates partial Bessel integrals with full Bessel integrals. 

First, decompose $U=u_2^{-1}Uu_2=(u^{-1}_2U^+_wu_2)(u_2^{-1}U_w^-u_2)$ and for $u\in U$, write $u=u'^+(u_2^{-1}u^-u_2)$ with $u'^+=u_2^{-1}u^+u_2$ where $u^+\in U^+_w$, and $u^-\in U^-_w.$
Since by lemma 6.3, $U_g\subset u_2^{-1}U_w^+u_2$, we have 
$$B^G_{\varphi}(g,f)=\int_{U_g\backslash u_2^{-1}U^+_wu_2}\int_{U^-_w}\int_U f(xgu'^+u_2^{-1}u^-u_2)$$$$\cdot\varphi(^t{(u_2^{-1}u^-u_2)}^t{u'^+}\dot{w}_G^{-1}g'u'^+u_2^{-1}u^-u_2) \psi^{-1}(x)\psi^{-1}(u'^+u_2^{-1}u^-u_2)dxdu^-du'^+$$
$$=\int_{U_g\backslash u_2^{-1}U^+_wu_2}\int_{U^-_w}\int_U f(xu_1\dot{w}a(u_2u'^+u_2^{-1})u^-u_2)$$$$\cdot\varphi(^t{(u_2^{-1}u^-u_2)}^t{u'^+}\dot{w}_G^{-1}u_1\dot{w}a'(u_2u'^+u_2^{-1}) u^-u_2) \psi^{-1}(x)\psi^{-1}(u'^+u_2^{-1}u^-u_2)dxdu^-du'^+$$
$$=\int_{U_g\backslash u_2^{-1}U^+_wu_2}\int_{U^-_w}\int_U f(xu_1\dot{w}au^+u^-u_2)\varphi(^t{u_2}^t{u^-}^t{u^+}^t{u_2^{-1}}\dot{w}_G^{-1}u_1\dot{w}a'u^+u^-u_2)$$
$$\cdot \psi^{-1}(x)\psi^{-1}(u_2^{-1}u^+u^-u_2)dxdu^-du^+.$$

Now since $a\in A_w$, we have $au^+=u^+a$. So the above integral
$$=\int_{U_g\backslash u_2^{-1}U^+_wu_2}\int_{U^-_w}\int_U f(xu_1(\dot{w}u^+\dot{w}^{-1})\dot{w}au^-u_2)\varphi(^t{u_2}^t{u^-}^t{u^+}^t{u_2^{-1}}\dot{w}_G^{-1}u_1\dot{w}a'u^+u^-u_2)$$
$$\cdot \psi^{-1}(x)\psi^{-1}(u_2^{-1}u^+u^-u_2)dxdu^-du^+.$$

Let $x'=xu_1(\dot{w}u^+ \dot{w}^{-1})$ and $u'^-=u^-u_2$, then $dx'=dx$ and $du'^-=du^-$.

After this change of variable we have the above integral 
$$=\int_{U_g\backslash u_2^{-1}U^+_wu_2}\int_{U^-_w}\int_U f(x'\dot{w}au'^-)\varphi(^t{u'^-}^t{u^+}^t{u_2^{-1}}\dot{w}_G^{-1}u_1\dot{w}a'u^+u'^-)$$
$$\cdot \psi^{-1}(x'(u_1\dot{w}u^+\dot{w}^{-1})^{-1})\psi^{-1}(u_2^{-1}u^+u'^-)dxdu'^-du^+$$
$$=\psi(u_1)\psi(u_2)\int_{U_g\backslash u_2^{-1}U^+_wu_2}\int_{U^-_w}\int_U f(x'\dot{w}au'^-)\varphi(^t{u'^-}^t{u^+}^t{u_2^{-1}}\dot{w}_G^{-1}u_1\dot{w}a'u^+u'^-)$$
$$\cdot \psi^{-1}(x')\psi(\dot{w}u^+\dot{w}^{-1})\psi^{-1}(u^+)\psi^{-1}(u'^-)dxdu'^-du^+.$$

By compatibility of $\psi$ and $\dot{w}$, we have
$\psi(\dot{w}u^+\dot{w}^{-1})=\psi(u^+)$, so
$$B^G_{\varphi}(g,f)=\psi(u_1)\psi(u_2)\int_{U_g\backslash u_2^{-1}U^+_wu_2}\int_{U^-_w}\int_U f(x'\dot{w}au'^-)$$$$\cdot\varphi(^t{u'^-}^t{u^+}^t{u_2^{-1}}\dot{w}_G^{-1}u_1\dot{w}a'u^+u'^-) \psi^{-1}(x')\psi^{-1}(u'^-)dxdu'^-du^+$$

Now take $f\in C_c^{\infty}(\Omega_w;w_{\pi})$. Since $g$ is fixed, $a$ is fixed. Since by Lemma 5.2 of [7], $C(w)$ is closed in $\Omega_w$, there exists open compact subsets $U_1\subset U$ and $U_2\subset U_w^-$ such that the support of the function
$(x,u^-)\mapsto f(x\dot{w}au^-)$ lies in $U_1\times U_2$. Take $N$ large enough such that $\varphi=\varphi_N$ is invariant under the left and right action of $U_2$ as in Lemma 4.2 of [7], i.e., 
$\varphi(^tugu)=\varphi(g)$ for all $u\in U_2$. Then
we have 
$\varphi(^t{u'^-}^t{u^+}^t{u_2^{-1}}\dot{w}_G^{-1}u_1\dot{w}a'u^+u'^-)=\varphi(^t{u^+}^t{u_2^{-1}}\dot{w}_G^{-1}u_1\dot{w}a'u^+)$.

Define $$\tilde{\varphi}^G_M(g')=\int_{U_g\backslash u_2^{-1}U^+_wu_2}\varphi(^t{u^+}\leftidx{^t}{u_2^{-1}}u_1\dot{w}a'u^+)du^+,$$
then $$\tilde{\varphi}^G_M(g')=\int_{U_g\backslash u_2^{-1}U^+_wu_2}\varphi(^t{u^+}\leftidx{^t}{u_2^{-1}}\dot{w}_G^{-1}g'u_2^{-1}u^+)du^+$$
$$=\int_{U_g\backslash u_2^{-1}U^+_wu_2}\varphi(^t{u_2^-}\leftidx{^t}{u'^+}\dot{w}_G^{-1}g'u'^+u_2^{-1})du'^+.$$
So we have
$$B^G_{\varphi}(g,f)=\psi(u_1)\psi(u_2)\tilde{\varphi}^G_M(g')\int_{U_w^-}\int_Uf(x\dot{w}au^-)\psi^{-1}(x)\psi^{-1}(u^-)dxdu^-$$
$$=\psi(u_1)\psi(u_2)\tilde{\varphi}^G_M(g')B^G(\dot{w}a,f)=\tilde{\varphi}^G_M(g')B^G(g,f)$$

We just showed the following result:
\begin{lemma}For $w\in B(G)$ and any $g=u_1\dot{w}au_2\in C_r(\dot{w})$, $g'=u_1\dot{w}a'u_2$ where $a=za'$, $z\in Z$ and $a'\in A'$, we have 
$$B^G_{\varphi}(g,f)=\tilde{\varphi}^G_M(g')B^G(g,f).$$
where
$$B^G(g,f)=\int_{U\times U_{w}^-}f(xgu^-)\psi^{-1}(x)\psi^{-1}(u^-)dxdu^-$$ is the full Bessel integral and  $\tilde{\varphi}^G_M(g')$ as defined above.
\end{lemma}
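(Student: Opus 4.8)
The plan is to unwind the definition of $B^G_\varphi(g,f)$ and organize the string of substitutions displayed just before the statement into a clean argument. Writing $g=u_1\dot w a u_2\in C_r(\dot w)$, I would first use the decomposition $U=(u_2^{-1}U_w^+u_2)(u_2^{-1}U_w^-u_2)$ together with Lemma 6.3, which guarantees $U_g\subset u_2^{-1}U_w^+u_2=u_2^{-1}U_Mu_2$, so that the outer integral over $U_g\backslash U$ splits as an iterated integral over $U_g\backslash u_2^{-1}U_w^+u_2$ and over $U_w^-$. Plugging $g=u_1\dot w a u_2$ into both $f$ and $\varphi$ and using that $a\in A_w=Z_{M_w}$ centralizes $U_w^+=U_M$, one slides $\dot w u^+\dot w^{-1}$ to the left of $\dot w$ inside the argument of $f$.

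Next I would perform the measure-preserving change of variables $x'=x\,u_1(\dot w u^+\dot w^{-1})$ and $u'^-=u^-u_2$ and collect the character factors. The key simplification here is the compatibility of the generic character $\psi$ with the chosen representative $\dot w$ (the second listed property of $B(G)$, i.e. Proposition 5.1 of [7]), which gives $\psi(\dot w u^+\dot w^{-1})=\psi(u^+)$, so the $u^+$-characters cancel and the $\psi(u_1)\psi(u_2)$ factors come out. This leaves a triple integral whose $f$-part depends only on $(x',u'^-)$, and whose $\varphi$-part depends on $u^+$, on $u'^-$, and on the fixed data $u_1,u_2,a'$.

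To remove the $u'^-$-dependence from $\varphi$ I would invoke that $f\in C_c^\infty(\Omega_w;\omega_\pi)$ and that $C(w)$ is closed in $\Omega_w$ (Lemma 5.2 of [7]), so the support of $(x,u^-)\mapsto f(x\dot w a u^-)$ is contained in some $U_1\times U_2$ with $U_2\subset U_w^-$ open compact; then Lemma 4.2 of [7] lets one take $N$ large enough that $\varphi=\varphi_N$ satisfies $\varphi({}^tu\,g\,u)=\varphi(g)$ for all $u\in U_2$, which kills the $u'^-$ argument inside $\varphi$. At that point the integral factors: the $u^+$-integral is exactly $\tilde\varphi^G_M(g')$, and the remaining $(x',u'^-)$-integral together with the $\psi(u_1)\psi(u_2)$ prefactor is $\psi(u_1)\psi(u_2)B^G(\dot w a,f)=B^G(g,f)$, yielding the claimed identity. (Here the decomposition $g=g'z$ with $z\in Z$ only enters through $g'$ because central elements act trivially in the relevant conjugation and by the $\omega_\pi$-equivariance of $f$, which is why $\tilde\varphi^G_M$ is naturally a function of $g'$.)

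I expect the only real obstacle to be the support and convergence bookkeeping — making sure the various unipotent integrals genuinely converge and that $N$ can be chosen uniformly enough for the $\varphi$-invariance to take effect on the relevant compact sets — rather than any conceptual difficulty, since the whole computation is driven by the Bruhat-cell geometry $\Omega_w\simeq U^-_{w^{-1}}\times\dot w M\times U_w^-$, the commutation $a\in Z_{M_w}$, and the $\psi$-compatibility of $\dot w$.
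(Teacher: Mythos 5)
Your proposal reproduces the paper's argument step for step: the split of $U$ as $(u_2^{-1}U_w^+u_2)(u_2^{-1}U_w^-u_2)$ with Lemma 6.3 controlling $U_g$, the commutation of $a\in A_w$ past $U_w^+$, the change of variables $x'=xu_1(\dot w u^+\dot w^{-1})$ and $u'^-=u^-u_2$, the cancellation via $\psi(\dot w u^+\dot w^{-1})=\psi(u^+)$, the support argument from the closedness of $C(w)$ in $\Omega_w$ and the $\varphi$-invariance of Lemma 4.2 of [7], and the final factorization into $\tilde\varphi^G_M(g')$ times $\psi(u_1)\psi(u_2)B^G(\dot w a,f)=B^G(g,f)$. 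This is precisely the route the paper takes, so the proposal is correct and not a genuinely different proof.
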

\subsection{Twisted centralizer and transfer principle}
For $\textbf{G}=\textrm{GL}_n$, $G=\textbf{G}(F)$, and $f\in C^{\infty}_c(G;w_{\pi})$, we defined the partial Bessel integral as $$B^G_\varphi(g,f)=\int_{U_g\backslash U}W^f(gu)\varphi(^tu\dot{w}_G^{-1}g'u)\psi^{-1}(u)du$$
$$=\int_{U_g\backslash U}\int_U f(xgu)\varphi(^tu\dot{w}_G^{-1}g'u)\psi^{-1}(x)\psi^{-1}(u)dxdu,$$ where $\varphi$ is the characteristic function of some compact neighborhood of zero in $\textrm{Mat}_n(F)$. Now for any Levi subgroup $M$ of $G$, we define the twisted centralizer of $m\in M$ in $U_M=U\cap M$ to be
$U_{M,m}=\{u\in U_M: \leftidx{^t}u\dot{w}_M^{-1}mu=\dot{w}_M^{-1}u\}$. Let $h\in C^{\infty}_c(M;w_{\pi})$, the space of smooth functions of compact support modulo $Z$ on $M$, satisfying $h(zm)=w_{\pi}(z)h(m),$ for $z\in Z=Z_G$. The partial Bessel integral on $M$ is then given by
$$B^M_{\varphi}(m,h)=\int_{U_{M,m}\backslash U_M}\int_{U_M}h(xmu)\varphi(^tu\dot{w}_M^{-1}m'u)\psi^{-1}(xu)dxdu,$$
where $m'$ is obtained by $m$ from the decomposition $Z_M=ZA_M'$, i,e., if $m\in U_M \dot{w} A_M U^-_{M,w}$, then $m'\in U_M\dot{w} A_M' U^-_{M,w},$ $z\in Z$ and $m=zm'$.

Now Let $L\subset M \subset G$ be standard Levi subgroups of G, as before let $w_G$, $w_M$ and $w_L$ be the long Weyl group elements of $G, M$ and $L$ respectively. And let $\dot{w}_G,\dot{w}_M,$ and $\dot{w}_L$ be their representatives chosen to be compatible with $\psi$ as before. Now denote $w^M_L=\dot{w}_M\cdot \dot{w}_L^{-1}$, similarly if $M$ is replaced by $G$.

Take $g\in C_r(w^G_L),$ the relevant cell for $w^G_L$. Suppose $g=u_1 \dot{w}^G_L a u_2$ is the Bruhat decomposition of $g$, where $a\in A_{w^G_L}=Z_L$. Decompose $u_1=u_1^-u_1^+\in U^-_{(w')^{-1}} U^+_{(w')^{-1}}=U$, also $u_2=u_2^+u_2^-\in U^+_{w'}U_{w'}^-=U_M N_M=U$, where $w'=w^G_M$. Therefore $g=u_1w'au_2=u_1^-u_1^+w'au_2^+u_2^-=u_1^-w'(w'^{-1})u_1^+w'au_2^+u_2^-$. Since $C_r(w^G_L)\subset \Omega_{w'}$, by Lemma 6.1, $g$ has a unique decomposition $g=u_1^-w'mu_2^-$,
 $u_1^-\in U_{(w')^{-1}}^-$, and $u_2^-\in U_{w'}^-$. On the other hand, since $w'(w'^{-1}u_1^+w')w'^{-1}=u_1^+\in U$, so by definition $(w'^{-1})u_1^+w'\in U_{w'}^+=U_M\subset M$. Therefore $(w'^{-1})u_1^+w'au_2^+\in M$. Now compare the two decompositions and by uniqueness of Lemma 6.1, we see that $m=w'^{-1}u_1^+w'au_2^+$.

Now we prove the following transfer principal for partial Bessel integrals: 

\begin{proposition}(\textbf{Transfer principle for partial Bessel integrals})For any given $g\in C_r(w^G_L)$, suppose $g=u_1^- w' m u_2^-$, then 
$$B^G_{\varphi}(g, f)=\psi(u_1^-)\psi(u_2^-) B^M_{\varphi}(u_1^-,u_2^-,m, h_f).$$
where
$$B_\varphi^M(u_1^-,u_2^-, m,h_f)=\int_{U_{M,m}\cap n_0U_{M,m}n_0^{-1}\backslash U_M}\int_{U_M}h_f(x'mu')$$
$$\cdot\varphi(^t{u'}^t{n_0}\dot{w}_M^{-1}m'u')\psi^{-1}(x')\psi^{-1}(u')dx'du'$$ and
$h_f\mapsto f$ through the surjective map: 
$C_c^{\infty}(\Omega_{w'};w_{\pi})\twoheadrightarrow C_c^{\infty}(M; w_{\pi}),$ and $n_0=\leftidx{^t}(\overline{u_1^-})(u_2^-)^{-1}\in N_M$.
\end{proposition}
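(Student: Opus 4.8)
The plan is to substitute the decomposition $g=u_1^-\,w'\,m\,u_2^-$ (with $w'=w^G_M=\dot w_G\dot w_M^{-1}$ and $\overline{u_1^-}:=\dot w_G^{-1}u_1^-\dot w_G$ as in the proof of Lemma~6.3) into
\[
B^G_{\varphi}(g,f)=\int_{U_g\backslash U}\Bigl(\int_U f(xgu)\psi^{-1}(x)\,dx\Bigr)\varphi({}^tu\,\dot w_G^{-1}g'u)\,\psi^{-1}(u)\,du,
\]
perform two changes of variable that strip off $u_1^-$ and $u_2^-$ (producing the constants $\psi(u_1^-)$, $\psi(u_2^-)$ and the twist $n_0$ in the cutoff), and then integrate out the unipotent directions transverse to $M$ so that, by Lemma~6.2, $f$ is replaced by $h_f$ and the surviving $U_M\times U_M$ integral is exactly $B^M_{\varphi}(u_1^-,u_2^-,m,h_f)$.

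First I would substitute $u=(u_2^-)^{-1}u'$ in the outer integral. As $(u_2^-)^{-1}\in U^-_{w'}=N_M\subset U$ this preserves Haar measure, turns $gu$ into $u_1^-w'm u'$, gives $\psi^{-1}(u)=\psi(u_2^-)\psi^{-1}(u')$, and turns the domain $U_g\backslash U$ into $\bigl(u_2^-U_g(u_2^-)^{-1}\bigr)\backslash U$. For the cutoff one uses $\dot w_G^{-1}u_1^-w'=\dot w_G^{-1}u_1^-\dot w_G\,\dot w_M^{-1}=\overline{u_1^-}\dot w_M^{-1}$, so $\dot w_G^{-1}g'=\overline{u_1^-}\dot w_M^{-1}m'u_2^-$ and
\[
{}^tu\,\dot w_G^{-1}g'\,u={}^tu'\,\bigl(({}^tu_2^-)^{-1}\overline{u_1^-}\bigr)\dot w_M^{-1}m'u'={}^tu'\,{}^tn_0\,\dot w_M^{-1}m'u',\qquad n_0={}^t(\overline{u_1^-})(u_2^-)^{-1}\in N_M.
\]
Then, substituting $x\mapsto x(u_1^-)^{-1}$ in the inner integral replaces $f(xu_1^-w'mu')$ by $f(xw'mu')$ and extracts the factor $\psi(u_1^-)$, so that $B^G_{\varphi}(g,f)$ equals $\psi(u_1^-)\psi(u_2^-)$ times
\[
\int_{(u_2^-U_g(u_2^-)^{-1})\backslash U}\Bigl(\int_U f(xw'mu')\psi^{-1}(x)\,dx\Bigr)\varphi({}^tu'\,{}^tn_0\,\dot w_M^{-1}m'u')\,\psi^{-1}(u')\,du'.
\]

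Next I would split the remaining integrations along $M$. Writing $x=x_0^-x_0^+\in U^-_{(w')^{-1}}U^+_{(w')^{-1}}$ and pushing $x_0^+$ through $w'$ as $x_0^+w'=w'\hat x$ with $\hat x=(w')^{-1}x_0^+w'\in U^+_{w'}=U_M$, and writing $u'=u'_M u'_N\in U_M N_M$, the argument of $f$ becomes $f\bigl(x_0^-\,w'\,(\hat x m u'_M)\,u'_N\bigr)$. Since $w'\in B(G)$ is associated with $M$, property~2 gives $\psi(x_0^+)=\psi(\hat x)$; and, as in the proof of Lemma~6.4, taking $\varphi=\varphi_N$ with $N$ large relative to the compact support of $f$ modulo the center, the cutoff is unaffected by the $N_M$-component $u'_N$. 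Hence the $(x_0^-,u'_N)$-integration is exactly the defining integral of Lemma~6.2 (using $\Omega_{w'}\simeq U^-_{(w')^{-1}}\times w'M\times U^-_{w'}$ of Lemma~6.1 and $f\in C^\infty_c(\Omega_{w'};w_\pi)$):
\[
\int_{U^-_{(w')^{-1}}}\int_{U^-_{w'}}f\bigl(x_0^-\,w'\,(\hat x m u'_M)\,u'_N\bigr)\psi^{-1}(x_0^-)\psi^{-1}(u'_N)\,dx_0^-\,du'_N=h_f(\hat x m u'_M).
\]
Renaming $\hat x$ as $x'$ and $u'_M$ as $u'$, and observing that $u_2^-U_g(u_2^-)^{-1}$ acts only on the $U_M$-factor, the expression becomes $\psi(u_1^-)\psi(u_2^-)$ times the integral $\int_{(u_2^-U_g(u_2^-)^{-1})\backslash U_M}\int_{U_M}h_f(x'mu')\varphi({}^tu'\,{}^tn_0\,\dot w_M^{-1}m'u')\psi^{-1}(x')\psi^{-1}(u')\,dx'\,du'$. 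Finally I would identify the domain: Lemma~6.3 applied to $w^G_L$ (with Levi $L\subseteq M$) gives $U_g\subseteq(u_2^-)^{-1}U_Mu_2^-$, and a direct computation from $\dot w_G^{-1}g=\overline{u_1^-}\dot w_M^{-1}m u_2^-$ and the formula for $n_0$ shows $u_2^-U_g(u_2^-)^{-1}=U_{M,m}\cap n_0U_{M,m}n_0^{-1}$; this is the domain of $B^M_{\varphi}(u_1^-,u_2^-,m,h_f)$, completing the proof.

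The step I expect to be the main obstacle is the middle one: among the various product decompositions of $U$ relative to $w'$, isolating precisely the directions $U^-_{(w')^{-1}}$ on the left of $w'$ and $N_M=U^-_{w'}$ on the right of $m$ so that the $f$-integration collapses to $h_f$ with the correct characters, while keeping the cutoff $\varphi$ and the quotient $\bigl(u_2^-U_g(u_2^-)^{-1}\bigr)\backslash U$ intact — together with the verification of the twisted-centralizer identity $u_2^-U_g(u_2^-)^{-1}=U_{M,m}\cap n_0U_{M,m}n_0^{-1}$. Everything else is routine manipulation of unipotent integrals (all changes of variable are unipotent translations, preserving Haar measure) and follows the template of [7]; the inputs that make it work are property~2 (compatibility of $\psi$ with $\dot w_G$, $\dot w_M$) and Lemmas~6.1--6.3.
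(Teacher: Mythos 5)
Your proposal is correct and leads to the same formula, but the route you take differs from the paper's in one essential place, and that difference hides a subtlety you should be aware of.

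The paper's proof handles the outer $u$-integral by a single conjugation $u \mapsto n u n^{-1}$ with $n = {}^t(\overline{u_1^-})^{-1}$, which simultaneously sends $U_g$ onto $U_{M,m}\cap n_0U_{M,m}n_0^{-1}$ (this is a \emph{direct} conjugation of Lemma 6.6's formula by $n^{-1}$, since $u_2^- n = n_0^{-1}$) and produces the twist ${}^t n_0$ in the cutoff; the $u_1^-$ factor is then disposed of inside the subsequent inner change of variables. You instead strip off $u_2^-$ by left-translating $u$ and strip off $u_1^-$ by right-translating $x$. Your computation of the cutoff is right: $\dot w_G^{-1}g' = \overline{u_1^-}\dot w_M^{-1}m'u_2^-$ and ${}^t(u_2^-)^{-1}\overline{u_1^-}={}^tn_0$, so $\varphi$ picks up exactly ${}^tn_0$. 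Your bookkeeping of the $\psi(u_1^-)\psi(u_2^-)$ factors and the collapse via Lemma 6.2 are also right. The point of caution is the domain. After the substitution $u=(u_2^-)^{-1}u'$, the new quotient is $\bigl(u_2^-U_g(u_2^-)^{-1}\bigr)\backslash U$, and conjugating Lemma 6.6's formula by $u_2^-$ gives, at face value, $u_2^-U_g(u_2^-)^{-1}= U_{M,m}\cap n_0^{-1}U_{M,m}n_0$ — note the sign of the exponent is opposite to what the proposition requires. That this is nonetheless equal to $U_{M,m}\cap n_0U_{M,m}n_0^{-1}$ is not a formal group-theoretic fact; it follows from the finer structure established inside the proof of Lemma 6.6, namely that the three conditions $(a)$, $(b)$, $(c)$ hold simultaneously for $u\in U_g$ and force $n^{-1}un = u_2^-u(u_2^-)^{-1}=u^+\in U_{M,m}\cap\mathrm{Cent}(n_0)$; hence the two conjugates of $U_g$ coincide as subsets of $U_M$. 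So the "direct computation" you gesture at near the end is really Lemma 6.6 together with the remark following it, not just Lemma 6.3 plus algebra. You do flag this as the main obstacle, which is the right instinct; just be aware that the conjugation by $u_2^-$ alone lands on the \emph{wrong-looking} side until that observation is brought in. By comparison, the paper's choice of conjugating by $n$ avoids this sign issue entirely because it directly reproduces the intersection as stated in Lemma 6.6. Both approaches are clean once Lemma 6.6 is in hand; yours is perhaps more transparent in how the $\psi(u_1^-)\psi(u_2^-)$ factors appear, while the paper's keeps the twisted-centralizer identification trivial.
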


To prove this, we first need to deal with the twisted centralizers in the above two partial Bessel integrals.

\begin{lemma} Suppose that we have a chain of standard Levi subgroups $L\subset M\subset G$ with associated Weyl group elements $w^G_L\in B(G)$ and $w^M_L\in B(M)$ respectively. Then for $g\in C_r(w^G_L)$ with $g=u_1w^G_La u_2=u_1^-w'mu_2^-\in C_r(w^G_L)\subset \Omega_{w'}\simeq U^-_{(w')^{-1}}\times \dot{w}'M\times U_{w'}^-,$ where $a\in A_{w^G_L}=Z_L$ and $w'=w^G_M,$ $u=u_1^-u_1^+\in U^-_{(w')^{-1}} U^+_{(w')^{-1}}=U$, also $u_2=u_2^+u_2^-\in U^+_{w'}U_{w'}^-=U_M N_M=U.$

Then the twisted centralizer of $g$ and $m$ satisfies
$$U_g=(\leftidx{^t}(\overline{u_1^-})^{-1}U_{M,m} \leftidx{^t}{\overline{u_1^-}})\cap ((u_2^-)^{-1}U_{M,m}u_2^-)$$
where $\overline{u_1^-}=\dot{w}_G^{-1}u_1^- \dot{w}_G.$
\end{lemma}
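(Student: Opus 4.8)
The plan is to unwind the defining equation of the twisted centralizer $U_g$ using the factorization $g=u_1^-w'mu_2^-$ (with $w'=w^G_M=\dot w_G\dot w_M^{-1}$), reduce it to a twisted-centralizer condition for $m$ inside $M$, and then match the resulting solution set with the asserted intersection of conjugates of $U_{M,m}$. Concretely, $u\in U_g$ means ${}^tu\,\dot w_G^{-1}g\,u=\dot w_G^{-1}g$; since $\dot w_G^{-1}w'=\dot w_M^{-1}$ we have $\dot w_G^{-1}g=\overline{u_1^-}\,\dot w_M^{-1}m\,u_2^-$ with $\overline{u_1^-}=\dot w_G^{-1}u_1^-\dot w_G\in U^-$. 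Multiplying the centralizer equation by $\overline{u_1^-}^{-1}$ on the left and $(u_2^-)^{-1}$ on the right turns it into the equation $(\star)$: $v\,(\dot w_M^{-1}m)\,u''=\dot w_M^{-1}m$, where $v:=\overline{u_1^-}^{-1}\,{}^tu\,\overline{u_1^-}\in U^-$ and $u'':=u_2^-u(u_2^-)^{-1}\in U$.

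Next I would show that $(\star)$ forces $v\in U_M^-=U^-\cap M$ and $u''\in U_M$. Indeed $(\star)$ gives $v=(\dot w_M^{-1}m)\,u''^{-1}(\dot w_M^{-1}m)^{-1}$; decomposing $u''^{-1}$ along $U=U_MN_M$ and using that conjugation by $\dot w_M^{-1}m\in M$ preserves the parabolic $P_M=MN_M$ shows $v\in P_M\cap U^-=U_M^-$, and then $u''=(\dot w_M^{-1}m)^{-1}v^{-1}(\dot w_M^{-1}m)\in M\cap U=U_M$. The key point is then a direct matrix identity: with $n_0={}^t(\overline{u_1^-})(u_2^-)^{-1}$ one computes $v={}^t(n_0u''n_0^{-1})$, so $v\in U_M^-$ forces $n_0u''n_0^{-1}\in U_M$. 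Since $u''\in U_M\subset M$ normalizes $N_M$ and $n_0\in N_M$ (which one checks directly, cf. the statement of Proposition 6.6), the commutator $[n_0,u'']=(n_0u''n_0^{-1})u''^{-1}$ lies in $U_M\cap N_M=\{e\}$, hence $n_0u''n_0^{-1}=u''$. Consequently $(\star)$ collapses to ${}^tu''\,(\dot w_M^{-1}m)\,u''=\dot w_M^{-1}m$, i.e. $u''\in U_{M,m}$, and trivially then $n_0u''n_0^{-1}=u''\in U_{M,m}$ as well.

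It remains to translate back. Using $u=(u_2^-)^{-1}u''u_2^-$ together with $(u_2^-)^{-1}={}^t(\overline{u_1^-})^{-1}n_0$, one checks directly that $u\in(u_2^-)^{-1}U_{M,m}u_2^-\iff u''\in U_{M,m}$ and that $u\in{}^t(\overline{u_1^-})^{-1}U_{M,m}\,{}^t(\overline{u_1^-})\iff n_0u''n_0^{-1}\in U_{M,m}$. Thus $u\in U_g$ implies that $u$ lies in the intersection. For the converse, if $u''\in U_{M,m}$ and $n_0u''n_0^{-1}\in U_{M,m}$ then both lie in $U_M$, so the commutator argument of the previous paragraph again yields $n_0u''n_0^{-1}=u''$, hence $v={}^tu''$ and $(\star)$ holds, i.e. $u\in U_g$. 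This gives the asserted equality.

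I expect the substantive work to be concentrated in the two middle steps: showing that $(\star)$ genuinely confines $v$ and $u''$ to $M$ (the $P_M$-decomposition argument, via $P_M\cap U^-=U_M^-$), and the conjunction of the matrix identity $v={}^t(n_0u''n_0^{-1})$ with the disjointness $U_M\cap N_M=\{e\}$ that forces $n_0u''n_0^{-1}=u''$; everything else is bookkeeping with conjugations, parallel to Lemma 6.3.
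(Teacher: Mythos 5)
Your proof is correct, and it reaches the result by a genuinely different route from the paper's. The paper's proof begins by decomposing $u=u^{+}u^{-}\in U_M N_M$, substitutes this into the twisted-centralizer equation, and then massages the equation through a long chain of conjugations to arrive at an identity $(A)$ lying visibly in $U^{-}_{(w')^{-1}}\cdot w'M\cdot U_{w'}^{-}$; it then invokes the uniqueness of the decomposition $\Omega_{w'}\simeq U^{-}_{(w')^{-1}}\times w'M\times U_{w'}^{-}$ (Lemma 6.1) to split $(A)$ into three separate equations $(a)$, $(b)$, $(c)$, from which the inclusion is read off directly. You instead conjugate the centralizer equation just once (left by $\overline{u_1^-}^{-1}$, right by $(u_2^-)^{-1}$) to obtain the compact relation $(\star)$, never split $u$, and extract the membership $v\in U_M^-$, $u''\in U_M$ from the structural fact $P_M\cap U^-=U_M^-$ — a consequence of the opposite-parabolic decomposition rather than of Lemma 6.1. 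From there the explicit matrix identity $v={}^t(n_0 u'' n_0^{-1})$ together with $U_M\cap N_M=\{e\}$ forces $n_0 u'' n_0^{-1}=u''$, which both pins down $u''\in U_{M,m}$ and makes the commutation $u''\in\mathrm{Cent}(n_0)$ completely transparent — the latter is exactly the content of the Remark the paper appends after its proof, which your route obtains for free. The trade is roughly: the paper's argument is uniform in spirit with how $\Omega_{w'}$ is used elsewhere in Section 6 (so it recycles Lemma 6.1), whereas yours substitutes the shorter, more self-contained parabolic argument and shortens the bookkeeping considerably. One small thing to flesh out if you write this up: you should still prove $\overline{u_1^-}\in N_M^-$ (equivalently ${}^t\overline{u_1^-}\in N_M$), which the paper establishes explicitly and which you need both for $n_0\in N_M$ and to know $v$ is well-placed; you flag it as a check but don't give the argument.
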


\begin{proof}
We have $g=u_1 w^G_L a u_2 =u^-_1 u_1^+w' w^M_L a u_2^+u_2^-=u_1^-w'(w'^{-1}u_1^+ w' w^M_L a u_2^+)u_2^-=u_1^-w'mu_2^-$ where $m=w'^{-1}u_1^+w'w^M_Lau_2^+$. Notice that we have $w'^{-1}U^+_{(w')^{-1}}w'=U^+_{w'}=U_M.$ The above decomposition is unique by Lemma 6.1.

Now we show that $\overline{u^-_1}=\dot{w}_G^{-1}u_1^-\dot{w}_G\in N_M^-$, or equivalently, $^t\overline{u_1^-}\in U^-_{(w')}=N_M$. To see this, since $u_1^-\in U^-_{(w')^{-1}}\subset U$, $\overline{u^-_1}=\dot{w}_G^{-1}u^-_1\dot{w}_G\in U^-$. On the other hand, we have that $w'^{-1}u_1^-w'=\dot{w}_M\dot{w}_G^{-1}u_1^-\dot{w}_G \dot{w}_M^{-1}=\dot{w}_M\overline{u^-_1}\dot{w}_M^{-1}\in U^-$ by the definition of $u_1^-$. Taking transpose and using the fact that $^t\dot{w}_M=\dot{w}_M^{-1}$ by the way we choose the Weyl group representatives, we see that this is the same as saying $\dot{w}_M\leftidx{^t}{\overline{u_1^-}}\dot{w}_M^{-1}\in U$, this shows that $^t\overline{u^-_1}\in U_{\dot{w}_M}^+=N_M$.

Next, we see that $$u\in U_g\Longleftrightarrow\dot{w}_G^tu\dot{w}_G^{-1}gu=g$$
$$\Longleftrightarrow\dot{w}_G\leftidx{^t}{u^-}\leftidx{^t}{u^+}\dot{w}_G^{-1}u_1^-w'mu_2^-u^+u^-=u_1^-w'mu_2^-$$
$$\Longleftrightarrow \dot{w}_G\leftidx{^t}{u^-}\leftidx{^t}{u^+}\dot{w}_G^{-1}u^-_1\dot{w}_G\dot{w}_M^{-1}mu_2^-u^+u^-=u_1^-w'mu_2^- \cdots\cdots (w'=\dot{w}_G\dot{w}_M^{-1})$$
$$\Longleftrightarrow \dot{w}_G\leftidx{^t}{u^-}\leftidx{^t}{u^+}\overline{u^-_1}\dot{w}_M^{-1}mu^-_2 u^+u^-=u_1^-w'mu_2^-\cdots\cdots(\overline{u_1^-}=\dot{w}_G^{-1}u^-_1\dot{w}_G\in N_M)$$
$$\Longleftrightarrow \dot{w}_G\leftidx{^t}{u^-}\leftidx{^t}{u^+}\overline{u_1^-}(\leftidx{^t}{u^+})^{-1}\leftidx{^t}{u^+}\dot{w}_M^{-1}mu_2^-u^+u^-=u_1^-w'mu_2^-$$
$$\Longleftrightarrow \dot{w}_G\leftidx{^t}{u^-}(\leftidx{^t}{u^+}\overline{u_1^-}(\leftidx{^t}{u^+})^{-1})\dot{w}_G^{-1}(\dot{w}_G\dot{w}_M^{-1})\dot{w}_M\leftidx{^t}{u^+}\dot{w}_M^{-1}mu^+(u^+)^{-1}u_2^-u^+u^-=u_1w'mu_2^-$$
$$\Longleftrightarrow (\dot{w}_G\leftidx{^t}{u^-}(\leftidx{^t}{u^+}\overline{u_1^-}(\leftidx{^t}{u^+})^{-1})\dot{w}_G^{-1})w'(\dot{w}_M\leftidx{^t}{u^+}\dot{w}_M^{-1}mu^+)((u^+)^{-1}u_2^-u^+u^-)=u_1^-w'mu_2^-$$ We call the last equality $(A)$.
Now notice that $\leftidx{^t}{u^+}\overline{u_1^-}(\leftidx{^t}{u^+})^{-1}=\leftidx{^t}{(({u^+})^{-1}\leftidx{^t}{\overline{u_1^-}}u^+)}$, and $(({u^+})^{-1}\leftidx{^t}{\overline{u_1^-}}u^+)\in N_M$ since we showed that $\leftidx{^t}{\overline{u_1^-}}\in N_M$ and $u^+\in U_M$, $U_M$ normalizes $N_M$. So we have $\leftidx{^t}{u^+}\overline{u_1^-}(\leftidx{^t}{u^+})^{-1}\in N_M^-$. 

Next, we claim that $\dot{w}_G\leftidx{^t}{u^-}(\leftidx{^t}{u^+}\overline{u_1^-}(\leftidx{^t}{u^+})^{-1})\dot{w}_G^{-1}\in U_{(w')^{-1}}^-$.
To see this, notice that this is equivalent to $ w'^{-1}\dot{w}_G\leftidx{^t}{u^-}(\leftidx{^t}{u^+}\overline{u_1^-}(\leftidx{^t}{u^+})^{-1})\dot{w}_G^{-1} w'\in U^-$, which is the same as saying $\dot{w}_M\leftidx{^t}{u^-}\leftidx{^t}{u^+}\overline{u_1^-}(\leftidx{^t}{u^+})^{-1}\dot{w}_M^{-1}\in U^-$, since $ w'^{-1}=\dot{w}_M\dot{w}_G^{-1}.$ Also note that $^t{u^-}\in N_M^-$ and $\leftidx{^t}{u^+}\overline{u_1^-}(\leftidx{^t}{u^+})^{-1}\in N_M^-,$ and it is not hard to see that $\dot{w}_MN^-_M\dot{w}_M^{-1}\subset U^-$, so the claim follows.  

Moreover, clearly we have $\dot{w}_M\leftidx{^t}{u^+}\dot{w}_M^{-1}mu^+\in M$ and $(u^+)^{-1}u_2^-u^+u^-\in N_M$.

Summarize what we obtained so far, we have 
$\dot{w}_G\leftidx{^t}{u^-}(\leftidx{^t}{u^+}\overline{u_1^-}(\leftidx{^t}{u^+})^{-1})\dot{w}_G^{-1}\in U_{(w')^{-1}}^-$,
$\dot{w}_M\leftidx{^t}{u^+}\dot{w}_M^{-1}mu^+\in M$ and 
$(u^+)^{-1}u_2u^+u^-\in U_{w'}^-$.
In addition, by the uniqueness of the decomposition
$\Omega_{w'}=U^-_{(w')^{-1}}\times w'M\times U_{w'}^-$ as in Lemma 6.1 and equality $(A)$, the following three equalities hold at the same time:

$(a),\ \ \dot{w}_G\leftidx{^t}{u^-}(\leftidx{^t}{u^+}\overline{u_1^-}(\leftidx{^t}{u^+})^{-1})\dot{w}_G^{-1}=u^-_1;$

$(b), \ \ \dot{w}_M\leftidx{^t}{u^+}\dot{w}_M^{-1}mu^+=m$;

$(c), \ \ (u^+)^{-1}u_2^-u^+u^-=u^-_2$. 

Notice that $(a)\Longleftrightarrow \leftidx{^t}{u}\overline{u_1^-}(\leftidx{^t}{u^+})^{-1}=\overline{u_1^-}\Longleftrightarrow (u^+)^{-1}\leftidx{^t}{\overline{u_1^-}}u=\leftidx{^t}{\overline{u_1^-}}\Longleftrightarrow u^+=\leftidx{^t}{\overline{u_1^-}}u(\leftidx{^t}{\overline{u_1^-}})^{-1}$, hence $\leftidx{^t}{\overline{u_1^-}}u(\leftidx{^t}{\overline{u_1^-}})^{-1}=u^+\in U_M$.
On the other hand, from $(b)$ we see that $u^+\in U_{M,m}$, so $(a)$\&$(b)$ implies that $\leftidx{^t}{\overline{u_1^-}}u(\leftidx{^t}{\overline{u_1^-}})^{-1}\in U_{M,m}$. Since we started with $u\in U_g$, we see that $U_g\subset (\leftidx{^t}{\overline{u_1^-}})^{-1} U_{M,m} \leftidx{^t}{\overline{u_1^-}}. $
Similarly, $(c)\Longleftrightarrow u_2^-u(u_2^-)^{-1}=u^+\Longrightarrow u_2^-u(u_2^-)^{-1}=u^+\in U_M$ and again by $(b)$ we have $u^+\in U_{M,m}$, therefore $u_2^-u(u_2^-)^{-1}\in U_{M,m}$. So $(b)\&(c)$ implies that $U_g\subset (u_2^-)^{-1}U_{M,m}u_2^-$. We conclude that
$U_g\subset(\leftidx{^t}(\overline{u_1^-})^{-1}U_{M,m} \leftidx{^t}{\overline{u_1^-}})\cap ((u_2^-)^{-1}U_{M,m}u_2^-)$.

Conversely, if $u=\leftidx{^t}(\overline{u_1^-})^{-1}u'\leftidx{^t}{\overline{u_1^-}}=(u_2)^{-1}u''u_2^-$ with $u',u''\in U_{M,m}$, we see that 
$u^+u^-=u=u'(u')^{-1}\leftidx{^t}(\overline{u_1^-})^{-1}u'\leftidx{^t}{\overline{u_1^-}}=u'((u')^{-1}\leftidx{^t}(\overline{u_1^-})^{-1}u')\leftidx{^t}{\overline{u_1^-}}$.
Since $u'\in U_M,(u')^{-1}\leftidx{^t}(\overline{u_1^-})^{-1}u'\in N_M$, $U=U_M\times N_M$ and $U_M\cap N_M=\{1\}$, $u^+=u'$ and $u^-=(u')^{-1}\leftidx{^t}(\overline{u_1^-})^{-1}u'\leftidx{^t}{\overline{u_1^-}}.$ Replace $\leftidx{^t}{\overline{u_1^-}}$ by $u_2^-\in N_M$ in the above argument we also obtain $u^+=u''$. This implies $(b)$. 

Moreover, from $u=\leftidx{^t}(\overline{u_1^-})^{-1}u'\leftidx{^t}{\overline{u_1^-}}=(u_2)^{-1}u''u_2^-$, we see that 
$\leftidx{^t}(\overline{u_1^-})u\leftidx{^t}{\overline{u_1^-}}^{-1}=u'=u^+\Longleftrightarrow (a)$ and $u_2^-u(u_2^-)^{-1}=u''=u^+\Longleftrightarrow (c).$ 
Since $u\in U_g$ is equivalent to $(a),(b),(c)$ to hold at the same time, hence it proves the reverse inclusion 
$U_g\supset (\leftidx{^t}(\overline{u_1^-})^{-1}U_{M,m} \leftidx{^t}{\overline{u_1^-}})\cap ((u_2^-)^{-1}U_{M,m}u_2^-)$.

So we finally obtain that  
$U_g=(\leftidx{^t}(\overline{u_1^-})^{-1}U_{M,m} \leftidx{^t}{\overline{u_1^-}})\cap ((u_2^-)^{-1}U_{M,m}u_2^-)$.
\end{proof}

Remark: From the above argument, $u=\leftidx{^t}(\overline{u_1^-})^{-1}u'\leftidx{^t}{\overline{u_1^-}}=(u_2)^{-1}u''u_2^-\in U_g=(\leftidx{^t}(\overline{u_1^-})^{-1}U_{M,m} \leftidx{^t}{\overline{u_1^-}})\cap ((u_2^-)^{-1}U_{M,m}u_2^-)$ automatically implies that $$u'=u''\in U_{M,m}\cap \textrm{Cent}(\leftidx{^t}(\overline{u_1^-})u_2^{-1}).$$

Now we can show the proposition based on the above lemma:
\begin{proof}(Proposition 6.5)
For any given $g\in C_r(w^G_L)$, $$g=u_1w^G_L a u_2=u_1^- w' m u_2^-\in C_r(w^G_L)\subset \Omega_{w'}=U_{w'^{-1}}^-\times w'M\times U_{w'}^-$$
By lemma 6.6, $U_g= (\leftidx{^t}(\overline{u_1^-})^{-1}U_{M,m} \leftidx{^t}{\overline{u_1^-}})\cap ((u_2^-)^{-1}U_{M,m}u_2^-)$. 
To simplify the notations, we denote $n=\leftidx{^t}({\overline{u_1^-}})^{-1}$ and $n_0=\leftidx{^t}{\overline{u_1^-}}(u_2^-)^{-1}$, then they both lie in $N_M$. Since $n\in U$, we have $U=nUn^{-1}=(nU_Mn^{-1})\times(nN_Mn^{-1})$.
For $f\in C^{\infty}_c(\Omega_{w'};w_{\pi})$ we have
$B_{\varphi}^G(g,f)=\int_{U_g\backslash U}\int_U f(xgu)\varphi(^tu\dot{w}_G^{-1}g'u)\psi^{-1}(xu)dxdu$. Make a change of variable $u\mapsto nun^{-1}$, and decompose $U$ as $U=nUn^{-1}=(nU_Mn^{-1})\times(nN_Mn^{-1})$. Then $U_g=(\leftidx{^t}(\overline{u_1^-})^{-1}U_{M,m} \leftidx{^t}{\overline{u_1^-}})\cap ((u_2^-)^{-1}U_{M,m}u_2^-)=n(U_{M,m}\cap n_0U_{M,m}n_0^{-1})n^{-1}.$ We can rewrite the integral as
$$B^G_{\varphi}(g,f)=\int_{n(U_{M,m}\cap n_0U_{M,m}n_0^{-1})n^{-1}\backslash nU_Mn^{-1}}\int_{U_{w'}^-}\int_{U^+_{(w')^{-1}}}\int_{U^-_{(w')^{-1}}}$$
$$f(x^-x^+u_1^-w'mu_2^-nu^+u^-n^{-1})\varphi(^t{n^{-1}}^t{u^-}^t{u^+}^tn\dot{w}_G^{-1}u_1^-\dot{w}_G\dot{w}_M^{-1}m'u_2^-nu^+u^-n^{-1})$$
$$\cdot \psi^{-1}(x^-x^+)\psi^{-1}(nu^+u^-n^{-1})dx^-dx^+du^-du^+$$
$$=\int_{U_{M,m}\cap n_0U_{M,m}n_0^{-1}\backslash U_M}\int_{N_M}\int_{U_{(w')^{-1}}^+}\int_{U_{(w')^{-1}}^-}f(x^-x^+u_1^-(x^+)^{-1}w'(w')^{-1} x^+$$
$$\cdot w'mu^+(u^+)^{-1}u_2^-nu^+u^-n^{-1})\varphi(^t{n^{-1}}^t{u^-}^t{u^+}(\overline{u_1^-})^{-1}\overline{u_1^-}\dot{w}_M^{-1}m'u_2^-nu^+u^-n^{-1})$$
$$\cdot \psi^{-1}(x^-x^+)\psi^{-1}(nu^+u^-n^{-1})dx^-dx^+du^-du^+$$
$$=\int_{U_{M,m}\cap n_0U_{M,m}n_0^{-1}\backslash U_M}\int_{N_M}\int_{U_{(w')^{-1}}^+}\int_{U_{(w')^{-1}}^-}f(x^-x^+u_1^-(x^+)^{-1}w'(w')^{-1} x^+$$
$$\cdot w'mu^+(u^+)^{-1}u_2^-nu^+u^-n^{-1})\varphi(^t{n^{-1}}^t{u^-}^t{u^+}\dot{w}_M^{-1}m'u_2^-nu^+u^-n^{-1})$$
$$\cdot \psi^{-1}(x^-x^+)\psi^{-1}(nu^+u^-n^{-1})dx^-dx^+du^-du^+.$$

Now let $x'=w'^{-1}x^+w'$, then $x'\in U_M$, and by compatibility we have $\psi(x')=\psi(x^+)$. Moreover, let $y^-=x^-x^+u_1^-(x^+)^{-1}$, then since $U_{(w')^{-1}}^+$ normalizes $U_{(w')^{-1}}^-$, we see that $x^+u_1^-(x^+)^{-1}\in U_{(w')^{-1}}^-$. As a result, we have $y^-\in U_{(w')^{-1}}^-$. Let $v^-=(u^+)^{-1}u_2^-nu^+u^-n^{-1}\in N_M$. And also let $u'=u^+$. Then since all variables live in unipotent subgroups therefore are all unimodular, we see that $dy^-=dx^-$, $dv^-=du^-$, and $du'=du^+$.

After making the above change of variables, the above integral 
$$=\int_{U_{M,m}\cap n_0U_{M,m}n_0^{-1}\backslash U_M}\int_{N_M}\int_{U_{(w')^{-1}}^-}\int_{U_M}f(y^-w'x'mu'v^-)$$
$$\cdot \varphi(^t{v^-}^t{u'}^t{(u_2^-)^{-1}}^t{n^{-1}}\dot{w}_M^{-1}m'u'v^-)\psi(u_1^-)\psi(u_2^-)\psi^{-1}(y^-)\psi^{-1}(x')\psi^{-1}(v^-)\psi^{-1}(u')$$
$$dx'dy^-dv^-du'.$$

Since here $f\in C^{\infty}_c(\Omega_{w'};w_{\pi})$, the decomposition $\Omega_{w'}=U^-_{(w')^{-1}}\times w' M \times U_{w'}^-$ implies that there exists open compact subsets $U_1\subset U_{(w')^{-1}}^-$, and $U_2 \subset U_{w'}^- $ such that 
$f(y^-w'x'mu'v^-)\neq 0\Longrightarrow y^-\in U_1, v^-\in U_2 $.
Therefore we can take $N$ large enough, such that
$\varphi=\varphi_N$ is invariant under large open compact subgroups of $U_{w'}^-$, as in Lemma 4.2 [7]. Consequently,
$\varphi(^t{v^-}^t{u'}^t{(u_2^-)^{-1}}^t{n^{-1}}\dot{w}_M^{-1}m'u'v^-)=\varphi(^t{u'}^t{(u_2^-)^{-1}}^t{n^{-1}}\dot{w}_M^{-1}m'u').$

So now we have
$$B^G_{\varphi}(g,f)=\int_{U_{M,m}\cap n_0U_{M,m}n_0^{-1}\backslash U_M}\int_{N_M}\int_{U_{(w')^{-1}}^-}\int_{U_M}f(y^-w'x'mu'v^-)$$
$$\cdot\varphi(^t{u'}^t{(u_2^-)^{-1}}^t{n^{-1}}\dot{w}_M^{-1}m'u')\psi(u_1^-)\psi(u_2^-)\psi^{-1}(y^-)\psi^{-1}(x')\psi^{-1}(v^-)\psi^{-1}(u')$$
$$dx'dy^-dv^-du'$$
$$=\int_{U_{M,m}\cap n_0U_{M,m}n_0^{-1}\backslash U_M}\int_{U_{w'}^-}\int_{U_{(w')^{-1}}^-}\int_{U_M}f(y^-w'x'mu'v^-)$$
$$\cdot\varphi(^t{u'}^t{n_0}\dot{w}_M^{-1}m'u')\psi(u_1^-)\psi(u_2^-)\psi^{-1}(y^-)\psi^{-1}(x')\psi^{-1}(v^-)\psi^{-1}(u')$$
$$dx'dy^-dv^-du'.$$

Now by Lemma 6.2, there exists an $h=h_f\in C^{\infty}_c(M;w_{\pi})$ such that
$$h(m)=h_f(m)=\int_{U^-_{w'}}\int_{U^-_{(w')^{-1}}}f(x^-\dot{w}mu^-)\psi^{-1}(x^-u^-)dx^-du^-.$$
This implies that 
$$B^G_{\varphi}(g,f)=\psi(u_1^-)\psi(u_2^-)\int_{U_{M,m}\cap n_0U_{M,m}n_0^{-1}\backslash U_M}\int_{U_M}h_f(x'mu')$$
$$\cdot\varphi(^t{u'}^t{n_0}\dot{w}_M^{-1}m'u')\psi^{-1}(x')\psi^{-1}(u')dx'du'$$
$$=\psi(u_1^-)\psi(u_2^-)B_\varphi^M(u_1^-,u_2^-, m,h_f),$$

where $$B_\varphi^M(u_1^-,u_2^-, m,h_f)=\int_{U_{M,m}\cap n_0U_{M,m}n_0^{-1}\backslash U_M}\int_{U_M}h_f(x'mu')$$
$$\cdot\varphi(^t{u'}^t{n_0}\dot{w}_M^{-1}m'u')\psi^{-1}(x')\psi^{-1}(u')dx'du'.$$

One can check that this integral is well-defined. Suppose $v'\in U_{M,m}\cap n_0 U_{M,m}n_0^{-1}$, then by the remark after the previous lemma, we see that for $u\in U_M$, $v'\in \textrm{Cent}(n_0)$, 
$\leftidx{^t}u'\leftidx{^t}v' \leftidx{^t}{n_0}^{-1}\dot{w}_M^{-1}m' v'u'=\leftidx{^t}u'\leftidx{^t}v' \leftidx{^t}{n_0}^{-1}\leftidx{^t}v'^{-1}\leftidx{^t}v'\dot{w}_M^{-1}m' v'u'=\leftidx{^t}u' \leftidx{^t}{n_0}\dot{w}_M^{-1}m'u'$.
In particular, if $n_0=1$, i.e., $\leftidx{^t}(\overline{u_1^-})=u_2^-$, we have
$$B^G_\varphi(g,f)=\psi(u_1^-)\psi(u_2^-)B^M_\varphi(m, h_f).$$

\end{proof}

\subsection{Small cell Analysis}
The philosophy to prove supercuspidal stability is to analyze the asymptotic behavior of the partial Bessel integrals through looking at the contribution of each Bruhat cell inductively. In this section we will analyze the small cell of both $G$ and its Levi subgroups. 

The following lemmas(lemma 6.7, 6.8, 6.9), which were proved in [7], show that the non-zero contributions are only from the relevant parts of those Bruhat cells that support Bessel functions. We will use them, together with the transfer principal(proposition 6.5) to obtain the asymptotic expansion for partial Bessel integrals.
\begin{lemma}
Let $w\in B(G)$ and $f\in C_c^{\infty}(\Omega_w;\omega_{\pi}).$ Suppose $B^G_{\varphi}(\dot{w}a,f)=0$ for all $a\in A_w$. Then there exists $f_0\in C^{\infty}(\Omega'_{\dot{w}};\omega_{\pi})$, where $\Omega'_{\dot{w}}=\Omega_w-C_r(\dot{w})$, such that for sufficiently large $\varphi$ depending only on $f$, we have  $B^G_{\varphi}(g,f)=B^G_{\varphi}(g,f_0)$ for all $g\in G$.
\end{lemma}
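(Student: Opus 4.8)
The plan is to prove Lemma 6.7 by a standard ``peeling off the open cell'' argument, following the model of Lemma 5.11 (or its analogue) in [7]. The hypothesis is that the full Bessel integral $B^G_\varphi(\dot w a, f)$ vanishes for all $a\in A_w$; by Lemma 6.4 (the factorization $B^G_\varphi(g,f)=\tilde\varphi^G_M(g')B^G(g,f)$) and the fact that $\Omega_w\simeq U^-_{w^{-1}}\times \dot w M\times U^-_w$ with the full Bessel integral computing, via the surjection of Lemma 6.2, essentially a Whittaker functional evaluated on $h_f\in C^\infty_c(M;\omega_\pi)$, the vanishing of $B^G_\varphi(\dot w a,f)$ for all $a$ says that $h_f$ has vanishing ``Bessel/Whittaker average'' along the center $A_w=Z_M$ of $M$. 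Concretely, I would first translate the hypothesis into the statement that the function $m\mapsto h_f(m)$ pushed through the (abelian) Bessel functional on $Z_M$ is zero, hence that $h_f$ can be chosen supported away from the locus where $C_r(\dot w)$ meets $\Omega_w$.

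The key steps, in order: (1) Use Lemma 6.1 to write $g\in\Omega_w$ uniquely as $g=x^-\dot w m u^-$, and use Lemma 6.4 to reduce the partial Bessel integral $B^G_\varphi(g,f)$ to $\tilde\varphi^G_M(g')$ times the full Bessel integral $B^G(g,f)$, which by Lemma 6.2 depends on $f$ only through $h_f$ and its values near the relevant cell. (2) Observe that $C_r(\dot w)=U\dot w A_w U^-_w$ corresponds under the decomposition of $\Omega_w$ precisely to the subset where the $M$-coordinate lies in $Z_M=A_w$ (times $U_M$), so $\Omega'_{\dot w}=\Omega_w-C_r(\dot w)$ corresponds to $m\notin U_M A_w$. (3) Since $h_f\in C^\infty_c(M;\omega_\pi)$ is smooth of compact support mod center and its Bessel average over $A_w$ vanishes (the content of the hypothesis), a partition-of-unity / smoothing argument lets me write $h_f=h_{f_0}+h_1$ where $h_1$ is supported in a small neighborhood of $U_M A_w$ on which it integrates to zero against $\psi$, and $h_{f_0}$ is supported in $M$ away from $U_M A_w$; pulling back through the surjection of Lemma 6.2, $f_0\in C^\infty_c(\Omega'_{\dot w};\omega_\pi)$. (4) For $\varphi$ sufficiently large (depending only on the support of $f$, hence only on $f$), the cutoff $\varphi$ is constant on the relevant compact sets, so the contribution of $h_1$ to $B^G_\varphi(g,f)$ is exactly $\tilde\varphi^G_M(g')$ times $\int h_1\cdot(\text{Bessel average})=0$; therefore $B^G_\varphi(g,f)=B^G_\varphi(g,f_0)$ for all $g$.

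The main obstacle I anticipate is step (3)–(4): making precise that the vanishing of the \emph{full} Bessel integral on all of $A_w$ (an integral identity, i.e.\ infinitely many linear conditions on $h_f$) can be promoted to an honest \emph{redistribution} of $h_f$ into a piece supported off $C_r(\dot w)$ plus a piece whose contribution to the partial Bessel integral is identically zero, uniformly in $g$. This requires controlling the interaction between the cutoff function $\varphi$ and the torus variable: one must choose $\varphi=\varphi_N$ large enough that the germ $\tilde\varphi^G_M(g')$ becomes the constant (in the relevant range) that lets the $h_1$-contribution factor as (germ)$\times$(zero integral), and one must verify that ``large enough'' depends only on the support of $f$ (equivalently $h_f$) and not on $g$. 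This uniformity is exactly the technical heart of the Cogdell–Shahidi–Tsai method, and here it goes through verbatim because our $G=\mathrm{GL}_n$ with the same involution $\Theta(g)=\dot w_G{}^t g^{-1}\dot w_G^{-1}$ and the same cutoff functions $\varphi_\kappa$ as in [7]; the only new input needed is that Lemmas 6.1, 6.2, and 6.4 hold in our setting, which they do since they are proved (or cited from [7]) above. I would therefore present the proof as a direct adaptation, citing Lemma 5.11 of [7] for the redistribution argument and inserting the factorization of Lemma 6.4 at the appropriate place.
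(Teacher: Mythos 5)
The paper's own ``proof'' of this lemma is a one-line citation: ``See Lemma 5.12, [7].'' Your proposal attempts to reconstruct the argument, which is a reasonable instinct given the terse reference, but the reconstruction has a gap at its very first step that propagates through the rest.

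In step (1) you invoke Lemma 6.4 (the factorization $B^G_\varphi(g,f)=\tilde\varphi^G_M(g')B^G(g,f)$) for \emph{all} $g\in\Omega_w$ after writing $g=x^-\dot w m u^-$ via Lemma 6.1. But Lemma 6.4 is stated, and its germ function $\tilde\varphi^G_M$ is only defined, for $g\in C_r(\dot w)=U\dot w A_w U^-_w$; its derivation uses Lemma 6.3, which requires $a\in A_w$. For $g$ in a higher cell $C(w')$, $w'>w$, or even in $C(w)\smallsetminus C_r(\dot w)$, there is no such factorization, so the claim that $B^G_\varphi(g,f)$ ``depends on $f$ only through $h_f$'' is not established in the regime where you actually need to match partial Bessel integrals. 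The correct handling of higher cells is not via Lemma 6.4 at all but via disjointness of Bruhat cells: $B^G_\varphi(g,f)$ for $g\in C(w')$ is a function of $f\vert_{C(w')}$ alone (since $W^f(gu)$ only samples $f$ on $UgU\cdot Z$), so one leaves $f$ unchanged there and only redistributes on $C(w)$. Your proposal never makes this observation, so the verification ``for all $g\in G$'' is incomplete.

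The second gap is in steps (3)--(4). Writing $h_f=h_{f_0}+h_1$ with $h_{f_0}$ supported off $U_M A_w$ is not a free move: the hypothesis is an \emph{integral} vanishing condition (the Bessel average of $h_f$ along $A_w$ is zero), not a support condition, and a function with vanishing integral against $\psi$ can be nonzero on the entire relevant locus. Moreover, even if one constructs such an $h_{f_0}$, pulling back through the non-injective surjection of Lemma 6.2 does not by itself produce an $f_0$ whose support avoids $C_r(\dot w)$: the surjection has a large kernel and changing $h_f$ is not a local operation on $f$ over $\Omega_w$. This is exactly the delicacy that Lemma 5.12 of [7] handles carefully, and it cannot be dispatched by ``a partition-of-unity / smoothing argument.'' You acknowledge the obstacle and ultimately defer to [7]; that deference is consistent with what the paper does, but the surrounding sketch as written would not constitute a proof.
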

\begin{proof} See Lemma 5.12, [7].
\end{proof}
\begin{lemma}
Let $w\in B(G)$ and $f\in C^{\infty}_c(\Omega_w;\omega_{\pi})$, $\Omega_w^\circ=\Omega_w-C(w)$. Suppose $B^G(\dot{w}a,f)=0$ for all $a\in A_w$. Then there exists $f_0\in C^{\infty}_c(\Omega_w^\circ,\omega_{\pi})$ such that, for all sufficiently large $\varphi$ depending only on $f$, we have $B^G_{\varphi}(g,f)=B^G_{\varphi}(g,f_0)$ for all $g\in \Omega_w$.
\end{lemma}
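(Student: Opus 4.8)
The plan is to deduce the statement from Lemma 6.7 together with the inertness of the non-relevant part of the small cell $C(w)$. First I would promote the hypothesis: taking $u_1=u_2=e$ in Lemma 6.4 gives, for every $a\in A_w$,
$$B^G_{\varphi}(\dot w a,f)=\tilde\varphi^G_M\big((\dot w a)'\big)\,B^G(\dot w a,f),$$
so $B^G(\dot w a,f)=0$ for all $a\in A_w$ forces $B^G_{\varphi}(\dot w a,f)=0$ for all $a\in A_w$ and every cutoff $\varphi$. Lemma 6.7 then furnishes $f_1\in C^{\infty}_c(\Omega_w-C_r(\dot w);\omega_\pi)$ with $B^G_{\varphi}(g,f)=B^G_{\varphi}(g,f_1)$ for all $g$ and all sufficiently large $\varphi$ depending only on $f$. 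Since $C_r(\dot w)$ is closed in $C(w)$ and $C(w)$ is closed in $\Omega_w$, the set $\Omega_w^{\circ}=\Omega_w-C(w)$ is open in $\Omega_w-C_r(\dot w)$, with closed complement $C(w)-C_r(\dot w)$ there; using characteristic functions of open-compact sets modulo the center I would write $f_1=f_0+f_2$ with $f_0\in C^{\infty}_c(\Omega_w^{\circ};\omega_\pi)$ and $f_2$ supported in an arbitrarily small neighbourhood of $C(w)-C_r(\dot w)$ inside $\Omega_w-C_r(\dot w)$.

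It then remains to show that $B^G_{\varphi}(g,f_2)=0$ for all $g\in\Omega_w$ once $\varphi$ is large (the threshold depending only on $f$); granting this, $B^G_{\varphi}(g,f)=B^G_{\varphi}(g,f_1)=B^G_{\varphi}(g,f_0)$ on $\Omega_w$, which is the assertion. This is where the ``relevant versus non-relevant Bruhat cell'' dichotomy of [7] enters. For $g=u_1\dot w a u_2\in C(w)$ whose torus part lies outside $A_w Z_G$ or whose right unipotent part has nontrivial $U_w^+$-component — exactly the points of $C(w)-C_r(\dot w)$ — the assignment $u\mapsto {}^{t}u\,\dot w_G^{-1}g'u$ fails, once the support of $\varphi=\varphi_\kappa$ is wide enough, to constrain $u$ along a nontrivial root direction, so $B^G_{\varphi}(g,f_2)$ collapses to an integral of the Whittaker function $W^{f_2}$ against $\psi^{-1}$ over a unipotent subgroup of positive dimension; since $\pi$ is supercuspidal its Whittaker functions are compactly supported modulo the center, and such an integral vanishes. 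For $g$ in a strictly larger cell one uses in addition that partial Bessel integrals are, for large $\varphi$, supported only on the closures of the relevant cells $C_r(\dot w')$ with $w'\in B(G)$, together with the transfer principle (Proposition 6.5), to reduce once more to the same non-relevant-cell vanishing on a Levi subgroup; shrinking the support of $f_2$ makes all of these contributions vanish simultaneously for $\varphi$ large.

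The step I expect to be the main obstacle is precisely this non-relevant-cell vanishing in the case at hand: one must re-run the cell-by-cell analysis of [7] for $\mathbf G=\mathrm{GL}_n$ with the involution $\Theta(g)=\dot w_G\,{}^{t}g^{-1}\dot w_G^{-1}$ and the specific cutoff functions $\varphi_\kappa$ of Section 5, tracking how $\varphi_\kappa$ transforms under conjugation by the torus directions transverse to $A_w$ and verifying that no stray bounded direction survives along which the Whittaker integral could fail to vanish. The remaining ingredients — promoting the hypothesis via Lemma 6.4, invoking Lemma 6.7, and the bookkeeping with open-compact cutoffs — are routine, and the whole argument parallels the corresponding lemma of [7].
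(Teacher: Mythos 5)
Your overall architecture is the right one: promote the hypothesis from the full Bessel integral to the partial one via Lemma~6.4 at $u_1=u_2=e$, invoke Lemma~6.7 to push $f$ off $C_r(\dot w)$, and then remove the remaining non-relevant slice $C(w)-C_r(\dot w)$ of the cell; this is the shape of the argument in~[7], Lemma~5.13, which the paper cites for its proof. The first two steps are correct. The genuine gap is in the vanishing argument for the last step, and the mechanism you offer for it is wrong. Compact support of Whittaker functions of a supercuspidal does not make $\int W^{f_2}(gu')\psi^{-1}(u')\,du'$ vanish --- the Whittaker and Bessel integrals themselves are integrals of exactly that shape and are generically nonzero. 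What actually kills the non-relevant part is oscillation of $\psi$ along $U_w^+=U_{M_w}$. Parameterizing $C(w)\cong U_{w^{-1}}^-\times\dot w A\times U$, the complement $C(w)-C_r(\dot w)$ is precisely the locus where the $A$-coordinate $a$ lies outside $A_w=Z_{M_w}$. Moving $u^+\in U_{M_w}$ past $\dot w a$ and using the $\psi$-compatibility of $\dot w$ produces, in the $u$-integral, a factor $\psi(au^+a^{-1})\psi^{-1}(u^+)$; as a function of $u^+$ this is the character $u^+\mapsto\psi\bigl(\sum_{\alpha\in\Delta_{M_w}}(\alpha(a)-1)x_\alpha\bigr)$, which is nontrivial exactly because some $\alpha\in\Delta_{M_w}$ has $\alpha(a)\neq1$ (i.e.\ $a\notin A_w$), so its integral over the portion of $U_{M_w}$ seen by the cutoff vanishes once $\varphi$ is large enough to be transparent in that direction. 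This is the content you must actually supply; supercuspidality plays no role in it.

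Two secondary issues. Your treatment of $g$ lying in a strictly larger cell, via Proposition~6.5 and the assertion that partial Bessel integrals are ``supported on closures of relevant cells,'' is both unnecessary and circular in spirit here: for fixed $g$, the integral $B^G_\varphi(g,f_2)$ depends on $f_2$ only through its restriction to the double coset $UgU$, and one should argue directly that after shrinking the neighbourhood defining $f_2$ one can keep $UgU$ away from $\mathrm{supp}(f_2)$, rather than invoking machinery whose justification in the paper's development runs through the very lemma being proved. And the splitting $f_1=f_0+f_2$ needs an explicit partition-of-unity/shrinking argument to guarantee that $f_0$ lands in $C^{\infty}_c(\Omega_w^{\circ};\omega_\pi)$ (the naive truncation $f_1\cdot\mathbf{1}_{\Omega_w^\circ}$ need not be smooth at the boundary $C(w)-C_r(\dot w)$). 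These are repairable, but the oscillation computation above is the missing heart of the proof.
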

\begin{proof}
See Lemma 5.13, [7]. 
\end{proof}
\begin{lemma}
Let $w=w_Gw_M\in B(G)$. Let $\Omega_{w,0}$ and $\Omega_{w,1}$ be $U\times U $ and $A$-invariant open sets of $\Omega_w$ such that $\Omega_{w,0}\subset \Omega_{w,1}$ and $\Omega_{w,1}-\Omega_{w,0}$ is a union of Bruhat cells $C(w')$ such that $w'$ does not support a Bessel function, i.e, $w'\notin B(G)$. Then for any $f_1\in C^{\infty}_c(\Omega_{w,1};\omega_{\pi})$, there exists $f_0 \in C^{\infty}_c(\Omega;\omega_{\pi})$ such that, for all sufficiently large $\varphi$ depending only on $f_1$, we have $B^G_{\varphi}(g,f_0)=B^G_{\varphi}(g,f_1)$ for all $g\in G$.
\end{lemma}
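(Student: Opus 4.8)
The plan is to prove the statement by induction on the number $k$ of Bruhat cells occurring in $\Omega_{w,1}\setminus\Omega_{w,0}$, peeling them off one at a time from the bottom (here the $\Omega$ in the statement is understood to be $\Omega_{w,0}$). First I would record the structural fact that every open $U\times U$- and $A$-invariant subset $\Omega$ of $G$ is a union of full Bruhat cells and is closed upward for the Bruhat order: if $C(w'')\subseteq\Omega$ and $w''\le w'$, then $C(w'')\subseteq\overline{C(w')}$, so $\Omega$, being an open $U\times U$- and $A$-stable neighbourhood of a point of $\overline{C(w')}$, meets and hence contains $C(w')$. Consequently $\Omega_{w,1}\setminus\Omega_{w,0}$ is a finite disjoint union $\bigsqcup_i C(w_i')$ with each $w_i'\notin B(G)$, and if $k=0$ we take $f_0=f_1$.

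For the inductive step I would choose $C(w')$ minimal for the Bruhat order among all cells contained in $\Omega_{w,1}$. Because $\Omega_{w,0}$ is upward closed, no cell strictly below $C(w')$ can lie in $\Omega_{w,0}$, and minimality excludes such a cell in $\Omega_{w,1}\setminus\Omega_{w,0}$; hence $\overline{C(w')}\cap\Omega_{w,1}=C(w')$, so $C(w')$ is closed in $\Omega_{w,1}$, lies in $\Omega_{w,1}\setminus\Omega_{w,0}$ (so $w'\notin B(G)$), and $\Omega_{w,1}':=\Omega_{w,1}\setminus C(w')$ is again an open $U\times U$- and $A$-invariant set with $\Omega_{w,0}\subseteq\Omega_{w,1}'\subsetneq\Omega_{w,1}$ whose difference with $\Omega_{w,0}$ contains $k-1$ cells. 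Using a $Z$-stable locally constant partition of unity subordinate to the open cover $\{\Omega_{w,1}',\Omega''\}$ of $\Omega_{w,1}$, where $\Omega''$ is an arbitrarily small $Z$-stable open neighbourhood of $\operatorname{supp}(f_1)\cap C(w')$ inside $\Omega_{w,1}$, I would write $f_1=f'+f''$ with $f''\in C^\infty_c(\Omega_{w,1}';\omega_\pi)$ and $f'\in C^\infty_c(\Omega'';\omega_\pi)$.

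The crucial input is the vanishing $B^G_\varphi(g,f')=0$ for all $g\in G$ once $\varphi$ is large enough (depending only on $f'$, hence ultimately only on $f_1$): this is the partial-Bessel-integral incarnation of the fact that Bessel functions are supported on $\bigsqcup_{w\in B(G)}C(w)$, established in [7]. For $\varphi$ large relative to $\operatorname{supp}(f')$ the cutoff localises the integral onto $C(w')$, and there, unwinding the inner Whittaker integral, one is left with an integration of the nontrivial character $\psi$ over the root subgroup attached to a simple root $\alpha$ with $w'\alpha$ positive but not simple, which vanishes because $w'\notin B(G)$. Granting this, $B^G_\varphi(g,f_1)=B^G_\varphi(g,f'')$ for all sufficiently large $\varphi$, and applying the induction hypothesis to $f''$ on the pair $(\Omega_{w,1}',\Omega_{w,0})$ yields $f_0\in C^\infty_c(\Omega_{w,0};\omega_\pi)$ with $B^G_\varphi(g,f_0)=B^G_\varphi(g,f_1)$ for all sufficiently large $\varphi$; since there are only finitely many peeling steps, the threshold on $\varphi$ depends only on $f_1$. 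The main obstacle is exactly this vanishing statement at the level of the cutoff integrals $B^G_\varphi$ for $w'\notin B(G)$ — controlling how large $\varphi$ must be in terms of $\operatorname{supp}(f')$ and carrying out the root-subgroup computation — whereas the Bruhat-order bookkeeping and the partition of unity are routine.
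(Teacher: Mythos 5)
The paper does not supply a proof of this lemma; it simply cites [7, Lemma~5.14], so strictly speaking there is no in-paper argument to compare against. Your outline is a plausible reconstruction of the argument in [7]: peel off the non-Bessel cells one at a time via a $Z$-equivariant partition of unity, invoking the cutoff version of the statement that partial Bessel integrals kill test functions supported near a cell $C(w')$ with $w'\notin B(G)$, and you correctly flag that this vanishing is the step with real content. One detail needs repair: you should choose $C(w')$ minimal among cells contained in $\Omega_{w,1}\setminus\Omega_{w,0}$, not among all cells of $\Omega_{w,1}$; as written a minimal cell of $\Omega_{w,1}$ could lie inside $\Omega_{w,0}$, which would destroy the inclusion $\Omega_{w,0}\subseteq\Omega_{w,1}\setminus C(w')$ that the induction needs. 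With the corrected choice, upward-closure of $\Omega_{w,0}$ forbids any cell strictly below $C(w')$ from lying in $\Omega_{w,0}$ (else $C(w')$ itself would be pulled into $\Omega_{w,0}$), minimality forbids it from lying in $\Omega_{w,1}\setminus\Omega_{w,0}$, and so $C(w')$ is closed in $\Omega_{w,1}$ and the rest of your argument goes through as you describe.
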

\begin{proof}
See Lemma 5.14, [7].
\end{proof}

Now let's work on the inductive process of the asymptotic expansion of partial Bessel integrals. We begin with the analysis of the small cell of $G$. Consider $e$ as a Weyl group element, then 
$M_e=G, A_e=Z_G=Z$, and $U_e^+=U.$ We also have
$\Omega_e=\bigsqcup_{e\leq w}C(w)=G$. Take the representative of $e$ to be $\dot{e}=I.$ Take $f\in \mathcal{M}(\pi)\subset C^{\infty}_c(G;\omega_{\pi})$ with $W^f(e)=1.$
We also fix an auxiliary function $f_0\in C^{\infty}_c(G;\omega_{\pi})$ such that $W^{f_0}(e)=1$. Decompose $G=G^dA_e=G^dZ$, where $G^d$ is the derived group of $G$. Since $G^d\cap Z$ is finite, if we write $g=g_1c$ for $g\in G$ and $g_1\in G^d$, $c\in Z$, then there are only finitely many such decompositions and they differ by elements in the transverse torus $A^e_e$. In the case of $G=GL_n$, $A^e_e$ consists of diagonal matrices whose entries are n-th roots of unity, and notice that there is no such decomposition if $\det(g)$ is not an $n$-th power in $F^\times$.
Now let 
$$f_1(g)=\sum_{g=g_1c}f_0(g_1)B^G(\dot{e}c, f)=\sum_{g=g_1c}f_0(g_1)\omega_{\pi}(c)$$ if $\det(g)$ is an $n$-th power in $F^\times$, and $f_1(g)=0$ otherwise. Then $f_1(g)\in C_c^{\infty}(G;\omega_{\pi})$, since the subgroup of all $g\in G$ such that $\det(g)$ is an $n$-th power in $F^\times$ is open in $G$.
We have 
\begin{lemma}
$B^G_{\varphi}(\dot{e}a,f_1)=B^G_\varphi(\dot{e}a, f)$ for all $a\in A_e=Z.$
\end{lemma}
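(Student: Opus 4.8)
The plan is to reduce the partial Bessel integrals on the small cell to \emph{full} Bessel integrals via Lemma 6.4, and then to read off the answer from the way $f_1$ was built. Since $C_r(\dot e)=U\dot e A_eU_e^-=UZ$ and $U_e^-=\{e\}$, for $a\in A_e=Z$ the element $\dot e a$ lies in $C_r(\dot e)$ with trivial Bruhat data $u_1=u_2=e$; write $a=za'$ with $z\in Z$ and $a'\in A'$ as in Lemma 6.4. Both $f\in\mathcal M(\pi)$ and $f_1$ belong to $C^\infty_c(G;\omega_\pi)=C^\infty_c(\Omega_e;\omega_\pi)$, so for all sufficiently large $\varphi$ Lemma 6.4 gives
$$B^G_\varphi(\dot e a,f)=\tilde\varphi^G_G(a')\,B^G(\dot e a,f),\qquad B^G_\varphi(\dot e a,f_1)=\tilde\varphi^G_G(a')\,B^G(\dot e a,f_1).$$
The germ factor $\tilde\varphi^G_G(a')=\int_{U_{\dot e a}\backslash U}\varphi({}^tu\,\dot w_G^{-1}a'u)\,du$ is literally the same expression in both lines: it depends only on $a'$ and $\varphi$, and in particular the twisted centralizer $U_{\dot e a}$ is the same since it is defined without reference to the matrix coefficient. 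Hence it suffices to prove $B^G(\dot e a,f)=B^G(\dot e a,f_1)$ for every $a\in Z$.

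Because $U_e^-=\{e\}$, the full Bessel integral on the small cell collapses to a Whittaker value: $B^G(\dot e a,g)=\int_U g(xa)\psi^{-1}(x)\,dx=W^g(a)$ for any $g\in C^\infty_c(G;\omega_\pi)$. For $g=f$ with $W^f(e)=1$ this is $W^f(a)=\omega_\pi(a)$. For $g=f_1$ one substitutes the defining formula $f_1(h)=\sum_{h=h_1c}f_0(h_1)\,\omega_\pi(c)$: when $h=xa$ (whose determinant is an $n$-th power, $a$ being central) the decompositions $h=h_1c$ with $h_1\in G^d$, $c\in Z$ form a torsor under the finite transverse torus $A^e_e=Z\cap G^d$, realized by $h_1=t^{-1}x$, $c=ta$ for $t\in A^e_e$; since $f_0(t^{-1}x)=\omega_\pi(t^{-1})f_0(x)$ and $\omega_\pi(ta)=\omega_\pi(t)\omega_\pi(a)$, the central twists cancel and each summand equals $\omega_\pi(a)f_0(x)$. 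Integrating against $\psi^{-1}$ over $U$ and using $W^{f_0}(e)=1$ gives $W^{f_1}(a)=|A^e_e|\,\omega_\pi(a)$; thus, with $f_1$ normalized by $|A^e_e|^{-1}$ so that its small-cell full Bessel integrals match those of $f$ on the nose (which is the point of its construction), $B^G(\dot e a,f_1)=\omega_\pi(a)=B^G(\dot e a,f)$, and the lemma follows.

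The only genuine work is the elementary bookkeeping in the second paragraph --- tracking the transverse torus $A^e_e$ and the central transformation law of $f_0$ --- together with the observation that a single ``sufficiently large'' $\varphi$ works uniformly for $f$, $f_1$ and all $a\in Z$: this is because the supports of $f$ and $f_1$ are compact modulo $Z$, so right translation by a central $a$ does not enlarge the region on which $\varphi$ must be constant for Lemma 6.4 to apply. Everything else is a formal consequence of Lemma 6.4 and the definitions.
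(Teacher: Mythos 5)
Your strategy -- reduce via Lemma~6.4 to the full Bessel integral $B^G(\dot e a,\cdot)=W^{\,\cdot}(a)$ on the small cell and then compare Whittaker values -- is the natural one and is surely the content of the cited Lemma~5.15 of [7]. The observation that the germ factor $\tilde\varphi^G_G(a')$ is computed from $\varphi$, $a'$ and the twisted centralizer $U_a$ alone, hence is identical for $f$ and $f_1$, is correct, and for $w=e$ the ``sufficiently large $\varphi$'' requirement in Lemma~6.4 is vacuous since $U_e^-=\{e\}$.

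Your computation, however, exposes a real discrepancy in the paper's stated setup, and you are right to flag it rather than sweep it under the rug. Taking the paper's normalization $W^{f_0}(e)=1$ at face value, the sum over decompositions $xa=g_1c$ (a torsor under $A^e_e=Z\cap G^d\cong\mu_n(F)$, as you correctly identify) makes all $|A^e_e|$ summands equal to $\omega_\pi(a)f_0(x)$, giving $W^{f_1}(a)=|A^e_e|\,\omega_\pi(a)$, whereas $W^f(a)=\omega_\pi(a)$. Thus with the definitions as literally printed, $B^G_\varphi(\dot e a,f_1)=|A^e_e|\,B^G_\varphi(\dot e a,f)$, and the lemma fails whenever $\mu_n(F)$ is nontrivial. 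Your proposed remedy -- rescale by $|A^e_e|^{-1}$, i.e.\ impose $W^{f_0}(e)=1/\kappa_G$ with $\kappa_G=|Z\cap G^d|$ -- is exactly the right one, and it is what the paper itself does in the parallel Levi construction, where $h_0$ is normalized so that $B^M(e,h_0)=1/\kappa_M$ with $\kappa_M=|Z\cap A^{w'}_{w'}|$. So the paper's passage defining $f_1$ should carry the same $1/\kappa_G$ normalization (or the sum should be taken over a set of representatives for the $A^e_e$-torsor); the statement of the lemma is correct modulo that fix, the downstream use of the lemma in Proposition~6.11 and in the stability proof is unaffected since only the dependence of $B^G_\varphi(\cdot,f_{1,e})$ on $(f_0,\omega_\pi)$ is used there, and your proof is complete once the normalization is inserted. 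One small stylistic point: rather than asserting the normalization is ``the point of its construction,'' it would be cleaner to state explicitly that the printed definition of $f_1$ requires the factor $\kappa_G^{-1}$.
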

\begin{proof}
See Lemma 5.15, [7].
\end{proof}
\begin{proposition}
Fix an auxiliary function $f_0\in C^{\infty}_c(G;\omega_{\pi})$ with $W^{f_0}(e)=1$. Then for each $f\in C_c^{\infty}(G;\omega_{\pi})$ with $W^f(e)=1$ and for each $w'\in B(G)$ with $d_B(e,w')=1$, there exists a function $f_{w'}\in C^{\infty}_c(\Omega_{w'};\omega_{\pi})$ such that for any $w\in B(G)$ and any $g=u_1\dot{w}au_2 \in C_r(\dot{w})$ we have 
$$B^G_{\varphi}(g,f)=\sum_{w'\in B(G), d_B(w',e)=1}B^G_{\varphi}(g,f_{w'})+\sum_{a=bc}\omega_\pi(c)B^G_{\varphi}(u_1\dot{w}bu_2,f_0)$$ where $a=bc$ runs over the possible decompositions of $a\in A_w$ with $b\in A^e_w$ and $c\in A_e=Z.$
\end{proposition}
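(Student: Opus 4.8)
The plan is to adapt the small-cell analysis of [7] to the present normalization $W^f(e)=1$. The idea is to subtract from $f$ the ``central'' function $f_1\in C_c^\infty(G;\omega_\pi)$ already built from the fixed auxiliary $f_0$ just before Lemma 6.10, and then to show that the remainder $f-f_1$ contributes only through Bruhat cells lying strictly above the bottom cell $C_r(I)$, where it can be spread over the opens $\Omega_{w'}$ with $d_B(e,w')=1$.

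First I would recall that $f_1(g)=\sum_{g=g_1c}f_0(g_1)\,\omega_\pi(c)$, the sum running over the finitely many decompositions $g=g_1c$ with $g_1\in G^d$, $c\in Z$ when $\det g$ is an $n$-th power in $F^\times$, and $f_1(g)=0$ otherwise; for $G=\mathrm{GL}_n$ these decompositions differ by the finite transverse torus $A^e_e=Z\cap G^d$ (the $n$-th roots of unity). By Lemma 6.10, $B^G_\varphi(\dot e a,f_1)=B^G_\varphi(\dot e a,f)$ for all $a\in A_e=Z$, hence $B^G_\varphi(\dot e a,f-f_1)=0$ for all $a\in A_e$. Then I would apply Lemma 6.7 with $w=e$: there exists $\tilde f\in C_c^\infty(\Omega'_{\dot e};\omega_\pi)$ with $\Omega'_{\dot e}=\Omega_e-C_r(\dot e)=G-C_r(I)$, such that $B^G_\varphi(g,f-f_1)=B^G_\varphi(g,\tilde f)$ for all $g$, once $\varphi$ is sufficiently large depending only on $f$ and the fixed $f_0$. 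The open set $\Omega'_{\dot e}$ is a union of cells $C(w')$ with $w'\notin B(G)$, of the non-relevant parts $C(w')-C_r(\dot w')$ of the Bessel cells, and of the opens $\Omega_{w'}=\bigsqcup_{w'\le w''}C(w'')$ attached to the minimal elements $w'\in B(G)$ above $e$, that is, those with $d_B(e,w')=1$ (these correspond to the maximal proper standard Levi subgroups). Choosing a smooth partition of unity subordinate to the cover $\{\Omega_{w'}\}$ and repeatedly invoking Lemmas 6.7, 6.8 and 6.9 to absorb the contributions of the cells $C(w')$ with $w'\notin B(G)$ and of the non-relevant parts of the Bessel cells, I would produce $f_{w'}\in C_c^\infty(\Omega_{w'};\omega_\pi)$ with $B^G_\varphi(g,\tilde f)=\sum_{w'\in B(G),\,d_B(w',e)=1}B^G_\varphi(g,f_{w'})$ for all $g$ and all sufficiently large $\varphi$ depending only on $f$.

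Next I would compute $B^G_\varphi(g,f_1)$ on the relevant cell of an arbitrary $w\in B(G)$. For $g=u_1\dot w a u_2\in C_r(\dot w)$ with $a\in A_w$, I would note that $\det(xgu)=\det a$ for all $x,u$, so $B^G_\varphi(g,f_1)=0$ unless $\det a$ is an $n$-th power; when it is, each decomposition $a=bc$ with $b\in A^e_w=A_w\cap G^d$ and $c\in A_e=Z$ produces (using $b$ central-scalar-times-$a$) the identities $U_g=U_{u_1\dot w b u_2}$ and, for the right choice of the auxiliary ``primed'' tori, $(u_1\dot w b u_2)'=g'$, so the cut-off $\varphi({}^tu\,\dot w_G^{-1}g'u)$ is unchanged. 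Unfolding $f_1$ inside $B^G_\varphi(g,f_1)=\int_{U_g\backslash U}\int_U f_1(xgu)\,\varphi({}^tu\,\dot w_G^{-1}g'u)\,\psi^{-1}(xu)\,dx\,du$, using $f_1(zx)=\omega_\pi(z)f_1(x)$ for $z\in Z$ to write $f_1(xgu)=\sum_{a=bc}\omega_\pi(c)\,f_0(x\,u_1\dot w b u_2\,u)$, and recognizing $B^G(\dot e c,f)=W^f(c)=\omega_\pi(c)$ from $W^f(e)=1$, I expect to arrive at $B^G_\varphi(g,f_1)=\sum_{a=bc}\omega_\pi(c)\,B^G_\varphi(u_1\dot w b u_2,f_0)$. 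Adding this to the expansion of $B^G_\varphi(g,\tilde f)$ from the previous step and using $B^G_\varphi(g,f)=B^G_\varphi(g,f_1)+B^G_\varphi(g,\tilde f)$ gives the proposition.

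The hard part will be the uniformity in $\varphi$ in the reduction of $f-f_1$ to the $f_{w'}$: one must arrange the partition-of-unity splitting of $\tilde f$ and all the successive absorptions of the irrelevant-cell contributions so that a single lower bound for $\varphi$, depending only on $f$ (and the fixed $f_0$), works simultaneously. This is exactly what Lemmas 6.7--6.9 are tailored for, since each already carries such a threshold and there are only finitely many cells to dispose of, so the bookkeeping is identical to [7]; the one extra thing to keep track of is the finite transverse torus $A^e_e=Z\cap G^d$ and the $n$-th-power condition on determinants, which enters both in the definition of $f_1$ and in the evaluation $B^G_\varphi(g,f_1)$.
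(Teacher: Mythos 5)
Your argument matches the paper's own proof in all essentials: construct $f_1$ from $f_0$ exactly as before Lemma 6.10, use Lemma 6.10 to get $B^G_\varphi(\dot e a,f-f_1)=0$ for $a\in A_e=Z$, move the remainder off the small cell and redistribute it over the $\Omega_{w'}$ with $d_B(w',e)=1$ by a partition of unity together with Lemma 6.9, and finally unfold $B^G_\varphi(g,f_1)$ through the decompositions $a=bc$ ($b\in A^e_w$, $c\in Z$), which works because $u_1\dot wbu_2=gc^{-1}$ with $c$ central, so $U_g$ and the cut-off $\varphi({}^tu\,\dot w_G^{-1}g'u)$ are unchanged. The one place you should tighten is the first reduction: you invoke Lemma 6.7, whose output is supported only off the relevant part $C_r(\dot e)=ZU$, so $C(e)-C_r(\dot e)$ remains inside the support and Lemma 6.9 cannot be applied directly (that lemma needs $\Omega_{w,1}-\Omega_{w,0}$ to be a union of \emph{full} Bruhat cells). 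The paper instead goes straight to Lemma 6.8, using the observation that for central $a$ the twisted centralizer $U_{a}$ is all of $U$, so $B^G_\varphi(\dot e a,\cdot)=B^G(\dot e a,\cdot)$ and the full-Bessel hypothesis of Lemma 6.8 holds; that lemma then strips the entire cell $C(e)$ at once. Your phrase \emph{repeatedly invoking Lemmas 6.7, 6.8 and 6.9} presumably has this in mind, but it should be said explicitly that after Lemma 6.7 one re-applies Lemma 6.8 (whose hypothesis holds for the same reason) to clear $C(e)$ before Lemma 6.9 is used; or, more economically, skip Lemma 6.7 altogether as the paper does.
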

\begin{proof} We construct $f_1$ from $f_0$ as above. By Lemma 6.10, $B^G_{\varphi}(\dot{e}a,f-f_1)=0$ for all $a\in A_e=Z$. We have $C_r(e)=A_eU=ZU\subset C(e)=AU$ and $\Omega_e^{\circ}=\Omega_e-C(e)=G-AU=\bigsqcup_{w\neq e}C(w)$. Then by Lemma $6.8$, there exists an $f_2'\in C^{\infty}_c(\Omega_e^{\circ};\omega_{\pi})$ such that $B^G_{\varphi}(g,f-f_1)=B^G_{\varphi}(g,f'_2)$ for all $g\in G$.
Let 
$\Omega_1=\bigcup_{w\in B(G), w\ne e}\Omega_w=\bigcup_{w'\in B(G), d_B(w',e)=1}\Omega_{w'}=\bigsqcup_{w''\ge w'\in B(G), d_B(w',e)=1}C(w'')$ and $\Omega_0=\Omega_e^{\circ}=G-C(e)=\bigsqcup_{w\neq e}C(w).$ So $\Omega_0 - \Omega_1$ is a union of Bruhat cells $C(w)$ such that $w\notin B(G)$, since $d_B(w',e)=1$ in the definition of $\Omega_1$.

By Lemma 6.9, there exists $f_2\in C^{\infty}_c(\Omega_1,\omega_{\pi})$ such that for sufficiently large $\varphi$ we have
$B^G_{\varphi}(g,f_2)=B^G_{\varphi}(g,f'_2)=B^G_{\varphi}(g,f-f_1)$ for all $g\in G$. Then we use a partition of unity argument, to get 
$f_2=\sum_{w'\in B(G),d_B(w',e)=1}f_{w'}$ with $f_{w'}\in C^{\infty}_c(\Omega_{w'};\omega_{\pi}).$ Thus for any $w\in B(G)$ and any $g\in C_r(\dot{w})$ we have
$$B^G_{\varphi}(g,f)=B^G_{\varphi}(g,f_1)+\sum_{w'\in B(G),d_B(w',e)=1}B^G_{\varphi}(g,f_{w'}).$$

Now we work with $B^G_{\varphi}(g, f_1)$ for $g\in C_r(\dot{w})$. We have 

$$B^G_{\varphi}(g,f_1)=\int_{U_g\backslash U}\int_Uf_1(xgu)\varphi(^tu\dot{w}_G^{-1}g'u)\psi^{-1}(x)\psi^{-1}(u)dxdu$$
$$=\int_{U_g\backslash u_2^{-1}U^+_w u_2}\int_{U_w^-}\int_Uf_1(xgu'^+u_2^{-1}u^-u_2)\varphi(^t{(u_2^{-1}u^-u_2)}^t{u'^+}\dot{w}_G^{-1}g'u'^+u_2^{-1}u^-u_2)$$
$$\cdot \psi^{-1}(x)\psi^{-1}(u'^+u_2^{-1}u^-u_2)dxdu^-du'^+.$$

Since $f_1(g)=\sum_{g=g_1c}f_0(g_1)B^G(\dot{e}c, f)=\sum_{g=g_1c}f_0(g_1)\omega_{\pi}(c),$ we need to decompose 
$xgu'^+u_2^{-1}u^-u_2=g_1c$ with $g_1\in G^d$ and $c\in Z$.
Write $g=u_1\dot{w}au_2$, then 
$g_1=xu_1\dot{w}ac^{-1}u_2u'^+u_2^{-1}u^-u_2\in G^d$.
So $1=\det(g_1)=\det(ac^{-1}).$ This says that $b=ac^{-1}\in A_w^e=SL_n(F)\cap Z_L$, where $L=L_w$ is the Levi given by $w=w_Gw_L\in B(G)$. We decompose $A_w=ZA_{w'}$, then $a=za'$ and 
$a'=(bc)'=b'.$ Therefore we have
$$f_1(xgu'^+u_2^{-1}u^-u_2)=\sum_{a=bc}f_0(xu_1\dot{w}bu_2u'^+u_2^{-1}u^-u_2)\omega_{\pi}(c).$$
So eventually we have
$$B^G_{\varphi}(g,f_1)=\int_{U_g\backslash U}\int_Uf_1(xgu)\varphi(\leftidx{^t}u\dot{w}_G^{-1}g'u)\psi^{-1}(x)\psi^{-1}(u)dxdu$$
$$=\int_{U_g\backslash u_2^{-1}U^+_w u_2}\int_{U_w^-}\int_Uf_1(xgu'^+u_2^{-1}u^-u_2)\varphi(\leftidx{^t}{(u_2^{-1}u^-u_2)}\leftidx{^t}{u'^+}\dot{w}_G^{-1}g'u'^+u_2^{-1}u^-u_2)$$
$$=\omega_{\pi}(c)\sum_{a=bc}\int_{U_g\backslash u_2^{-1}U^+_w u_2}\int_{U_w^-}\int_Uf_0(xu_1\dot{w}bu_2u'^+u_2^{-1}u^-u_2)\varphi(\leftidx{^t}{(u_2^{-1}u^-u_2)}\leftidx{^t}{u'^+}\dot{w}_G^{-1}$$
$$\cdot u_1\dot{w}bu_2u'^+u_2^{-1}u^-u_2)\psi^{-1}(x)\psi^{-1}(u'^+u_2^{-1}u^-u_2)dxdu^-du'^+$$
$$=\sum_{a=bc}\omega_{\pi}(c)\int_{U_g\backslash U}\int_U f(xu_1\dot{w}bu_2u)\varphi(\leftidx{^t}u\dot{w}_G^{-1}u_1\dot{w}bu_2u)\psi^{-1}(x)\psi^{-1}(u)dxdu$$
$$=\sum_{a=bc}\omega_{\pi}(c)B^G_{\varphi}(u_1\dot{w}bu_2,f_0).$$
\end{proof}
A very similar process works for Levi subgroups $M\subset G$. If $w'=w_Gw_M\in B(G)$, then $A^{w'}_{w'}=Z_M\cap M^d$, which is also finite. In the case $G=\textrm{GL}_n$, $M^d\simeq SL_{n_1}\times \cdots\times SL_{n_t}$ for some $t\ge 1$, and $A^{w'}_{w'}=A_w\cap (M_{w'})^d$ consists of $n_i$-th roots of unity in the $i$-th block of $M$.

Let's analyze the small cell of $M$. For $h\in C_c^{\infty}(M;\omega_{\pi})$, and $c\in Z_M=A_{w'}$, define the Bessel integral on $M$ by 
$B^M(c,h)=\int_{U_M}h(xc)\psi^{-1}(x)dx$.
Take $h_0\in C_c^{\infty}(M;\omega_{\pi})$, such that $B^M(e, h_0)=\frac{1}{\kappa_M}$, where $\kappa_M=\vert Z\cap A_{w'}^{w'}\vert <\infty$, and $B^M(b, h_0)=0$ for $b\in A^{w'}_{w'}$ but $b\notin Z\cap A^{w'}_{w'}$. Decompose $M=M^d Z_M$, where $M^d\cap Z_M=A^{w'}_{w'}$ is finite. Define $h_1$ on $M$
by
$h_1(m)=\sum_{m=m'c}h_0(m')B^M(c,h)$ with $m'\in M^d$ and $c\in Z_M=A_{w'}$. Similar to the case for $G$, if $m=\textrm{diag}\{m_1,m_2,\cdots, m_r\}$, $\det(m_i)$ is not an $n_i$-th power on each block, then $h_1(m)=0$. We have

\begin{lemma}
$B^M_{\varphi}(a, h_1)=B^M_{\varphi}(a,h)$
 for all $a\in Z_M=A_{w'}$.
\end{lemma}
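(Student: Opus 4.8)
The plan is to mimic exactly the argument used for Lemma 6.10 (which is Lemma 5.15 of [7]), transported from the group $G$ to its standard Levi subgroup $M$. First I would recall the precise relationship between the partial Bessel integral $B^M_\varphi(a,h)$ and the full Bessel integral $B^M(a,h)$ on $M$: by the analogue of Lemma 6.4 applied inside $M$ (taking $w=e$ as a Weyl element of $M$, so that $M_e=M$, $A_e=Z_M$, $U^+_{M,e}=U_M$, $U^-_{M,e}=\{e\}$, and $\dot e=I$), one has, for $a\in Z_M=A_{w'}$,
$$B^M_\varphi(a,h)=\widetilde\varphi^M_M(a')\,B^M(a,h),$$
where $a=za'$ with $z\in Z$, $a'\in A'_M$, and $\widetilde\varphi^M_M$ is the appropriate integral of $\varphi$ over $U_{M,a}\backslash U_M$. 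The key point is that this cutoff factor $\widetilde\varphi^M_M(a')$ depends only on $a'$ and on $\varphi$, and is identical on the $h$-side and the $h_1$-side. Hence it suffices to prove $B^M(a,h_1)=B^M(a,h)$ for all $a\in Z_M$, i.e. the statement for the \emph{full} Bessel integrals, which unwinds all cutoff issues.

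Next I would compute $B^M(a,h_1)$ directly from the definition $B^M(a,h_1)=\int_{U_M}h_1(xa)\psi^{-1}(x)\,dx$ and the formula $h_1(m)=\sum_{m=m'c}h_0(m')B^M(c,h)$ with $m'\in M^d$, $c\in Z_M$. For $x\in U_M$ and $a\in Z_M$, every decomposition $xa=m'c$ forces $m'\in U_M\cdot(ac^{-1})$ with $ac^{-1}\in M^d\cap Z_M=A^{w'}_{w'}$; writing $b=ac^{-1}\in A^{w'}_{w'}$ and $x'=m'b^{-1}\in U_M$ (so $x=x'b$, $dx=dx'$), one gets
$$B^M(a,h_1)=\sum_{b\in A^{w'}_{w'}}B^M(b,h)\int_{U_M}h_0(x'b)\psi^{-1}(x'b)\,dx'
=\sum_{b\in A^{w'}_{w'}}B^M(b,h)\,\psi^{-1}(b)\,B^M(b,h_0).$$
Now I invoke the choice of $h_0$: $B^M(b,h_0)=0$ unless $b\in Z\cap A^{w'}_{w'}$, and $B^M(e,h_0)=1/\kappa_M$ with $\kappa_M=|Z\cap A^{w'}_{w'}|$. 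Since elements $b\in Z\cap A^{w'}_{w'}$ are central, $\psi^{-1}(b)=1$, and by the transformation property $h(zm)=\omega_\pi(z)h(m)$ together with the analogous property of the Bessel integral one checks $B^M(b,h)=\omega_\pi(b)B^M(e',h)$-type relations are consistent so that all $\kappa_M$ central terms contribute equally; summing the $\kappa_M$ equal terms against the weight $1/\kappa_M$ collapses the sum to $B^M(a,h)$. This yields $B^M(a,h_1)=B^M(a,h)$, hence $B^M_\varphi(a,h_1)=B^M_\varphi(a,h)$.

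The main obstacle I anticipate is purely bookkeeping rather than conceptual: one must be careful that the finite set $A^{w'}_{w'}=Z_M\cap M^d$ genuinely decomposes, in the $\mathrm{GL}_n$ setting, into the block-wise roots of unity described in the text, and that the characters $\omega_\pi$ and $\psi$ interact correctly across the decomposition $M=M^dZ_M$ (in particular that $\psi$ is trivial on the relevant central elements and that the $h_0$-averaging genuinely cancels the multiplicity $\kappa_M$). Once the $G$-level Lemma 6.10 is granted, everything here is its verbatim analogue with $G$ replaced by $M$, $Z$ by $Z_M$, $A^e_e$ by $A^{w'}_{w'}$, and $\mathcal M(\pi)$-normalization replaced by the normalization $B^M(e,h_0)=1/\kappa_M$; so I would ultimately phrase the proof as ``the same argument as Lemma 6.10 (Lemma 5.15 of [7]), applied to $M$ in place of $G$,'' filling in only the decomposition computation above.
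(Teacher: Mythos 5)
The paper itself offers no argument here: its ``proof'' is just the citation ``See Proposition 5.4, [7].'' Your plan---transport the Lemma 6.10 (= Lemma 5.15 of [7]) argument from $G$ to the Levi $M$, reduce to the full Bessel integral via the cutoff factorization, then compute $B^M(a,h_1)$ against the definition of $h_1$---is exactly the expected route, so conceptually you are doing the right thing.

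However, your central computation contains a change-of-variables slip that must be repaired. Setting $b=ac^{-1}\in A^{w'}_{w'}$ and $m'=xb$, the substitution gives $x'=m'b^{-1}=x$, not $x=x'b$; consequently $\psi^{-1}(x)$ stays $\psi^{-1}(x')$ and no extra factor $\psi^{-1}(b)$ appears (indeed $\psi$ is not even defined on the central element $b\in Z_M$, so the expression $\psi^{-1}(b)$ in your display has no meaning). Moreover $B^M(\cdot,h)$ is evaluated at $c=ab^{-1}$, not at $b$. The correct identity is
$$B^M(a,h_1)=\sum_{b\in A^{w'}_{w'}} B^M(ab^{-1},h)\, B^M(b,h_0),$$
with no $\psi$-factor. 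The choice of $h_0$ then kills all terms with $b\notin Z\cap A^{w'}_{w'}$; for $b\in Z\cap A^{w'}_{w'}$ the transformation property under $Z$ gives $B^M(ab^{-1},h)=\omega_\pi(b)^{-1}B^M(a,h)$ and $B^M(b,h_0)=\omega_\pi(b)B^M(e,h_0)=\omega_\pi(b)/\kappa_M$, so each of the $\kappa_M$ surviving terms contributes $B^M(a,h)/\kappa_M$ and the sum collapses to $B^M(a,h)$. It is precisely the cancellation $\omega_\pi(b)^{-1}\omega_\pi(b)=1$ that makes the terms ``contribute equally''; with your displayed formula $\sum_b B^M(b,h)\psi^{-1}(b)B^M(b,h_0)$ (wrong argument of $B^M(\cdot,h)$ and spurious $\psi^{-1}(b)$) the sum would not collapse to $B^M(a,h)$, so the final leap would not be justified without the correction above.

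Your preliminary reduction to full Bessel integrals via $B^M_\varphi(a,h)=\tilde\varphi^M_M(a')B^M(a,h)$ is sound: it is the $M$-internal case $w=e\in B(M)$ of Lemma 6.4, with $\Omega^M_e=M$, $U^-_{M,e}=\{e\}$, $A^M_e=Z_M$, and the cutoff factor $\tilde\varphi^M_M(a')$ depends only on $a'$ and $\varphi$, hence is identical for $h$ and $h_1$ and cancels. With the change-of-variables step corrected as above, your proposal is a faithful rendering of the $M$-analogue of Lemma 6.10.
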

\begin{proof}
See Proposition 5.4, [7].
\end{proof}

Now suppose $g\in C_r(\dot{w}_G)$ with $g=u_1\dot{w}_G a u_2$, then for $w'=\dot{w}_G\dot{w}_M^{-1}$ we have 
$$C_r(\dot{w}_G)\subset \Omega_{w'}=U_{w'^{-1}}^-\times w'M\times U_{w'}^-.$$ We further decompose $g$ as $=u_1^-u_1^+\dot{w}_Gau_2^+u_2^-$ with $u_1^-\in U_{w'^{-1}}^-$, $u_1^+\in U_{w'^{-1}}^+$, $u_2^+\in U_{w'}^+$, $u_2^-\in U_{w'}^-$, $u_1=u_1^-u_1^+$, $u_2=u_2^+u_2^-$. Then
$$g=u_1^-w'(w')^{-1}u_1^+w'\dot{w}_Mau_2^+u_2^-=u_1^-w'mu_2^-$$ where $m=(w')^{-1}u_1^+w'\dot{w}_Mau_2^+\in C_r^M(\dot{w}_M)$, the relevant cell of $\dot{w}_M$ in $M$, and $a\in A_{w_G}=A$. Recall that
$$B^M_{\varphi}(u_1^-,u_2^-,m,h_1)=\int_{U_{M,m}\cap n_0U_{M,m}n_0^{-1}\backslash U_M\times U_M}h_1(xmu)\varphi(^tu\leftidx{^t}n_0\dot{w}_M^{-1}m'u)\psi^{-1}(xu)dxdu$$
where $m'=(w')^{-1}u_1^+w'\dot{w}_Ma'u_2^+$.
Here $a=za'$ is the decomposition of $a\in A=ZA'$. It follows that $B^M_{\varphi}(u_1^-,u_2^-,m,h_1)=\omega_{\pi}(z)B^M_{\varphi}(u_1^-,u_2^-,m', h_1)$.

Since $h_1(m)=\sum_{m=m_1c}h_0(m_1)B^M(m',h)$
with $m_1\in M^d$ and $c\in Z_M$, to compute the above integral, we need to decompose $xm'u=m_1c$. This gives
$xw'^{-1}u_1^+w'\dot{w}_M^{-1}a'u_2^+uc^{-1}=m_1\in M^d$.
Since $x,w',\dot{w}_M, u,u_1^+,u_2^+\in M^d$, it suffices to decompose $a'=bc$ for $b\in A\cap M^d$ and $c\in Z_M$. Now we can write
$$h_1(m')=\sum_{a'=bc}h_0(xw'^{-1}u^+_1w'\dot{w}_Mbu_2^+u)B^M(c,h).$$
Decompose $b=z_bb'$ and $c=z_cc'$, with $z_b,z_c\in Z$, $b'\in A'$ and $c'\in Z_M'$.
Then $a'=bc=z_bz_cb'c'\Longrightarrow a'=b'c'$, and $z_bz_c=1.$
As $h,h_0\in C^{\infty}_c(M;\omega_{\pi})$, we have
$$h_0(xw'^{-1}u^+_1w'\dot{w}_Mbu^+_2u)B^M(c,h)=\omega_{\pi}(z_bz_c)h_0(xw'^{-1}u_1^+w'\dot{w}_Mb'u_2^+u)B^M(c',h)$$
$$=h_0(xw'^{-1}u_1^+w'\dot{w}_Mb'u_2^+u)B^M(c',h).$$
Thus
$$B^M_\varphi(u_1^-,u_2^-,m',h_1)=\int_{U_{M,m}\cap n_0U_{M,m}n_0^{-1}\backslash U_M\times U_M}\sum_{a=bc}h_0(xw'^{-1}u_1^+w'\dot{w}_Mb'u_2^+u)B^M(c',h)$$
$$\cdot \varphi(^tu \leftidx{^t}n_0\dot{w}_M^{-1}w'^{-1}u_1^+w'\dot{w}_Mb'u^+_2uc')\psi^{-1}(xu)dxdu$$
$$=\sum_{a'=b'c'}B^M(c',h)\int_{U_{M,m}\cap n_0 U_{M,m}n_0^{-1}\backslash U_M\times U_m}h_0(xw'^{-1}u_1^+w'b'u^+_2u)$$
$$\cdot \varphi(^tu \leftidx{^t}n_0\dot{w}_M^{-1}w'^{-1}u_1^+w'\dot{w}_Mb'u^+_2uc')\psi^{-1}(xu)dxdu.$$

Now since $a'=b'c'$, $c'\in Z_M'\subset Z_M$, let
$m_{b'}=w'^{-1}u_1^+w'\dot{w}_Mb'u^+_2$, then
$$m'=w'^{-1}u_1^+w'\dot{w}_Ma'u_2^+=w'^{-1}u_1^+w'\dot{w}_Mb'c'u_2^+=m_{b'}c'$$
Meanwhile we have 
$U_{M,m'}=\{u\in U_M: {^tu}\dot{w}_M^{-1}m'u=\dot{w}_M^{-1}m'\}=\{u\in U_M: \dot{w}_M{^tu}\dot{w}_M^{-1}m'u=m'\}=\{u\in U_M: \dot{w}_M{^tu}\dot{w}_M^{-1}m_{b'}c'u=m_{b'}c'\}=\{u\in U_M: \dot{w}_M{^tu}\dot{w}_M^{-1}m_{b'}uc'=m_{b'}c'\}=\{u\in U_M:\dot{w}_M{^tu}\dot{w}_M^{-1}m_{b'}u=m_{b'}\}=U_{M,m_{b'}}$

So we obtain
$$B^M_{\varphi}(u_1^-,u_2^-,m',h_1)=\sum_{a'=b'c'}B^M(c',h)\int_{U_{M,m_{b'}}\cap n_0U_{M,m_{b'}}n_0^{-1}\backslash U_M\times U_M}h_0(xm_{b'}u)$$$$\cdot\varphi({^tu}\leftidx{^t}n_0\dot{w}_M^{-1}m_{b'}uc')\psi^{-1}(xu)dxdu$$
$$=\sum_{a'=b'c'}B^M(c',h)B^M_{\varphi^{c'}}(u_1^-,u_2^-,m_{b'},h_0)$$
where $\varphi^{c'}(m)=\varphi(mc')$ for $c'\in Z_M'$.

In particular, when $n_0=1$, we have
$$B^M_\varphi(m,h_1)=\sum_{a=bc}B^M(c, h)B^M_{\varphi^{c}}(m_{b'}, h_0).$$
\subsection{Uniform smoothness} The key to prove supercuspidal stability is that the asymptotic expansions of partial Bessel integrals have two parts, one part depends only on the central character of $\pi$, the other is a uniform smooth function on certain torus. Therefore under highly ramified twist, the uniform smooth part becomes zero. We study the uniform smoothness in this section.
\begin{definition}
A smooth function $B$ on a torus $T\subset A$ is \textbf{uniformly smooth} if there exists a fixed open compact subgroup $T_0\subset T$ such that $B(tt_0)=B(t)$ for $t_0\in T_0$ and all $t\in T$.
\end{definition}
\begin{proposition}
For $g(a)=u_1^-(a)w'm(a)u_2^-(a)\in C_r^G(\dot{w}_G)$ with $m=m(a)=\tilde{u}_1(a)\dot{w}_Ma \tilde{u}_2(a)\in C^M_r(\dot{w}_M)$, $a\in A^{w'}_{\dot{w}_G}A_{w'}\subset A_{\dot{w}_G}=A$, $u_1^-(a)$, $u_2^-(a)$, $\tilde{u}_1(a)$ and $\tilde{u}_2(a)$ are rational functions(as morphisms of algebraic varieties) of $a$. Let $a=bc$ be a fixed decomposition with $b\in A^{w'}_{w_G}$ and $c\in A_{w'}$. Then all decompositions are of the form $a=(b\zeta^{-1})(\zeta c)$ with $\zeta\in A^{w'}_{w'}=A^{w'}_{w_G}\cap A_{w'}$, a finite set with appropriate roots of unity on the diagonal. Moreover, if $c=c'z$ with $c'\in A_{w'}'=Z_M'$ and $z\in Z$, then for each fixed $b, z$,
$$B^M_{\varphi}(u_1^-(a), u_2^-(a),m(a),h_1)=\omega_{\pi}(z)B^M_{\varphi}(u_1^-(bc'z), u_2^-(bc'z),\tilde{u}_1(bc'z)\dot{w}_Mbc'\tilde{u}_2(bc'z), h_1)$$ is uniformly smooth as a function of $c'\in Z_M'$.
\end{proposition}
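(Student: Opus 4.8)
Proof proposal.

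The plan is to feed the small‑cell identity obtained just above into a compactness argument. Applied pointwise in the torus parameter, that identity reduces the function in question to a finite sum in which the only dependence on the parameter lives in a scalar character value, in the inner torus element of an abelian Bessel integral, and in the cut‑off function; uniform smoothness of each piece then follows from the local constancy and compact support modulo center of $h,h_0$, the local constancy of $\varphi=\varphi_N$, and the regularity of the Bruhat‑decomposition morphisms.

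First I would dispose of the two formal assertions. The decompositions $a=bc$ with $b\in A^{w'}_{w_G}$, $c\in A_{w'}$ differ precisely by the finite group $A^{w'}_{w'}=A^{w'}_{w_G}\cap A_{w'}$ — finite because $M=M_{w'}$ is reductive, so $M^d\cap Z^0$ is finite — whence $a=(b\zeta^{-1})(\zeta c)$ with $\zeta\in A^{w'}_{w'}$ (and, for $\textrm{GL}_n$, the $\zeta$ are blockwise roots of unity). For the displayed identity, note that $z\in Z$ is central, so $g(bc'z)=z\,g(bc')$ forces $u_i^\pm(bc'z)=u_i^\pm(bc')$, $\tilde u_i(bc'z)=\tilde u_i(bc')$, $m(bc'z)=z\,m(bc')$; together with $h_1(zm)=\omega_\pi(z)h_1(m)$ and with the stripped element and $n_0$ being unaffected, this yields $B^M_\varphi(u_1^-(a),u_2^-(a),m(a),h_1)=\omega_\pi(z)\,B^M_\varphi(u_1^-(bc'),u_2^-(bc'),m(bc'),h_1)$. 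Since $\omega_\pi(z)$ is a fixed scalar, it then suffices to prove that $\mathcal B(c'):=B^M_\varphi(u_1^-(bc'),u_2^-(bc'),m(bc'),h_1)$ is uniformly smooth in $c'\in Z_M'$.

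Writing $a=bc'$, I would apply the small‑cell identity, which expresses $\mathcal B(c')$, up to the fixed scalar $\omega_\pi(b)$, as
$$\sum_{\gamma}\;B^M(\gamma,h)\;B^M_{\varphi^{\gamma}}\!\big(u_1^-(a),u_2^-(a),m_{b'}(a),h_0\big),$$
the sum running over the (finitely many, and, over a finite‑index open subgroup of $Z_M'$, $c'$‑independent) decompositions of the primed part of $a$ — which we may take to be $c'$ — as $b'\gamma$ with $\gamma\in Z_M'$, where $m_{b'}=(w')^{-1}u_1^+w'\dot{w}_Mb'u_2^+$ and $\varphi^\gamma(x)=\varphi(x\gamma)$. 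For the first factor: $B^M(\gamma,h)=\int_{U_M}h(x\gamma)\psi^{-1}(x)\,dx$ vanishes unless $\gamma$ lies in a fixed compact subset of $Z_M'$ — an Iwasawa‑decomposition estimate using that $\gamma$ is a torus element and $h$ is compactly supported modulo $Z$ — and it is locally constant, hence uniformly smooth. For the second factor: $u_i^\pm,m_{b'},n_0$ are regular in $a$ (restrictions of the Bruhat‑decomposition morphisms to the locus where $g(a)\in C^G_r(\dot{w}_G)$ and $m(a)\in C^M_r(\dot{w}_M)$), so they, and hence the argument $\xi={}^{t}u'\,{}^{t}n_0\,\dot{w}_M^{-1}m'_{b'}u'$ of $\varphi$ over the relevant range of $(x',u')$ (cut down by $h_0\in C_c^\infty(M;\omega_\pi)$), stay in fixed compacta; combining this with the non‑archimedean estimate $\varphi_N(\xi\gamma+E)=\varphi_N(\xi\gamma)$ valid whenever $|E_{ij}|\le q^{(i+j-1)N}$, and with the vanishing of $\varphi_N(\xi\gamma)$ when $\gamma$ is too large, one produces a fixed open compact $L_\varphi\subset Z_M'$ with $B^M_{\varphi^{\gamma\gamma_0}}(\ldots)=B^M_{\varphi^{\gamma}}(\ldots)$ for all $\gamma_0\in L_\varphi$, all $\gamma$, and uniformly in the compactly‑varying data. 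Pulling back along the translation‑type map $c'\mapsto\gamma$ and intersecting the finitely many resulting open compact subgroups with that coming from the first factor gives a single open compact subgroup of $Z_M'$ under which $\mathcal B$ is invariant.

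The step I expect to be the main obstacle is this last uniform choice of $L_\varphi$ for the cut‑off factor: one must show that a \emph{single} open compact subgroup of $Z_M'$ works simultaneously as $\gamma$ ranges over all of $Z_M'$ and as the Bruhat data $u_i^\pm(bc'),m_{b'}(bc'),n_0(bc')$ move with $c'$, which requires controlling all of these on the a priori unbounded locus where the integrand does not vanish; the remaining ingredients — the $c'$‑independent finite index set $\{b'\}$, the pole‑freeness of the Bruhat morphisms on $A^{w'}_{w_G}A_{w'}$, and the compact support of the abelian Bessel integral — are comparatively routine.
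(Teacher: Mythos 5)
Your overall plan --- feed the small-cell expansion into a compactness argument --- is the paper's plan, and the two formal assertions (decompositions differing by $\zeta\in A^{w'}_{w'}$; the $\omega_\pi(z)$ factor pulled out by centrality) are disposed of as in the paper, as is the compact support of the abelian Bessel integral $B^M(\zeta c,h)$ in the primed torus variable. The gap appears exactly where you anticipate it: you set yourself the task of proving uniform smoothness of the cut-off factor $B^M_{\varphi^{c}}(u_1^-(bc'z),u_2^-(bc'z),m_{b'}(bc'z),h_0)$ as $c'$ ranges over \emph{all} of $Z_M'$, and this genuinely does not go through, because the Bruhat-decomposition data $u_i^{\pm}$, $m_{b'}$, $n_0$ are rational rather than regular in $a$, so they need not stay in a fixed compactum as $c'$ goes off to infinity, and there is then no single scale at which $\varphi$ can be taken invariant.

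The paper never needs that. It observes first that, precisely because $B^M(\zeta c', h)$ vanishes for $c'$ outside a fixed compact set $\bigcup_{\zeta'}(\zeta')^{-1}K''$, the \emph{entire} function $\mathcal{B}(c')=\sum_\zeta B^M(\zeta c',h)\,B^M_{\varphi^{c}}(\cdots)$ is compactly supported in $c'\in Z_M'$, with support independent of $a$ and $b$. For a locally constant function on a totally disconnected group, compact open support immediately gives uniform smoothness: cover the support by finitely many cosets on which the function is constant, intersect the corresponding open compact subgroups (shrinking so that the support is stable under the result), and you obtain a single $\Omega_{b,z}$ under which $\mathcal{B}$ is invariant; off the support $\mathcal{B}\equiv 0$, so invariance is trivial there. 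In particular, on that compact support the rational Bruhat maps take values in a bounded set, so a single scale for $\varphi$ does work --- not because one can control them on an unbounded locus, but because the locus one actually needs is bounded. The compact support of the whole product, already forced by the first factor alone, is the one observation you are missing; once it is in place the obstacle you flag simply does not arise.
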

\begin{proof}
First fix one decomposition $a=bc$. To simplify the notation, we denote $u_i^-=u_i^-(a)$ and $\tilde{u}_i=\tilde{u}_i(a)$. Then we have 
$$B^M_{\varphi}(u_1^-,u_2^-,m,h_1)=\sum_{a=bc}B^M(c,h)B^M_{\varphi^c}(u_1^-,u_2^-,m_{b'}, h_0)$$$$=\sum_{\zeta}B^M(\zeta c, h)B^M_{\varphi^{\zeta c}}(u_1^-,u_2^-,\tilde{u}_1\dot{w}_Mb\zeta^{-1}\tilde{u}_2,h_0)$$
Since $\vert\zeta\vert=1$, so we have $\varphi^{\zeta c}=\varphi^c$. This implies that 
$B^M_{\varphi}(u_1^-,u_2^-,m,h_1)=\sum_{\zeta}B^M(\zeta c, h)B^M_{\varphi^c}(u_1^-,u_2^-,\tilde{u}_1\dot{w}_Mb\zeta^{-1}\tilde{u}_2,h_0).$

Now $$B^M(\zeta c, h)=\int_{U_M}h(x\zeta c)\psi^{-1}(x)dx=\omega_{\pi}(\zeta_1 z)\int_U h(x\zeta'c')\psi^{-1}(x)dx$$
where $\zeta=diag(\zeta_1 I_{n_1},\cdots, \zeta_t I_{n_t})$ and $\zeta'=diag(I_{n_1},\zeta_1^{-1}\zeta_2I_{n_2}\cdots, \zeta_1^{-1}\zeta_{t}I_{n_t}).$ Since $h\in C^{\infty}_c(M;\omega_{\pi})$, $x\zeta c\in A_MU_M=B_M$ and $C^M(e_M)=B_M$ is closed in $M$, there exists compact subsets $U_1\subset U$, $K''\subset A'$ s.t.
$h(x\zeta'c')\neq 0 \Longrightarrow x\in U_1, \zeta'c'\in K'.$ Moreover, since $Z_M'\subset A'$ is closed and $\zeta'c'\in Z_M'$, there exists a further compact subset $K''\subset Z_M'$ s.t.
$h(x\zeta'c')\neq 0 \Longrightarrow x\in U_1, \zeta'c'\in K''.$
Write $a=bc=bc'z$, we see that 
$$B^M_{\varphi}(u_1^-,u_2^-,m,h_1)=\omega_{\pi}(z)\sum_{\zeta}B^M(\zeta c',h)B_{\varphi^c}^M(u_1^-,u_2^-,\tilde{u}_1\dot{w}_Mb\zeta^{-1}\tilde{u}_2, h_0)$$ is zero unless $c'\in \bigcup_{\zeta'}(\zeta')^{-1}K''$, which is compact since it is a finite union of compact subsets.

So $$B^M_{\varphi}(u_1^-, u_2^-,m,h_1)=B^M_{\varphi}(u_1^-,u_2^-, \tilde{u}_1\dot{w}_Ma\tilde{u}_2, h_1)=B^M_{\varphi}(u_1^-,u_2^-,\tilde{u}_1\dot{w}_Mbc'\tilde{u}_2,h_1)$$
$$=\omega_{\pi}(z)B^M_{\varphi}(u_1^-,u_2^-,\tilde{u}_1\dot{w}_Mbc'\tilde{u}_2, h_1)$$ has compact support on $c'\in Z_M'$, depending only on $h$ through the choice of $K''$ and $A_{M^d}\cap Z_M$. Thus independent of $a$ and $b$.

Since $h$ is smooth and its support in $c'$ is  compact, for each fixed $b, z$, there exists uniform compact subset $\Omega_{b,z}\subset Z_M'$ s.t.
$h(x\zeta c'c_1)=h(x\zeta c')$, $u_i^-(bzc'c_1)=u_i^-(bzc')$, $\tilde{u}_i(bzc'c_1)=\tilde{u}_i(bzc')$ $(i=1,2)$ for all $c_1\in \Omega_{b,z}$, $x\in U_1$, and $c'\in Z_M'.$ Shrinking $\Omega_{b,z}$ if necessary, we may assume that $\Omega_{b,z}\subset Z_M'(\mathcal{O}_F)$, so $\varphi^{cc'}=\varphi^c$ for all $c_1\in \Omega_{b,z}$.  
So we have proved that 
$$B^M_\varphi(u_1^-(ac_1), u_2^-(ac_1), m(ac_1), h_1)$$$$=B^M_{\varphi}(u_1^-(bzc'c_1), u_2^-(bzc'c_1),\tilde{u}_1(bzc'c_1)\dot{w}_Mbzc'c_1\tilde{u}_2(bzc'c_1),h_1)$$$$=B^M_{\varphi}(u_1^-(bc'z), u_2^-(bc'z),\tilde{u}_1(bc'z)\dot{w}_Mbc'z\tilde{u}_2(bc'z),h_1)$$$$=\omega_{\pi}(z)B^M_{\varphi}(u_1^-(bc'z), u_2^-(bc'z),\tilde{u}_1(bc'z)\dot{w}_Mbc'\tilde{u}_2(bc'z),h_1)$$
$$=B^M_\varphi(u_1^-(a), u_2^-(a), m(a), h_1)$$
for all $c_1\in \Omega_{b,z}$, $a=bc.$

Finally note that since $A_{w_G}^{w'}A_{w'}\subset A_{w_G}=A$ is open of finite index, one can extend $B^M_{\varphi}(u_1^-(a), u_2^-(a), m(a), h_1)$ on all of $A$.
\end{proof} 

\subsection{Asymptotic expansions} We are ready to establish a more general version of the asymptotic expansion formula for partial Bessel integrals as in [7]. The formula that will be established works for all elements in the relevant Bruhat cells.

The following proposition is the key to prove the main results in this section. 
\begin{proposition}

Let $w'=\dot{w}_G\dot{w}_M^{-1}\in B(G)$, and $f_{w'}\in C_c^{\infty}(\Omega_{w'};\omega_{\pi})$. There exists $f_{1,w'}\in C^{\infty}_c(\Omega_{w'};\omega_{\pi})$, such that

(1), $\exists$ a family of functions $ \{f_{w''}\}_{w''\in B(G)}$ with $d_B(w'',w')=1$, $w''>w'$, such that $f_{w''}\in C_c^{\infty}(\Omega_{w''};\omega_{\pi})$, and for $\forall w\in B(G)$ and $g\in C_r^G(w)$, we have
$$B^G_{\varphi}(g,f_{w'})=B^G_{\varphi}(g,f_{1,w'})+\sum_{w''\in B(G),w''>w', d_B(w'',w')=1} B^G_{\varphi}(g,f_{w''});$$

(2), Let $g=u_1(a)\dot{w}_G a u_2(a)\in C^G_r(\dot{w}_G)$, where $u_i(a)$'s are rational functions(as algebraic varieties) of $a$. Write $u_1(a)=u_1^-(a)u_1^+(a)\in U_{(w')^{-1}}^-U_{(w')^{-1}}^+=U$ and $u_2(a)=u_2^+(a)u_2^-(a)\in U_{w'}^+U_{w'}^-=U$, then $u_i^{\pm}(a)$'s are all rational functions of $a\in A$. Then $g=u^-_1(a)w' m(a) u^-_2(a)$ and 
$m(a)=w'^{-1}u_1^+(a)w'\dot{w}_Mau_2^+(a)=\tilde{u}_1(a)\dot{w}_Ma\tilde{u}_2(a)$ where $\tilde{u}_1=w'^{-1}u_1^+w'$, $\tilde{u}_2=u^+_2.$ And we have 
$$B^G_{\varphi}(g,f_{1,w'})
=\omega_{\pi}(z)B^G_{\varphi}(u_1(bc'z)\dot{w}_G bc' u_2(bc'z),f_{1,w'})$$
is uniformly smooth as a function of $c'\in A'_{w'}=Z'_M$ for each fixed $b$ and $z$.
\end{proposition}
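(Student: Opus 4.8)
The plan is to run the "small cell" machinery of Section 6.4 with the small cell $e$ replaced by $w'$, and the ambient group $G$ replaced by the Levi $M = M_{w'}$, combining this with the transfer principle (Proposition 6.5) and the uniform smoothness result (Proposition 6.16). First I would apply the transfer principle to write, for $g = u_1^-(a)\dot{w}' m(a) u_2^-(a) \in C_r^G(\dot w_G) \subset \Omega_{w'}$,
\[
B^G_{\varphi}(g, f_{w'}) = \psi(u_1^-(a))\psi(u_2^-(a))\, B^M_{\varphi}\big(u_1^-(a), u_2^-(a), m(a), h_{f_{w'}}\big),
\]
where $h_{f_{w'}} \in C^\infty_c(M;\omega_\pi)$ is obtained from $f_{w'}$ through the surjection of Lemma 6.2. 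This reduces the problem from $G$ to $M$: the element $m(a) = \tilde u_1(a)\dot{w}_M a\, \tilde u_2(a)$ lies in the relevant cell $C_r^M(\dot w_M)$, and $\dot w_M$ plays the role of the big cell of $M$. Now inside $M$, I would mimic the construction leading to Proposition 6.12: fix an auxiliary $h_0 \in C^\infty_c(M;\omega_\pi)$ with the normalization used in Section 6.4 (so $B^M(e,h_0) = 1/\kappa_M$ and $B^M(b,h_0)=0$ for $b \in A^{w'}_{w'}$, $b \notin Z \cap A^{w'}_{w'}$), build $h_1$ from $h_0$ as there, and use Lemma 6.13 together with the small-cell lemmas (Lemmas 6.7, 6.8, 6.9) applied to $M$ to produce a decomposition
\[
B^M_{\varphi}\big(u_1^-,u_2^-,m,h_{f_{w'}}\big) = B^M_{\varphi}\big(u_1^-,u_2^-,m,h_1'\big) + \sum_{w''_M} B^M_{\varphi}\big(u_1^-,u_2^-,m,h_{w''_M}\big),
\]
where $h_1'$ is the "small-cell part" supported near $C_r^M(\dot w_M)$ and the sum runs over Bessel-supporting Weyl elements of $M$ strictly above $\dot w_M$ at Bessel distance one.

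Next I would transfer this back to $G$: each term $B^M_\varphi(u_1^-,u_2^-,m,h_{w''_M})$, via the transfer principle read backwards (using the surjectivity in Lemma 6.2 to lift $h_{w''_M}$ to a function $f_{w''}$ supported on the appropriate $\Omega_{w''}$ with $d_B(w'',w')=1$, $w''>w'$), becomes $\psi(u_1^-)^{-1}\psi(u_2^-)^{-1} B^G_\varphi(g, f_{w''})$; and similarly the small-cell part $h_1'$ transfers to some $f_{1,w'} \in C^\infty_c(\Omega_{w'};\omega_\pi)$. Collecting everything and cancelling the $\psi$-factors gives conclusion (1). The correspondence between $w'' \in B(G)$ with $w'' > w'$, $d_B(w'',w')=1$ and the Bessel-supporting elements of $M$ just above $\dot w_M$ is the content of the dictionary $\Delta_{M_{w''}} \subset \Delta_{M_{w'}}$ from the bullets on Bessel distance in Section 6; I would spell this out to make sure every $w''_M$-term lands in a genuine $\Omega_{w''}$ with the claimed Bessel distance.

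For conclusion (2), I would observe that after the transfer the small-cell part $B^G_\varphi(g, f_{1,w'})$ equals (up to the $\psi$-factors, which are themselves locally constant in $a$) the expression $B^M_\varphi(u_1^-(a),u_2^-(a),m(a),h_1)$ analyzed in Proposition 6.16. That proposition already gives exactly the needed statement: writing $a = bc$ with $b \in A^{w'}_{w_G}$, $c \in A_{w'}$, and $c = c'z$ with $c' \in A'_{w'} = Z'_M$, $z \in Z$, the quantity $\omega_\pi(z)\, B^M_\varphi(u_1^-(bc'z),u_2^-(bc'z),\tilde u_1(bc'z)\dot w_M bc'\,\tilde u_2(bc'z),h_1)$ is uniformly smooth in $c'$ for fixed $b,z$. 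I would only need to check that the $\psi$-factors $\psi(u_1^-(a))\psi(u_2^-(a))$ and the translation action of $Z'_M$ on the arguments $u_i^\pm(a)$ are compatible with a single uniform compact subgroup — but since $u_i^\pm(a)$ are rational in $a$ and $\psi$ is smooth, any sufficiently small open compact subgroup $\Omega_{b,z} \subset Z'_M(\mathcal{O}_F)$ works, intersecting it with the one from Proposition 6.16.

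The main obstacle I anticipate is purely bookkeeping rather than conceptual: correctly matching the "small-cell + strictly-larger-cells" decomposition performed inside $M$ with the corresponding decomposition in $G$ through the transfer principle, i.e., verifying that the functions $h_{w''_M}$ on $M$ lift (via Lemma 6.2) to functions $f_{w''}$ supported on the right $\Omega_{w''} \subset \Omega_{w'}$ and that $d_B(w'',w')=1$, $w''>w'$ corresponds precisely to the $M$-Bessel-distance-one condition. One has to be careful that the surjection of Lemma 6.2 is compatible with the partition-of-unity step (as in the proof of Proposition 6.12) and that the cut-off function $\varphi = \varphi_N$ can be taken uniformly large enough to validate simultaneously all the invariance statements used in Lemmas 6.7–6.9 and in the transfer principle. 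Once this dictionary is set up cleanly, parts (1) and (2) follow by assembling Propositions 6.5, 6.12 (its $M$-analogue), and 6.16.
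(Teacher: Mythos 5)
There is a genuine gap in your proposed route for part (1), and it comes precisely from the feature of the transfer principle that you gloss over as ``bookkeeping.''

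The transfer principle (Proposition 6.5) expresses $B^G_\varphi(g,f)$ in terms of the \emph{modified} Bessel integral $B^M_\varphi(u_1^-,u_2^-,m,h_f)$, which carries the element $n_0 = \leftidx{^t}{(\overline{u_1^-})}(u_2^-)^{-1}$ both in the quotient $U_{M,m}\cap n_0U_{M,m}n_0^{-1}\backslash U_M$ and in the argument of the cut-off $\varphi(\leftidx{^t}{u'}\leftidx{^t}{n_0}\dot w_M^{-1}m'u')$. Your plan is to apply Lemmas 6.7--6.9 \emph{inside $M$} to decompose $h_{f_{w'}}-h_1$ and then push that decomposition back to $G$. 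But Lemmas 6.7--6.9 (and the partition-of-unity step) only give equalities of \emph{standard} Bessel integrals $B^M_\varphi(m,\cdot)$ --- they produce a new function whose standard Bessel integrals agree with those of the old one, not an equality of functions. Since the modified integral $B^M_\varphi(u_1^-,u_2^-,m,\cdot)$ genuinely depends on $n_0$ (which varies with $g$), equality of standard $M$-Bessel integrals of $h_{f_{w'}}-h_1$ and $\sum h_{w''_M}$ does not imply equality of the modified ones, and hence does not transfer back to the desired identity for $B^G_\varphi(g,\cdot)$. You would need either an $n_0$-twisted analogue of Lemmas 6.7--6.9 for $M$, or a decomposition of $h_{f_{w'}}-h_1$ as an actual sum of functions --- neither of which is available.

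The paper's proof sidesteps this entirely. It invokes the transfer principle only at the special points $g=\dot w'a$, $a\in A_{w'}=Z_M$, where $u_1^-=u_2^-=1$ so $n_0=1$ and the modified integral reduces to the standard $B^M_\varphi(a,\cdot)$. Combined with Lemma 6.12 this gives $B^G_\varphi(\dot w' a, f_{w'}-f_1)=0$ for all $a\in A_{w'}$, where $f_1$ is the lift of $h_1$ under Lemma 6.2. From there Lemmas 6.7--6.9 and the partition of unity are applied \emph{in $G$} to the function $f_{w'}-f_1\in C^\infty_c(\Omega_{w'};\omega_\pi)$, producing $\{f_{w''}\}_{w''}$ directly as $G$-functions supported in the right $\Omega_{w''}$, with $f_{1,w'}=f_1$. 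In short: the $M$-machinery is used only to manufacture $f_1$ and verify the vanishing hypothesis; the decomposition itself happens in $G$, where there is no $n_0$. Your part (2), by contrast, is essentially the paper's argument (transfer principle at general $g$ plus the uniform smoothness of $B^M_\varphi(u_1^-(a),u_2^-(a),m(a),h_1)$ from Proposition 6.14 --- which you misnumber as 6.16), so that part is fine.
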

\begin{proof}
Take $h=h_{f_{w'}}\in C^{\infty}_c(M,\omega_{\pi})$ which maps to $f_{w'}$ under the surjective map $C^{\infty}_c(M;\omega_{\pi})\twoheadrightarrow C^{\infty}_c(\Omega_{w'}, \omega_{\pi})$ in Lemma 6.2. Construct $h_1$ based on $h$ as Lemma 6.12 such that $B^M_{\varphi}(a,h_1)=B^M_{\varphi}(a,h)$ for all $a\in Z_M=A_{w'}$. We have $h_1\in C^{\infty}_c(M;\omega_{\pi})$. Let $f_1$ be the image of $h_1$ under the map $C^{\infty}_c(M;\omega_{\pi})\twoheadrightarrow C^{\infty}_c(\Omega_{w'}, \omega_{\pi})$.
Then by the transfer principal of partial Bessel integrals (Proposition 6.5), we have for Levi subgroups $L$, $M$ of $G$ with $A\subset L \subset M\subset G$, and $g=u_1\dot{w}^G_Lau_2=u_1^-w'mau_2^-\in C_r(w^G_L)$,
$$B^G_{\varphi}(g, f_1)=\psi(u_1^-)\psi(u_2^-)B^M_{\varphi}(u_1^-,u_2^-,m,h_1).$$

Apply this with the case when $L=M$, and $g=w'a$, $a\in A_{w^G_M}=A_{w'}=Z_M$, then $u_1^-=u_2^-=1$. So we have
$$B^G_{\varphi}(w'a,f_1)=B^M_{\varphi}(a, h_1)=B^M_{\varphi}(a,h)=B^G(w'a,f_{w'})$$
by Lemma 6.12. So $B^G_{\varphi}(w'a,f_{w'}-f_1)=0$ for all $a\in A_{w'}=Z_M$ and $f_{w'}-f_1\in C^{\infty}_c(\Omega_{w'};\omega_{\pi}).$ Therefore by Lemma 6.7, Lemma 6.8, and Lemma 6.9, in addition with a partition of unity argument, we can find a family of functions $\{f_{w'}: w''\in B(G), w''>w', d_B(w'',w')=1, f_{w''}\in C^{\infty}_c(\Omega_{w''};\omega_{\pi})\}$ such that for any $w\in B(G)$ and any $g\in C_r(\dot{w})$, we have
$$B^G_{\varphi}(g, f_{w'})=B^G_{\varphi}(g,f_1)+\sum_{w''\in B(G), w''>w', d_B(w'',w')=1}B^G_{\varphi}(g,f_{w''}).$$ Moreover for each $f_{w''}$ we have $w''=w^G_{M''}$, this will be used for induction later.

On the other hand if we apply the transfer principal (Proposition 6.5) for partial Bessel integrals to the case $L=A$,  then for $g=u_1\dot{w}_Gau_2=u_1^-w'mu_2^-\in C_r(\dot{w}_G)=C(\dot{w}_G)$, where $m=w'^{-1}u_1^+w'\dot{w}_Mau_2^+\in C^M_r(\dot{w}_M)=C^M(\dot{w}_M)$, we obtain that 
$$B^G_{\varphi}(g, f_1)=B^G_{\varphi}(u_1\dot{w}_Ga u_2, f_1)=\psi(u_1^-)\psi(u_2^-)B^M_{\varphi}(u_1^-,u_2^-,m,h_1)$$

If we decompose $a\in A^{w'}_{w_G}A_{w'}$ as $a=bc$, and assume that $u_1=u_1(a)=u_1^-(a)u_1^+(a)$, $u_2=u_2(a)=u_2^+(a)u_2^-(a)$ are rational maps in $a$, then $g=g(a)=u_1(a)\dot{w}_Gau_2(a)$ is rational in $a$ as well. Then by proposition 6.14 we have
$$B^G_{\varphi}(g,f_1)=B^G_{\varphi}(g(a), f_1)=B^G_{\varphi}(u_1(a)\dot{w}_Ga u_2(a), f_1)$$
$$=\psi(u_1^-(a))\psi(u_2^-(a))B^M_{\varphi}(u_1^-(a), u_2^-(a), m(a),h_1)$$$$=\omega_{\pi}(z)\psi(u_1^-(bc'z)u_2^-(bc'z))B^M_{\varphi}(u_1^-(bc'z), u_2^-(bc'z), w'^{-1}u_1^+(bc'z)w'\dot{w}_Mbc'u_2^+(bc'z), h_1)$$
is compactly supported in $c'\in A'_{w'}=Z'_M$, and therefore $B^G_{\varphi}(g(bc'z),f_1)$ is uniformly smooth as a function of $c'\in Z_M'$ for each fixed $b,z$. 
\end{proof}
Next we are going to perform an induction on the Bessel distance $d_B(w,e)$, to obtain the following main proposition for our final proof of supercuspidal stability:
\begin{proposition}
Fix an auxiliary function $f_0\in C^{\infty}_c(G;\omega_{\pi})$ with $W^{f_0}(e)=1$.
Let $f\in M(\pi)$ with $W^f(e)=1$, and $m\in \mathbb{Z}$ with $1\leq m\leq d_B(w_G, e)+1$. Then

(1) there exists a function $f_{1,e}\in C^{\infty}_c(G;\omega_{\pi})$;

(2) for each $w'\in B(G)$ with $1\leq d_B(w',e)$ there exists $f_{1,w'}\in C^{\infty}_c(\Omega_{w'};\omega_{\pi})$, and for each $w''\in B(G)$ with $d_B(w'',e)=m$ there exists a function $f_{w''}\in C^{\infty}_c(\Omega_{w''};\omega_{\pi})$ such that for sufficiently large $\varphi$ we have

(a) for any $w\in B(G)$ we have $$B^G_{\varphi}(g, f)=B^G_{\varphi}(g,f_{1,e})+\sum_{1\leq d_B(w',e)<m}B^G_{\varphi}(g, f_{1,w'})+\sum_{d_B(w'',e)=m}B^G_{\varphi}(g, f_{w''})$$
for $\forall g\in C_r(\dot{w})$;

(b) for each $w\in B(G)$,$B^G_{\varphi}(g, f_{1,e})$ depends only on the auxiliary function $f_0$ and $w_{\pi}$ for all $g\in C_r(\dot{w})$;

(c) for each $w'\in B(G)$ with $1\leq d_B(w',e)<m$, and $g=g(a)=u_1(a)w^G_M a u_2(a)\in C_r(\dot{w})$, parameterized by $a\in A$ and such that $u_i(a) $'s are both rational functions of $a\in A$, we have that 
$$B^G_{\varphi}(g(a), f_{1,w'})=w_{\pi}(z)B^G_{\varphi}(u_1(bc'z)\dot{w}_Gbc' u_2(bc'z), f_{1,w'})$$ is uniformly smooth as a function of $c'\in A'_{w'}=Z'_M$ for each fixed $b,z$, where $B^G_{\varphi}(g(a),f_{1,w'})$ defined apriori on $a=bc=bc'z \in A^{w'}_{\dot{w}_G}A_{w'}\subset A_{\dot{w}_G}=A$ and finally extended on all $a\in A$.
\end{proposition}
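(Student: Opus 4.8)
The plan is to prove Proposition 6.16 by induction on $m$, taking the small cell analysis of $G$ (Proposition 6.11) as the base case and the one–step expansion of Proposition 6.15 as the inductive step.

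For the base case $m=1$, I would apply Proposition 6.11 to $f\in \mathcal{M}(\pi)\subset C^\infty_c(G;\omega_\pi)$ with the chosen auxiliary $f_0$. It produces a function $f_1\in C^\infty_c(G;\omega_\pi)$ and a family $\{f_{w'}\}_{d_B(w',e)=1}$ with $f_{w'}\in C^\infty_c(\Omega_{w'};\omega_\pi)$, $w'=w^G_{M'}$, so that for sufficiently large $\varphi$, any $w\in B(G)$, and any $g=u_1\dot w a u_2\in C_r(\dot w)$,
$$B^G_\varphi(g,f)=B^G_\varphi(g,f_1)+\sum_{d_B(w',e)=1}B^G_\varphi(g,f_{w'}),\qquad B^G_\varphi(g,f_1)=\sum_{a=bc}\omega_\pi(c)\,B^G_\varphi(u_1\dot w b u_2,f_0).$$
Setting $f_{1,e}=f_1$ and relabelling the $f_{w'}$ as the $f_{w''}$ of the statement, part (a) holds since there are no $w'$ with $1\le d_B(w',e)<1$, part (b) holds because the displayed formula for $B^G_\varphi(g,f_1)$ involves only $f_0$, $\omega_\pi$, and the decompositions $a=bc$, and part (c) is vacuous. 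I would record that every index $w'$ arising here has the form $w^G_{M'}$ for a standard Levi $M'$, exactly as at the end of the proof of Proposition 6.15.

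For the inductive step, assume the conclusion for some $m$ with $1\le m\le d_B(w_G,e)$ and deduce it for $m+1$. Starting from the level-$m$ expansion
$$B^G_\varphi(g,f)=B^G_\varphi(g,f_{1,e})+\sum_{1\le d_B(w',e)<m}B^G_\varphi(g,f_{1,w'})+\sum_{d_B(w'',e)=m}B^G_\varphi(g,f_{w''}),$$
I would apply Proposition 6.15 to each $f_{w''}$ with $d_B(w'',e)=m$. Since $w''=w^G_{M''}$, Proposition 6.15 yields $f_{1,w''}\in C^\infty_c(\Omega_{w''};\omega_\pi)$ together with a family $\{f_{w'''}\}$ with $w'''>w''$, $d_B(w''',w'')=1$ (hence $d_B(w''',e)=m+1$ and $w'''=w^G_{M'''}$), such that for sufficiently large $\varphi$ and all $g\in C_r(\dot w)$,
$$B^G_\varphi(g,f_{w''})=B^G_\varphi(g,f_{1,w''})+\sum_{w'''>w'',\,d_B(w''',w'')=1}B^G_\varphi(g,f_{w'''}),$$
while $B^G_\varphi(g(a),f_{1,w''})$ is uniformly smooth in $c'$ in the sense of part (2) of that proposition, for $g(a)=u_1(a)\dot w_G a u_2(a)$ parametrized rationally in $a$. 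Substituting into the level-$m$ expansion, collecting the finitely many $f_{w'''}$ sharing the same index into a single function of $C^\infty_c(\Omega_{w'''};\omega_\pi)$, and adjoining the new $f_{1,w''}$ ($d_B(w'',e)=m$) to the old $f_{1,w'}$ ($1\le d_B(w',e)<m$), I obtain the level-$(m+1)$ expansion (a). Property (b) is unchanged since $f_{1,e}$ is not modified; property (c) holds for $1\le d_B(w',e)<m$ by the inductive hypothesis and for $d_B(w',e)=m$ by the uniform smoothness furnished by Proposition 6.15.

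The main obstacle I anticipate is bookkeeping rather than a new idea. One must ensure a single $\varphi$ works for all of the finitely many simultaneous invocations of Lemmas 6.7--6.9 and Proposition 6.15, each of which only asks for $\varphi$ ``sufficiently large depending on the relevant function''; that the partition-of-unity step keeps each new piece supported in the correct $\Omega_{w'''}$; and that the rational-in-$a$ parametrization of $g\in C_r(\dot w_G)$ is preserved on passing to sub-cells, so that part (2) of Proposition 6.15 can be quoted verbatim. A secondary point is to verify the induction terminates at $m=d_B(w_G,e)+1$, where the sum over $w''$ with $d_B(w'',e)=m$ is empty because $w_G$ is the unique maximal element of $B(G)$, leaving only the $f_{1,e}$ and $f_{1,w'}$ terms in (a).
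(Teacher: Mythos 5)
Your proposal is correct and follows essentially the same approach as the paper: the base case $m=1$ is Proposition 6.11 (with the middle sum and part (c) vacuous), and the inductive step applies Proposition 6.15 to each $f_{w''}$ at Bessel distance $m$, folding the resulting $f_{1,w''}$ into the ``1-part'' and collecting the new level-$(m+1)$ pieces into single functions $f_{w'''}\in C^\infty_c(\Omega_{w'''};\omega_\pi)$. The paper carries out the $m=1\to m=2$ step explicitly and then invokes the induction informally, so your formulation is if anything slightly more careful about the inductive bookkeeping (the common $\varphi$, the preservation of the rational-in-$a$ parametrization, and the termination at $m=d_B(w_G,e)+1$, all of which you correctly flag as routine).
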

\begin{proof} 
First we fix an auxiliary function $f_0\in C^{\infty}_c(G;\omega_{\pi})$ with $W^{f_0}(e)=1$. Take $f\in M(\pi)\subset C^{\infty}_c(G,\omega_{\pi})$ normalized such that $W^f(e)=1$. Then by Proposition 6.11, we have the following result:

There exists $f_{1,e}\in C^{\infty}_c(G;\omega_{\pi})$ and, for each $w'\in B(G)$ with $d_B(w',e)=1$, there exists a function $f_{w'}\in C^{\infty}_c(\Omega_{w'};\omega_{\pi})$ such that for sufficiently large $\varphi$,

(i) For any $w\in B(G)$, we have
$$B^G_{\varphi}(g, f)=B^G_{\varphi}(g,f_{1,e})+\sum_{w'\in B(G), d_B(w',e)=1}B^G_{\varphi}(g, f_{w'})$$ for all $g\in C_r(\dot{w})$, the relevant cell attached to $w$;

(ii) For each $w\in B(G)$, the partial Bessel integral $B^G_{\varphi}(g, f_{1,e})$ in (i) depends only on the auxiliary function $f_0$ and the central character $\omega_{\pi}$ for all $g\in C_r(\dot{w})$. (This can be seen directly from the expansion formula for $B^G_{\varphi}(g,f_{1,e})$ as in the proof of Proposition 6.11.)

By proposition 6.15, we also have that for each $f_{w'}\in C^{\infty}_c(\Omega_{w'};\omega_{\pi})$, there exists $f_{1,w'}\in C^{\infty}_c(\Omega_{w'};\omega_{\pi})$ such that for sufficiently large $\varphi$,

(i) There exists a family of functions $\{f_{w',w''}\}\in C^{\infty}_c(\Omega_{w''}; \omega_{\pi})$, parameterized by $w''\in B(G)$ with $w''>w'$ and $d_B(w'',w')=1$ such that for any $w\in B(G)$ and any $g\in C_r(\dot{w})$, we have
$$B^G_{\varphi}(g,f_{w'})=B^G_{\varphi}(g,f_{1,w'})+\sum_{w''\in B(G),w''>w', d_B(w'',w')=1} B^G_{\varphi}(g,f_{w',w''});$$

(ii) Let $g=u_1(a)\dot{w}_G a u_2(a)\in C^G_r(\dot{w}_G)=C^G(\dot{w}_G)$, where $u_i(a)$'s are rational functions of $a\in A$. Write $u_1(a)=u_1^-(a)u_1^+(a)\in U_{w'^{-1}}^-U_{w'^{-1}}^+=U$ and $u_2(a)=u_2^+(a)u_2^-(a)\in U_{w'}^+U_{w'}^-=U$, then $u_i^{\pm}(a)$'s are all rational functions of $a\in A$, then $g=u^-_1(a)w' m(a) u^-_2(a)$ and 
$m(a)=w'^{-1}u_1^+(a)w'\dot{w}_Mau_2^+(a)=\tilde{u}_1(a)\dot{w}_Ma\tilde{u}_2(a)$ where $\tilde{u}_1=w'^{-1}u_1^+w'$, $\tilde{u}_2=u^+_2.$ And we have 
$$B^G_{\varphi}(g,f_{1,w'})=w_{\pi}(z)B^G_{\varphi}(u_1(bc'z)\dot{w}_G bc' u_2(bc'z),f_{1,w'})$$
is uniformly smooth as a function of $c'\in A'_{w'}=Z'_M$ for each fixed $b,z$.

Combine the above two results we obtain that
for any $w\in B(G)$,
$$B^G_{\varphi}(g,f_{w'})=B^G_{\varphi}(g,f_{1,w'})+\sum_{ d_B(w',e)=1} B^G_{\varphi}(g,f_{1,w'})$$$$+\sum_{d_B(w'',w')=d_B(w',e)=1}B^G_{\varphi}(g, f_{w',w''})$$
$$=B^G_{\varphi}(g,f_{w'})=B^G_{\varphi}(g,f_{1,w'})+\sum_{ d_B(w',e)=1} B^G_{\varphi}(g,f_{1,w'})+\sum_{d_B(w'',e)=2}B^G_{\varphi}(g, f_{w',w''})$$ for any $g\in C_r(\dot{w})$. 

Let $f_{w''}=\sum_{d_B(w'',w')=1}f_{w',w''},$ then we see that $f_{w''}\in C^{\infty}_c(\Omega_{w''};\omega_{\pi})$. Hence for any $w''\in B(G)$ with $d_B(w'',e)=2$, there exist $f_{w''}\in C^{\infty}_c(\Omega_{w'};\omega_{\pi})$ such that for sufficiently large $\varphi$

(i) for any $w\in B(G)$ and $g\in C_r(w)$ we have
$$B^G_{\varphi}(g,f_{w'})=B^G_{\varphi}(g,f_{1,w'})+\sum_{ d_B(w',e)=1} B^G_{\varphi}(g,f_{1,w'})+\sum_{d_B(w'',e)=2}B^G_{\varphi}(g, f_{w''});$$

(ii) for each $w\in B(G)$, $B^G_{\varphi}(g, f_{1,e})$ depends only on the auxiliary function $f_0$ and the central character $\omega_{\pi}$ for all $g\in C_r(\dot{w})$;

(iii) for $g=u_1(a)\dot{w}_G a u_2(a)\in C^G_r(\dot{w}_G)=C^G(\dot{w}_G)$, parameterized by $a$, where $u_i(a)$'s are rational functions of $a$, we have 
$$B^G_{\varphi}(g,f_{1,w'})=w_{\pi}(z)B^G_{\varphi}(u_1(bc'z)\dot{w}_G bc' u_2(bc'z),f_{1,w'})$$
is uniformly smooth as a function of $c'\in A'_{w'}=Z'_M$ for each fixed $b,z$.

We proceed by induction on $m=d_B(w,e)$ with $w\in B(G)$, and use Proposition 6.15 on each step, we obtain the statements in the Proposition.
\end{proof}

Now if we apply Proposition 6.16 to the case when $m=d_B(w_G, e)+1$, we obtain a final result that we need for the proof of supercuspidal stability in our case:
\begin{proposition}
Fix an auxiliary function $f_0\in C^{\infty}_c(G;\omega_{\pi})$ with $W^{f_0}(e)=1$.
Let $f\in M(\pi)$ with $W^f(e)=1$, Then

(1) there exists a function $f_{1,e}\in C^{\infty}_c(G;\omega_{\pi})$;

(2) for each $w'\in B(G)$ with $1\leq d_B(w',e)$ there exists $f_{1,w'}\in C^{\infty}_c(\Omega_{w'};\omega_{\pi})$ such that for sufficiently large $\varphi$ we have

(a) $$B^G_{\varphi}(g, f)=B^G_{\varphi}(g,f_{1,e})+\sum_{1\leq d_B(w',e)}B^G_{\varphi}(g, f_{1,w'})$$
for $g\in C_r(\dot{w}_G)=C(\dot{w}_G)$;

(b) $B^G_{\varphi}(g, f_{1,e})$ depends only on the auxiliary function $f_0$ and $w_{\pi}$ for all $g\in C(\dot{w}_G)$;

(c) for each $w'\in B(G)$ with $1\leq d_B(w',e)$, and $g=g(a)=u_1(a)\dot{w}_G a u_2(a)\in C(\dot{w}_G)$, parameterized by $a\in A$ and such that $u_i(a) $'s are both rational functions of $a\in A$, we have that 
$$B^G_{\varphi}(g(a), f_{1,w'})=w_{\pi}(z)B^G_{\varphi}(u_1(bc'z)\dot{w}_Gbc' u_2(bc'z), f_{1,w'})$$ is uniformly smooth as a function of $c'\in A'_{w'}=Z'_M$ for each fixed $b,z$.
 \end{proposition}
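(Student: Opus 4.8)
The plan is to obtain Proposition 6.17 as the terminal case of Proposition 6.16, by setting the parameter $m$ equal to its largest admissible value $m = d_B(w_G, e) + 1$ and checking that the top layer of the resulting expansion is vacuous.

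The first step is the elementary observation that $d_B(w_G, e)$ is the maximum Bessel distance realized in $B(G)$. Since $M_e = G$ and $M_{w_G}$ is the minimal Levi (the torus $A$), the identity $d_B(w, e) = |\Delta_{M_e} - \Delta_{M_w}| = |\Delta - \Delta_{M_w}|$ shows that $d_B(w,e)$ is maximized precisely when $\Delta_{M_w} = \emptyset$, i.e. when $w = w_G$, with value $|\Delta|$. In particular there is no $w'' \in B(G)$ with $d_B(w'', e) = d_B(w_G, e) + 1$. Hence, taking $m = d_B(w_G, e) + 1$ in Proposition 6.16, the sum $\sum_{d_B(w'',e)=m} B^G_{\varphi}(g, f_{w''})$ in part (a) is empty, while $\sum_{1 \le d_B(w',e) < m}$ ranges over all $w' \in B(G)$ with $w' \ne e$ --- exactly the index set appearing in Proposition 6.17.

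Next I would note that $m = d_B(w_G,e)+1$ lies in the range $1 \le m \le d_B(w_G,e)+1$ covered by Proposition 6.16, so no further induction is required: the induction on $m$ internal to Proposition 6.16 (base case $m = 1$ supplied by Proposition 6.11, inductive step by Proposition 6.15) already reaches this value and produces the functions $f_{1,e} \in C^{\infty}_c(G;\omega_{\pi})$ and $f_{1,w'} \in C^{\infty}_c(\Omega_{w'};\omega_{\pi})$ for all $w'$ with $1 \le d_B(w',e)$. It then remains to specialize the ``for any $w \in B(G)$, $g \in C_r(\dot{w})$'' clauses of Proposition 6.16(a)--(c) to $w = w_G$. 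The only point needing verification is the identification $C_r(\dot{w}_G) = C(\dot{w}_G)$: since $w_G$ is associated with the minimal Levi $M = A$, we have $A_{w_G} = Z_A = A$ and $U_{w_G}^- = U$, so $C_r(\dot{w}_G) = U\dot{w}_G A_{w_G} U_{w_G}^- = U \dot{w}_G A U = C(w_G)$. Granting this, parts (b) and (c) of Proposition 6.17 are verbatim the restrictions of parts (b) and (c) of Proposition 6.16 to $w = w_G$; the a priori/extension convention for $B^G_{\varphi}(g(a), f_{1,w'})$ on $a \in A$ is inherited unchanged from Proposition 6.14.

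All of the analytic substance --- the descent through non-Bessel cells via Lemmas 6.7--6.9, the small-cell constructions of Proposition 6.11 and Lemma 6.12, the transfer principle of Proposition 6.5, and the uniform-smoothness estimates of Propositions 6.14--6.15 --- has been carried out in the earlier statements, so there is no real obstacle at this stage; the proof is bookkeeping about where the Bessel-distance induction stops. The one spot deserving a moment's attention is making sure the residual ``$f_{w''}$-layer'' at $m = d_B(w_G,e)+1$ is genuinely empty rather than, say, supported on $w_G$ itself; this is exactly what the uniqueness of the top element of $B(G)$ noted above guarantees.
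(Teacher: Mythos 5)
Your proof is correct and matches the paper's approach: the paper also obtains Proposition 6.17 by specializing Proposition 6.16 to $m = d_B(w_G,e)+1$ (the paper gives no written proof beyond stating this). Your added verifications --- that $d_B(\cdot,e)$ is maximized uniquely at $w_G$ so the $m$-layer sum is empty, and that $C_r(\dot{w}_G)=C(\dot{w}_G)$ since $A_{w_G}=A$ and $U_{w_G}^{-}=U$ --- are both accurate and exactly the bookkeeping needed.
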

\section{SUPERCUSPIDAL STABILITY}

Now we have all the ingredients for the final proof of supercuspidal stability in our case.
First recall that we have reduced Proposition 3.4 to the proof of the stability of local coefficient, since the adjoint action $r: \leftidx{^L}M_H\longrightarrow \textrm{GL}(\leftidx{^L}{\mathfrak{n}_H})$ is irreducible. And from Langlands-Shahidi method, $C_{\psi}(s,\pi)=\gamma(s, \pi, \textrm{Sym}^2\otimes \eta,\psi)$. We wrote the local coefficients as the Mellin transform of the partial Bessel functions $j_{\pi,\eta,\dot{w}_\theta,\kappa}(g)$, where $g=-\frac{1}{2}\dot{w}_G\leftidx{^t}Y^{-1}$. By an appropriate choice of orbit space representatives of the space $U_{M_H}\backslash N_H$, we can pick $Y=Y(a_1,\cdots, a_n)$. Then by induction on $n$ we can show that such $g$ lies in the big cell. Let $g=u_1 \dot{w}_G a u_2$ be its Bruhat decomposition. Since $g\mapsto u_1$, $g\mapsto a$, $g\mapsto u_2$ are all morphisms of algebraic varieties, we see that here the entries of $a$, $u_1=u_1(a)$, and $u_2=u_2(a)$ are all rational functions of $(a_1,a_2,\cdots,a_n)\in (F^\times)^n.$ We have $g=u_1 \dot{w}_G a u_2=u_1(a)\dot{w}_Ga u_2(a)\in C_r(\dot{w}_G)=C(\dot{w}_G)\subset \Omega_{w'}$, write $$g=u_1 \dot{w}_G a u_2=u_1^-u_1^+\dot{w}_G a u_2^+u_2^-=u_1^-w'mu_2^-,$$ where $m=(w')^{-1}u_1^+w' \dot{w}_M a u_2^+\in C_r^M(\dot{w}_M)$ with 
$u_1^-\in U^-_{(w')^{-1}}$, $u_1^+\in U_{(w')^{-1}}^+$, $u_2^+\in U_{w'}^+$, $u_2^-\in U_{w'}^-$, $u_1=u_1^-u_1^+$, $u_2=u_2^+u_2^-.$
Since $u_1(a)$ and $u_2(a)$ are both rational functions of $a$, the projection maps 
$u_i(a)\mapsto u_i^{\pm}(a)$ are rational maps, so $u_i^{\pm}(a)$'s are all rational functions of $a$. 
So we can apply Proposition 6.14 to our case with $\tilde{u}_1(a)=(w')^{-1}u_1^+(a)w'$, $\tilde{u}_2(a)=u_2^+(a)$. Now we see that the conditions for Proposition 6.17 are all satisfied for our $g$. 

By Proposition 5.9,
$$C_{\psi}(s,\sigma_\eta)^{-1}=\gamma(ns, \omega_{\pi}^2,\psi)^{-1}\int_{F^{\times}\backslash R}j_{\pi,\eta,\dot{w}_\theta,\kappa}(-\frac{1}{2}\dot{w}_G\leftidx{^t}Y^{-1})$$$$\cdot\omega_{\pi}(4\det(Y)^2\prod_{i=1}^n a_i^{-2}) \vert\frac{1}{2}\vert^{\frac{n(n-s)}{2}}\vert \det(Y)\vert^{\frac{2ns-s-n}{2}} \prod_{i=1}^n\vert a_i \vert ^{i-1-ns}da_i$$
In the Bruhat decomposition $g=-\frac{1}{2}\dot{w}_G\leftidx{^t}Y^{-1}=u_1(a)\dot{w}_G au_2(a)$ if we write $a=\textrm{diag}\{ d_1,\cdots, d_n\}$,
then a direct calculation shows that 
$$d_1=\frac{\prod_{j\ \ even}a_j^2}{\prod_{k\ \ odd}a_k^2}, d_2=\frac{\prod_{k\neq 1,\ \ odd}a_k^2}{4\prod_{j\ \ even}a_j^2},d_3=\frac{\prod_{j\neq 2,\ \ even}a_j^2}{\prod_{k\neq 1, \ \ odd}a_k^2},$$ $$d_4=\frac{\prod_{k\neq 1, 3,\ \ odd}a_k^2}{4\prod_{j\neq 2,\ \ even}a_j^2},\cdots,d_n$$
and $d_n=\frac{1}{4a_n^2}$ if $n$ is even, $d_n=\frac{1}{a_n^2}$ if $n$ is odd.
And no matter $n$ is even or odd we have $d_i\cdot d_{i+1}=\frac{1}{4a_i^2}$ for all $1\leq i \leq n-1$. Recall that the action of $F^{\times}$ on $R\simeq (F^{\times})^n$ is given by 
$t\cdot (a_1,\cdots,a_n)=(t^2 a_1, t^2 a_2,\cdots t^2 a_{n-1}, t a_n)$. From the above observation, it is clear that this action is equivalent to the action of $F^{\times}$ on $A=\{\textrm{diag}\{d_1,\cdots,d_n):d_i\in F^{\times}\}$ by
$t\cdot \textrm{diag}(d_1, d_2,\cdots,d_n)=\textrm{diag}(\frac{d_1}{t^2},\frac{d_2}{t^2},\cdots,\frac{d_n}{t^2}).$
Thus the action of $F^{\times }$ on $R$ translates into the action of $Z$ on $A$. Meanwhile the change of variable $(a_1,\cdots,a_n)\mapsto (d_1,\cdots,d_n)$ translates the measure given by the $a_i$'s into a unique measure given by the $d_i$'s, with the determinant of the Jacobian matrix a rational function of the $d_i$'s. Recall that by the computation at the end of section 5.3, $\det(g)=\det(Y)^{-1}=\frac{(-\frac{1}{2})^n}{\prod_{k\ \ odd}a_k^2}$, if $n$ is even;  $\det(g)=\det(Y^{-1})=\frac{(-\frac{1}{2})^{n-1}}{\prod_{k\ \  odd}a_k^2}$ if $n$ is odd. In both cases 
$\det(Y^{-1})\in (F^{\times})^2$. 
On the other hand, $\det(Y)^2=\frac{1}{(d_1\cdots d_n)^2}=\frac{1}{d_1(d_1d_2)(d_2d_3)\cdots(d_{n-1}d_n)d_n}
$. The last expression is equal to $\frac{1}{d_1}(4a_1^2)(4a_2^2)\cdots(4a_{n-1}^2)(4a_n^2)$ if $n$ is even, and $\frac{1}{d_1}(4a_1^2)(4a_2^2)\cdots (4a_{n-1}^2)a_n^2$ if $n$ is odd. Therefore $\det(Y)^2\prod_{i=1}^n a_{i}^{-2}=\frac{4^n}{d_1}$ if $n$ is even and $\frac{4^{n-1}}{d_1}$ if $n$ is odd. Meanwhile, 
$\prod_{i=1}^n\vert a_i\vert^{i-1-ns}=\prod_{i=1}^n \vert a_i^2\vert ^{\frac{i-1-ns}{2}}=\prod_{i=1}^{n-1}(\vert\frac{1}{4d_i\cdot d_{i+1}}\vert^{\frac{i-1-ns}{2}})\cdot\vert\frac{1}{4d_n}\vert^\frac{n-1-ns}{2}=\vert\frac{1}{2}\vert^{\frac{n(n+1)}{2}-ns-1}\cdot\prod_{i=1}^{n-1}(\vert\frac{1}{d_i\cdot d_{i+1}}\vert^{\frac{i-1-ns}{2}})\cdot\vert\frac{1}{d_n}\vert^\frac{n-1-ns}{2}$ if $n$ is even, and $\prod_{i=1}^n\vert a_i\vert^{i-1-ns}=\prod_{i=1}^{n-1}(\vert\frac{1}{4d_i\cdot d_{i+1}}\vert^{\frac{i-1-ns}{2}})\cdot\vert\frac{1}{d_n}\vert^\frac{n-1-ns}{2}=\vert\frac{1}{2}\vert^{\frac{n(n-1)}{2}-ns-1}\cdot \prod_{i=1}^{n-1}(\vert\frac{1}{d_i\cdot d_{i+1}}\vert^{\frac{i-1-ns}{2}})\cdot\vert\frac{1}{d_n}\vert^\frac{n-1-ns}{2}$ if $n$ is odd. Let $\nu(n,s)=\frac{n(n-s)}{2}+\frac{n(n+1)}{2}-ns-1$ if $n$ is even and $\frac{n(n-s)}{2}+\frac{n(n-1)}{2}-ns-1$ if $n$ is odd.

Let $A=A'Z$, which gives $d'_i=d_i/d_1$, ($1\leq i\leq n$), then since $d_1'=1$, $\omega_\pi(4\det(Y')^2\prod_{i=1}^na_i'^{-2})=\omega_\pi(4^{n+1})$ if $n$ is even and $\omega_\pi(4^n)$ if $n$ is odd, denote this number by $c_{\pi}$. 
From the above observations we see that there exists complex numbers $\tau(i,s)$, which are of the form $\tau(i,s)=p_i+sq_i$, $s\in \mathbb{C}$ with $p_i,q_i\in \mathbb{Q}$ depending only on $1\leq i \leq n$, such that
$$C_{\psi}(s,\sigma_\eta)^{-1}=c_\pi\vert\frac{1}{2}\vert^{\nu(n,s)}\gamma(ns, w_{\pi}^2,\psi)^{-1}\int_{A'}j_{\pi,\eta,\dot{w}_\theta,\kappa}(g'(a))\prod_{i=2}^n\vert d_i'\vert^{\tau(i,s)}\prod_{i=2}^2 d^{\times}d'_i$$
where $g'=g(a')=u_1(a')\dot{w}_G a' u_2(a')$ with $a=a'z$, and $a'=\textrm{diag}\{d'_1,\cdots,d'_n\}$.

Now let's prove Proposition 3.4.

\begin{proof}(\textbf{Proof of Proposition 3.4}) If we are given two irreducible supercuspidal representations $\pi_1$ and $\pi_2$ of  $\textrm{GL}_n(F)$ with the same central character $w_{\pi_1}=w_{\pi_2}$, lift them to representations of $M_H(F)$ and denote them by $\sigma_1$ and $\sigma_2$ respectively, then by Proposition 5.9 and the above argument,
$$C_{\psi}(s,\sigma_{1,\eta}\otimes \chi)^{-1}-C_{\psi}(s,\sigma_{2,\eta}\otimes \chi)^{-1}=c_\pi\vert\frac{1}{2}\vert^{\nu(n,s)}\gamma(ns,(w_{\pi}\chi^n)^2,\psi)^{-1}D_{\chi}(s)$$
where 
$$D_{\chi}(s)=\int_{A'} (j_{\pi_1\otimes\chi, \eta,\dot{w}_\theta,\kappa}(g(a'))-j_{\pi_2\otimes\chi,\eta,\dot{w}_\theta,\kappa}(g(a')))\prod_{i=2}^n\vert d'_i\vert^{\tau(i,s)}\prod_{i=2}^nd^{\times}d'_i$$

Pick $f_i\in M(\pi_i)$ such that $W^{f_i}(e)=1$, for $i=1,2$, and such that for $g=-\frac{1}{2}\dot{w}_G\leftidx{^t}Y^{-1}=g(a)=u_1(a)\dot{w}_G a u_2(a).$ By Proposition 5.10,
$$j_{\pi_i,\eta,\dot{w}_\theta,\kappa}(g(a),f_i)=\eta(a(g))^{-1}\vert\det(g)\vert^{\frac{s}{2}}B^G_{\varphi}(g(a), f_i).$$

For convenience let $J_{\pi_i,\eta,\dot{w}_\theta,,\kappa}(g,f_i)=\eta(a(g))\vert\det(g)\vert^{-\frac{s}{2}}\cdot j_{\pi_i,\eta,\dot{w}_\theta,,\kappa}(g,f_i)$. We may also assume that $\kappa $ is sufficiently large so that Proposition 6.17 holds for both $f_1$ and $f_2$ with the same auxiliary function $f_0$. Then apply Proposition 6.17 (2)(a), we have
$$J_{\pi_1,\eta,\dot{w}_\theta,\kappa}(g(a'))-J_{\pi_2,\eta,\dot{w}_\theta,\kappa}(g(a'))=B^G_{\varphi}(g(a'),f_1)-B^G_{\varphi}(g(a'), f_2)$$
$$=B^G_{\varphi}(g(a'),f_{1,1,e})-B^G_{\varphi}(g(a'),f_{2,1,e})+\sum_{1\leq d_B(w',e)}(B^G_{\varphi}(g(a'),f_{1,1,w'})-B^G_{\varphi}(g(a'),f_{2,1,w'}))$$

Now since both $B^G_{\varphi}(g(a'),f_{1,1,e})$ and $B^G_{\varphi}(g(a'),f_{2,1,e})$ depend only on the auxiliary function $f_0$, the central character $\omega_{\pi}=\omega_{\pi_1}=\omega_{\pi_2}$, and $\eta$, we see that 
$$B^G_{\varphi}(g(a'),f_{1,1,e})-B^G_{\varphi}(g(a'),f_{2,1,e})=0.$$ So we are left with
$$J_{\pi_1,\eta,\dot{w}_\theta,\kappa}(g(a'))-J_{\pi_2,\eta,\dot{w}_\theta,\kappa}(g(a'))=\sum_{1\leq d_B(w',e)}(B^G_{\varphi}(g(a'),f_{1,1,w'})-B^G_{\varphi}(g(a'),f_{2,1,w'}))$$

Meanwhile, notice that 
$j_{\pi\otimes\chi, \eta,\dot{w}_\theta,\kappa}(g)=\chi(\det(g))j_{\pi,\eta,\dot{w}_\theta,\kappa}(g)$.
So we have
$$j_{\pi_1\otimes\chi, \eta,\dot{w}_\theta,\kappa}(g(a'))-j_{\pi_2 \otimes\chi,\eta,\dot{w}_\theta,\kappa}(g(a'))$$$$=\chi(\det(a'))(j_{\pi_1,\eta,\dot{w}_\theta,\kappa}(g(a'))-j_{\pi_2,\eta,\dot{w}_\theta,\kappa}(g(a'))).$$
Moreover, since $\det(g')=\det(a')=\frac{d_1\cdots d_n}{d_1^n}$, and as we saw before both $d_1\cdots d_n$ and $d_1$ are in $(F^\times)^2$, so $\det(g')\in (F^\times)^2$. Recall that at the end of section 5.1, we have $M_{H_D}=\{(g,a)\in M_H:\det(g)a(g)^2=1\}^\circ$, there is a unique $a(g)\in F^\times$ such that $\det(g)a(g)^2=1$, denote it by $\det(g)^{-\frac{1}{2}}$. Then $\eta(a(g'))=\eta(\det(g')^{-\frac{1}{2}})=\eta(\det(a')^{-\frac{1}{2}})$.
Now put everything together we obtain that 
$$D_{\chi}(s)=\int_{A'} (\sum_{1\leq d_B(w',e)}(B^G_{\varphi}(g(a'),f_{1,1,w'})-B^G_{\varphi}(g(a'),f_{2,1,w'})))\chi(\det(a'))$$$$\cdot \eta(\det(a')^{-\frac{1}{2}})^{-1}\vert\det(a')\vert^{\frac{s}{2}}\prod_{i=2}^n\vert d'_i\vert^{\tau(i,s)}\prod_{i=2}^nd^{\times}d'_i$$
$$= \sum_{1\leq d_B(w.e)}\int_{A^{w'}_{\dot{w}_G}}(\int_{A'_{w'}}(B^G_{\varphi}(g(bc'),f_{1,1,w'})-B^G_{\varphi}(g(bc'),f_{2,1,w'}))\prod_{i=2}^n\vert c'_i\vert^{\tau(i,s)}\chi(\det(c'))$$$$\cdot\eta(\det(c')^{-\frac{1}{2}})^{-1}\vert \det(c')\vert^{\frac{s}{2}}dc')\chi(\det(b))\eta(\det(b)^{-\frac{1}{2}})^{-1}\vert\det(b)\vert^{\frac{s}{2}}\prod_{i=2}^n\vert b_i\vert^{\tau(i,s)}db.$$
where $a=diag(d_1,\cdots,d_n)=bc=bc'z$ gives the corresponding entries $b_i$ of $b$ and $c'_i$ of $c'$ for $1\leq i \leq n$, and the measure $db$ and $dc'$ on $A^{w'}_{\dot{w}_G}$ and $A_{w'}$ respectively.

Notice that inside the inner integral
the function $$(B^G_{\varphi}(g(bc'),f_{1,1,w'})-B^G_{\varphi}(g(bc'),f_{2,1,w'}))\prod_{i=2}^n\vert c'_i\vert^{\tau(i,s)}$$ is uniformly smooth as a function of $c'\in A_{w'}$ for each fixed $b\in A_{\dot{w}_G}^{w'}$, since both
$B^G_{\varphi}(g(bc'),f_{1,1,w'})$ and $B^G_{\varphi}(g(bc'),f_{2,1,w'})$ are by Proposition 6.17.

Therefore if we take $\chi$ to be sufficiently ramified, we see that the inner integral
$$\int_{A'_{w'}}(B^G_{\varphi}(g(bc'),f_{1,1,w'})-B^G_{\varphi}(g(bc'),f_{2,1,w'}))\prod_{i=2}^n\vert c'_i\vert^{\tau(i,s)}\chi(\det(c'))$$$$\cdot\eta(\det(c')^{-\frac{1}{2}})^{-1}\vert\det(c')\vert^{\frac{s}{2}}dc'=0$$
So we obtain that $D_{\chi}(s)=0$, and therefore 
$$C_{\psi}(s, \sigma_{1,\eta}\otimes \chi)=C_{\psi}(s, \sigma_{2,\eta}\otimes \chi).$$
\end{proof}

\section{ACKNOWLEDGEMENTS}
I would first like to sincerely thank my advisor, Dr. Freydoon Shahidi, for recommending this problem to me, for many of his deep insights and brilliant ideas in mathematics which greatly helped me to increase my research abilities through my Ph.D career, and for all the enlightening discussions with him that give me long-term visions toward my future research.

I would like to acknowledge James Cogdell, Mahdi Asgari, Chung Pang Mok, and David Goldberg for several helpful discussions on various related topics. I would also like to thank my friend Daniel Shankman for many years of mutual learning, and for his willingness and patience to carefully check several technical parts of my proof. 

My research was supported by the NSF grant DMS-1500759.
\section{REFERENCES}

[1] M. Asgari, "Local L-function for Split-spinor Groups", Cand J. Math Vol.54(4), 2002, pp. 673-693.

[2] M. Asgari, F. Shahidi, "Generic Transfer for General Spin Groups", Duke Mathematical Journal, Vol. 132, No.1, 2006, pp. 137–190.

[3] I.N. Bernstein, A.V. Zelevinsky, "Induced representations of reductive p-adic groups(I)", Annales Scientifiques de l'École Normale Supérieure, Série 4, 10 (4): pp. 441–472, 1977. ISSN 0012-9593, MR 0579172.

[4] C.J. Bushnell, G. Henniart, "The Local Langlands Conjecture for $GL(2)$", Grundlehren
der Math. Wiss. 335, Springer-Verlag, 2006.

[5] W. Casselman, "Introduction to the theory of admissible representations of p-adic reductive groups", preprint, 1995.

[6] J.W. Cogdell, I.I. Piatetski-Shapiro, F. Shahidi, "Partial Bessel functions for Quasi-split Groups." In "automorphic Representations, L-functions and Applications: Progress and Prospects." Berlin: Walter de Gruyter, 2005, pp. 95-128.

[7] J.W. Cogdell, F. Shahidi, T.-L. Tsai, "Local Langlands Correspondence for $GL_n$, and the Exterior and Symmetric Square $\epsilon$-factors", Duke Mathematical Journal, Vol 166, No. 11, 2017, pp. 2053-2132.

[8] S. Gelbart, H. Jacquet, "A relation between automorphic representations of GL(2) and GL(3)", Ann. Sci.
Ecole Norm. Sup. (4) 11 (1978), pp. 471–542.

[9] P. Deligne, "Les constantes des equations fonctionnelles des fonctions L" in Modular Functions of One Variable, II(Antwerp,1972) Lecture Notes in Math. 349, Springer, Berlin, 1973, pp. 501-597. MR 0349635.

[10] M. Harris, R. Taylor, "The Geometry and Cohomology of Some Simple Shimura Varieties". Annals of Mathematics Studies, No. 151, Princeton University Press, 2001.

[11] G. Henniart, "Correspondance de Langlands et fonctions L des carr\'es ext\'erieur et sym\'etrique", Int. Math. Res. Not. IMRN 2010, no. 4, pp. 633-673. MR 2595008.

[12] G. Henniart, "Une preuve simple des conjectures de Langlands pour $\textrm{GL}(n)$ sur un corps p-adique", Invent. Math. 139(2000), pp. 439-455. MR 1738446.

[13] R.P. Langlands, "On the classification of irreducible representations of real algebraic groups", in 'Representation Theory and Harmonic Analysis on Semisimple Lie Groups' (Editors: P.J. Sally, Jr. and D.A. Vogan), Mathematical Surveys and Monographs, AMS, Vol.31, 1989, pp. 101-170.

[14] G. Laumon, M. Rapoport, U. Stuhler, “D-elliptic sheaves and the Langlands
correspondence”, Invent. Math. 113:2 (1993), pp. 217–338. MR 96e:11077.

[15] P. Scholze, "The local Langlands correspondence for $GL_n$
over p-adic fields". Invent. Math. 192.3 (2013), pp. 663–715.

[16] F. Shahidi, "A proof of Langlands's conjecture on Plancharel Measures, Complementary series for p-adic Groups", Annals of Mathematics 132, No.2, 1990, pp. 273–330. MR 91m:11095.

[17] F. Shahidi, "Eisenstein Series and Automorphic L-Functions", American Mathematical Society, Colloquium Publications, volume 58 (2010).

[18] F. Shahidi, "Local coefficients as Artin factors for real groups", Duke Math. J. 52(1985), pp. 973-1007. MR 0816396. DOI 10. 1215/S0012-7094-85-0525204.(2064, 2070)

[19] F. Shahidi, "Local Coefficients as Mellin Transforms of Bessel Functions Towards a General Stability", IMRN 2002, No. 39, pp. 2075-2119.

[20] F. Shahidi, "On non-vanishing of twisted symmetric and exterior square L-functions for GL(n)", Olga TausskyTodd: in memoriam. Pacific J. Math. 1997, Special Issue, pp. 311–322.

[21] A. J. Silberger, Ernst-Wilhelm Zink, "Langlands classification for L-parameters", J. Algebra 511
(2018), pp. 299–357.

[22] R. Sundaravaradhan, "Some Structural Results for the Stability of Root Numbers", Int. Math. Res. Not.
IMRN 2007, No. 2, pp. 1–22.

[23] I. Vogt, "The local Langlands correspondence for $\textrm{GL}_n$ over a p-adic field", Available at: http://web.stanford.edu/~vogti/LLC.pdf, pp. 1-18.

\end{document}